\declaretheoremstyle[bodyfont=\sl]{slanted}
\declaretheorem[name=Definition,style=definition,qed=$\dashv$,
numberwithin=section]{dfn}
\declaretheorem[name=Definition,style=definition,numbered=no,qed=$\dashv$]{dfn*}
\declaretheorem[name=Definition,style=definition,numbered=no]{dfnnoqed*}
\declaretheorem[name=Theorem,style=slanted,sibling=dfn]{tm}
\declaretheorem[name=Theorem,style=slanted,numbered=no]{tm*}
\declaretheorem[name=Lemma,style=slanted,sibling=dfn]{lem}
\declaretheorem[name=Corollary,style=slanted,numbered=no]{cor*}
\declaretheorem[name=Remark,style=definition,sibling=dfn]{rem}
\declaretheoremstyle[headfont=\scshape]{claimstyle}
\declaretheorem[name=Claim,style=claimstyle]{clm}
\declaretheorem[name=Claim,style=claimstyle]{clmtwo}
\declaretheorem[name=Claim,style=claimstyle]{clmthree}
\declaretheorem[name=Claim,style=claimstyle]{clmfour}
\declaretheorem[name=Claim,style=claimstyle]{clmfive}
\declaretheorem[name=Claim,style=claimstyle,numbered=no]{clm*}
\declaretheorem[name=Subclaim,style=claimstyle,numberwithin=clm]{sclm}
\declaretheorem[name=Subclaim,style=claimstyle,numberwithin=clmthree]{sclmthree}
\declaretheorem[name=Subclaim,style=claimstyle,numberwithin=clmfour]{sclmfour}
\declaretheorem[name=Subclaim,style=claimstyle,numberwithin=clmfive]{sclmfive}
\declaretheorem[name=Subclaim,style=claimstyle,numbered=no]{sclm*}
\declaretheorem[name=Subsubclaim,style=claimstyle,numbered=no]{ssclm*}
\declaretheoremstyle[headfont=\scshape]{casestyle}
\declaretheorem[name=Case,style=casestyle]{case}
\declaretheorem[name=Case,style=casestyle]{casetwo}
\declaretheorem[name=Case,style=casestyle]{casethree}
\declaretheorem[name=Case,style=casestyle]{casefour}
\declaretheorem[name=Case,style=casestyle]{casefive}
\declaretheorem[name=Case,style=casestyle]{casesix}
\newcommand{\lgcd}{\mathrm{lgcd}}
\newcommand{\compmode}{1}
\newcommand{\compopt}[2]{\ifthenelse{\equal{\compmode}{0}}{#1}{#2}}
\newcommand{\J}{\mathcal{J}}
\newcommand{\sub}{\subseteq}
\newcommand{\cross}{\times}
\newcommand{\om}{\omega}
\newcommand{\pow}{\mathcal{P}}
\newcommand{\OR}{\mathrm{OR}}
\newcommand{\Hull}{\mathrm{Hull}}
\newcommand{\cut}{\backslash}
\newcommand{\Tt}{\mathcal{T}}
\newcommand{\Uu}{\mathcal{U}}
\newcommand{\rg}{\mathrm{rg}}
\newcommand{\dom}{\mathrm{dom}}
\newcommand{\ins}{\trianglelefteq}
\newcommand{\pins}{\triangleleft}
\newcommand{\npins}{\ntriangleleft}
\newcommand{\crit}{\mathrm{cr}}
\newcommand{\rest}{\!\upharpoonright\!}
\newcommand{\com}{\circ}
\newcommand{\range}{\rg}
\newcommand{\lh}{\mathrm{lh}}
\newcommand{\Ult}{\mathrm{Ult}}
\newcommand{\sats}{\models}
\newcommand{\es}{\mathbb{E}}
\newcommand{\core}{\mathfrak{C}}
\newcommand{\pred}{\mathrm{pred}}
\newcommand{\id}{\mathrm{id}}
\newcommand{\conc}{\ \widehat{\ }\ }
\DeclareMathOperator{\Th}{Th}
\DeclareMathOperator{\card}{card}
\DeclareMathOperator{\cof}{cof}
\newcommand{\rSigma}{\mathrm{r}\Sigma}
\newcommand{\bfrSigma}{\undertilde{\rSigma}}
\newcommand{\bfSigma}{\undertilde{\Sigma}}
\newcommand{\psub}{\subsetneq}
\newcommand{\cHull}{\mathrm{cHull}}
\newcommand{\tu}{\textup}
\renewcommand{\cut}{\backslash}
\newcommand{\pvec}{\vec{p}}
\newcommand{\exit}{\mathrm{ex}}
\newcommand{\passive}{\mathrm{pv}}
\newcommand{\dropset}{\mathscr{D}}
\renewcommand{\cut}{\backslash}
\newcommand{\spc}{\mathrm{spc}}
\newcommand{\ph}{\mathfrak{P}}
\newcommand{\D}{\mathrm{D}}
\newcommand{\muvec}{\vec{\mu}}
\title{The initial segment condition for $\kappa^+$-supercompactness}
\author{Farmer Schlutzenberg\footnote{
afirstname dot alastname at tuwien dot ac dot at, afirstname dot alastname at gmail}}
\begin{document}

\maketitle

\begin{abstract}
We give a development of the fine structure of mice with long extenders,
to the level of $\kappa^+$-supercompact cardinals $\kappa$.
We do this using a hierarchy
with features more analogous to those familiar in the short extender context than the hierarchies  introduced by Woodin and by Neeman-Steel.
In particular, the mice we consider satisfy  stronger versions of the initial segment condition.
We establish a form of fine structural condensation involving embeddings
$\pi:H\to M$ which need not be the identity below the projectum of $H$ (under special assumptions). We also adapt the analysis of the Dodd structure of short extenders on the sequence to mice at this level.
\end{abstract}

\tableofcontents
\section{Introduction}

The theory of fine structural models containing cardinals $\kappa$ which
are $\kappa^{+}$-supercompact, and in fact, $\kappa^{+n}$-supercompact, where $n<\om$,
was introduced by Woodin in the 2010s,
in particular with the circulation of his monograph on the topic.
Around the same time, Neeman and Steel  released some notes \cite{nsp1}, \cite{nsp1fs}
on the key ideas involved which
go beyond the earlier theory
(the earlier theory  had reached the level of superstrong cardinals).
The most obvious difference between the two levels is that at the lower level,
all extenders $E$ used to build premice
are short, consisting of component
measures $E_a$ 
concentrating on the critical point $\crit(E)$ of $E$,
whereas at the higher level,
there are also long extenders,
with measures concentrating on ordinals $>\crit(E)$,
in particular on $\crit(E)^{+M}$, where $M$ is the structure over which 
$E$ is an extender, in the case of $\kappa^+$-supercompactness. Thus, we refer to the earlier theory as that of short extenders,
and the latter as that of long extenders.

Woodin developed the theory of long extenders
through the level of $\kappa^{+n}$-supercompactness. Neeman-Steel \cite{nsp1},
\cite{nsp1fs}, explained
the key new concepts, focusing
on the basic definitions and the proof of $1$-solidity at the $\kappa^+$-supercompact level. The hierarchy they define differs
from that of Woodin's, but they claim in their notes that it is equivalent. The segments of the models $M$ in the Neeman-Steel hierarchy need not be fully solid/sound, whereas in Woodin's hierarchy they are. In either case,
one must deal with failures of solidity for iterates; in the Neeman-Steel hierarchy, these failures already appear within the premice themselves, whereas in Woodin's hierarchy, they only appear after taking certain kinds of ultrapowers. This means that ultrapower maps arising from the Neeman-Steel hierarchy preserve standard parameters, whereas in Woodin's, they need not. But the failure of solidity and/or preservation of standard parameters only occurs in a certain specific manner, which can be tracked almost like in the short extender realm, so in the end it does not cause a major problem.
And while iteration maps in  the Neeman-Steel hierarchy preserve standard parameters,
this comes at the cost of complicating the basic definitions of premice, with rather explicit demands on the fine structure of the structures involved (that of ``projectum free spaces''). Moreover, the Neeman-Steel hierarchy involves failures of the initial segment condition (ISC),
in that, for example, there can be short extenders $E$ in the extender sequence, whose derived normal  measure is not in the sequence. It appears that Woodin's hierarchy also has such failures of the ISC (see the remarks further below). Both hierarchies also suffer from failures of condensation,  related to the failure of solidity and/or preservation of standard parameters.

The goals of this paper are as follows.
First, we will give a full account of the basic long extender theory at the level of $\kappa^+$-supercompactness. Our account will work with a new hierarchy, different from Woodin's and Neeman-Steel's, one whose features are much closer to the features in the short extender hierarchy. (It seems likely that this hierarchy should, however, be translatable into the Woodin or the Neeman-Steel hierarchy.) We will establish stronger, and more traditional, condensation and initial segment condition properties for this hierarchy. For example, normal measures of extenders on the sequence will themselves be on the sequence. We will also analyse Dodd-solidity, and our hierarchy will be designed with this built in. And although this is a coarse property (given a premouse $M$
with active extender $F$,
it demands that certain fragments of $F$
are already in $M$, but does not demand anything about how they are derived from the extender sequence of $M$), it fails for the Neeman-Steel hierarchy (and presumably for Woodin's also, based on remarks in \cite{nsp1}).
In this regard, we will prove the following results (all the premice below are those in the hierarchy introduced in this paper):

\begin{tm*}[\ref{tm:rho^M_D=0_implies_Dodd-absent-solid_and_universal}]
  Let $M$ be a $(0^-,\om_1,\om_1+1)^*$-iterable active short premouse
  with $\rho_{\mathrm{D}}^M=0$.
 Then $M$ is Dodd-absent-solid and Dodd-absent-universal.
\end{tm*}

\begin{tm*}[\ref{tm:Dodd-absent-soundness}]
 Let $M$ be a $(0^-,\om_1,\om_1+1)^*$-iterable active short $1$-sound
 premouse with $\rho_{\D}^M>0$.
 Then $M$ is Dodd-absent-sound.
\end{tm*}

Here the ``$0^-$'' in the notation is explained in \S\ref{sec:it_trees}, but it has essentially the same meaning as ``$0$'' would have in that role in the short extender context (it specifies a the soundness degree associated to $M$ when iterating $M$). The ordinal $\rho_{\D}^M$ is the \emph{Dodd-absent-projectum} of $M$, which is related to, but distinct from, the usual \emph{Dodd projectum}. And \emph{Dodd-absent-soundness} is the corresponding relative of the usual notion of \emph{Dodd-soundness}.

As a corollary of these two results
and Theorems \ref{tm:Dodd-absent-core_of_M_when_rho_D^M=0}
and \ref{tm:1-core_of_Dodd-abs-sound}, we have:
\begin{cor*}
Let $M$ be a $(0^-,\om_1,\om_1+1)^*$-iterable active short $1$-sound premouse. Then $M$ is Dodd-absent-sound.
\end{cor*}

Theorem \ref{tm:solidity} (for which we skip the formal statement of for the present) establishes, \emph{almost}, that mice  $M$ in our hierarchy are
 solid and universal (in the sense of the standard parameter). This needs the qualifier ``almost'', since it was shown by Woodin that full solidity does fail at this level,
 and this failure is a key new feature in this territory. But the results here are analogous to those of Woodin and Neeman-Steel.

Voellmer \cite{voellmer}
proved some condensation facts for the Neeman-Steel hierarchy. We will establish the analogues here, but also some more condensation facts without analogues there.
(Some of these are provably false for the Neeman-Steel hierarchy when
taken literally
when taken literally,
but should yield corresponding theorems when appropriately modified.) The basic such result is:

\begin{tm*}[\ref{tm:first_cond}]  Let $n<\om$ and let $M$ be an $(n,\om_1,\om_1+1)^*$-iterable  $n$-sound premouse with $\om<\rho_n^M$.
 Let $W$ be an $(n+1)$-sound premouse. Suppose that if $M$ is active short then $M,W$ are Dodd-absent-sound.
 Let $\pi:W\to M$ be an $n$-lifting $c$-preserving $\pvec_n$-preserving embedding
 such that either:
 \begin{enumerate}[label=(\alph*)]
  \item
$\eta=\rho_{n+1}^W\leq\crit(\pi)$, or
\item\label{item:cond_moving_below_proj_intro} $\crit(\pi)=\mu<\eta=\rho_{n+1}^W=\mu^{+W}=\mu^{+M}$ and $E\in\es^M$,
where $E$ is the short extender derived from $\pi$.
\end{enumerate}
 Then either:
 \begin{enumerate}
  \item $W\pins M$, or
  \item $M|\eta$ is active
  and $W\pins\Ult(M|\eta,F^{M|\eta})$, or
  \item\label{item:clause_3_cond_intro}
 $W=\core_{n+1}(M)$ and $\pi$ is the core map, or
  \item\label{item:clause_4_cond_intro}
 $M$ is stretched-$(n+1)$-solid,
  $\eta=\min(p_{n+1}^M)$, and letting $p=p_{n+1}^M\cut\{\eta\}$,
  \[W=\cHull_{n+1}^M(\eta\cup\{\pvec_n^M,p\}),\]
  or equivalently, $W=\Ult_n(\core_{n+1}(M),F^{M|\eta})$; moreover, $\pi$ is the uncollapse map.
 \end{enumerate}
\end{tm*}

Here a case of particular interest is clause \ref{item:cond_moving_below_proj_intro} above, in which $\crit(\pi)<\rho_{n+1}^W$. The standard condensation
results familiar from the short extender context assume $\rho_{n+1}^W\leq\crit(\pi)$. Clause \ref{item:clause_4_cond_intro}
of the conclusion relates to  failures of solidity; in that case $M$ is not $(n+1)$-solid.

Theorem \ref{tm:second_cond} is a variant of Theorem \ref{tm:first_cond}, in which the $(n+1)$st core is replaced by the Dodd-absent core.

Finally, we establish the following instance of the \emph{Mitchell-Steel initial segment condition} (MS-ISC,
Definition \ref{dfn:MS-ISC}),
which is just a direct translation of the usual initial segment condition of \cite{outline} to our context (for short extenders on the sequence),
assuming some soundness properties:

\begin{tm*}[\ref{tm:MS-ISC}]
 Let $M$ be a $(0,\om_1,\om_1+1)^*$-iterable
 Dodd-absent-sound $1$-sound
 active short premouse.
 Then $M$ satisfies the MS-ISC.
\end{tm*}

This theorem implies in particular that
the normal measure derived from $F^M$ is in the extender sequence $\es_+^M$ of $M$.
This contrasts heavily with the hierarchy of Neeman-Steel, in which this can fail.
The same appears to be the case for Woodin's hierarchy;
  see for example \cite[p.~2, following Question 1.1]{goldberg_linearity_Mitchell_order}, in particular the phrase ``\ldots one must \emph{prevent} certain normal measures from appearing on their extender sequences''.

The material in this paper is also  used
in  \cite{fullnorm_long}, which establishes full normalization
for the kinds of mice  considered here (cf.~\cite{fullnorm},
\cite{a_comparison_process_for_mouse_pairs}).

\subsection*{Acknowledgements}

Funded by the Deutsche Forschungsgemeinschaft (DFG, German Research Foundation) -- project number 445387776. Gef\"ordert durch die Deutsche Forschungsgemeinschaft (DFG) im Rahmen der Exzellenzstrategie des Bundes und der L\"ander EXC 2044--390685587, Mathematik M\"unster: Dynamik-Geometrie-Struktur. Editing
funded by the Austrian Science Fund (FWF) [10.55776/Y1498].\\

The author thanks the organizers of the \emph{From $\om$ to $\Omega$} conference, National University of Singapore, 2023,
and  the organizers of the \emph{2nd Irvine Conference on Inner Model Theory}, UC Irvine, 2023,
for the opportunity to present parts of this work.

\subsection{Notation}\label{subsec:notation}

Unexplained notation should be explained for example in
the introductions to \cite{iter_for_stacks}, \cite{premouse_inheriting}.
For the definition of $(z_{n+1},\zeta_{n+1})$ see \cite[\S2]{extmax}.  For a premouse $M$,
we write $F^M$ for the active extender of $M$, $F_J^M$ for the largest witness to the Jensen ISC for $F^M$, if it exists (otherwise $F_J^M=\emptyset$), $\nu(F^M)$ for the strict sup of generators of $F^M$, $\lh(F^M)=\OR^M$,
$\crit(F^M)$ for the critical point of $F^M$, $\spc(F^M)$ for the space of $F^M$ (which in our case will be either $\crit(F^M)$, if $F^M$ is short, and $\crit(F^M)^{+M}$, if $F^M$ is long), $\dom(F^M)$
for the domain of $F^M$
(which will be $M|\crit(F^M)^{+M}$, if $F^M$ is short, and $M|\crit(F^M)^{++M}$, if $F^M$ is long;
we sometimes also write $\dom(F^M)$ for the ordinal height of this structure, but hopefully no confusion will arise; $\lambda(F^M)=j(\crit(F^M))$ where $j=i^M_{F^M}:M\to\Ult(M,F^M)$
denotes the ultrapower map,
if $n<\om$ and $F^M$ is a $P$-extender with $\spc(F^M)<\rho_n^P$
then $\Ult_n(P,F^M)$
denotes the degree $n$ ultrapower of $P$ by $F^M$, and $i^{P,n}_{F^M}:P\to\Ult_n(P,F^M)$
the ultrapower map. And for $a\in[\nu(F^M)]^{<\om}$
and a function $f$ used in forming this ultrapower, $[a,f]^{P,n}_{F^M}$ denotes the object represented by $(a,f)$. If $n>0$ and $t$ is an $\rSigma_n$ Skolem term and $q\in P$ then $f^P_{q,t}:P^{<\om}\to P$ 
denotes the partial function defined by $x\mapsto t^P(q,x)$. If $t=0$
then $f^P_{q,t}$ just denotes $q$.
If $M$ is a premouse then $\es^M$
denotes the internal extender sequence of $M$, $F^M$ the active extender, and $\es_+^M=\es^M\conc\left<F^M\right>$.
We write
$M^{\passive}$ for the passivization of $M$; i.e. the premouse $N$ with
$\OR^N=\OR^M$ and $\es^N=\es^M$
but $F^N=\emptyset$.
For $\alpha\leq\OR^M$
we write $M|\alpha$
for the initial segment of $M$ of height $\alpha$, including the active extender $\es_\alpha^M$,
and $M||\alpha=(M|\alpha)^{\passive}$. We write $M\ins N$ if $M$ is an initial segment of $N$; that is, $M=N|\alpha$ for some $\alpha$.
We write $M\pins N$ iff $M\ins N$ but $M\neq N$.

\section{Mice for $\kappa^+$-supercompactness}\label{sec:long_mice}
We begin by describing the hierarchy of mice $M$ we will use, and their fine structure. The extenders in the extender sequence $\es_+^M$ of $M$ can be either short or long; the short extenders $E$ consist of measures over their critical point $\crit(E)$, whereas the long extenders have measures over $\crit(E)^{+M}$. We will index short extenders $E$ and long extenders $E$ with a largest generator at $\lambda^{+\Ult(M,E)}$, whereas long extenders with no largest generator (which have sup of generators $\lambda^{+\Ult(M,E)}$)
will be indexed at $\lambda^{++\Ult(M,E)}$.
Forms of the \emph{initial segment condition} will be crucial. As usual, \emph{potential premice}
will have the basic structure,
and \emph{premice} will be potential premice with refined fine structure.
As part of this refinement of fine structure,
we will want to make demands
on the Dodd structure of short extenders indexed on proper segments of $M$.

\subsection{Premice}
\begin{dfn}
 A \emph{potential premouse \tu{(}pot-pm\tu{)}}
 is a structure $M=(\J_\alpha^{\es},\es,F)$
 where $\es$ is a sequence of (partial) extenders,
 and if $F\neq\emptyset$ then $F$ is an $M$-extender such that
 $F$ coheres $\es$ and letting $\kappa=\crit(F)$
 and $\lambda=\lambda(F)=i^M_F(\kappa)$ and $\nu=\nu(F)$ (the strict sup of the generators of $F$) and $U=\Ult(M,F)$
 and $j:M\to U$ the ultrapower map, we have:
 \begin{enumerate}
  \item If $F$ is short (so $\dom(F)=M|\kappa^{+M}$
  and $\spc(F)=\kappa$)
  then $\lambda$ is the largest cardinal of $M$, $\OR^M=\lambda^{+U}$, and $F$ satisfies the Jensen ISC; that is, for all $\bar{\lambda}\in(\kappa,\lambda)$, if $G=F\rest\bar{\lambda}$
  is (a) \emph{whole  \tu{(}proper fragment of $F$\tu{)}} , meaning that $i^M_G(\kappa)=\bar{\lambda}$,
  then $G\in\es^M$.\footnote{Often the \emph{Jensen ISC} is formulated as only requiring that $G\in M$, instead of $G\in\es^M$. When we write ``$G\in\es^M$'',
  literally it should be the \emph{trivial completion} of $G$ which is in $\es^M$;
  that is, $G'\in\es^M$ where $G'$
  is the extender equivalent to $G$
  satisfying the requirements we are presently specifying.}
  \item If $F$ is long then:
  \begin{enumerate}
   \item 
$\dom(F)=M|\kappa^{++M}$
  (so $\spc(F)=\kappa^{+M}$),
  \item either:
  \begin{enumerate}
  \item $\nu=\nu^-+1$ for some $\nu^-$,
  $\OR^M=\lambda^{+U}$, 
   $\lambda<\nu^-<\OR^M$,
   and $F^M$ satisfies the $\{\nu^-\}$\emph{-Jensen ISC};
  that is, for each $\bar{\lambda}\in(\kappa,\lambda)$, letting
  \[ G=F^M\rest(\bar{\lambda}\cup\{\nu^-\}),\]
  if $G$ is (a) \emph{$\{\nu^-\}$-whole \tu{(}proper fragment of $F$\tu{)}}, meaning that $i^M_G(\kappa)=\bar{\lambda}$, then (the trivial completion of) $G$ is in $\es^M$, or
   \item $\nu$ is a limit ordinal (it follows that $\nu=\lambda^{+U}$)
   and $\OR^M=\lambda^{++U}$,
  \end{enumerate}
  \item the \emph{long ISC} holds: for each $\bar{\nu}\in[\lambda,\nu)$,
  the trivial completion of $F\rest\bar{\nu}$
  is in $\es^M$,
  \end{enumerate}
  \item $F$ is coded in the natural amenable fashion.\qedhere
 \end{enumerate}
\end{dfn}

\begin{dfn}
 The \emph{language $\mathscr{L}$ of pot-pms} has symbols $\dot{\es}$, $\dot{F}$, $\dot{F_J}$, $\dot{F_{\downarrow}}$.
 Let $M=(\J_\alpha^\es,\es,F)$ be a pot-pm. Then we define the \emph{$0$-core} of $M$ (which we will mostly identify with $M$) as the $\mathscr{L}$-structure $\core_0(M)=(\J_\alpha^\es,\es,F,F_J,F_{\downarrow})$
 (interpreting symbol $\dot{x}$ with $x$),
 where:
 \begin{enumerate}
  \item If $M$ is passive (i.e. $F=\emptyset$) then $F_J=F_\downarrow=\emptyset$,
  \item If $M$ is active (i.e. $F\neq\emptyset$) and either
  \begin{enumerate}[label=--]
   \item 
short (i.e.~$F$ is short), or
\item long (i.e.~$F$ is long) with a largest generator $\nu^-$,
\end{enumerate}then either:
  \begin{enumerate}
  \item (\emph{type A}) there is no ($\{\nu^-\}$-)whole proper fragment of $F$,
  and $F_J=\emptyset$, or
  \item (\emph{type B}) there is a largest ($\{\nu^-\}$-)whole proper fragment $G$ of $F$, and $F_J=G$, or
  \item (\emph{type C}) there is a ($\{\nu^-\}$-)whole proper fragment of $F$, but no largest such,
  and $F_J=\emptyset$ (note that in this case, by the ($\{\nu^-\}$-)Jensen ISC, either
  \begin{enumerate}[label=--]\item $F$ is short and for every $\xi<\lambda(F)$, there is $\bar{\lambda}\in(\xi,\lambda(F))$ such that $F\rest\bar{\lambda}$ is whole, or
  \item $F$ is long and for every $\xi<\lambda(F)$, there is $\bar{\lambda}\in(\xi,\lambda(F))$ such that  $F\rest\{\bar\lambda\}\cup\{\nu^-\}$ is $\{\nu^-\}$-whole).
  \end{enumerate}
  \end{enumerate}
  \item If $M$ is active long (i.e. $F\neq\emptyset$ is long) and $\nu(F)$ is a successor,
  then $F_\downarrow=F\rest\nu^-$;
  in all other cases, $F_\downarrow=\emptyset$.\qedhere
 \end{enumerate}
\end{dfn}

\begin{dfn}
 Given a pot-pm $M$ and a (limit) ordinal $\beta\leq\OR^M$, $M|\beta$ denotes $(\J_\beta^{\es},\es\rest\beta,\es^M_\beta)$,
 and $M||\beta$ denotes $(\J_\beta^{\es},\es\rest\beta,\emptyset)$. We also write $M^{\passive}=M||\OR^M$
 for the \emph{passivization} of $M$.
\end{dfn}

\begin{rem}
 Let $N$ be an active short pot-pm and $\kappa=\crit(F^N)$.
As usual, the \emph{Dodd projectum}
$\tau^N$ of $N$ is the least ordinal $\tau\geq\kappa^{+N}$ such that
$F^N$ is generated by $\tau\cup t$
with some $t\in[\OR]^{<\om}$.
(That is, for some such $t$,
every $x\in N$ is of the form $i^N_{F^N}(f)(a,t)$ for some $f\in N|\kappa^{+N}$ and $a\in[\tau]^{<\om}$.)
And the \emph{Dodd parameter} $t^N$ of $N$
is the least $t\in[\OR]^{<\om}$
such that $F^N$ is generated by $\tau^N\cup t$.\footnote{Here and elsewhere, given $a,b\in[\OR]^{<\om}$,
we set $a<b$ iff $a\neq b$ and $\max(a\Delta b)\in b$.} 

This is the traditional way of defining the Dodd projectum and parameter.
However, it is not entirely analogous
to the $(n+1)$st projectum $\rho_{n+1}$ and standard parameter $p_{n+1}$.
 For our purposes it turns out more convenient to focus on a better analogue, 
 specified in terms of the ``least missing fragment'' of $F^N$, instead of the ``least generating fragment''.
 
 Also, the requirement that $\tau^M\geq\kappa^{+M}$ is somewhat artificial. If $\tau^M=\kappa^{+M}$,
 although $F^M$ is generated by $t^M\cup\kappa^{+M}$, and hence by $t^M\cup\{\kappa\}$, it need not be that $F^M$ is generated by $t^M$ by itself.
 See \cite{fsfni_v5} for more on this.
 Here, it will be important to distinguish between these cases, which the following definition does.
\end{rem}

\begin{dfn}\label{dfn:rho_D,p_D}
 Let $N$ be an active short premouse.
 We define the \emph{Dodd-absent projectum} $\rho_{\D}^N$
 as the least $\rho\in\OR^N$ such that  \[F^N\rest(\rho\cup p)\notin N \]
 for some $p\in[\OR^N]^{<\om}$.
 We define the \emph{Dodd-absent parameter} $p_{\D}^N$
 as the least $p\in[\OR^N]^{<\om}$ such that
 \[F^N\rest(\rho_{\D}^N\cup p)\notin N.\]
 
 We say that $N$ is \emph{Dodd-absent solid}
 iff
 \[ F^N\rest(\alpha\cup (p_{\D}^N\cut(\alpha+1)))\in N\]
  for each $\alpha\in p_{\D}^N$.
 
 We say that $N$ is \emph{Dodd-absent sound} if
 $N$ is Dodd-absent solid and
  $F^N$ is generated by $\rho_{\D}^N\cup p_{\D}^N$.
  
  By analogy with $(z_{n+1},\zeta_{n+1})$
  (see \cite[\S2]{extmax}),
  we also define $(s^N,\sigma^N)$
  as the lexicographically least $(s,\sigma)\in[\OR]^{<\om}\cross\OR$
  such that $F^N\rest(s\cup\zeta)\notin N$.
\end{dfn}

\begin{dfn}
 Let $M$ be a pot-pm. We say that $M$ is \emph{$0$-sound} and define $\rho_{0}^M=\OR^M$ and $p_{0}^M=\emptyset$.
 We define $\rSigma_0^M$ and $\rSigma_1^M$ as $\Sigma_0^M$ and $\Sigma_1^M$
 in the language $\mathscr{L}$ of pot-pms
 (that is, over the structure $\core_0(M)$).

 We define $\rho_1^M,p_1^M$,
 \emph{$1$-universality}, \emph{$1$-solidity},
 \emph{$1$-soundness} as usual, using the language $\mathscr{L}$ and $\Sigma_1$-definability over $\core_0(M)$. (We define \emph{$1$-solidity} conventionally,
 like for short extender mice. We define \emph{$1$-soundness} as the conjunction of
  $1$-solidity and the assertion that $M=\Hull_1^M(\rho_1^M\cup\{p_1^M\})$.)
 
 Let $0<n<\om$, and suppose that $M$ is $n$-sound
 and $\om<\rho_n^M$.
 Then we define $\rSigma_{n+1}^M,\rho_{n+1}^M,p_{n+1}^M$, \emph{$(n+1)$-universality},
 \emph{$(n+1)$-solidity}, \emph{$(n+1)$-soundness} as usual. (In particular, we define the predicate $T_n^M$, by setting $T_n^M(\alpha,q,t)$
 iff $\alpha<\rho_n^M$ and $t=\Th_{\rSigma_n}^M(\alpha\cup\{q\})$. Then the $\rSigma_{n+1}^M$
 assertions $\varphi(\vec{x})$ are those of the form ``there are $\alpha,q,t$ such that $T_n(\alpha,q,t)$
 and $\psi(\alpha,q,t,\vec{x})$'', where $\psi$ is $\rSigma_n$. As in \cite[\S5]{V=HODX_pub}, we do not use any analogue of the parameters $u_n$ of \cite{fsit}. So for example, $p_{n+1}^M$
 is the lex-least $p\in[\OR^M]^{<\om}$
 such that $\Th_{\rSigma_{n+1}}^M(\rho_{n+1}^M\cup\{p,p_1^M,\ldots,p_n^M\})\notin M$.

 And \emph{$\om$-soundness} is the conjunction of $n$-soundness, for all $n<\om$.
\end{dfn}

\begin{dfn}
 Let $H,M$ be $n$-sound pot-pms.
 We say that $\pi:H\to M$ is an \emph{$n$-lifting}
 embedding iff $\pi$ is $\rSigma_0$-elementary
 and if $n>0$ then $\pi``T_n^H\sub T_n^M$.
 We say that $\pi$ is \emph{$n$-nice}
 iff $\pi$ is $n$-lifting and $\pi(\pvec_n^H)=\pvec_n^M$.
\end{dfn}

\begin{dfn}\label{dfn:pm}
 A \emph{premouse} is a pot-pm $M$
 such that for all $\beta<\OR^M$, we have:
 \begin{enumerate}
  \item $M|\beta$ is $\om$-sound,
  \item if $M|\beta$ is active short then $F^{M|\beta}$ is Dodd-absent-sound,
  \item For all $n<\om$
  and all $H,\pi\in M$
  such that $H$ is an $(n+1)$-sound
  pot-pm and $\pi:H\to M|\beta$
  is $n$-nice with $\rho=\rho_{n+1}^H\leq\crit(\pi)$,
  either:
  \begin{enumerate}[label=--]
  \item $H\ins M|\beta$, or
  \item 
   $M|\rho$ is active and $H\pins\Ult(M|\rho,F^{M|\rho})$.
   \end{enumerate}
   \item If $M|\beta$ is active short
   and $\kappa=\crit(F^{M|\beta})$
   then for all $H,\pi\in M$ such that
   $H$ is an active short Dodd-absent-sound pot-pm
   and $\kappa=\crit(F^H)$
   and $H|\kappa^{+H}=M|\kappa^{+(M|\beta)}$
   and $\pi:H\to M|\beta$
   is $0$-lifting with $\rho_{\mathrm{D}}^H\leq\crit(\pi)$
   and $\kappa^{+H}<\crit(\pi)$,
   either:
     \begin{enumerate}[label=--]
  \item $H\ins M|\beta$, or
  \item 
   $M|\rho$ is active and $H\pins\Ult(M|\rho,F^{M|\rho})$.\qedhere
   \end{enumerate}
  \end{enumerate}
\end{dfn}

\begin{rem}
Because of the traditional form of soundness
we require (in that it is not modified in the manner (not explicitly) defined by Neeman/Steel in \cite{nsp1}, \cite{nsp1fs}), and that we do not demand projectum free spaces,
the hierarchy clearly differs from \cite{nsp1}.
Although Woodin has similar soundness requirements,
his hierarchy also differs, because we make the Dodd-absent-soundness and condensation requirements for proper segments.\end{rem}

\subsection{Preservation of fine structure under ultrapowers}

We now want to work through preservation of fine structural features by ultrapower embeddings.

\begin{dfn}
 Let $M$ be a premouse and $E$ be an $M$-extender with $\crit(E)=\kappa<\OR^M$
and such that
 either $\spc(E)=\kappa$ (so $E$ is short) or $\spc(E)=\kappa^{+M}<\OR^M$ (so $E$ is long).
 We say that $E$ is \emph{weakly amenable \tu{(}to $M$\tu{)}} iff
 either:
  \begin{enumerate}[label=--]
  \item $E$ is short and $E_a\cap(N|\gamma)\in N$ for each $\gamma<\kappa^{+N}$ and  $a\in[\nu(E)]^{<\om}$, or
  \item $E$ is long and $E_a\cap(N|\gamma)\in N$  for each $\gamma<\kappa^{++N}$
  and  $a\in[\nu(E)]^{<\om}$.\qedhere
  \end{enumerate}
\end{dfn}

\begin{lem}
 Let $M$ be a premouse. Then $F^M$ is weakly amenable.
\end{lem}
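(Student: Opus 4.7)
The plan is to read weak amenability off directly from clause~(3) of the definition of pot-pm, which demands that $F=F^M$ be coded in the natural amenable fashion. Recall the content of this clause: $F$ is presented as a predicate on $M^{\passive}$ whose extension consists of (codes of) pairs $\langle a,X\rangle$ with $a\in[\nu(F)]^{<\om}$ and $X\in\dom(F)$, where $\dom(F)=M|\kappa^{+M}$ in the short case and $\dom(F)=M|\kappa^{++M}$ in the long case, and amenability means $F\cap(M|\delta)\in M$ for every $\delta<\OR^M$.

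Fix $a\in[\nu(F)]^{<\om}$ and $\gamma$ in the permitted range ($\gamma<\kappa^{+M}$ in the short case, $\gamma<\kappa^{++M}$ in the long case). I would choose $\delta$ with $\gamma<\delta<\OR^M$ large enough that the code of every pair $\langle a,X\rangle$ with $X\in M|\gamma$ already lies in $M|\delta$; this is possible because both $\gamma$ and the (finite) rank of $a$ are bounded strictly below $\OR^M$, so all such codes fit into a single bounded level of the $\J$-hierarchy. Amenability then gives $F\cap(M|\delta)\in M$, and
\[
E_a\cap(M|\gamma)\;=\;\{X\in M|\gamma:\langle a,X\rangle\in F\cap(M|\delta)\}
\]
is rudimentary in the parameters $a,\gamma$ and the set $F\cap(M|\delta)$, so it lies in $M$.

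The only point that warrants a moment's thought is the availability of a suitable $\delta<\OR^M$ in the long case, where $\nu(F)$ can be as large as $\lambda^{+U}$. But the type-C case has $\OR^M=\lambda^{++U}$, and the type-B case has $\nu^-<\OR^M=\lambda^{+U}$, so in each case $[\nu(F)]^{<\om}\subseteq M$ and $\dom(F)\in M$ both sit strictly below $\OR^M$, and an appropriate $\delta$ exists. I do not anticipate any substantive obstacle: the lemma is essentially a definition unwind, whose sole purpose is to confirm that the chosen amenable coding is strong enough to deliver weak amenability in both the short and the long regimes.
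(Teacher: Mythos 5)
Your proposal does not match the paper's proof, and I think it contains a genuine gap rather than a legitimate shortcut.

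The paper treats this lemma as substantive, not as a definition unwind. The short case is dismissed as standard, but the long case is handled by a real argument: let $\vec{X}=\left<X_\beta\right>_{\beta<\kappa^{+M}}$ enumerate $\pow([\kappa^{+M}]^{|a|})\cap(M|\gamma)$ inside $M$, let $j:M\to U=\Ult(M,F^M)$ be the ultrapower map, and set $k=j\rest\kappa^{+M}$. The \emph{long ISC} is used to conclude $k\in U$ (since $k$ is determined by the short part of $F^M$, which the long ISC places in $\es^U$). Then $X_\beta\in E_a\iff a\in j(\vec X)_{k(\beta)}$ lets one compute $E_a\cap(M|\gamma)$ from the parameters $j(\vec X),k\in U$, so $E_a\cap(M|\gamma)\in U$, and finally the agreement $M|\kappa^{++M}=U|\kappa^{++U}$ transfers it into $M$. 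The ISC and the agreement between $M$ and $U$ up to $\kappa^{++}$ are doing real work here, and neither appears in your argument.

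Your proposal instead reads weak amenability directly off clause (3) of the pot-pm definition, interpreting ``coded in the natural amenable fashion'' as: the literal set of pairs $\left<a,X\right>$ is amenable over $M^{\passive}$. There are two problems with this. First, you have not established that this is the coding the paper intends; the phrase is not spelled out, and in the standard premouse literature the amenable coding of a top extender is typically a more structured object (organized by fragments or by levels $\xi<\spc(F)$) precisely because the naive set of pairs is \emph{not} obviously amenable. Second, and more importantly, if the coding were the naive pairs and its amenability were simply postulated, then the present lemma would be a tautological consequence of the axioms, and one would also get (for free) a much stronger uniformity: the entire function $a\mapsto E_a\cap(M|\gamma)$ restricted to bounded $a$ would lie in $M$, not merely each fibre $E_a\cap(M|\gamma)$ individually. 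That the paper does not take this view, and instead proves the long case via the ISC, is strong evidence that clause (3) is not, and cannot be taken to be, the naive-pairs amenability. Put differently, your argument is at risk of circularity: if the amenability of the chosen coding is essentially equivalent to weak amenability of the extender, then invoking it to prove weak amenability proves nothing. The content of the lemma is exactly the step you dismiss as ``not a substantive obstacle,'' namely that $E_a$ restricted to a level of $M$ can be recovered inside $M$; the paper's mechanism for this in the long case goes through $U$ via the ISC, and that is what your argument is missing.
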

\begin{proof}
 If $E=F^M$ is short, this is as usual.
 Suppose it is long. Let $\gamma,a$ be given,
 and let $\vec{X}=\left<X_\beta\right>_{\beta<\kappa^{+M}}$
 enumerate $\pow([\kappa^{+M}]^{|a|})\cap (M|\gamma)$ with $\vec{X}\in M$.
 Let $j:M\to U=\Ult(M,F^M)$ be the ultrapower map. Then letting $k=j\rest\kappa^{+M}$,
 we have $k\in U$ by the ISC,
 since $k$ is determined by the short part of $F^M$. But for $\beta<\kappa^{+M}$,
 we have $X_\beta\in E_a$
 iff $a\in j(X_\beta)$ iff $a\in j(\vec{X})_{k(\beta)}$, so from the parameters $j(\vec{X})$ and $k$,
 we get $E_a\cap (M|\gamma)\in U$,
 but $M|\kappa^{++M}=U|\kappa^{++U}$,
 and so in fact $E_a\cap (M|\gamma)\in U|\kappa^{++U}\sub M$.
\end{proof}

\begin{dfn}
 Let $N$ be a premouse and $E=F^M$
 for some active premouse $M$.

 We say that $E$ is an \emph{$N$-extender} iff letting $\kappa=\crit(E)$, either:
 \begin{enumerate}[label=--]
  \item $E$ is short and $N||\kappa^{+N}=M|\kappa^{+M}$, or
  \item $E$ is long and $N||\kappa^{++N}=M|\kappa^{++M}$. \end{enumerate}

  We say that $E$ is \emph{close to $N$} iff
 $E$ is an $N$-extender and
each component measure $E_a$ of $E$
  is $\bfrSigma_1^N$-definable.
\end{dfn}

\begin{rem}
 The definitions above might abuse terminology,
 since it does not seem immediately obvious
 that $E$ determines $M$,
 and in particular, that $E$ determines $M|\kappa$. (Given $M|\kappa$ and $E$,
 $M$ is determined.) But in context,
 we will know which $M$ we are dealing with, so this will not be a problem.
\end{rem}

\begin{dfn}
 We say that a tuple $(N,P,n)$ is \emph{proto-anomalous}
 or is \emph{a proto-anomaly} iff
  $N,P$ are premice, $n=0$,
 $N$ is active short, $P$ is active long,
 $\crit(F^N)=\crit(F^P)$ and 
 $F^P$ is an $N$-extender.
\end{dfn}

\begin{dfn}
Let $n<\om$ and $N$ be an $n$-sound premouse.
 As in \cite{extmax},
 $(z_{n+1}^N,\zeta_{n+1}^N)$
 denotes the lex-least pair
 $(z,\zeta)\in[\OR]^{<\om}\cross\OR$ such that
 \[ \Th_{\rSigma_{n+1}}^N(\zeta\cup\{z,\pvec_n^N\})\notin N.\qedhere\]
\end{dfn}

\begin{rem}
 Here and elsewhere, the ordering on $[\OR]^{<\om}\cross\OR$ is that where given $(a,\alpha),(b,\beta)\in[\OR]^{<\om}\cross\OR$,
 we set $(a,\alpha)<(b,\beta)$ iff letting $a':(n+1)\to\OR$ be the function such that $a'\rest n$ is strictly decreasing and $a'`` n=a$ and $a'(n)=\alpha$, and $b'$ likewise, then $a'<b'$ (lexicographically).
\end{rem}

\begin{lem}\label{lem:param_proj_pres}
Let $n<\om$ and $N$ be an $n$-sound  premouse.
Let $P$ be an active premouse.
Suppose $F^P$ is an $N$-extender and $\dom(F^P)\leq\rho_n^N$.
Suppose $(N,P,n)$ is not proto-anomalous.
Let $U=\Ult_n(N,F^P)$ and $j:N\to U$ be the ultrapower map. Let $\kappa=\crit(j)$.
Let $\eta=\sup j``\kappa^{+N}$.
Suppose $U$ is wellfounded.
Then:
\begin{enumerate}
 \item\label{item:z,zeta<=p,rho} $(z_{n+1}^N,\zeta_{n+1}^N)\leq(p_{n+1}^N,\rho_{n+1}^N)$.
 \item\label{item:solidity_charac_z,zeta} $N$ is $(n+1)$-solid $\iff z_{n+1}^N=p_{n+1}^N\iff\zeta_{n+1}^N=\rho_{n+1}^N$.
 \item\label{item:U_is_pm}  $U$ is an $n$-sound premouse
 and $j$ is an $n$-embedding,
 \item\label{item:z,zeta_pres} $(z_{n+1}^U,\zeta_{n+1}^U)=(j(z_{n+1}^N),\sup j``\zeta_{n+1}^U)$.
 \item\label{item:when_dom_<=rho} If $\dom(F^P)\leq\rho_{n+1}^N$
 then:
 \begin{enumerate}[label=\tu{(}\alph*\tu{)}]\item $\rho_{n+1}^U=\sup j``\rho_{n+1}^N$,
 \item if $N$ is $(n+1)$-solid then $j(p_{n+1}^N)=p_{n+1}^U$.
 \end{enumerate}
  \item\label{item:standard_pres} Suppose $\rho_{n+1}^N\leq\kappa$.
 Then:
 \begin{enumerate}[label=\tu{(}\alph*\tu{)}]
 \item $\rho_{n+1}^U\leq\rho_{n+1}^N$,
 \item if $F^P$ is close to $N$ then $\rho_{n+1}^U=\rho_{n+1}^N$,
 \item if $\rho_{n+1}^N=\rho_{n+1}^U$ and $N$ is $(n+1)$-solid then:
 \begin{enumerate}
 \item $p_{n+1}^U=j(p_{n+1}^N)$,
 \item $U$ is $(n+1)$-solid.
 \end{enumerate}
 \end{enumerate}
 \item\label{item:long_proj} Suppose $\rho_{n+1}^N=\kappa^{+N}<\kappa^{++N}=\dom(F^P)$ \tu{(}so $F^P$ is long\tu{)}.
 Then:
 \begin{enumerate}[label=\tu{(}\alph*\tu{)}]\item\label{item:rho^U<=rho^N} $\rho_{n+1}^U\leq\rho_{n+1}^N$,
 \item\label{item:closeness_preserves_rho} if $F^P$ is close to $N$ then $\rho_{n+1}^U=\rho_{n+1}^N$,
 \item \label{item:if_rhos_match_and_solid}if $\rho_{n+1}^N=\rho_{n+1}^U$ and $N$ is $(n+1)$-solid then:
 \begin{enumerate}
 \item $p_{n+1}^U=j(p_{n+1}^N)\cup\{\eta\}$,
 \item $U$ is not $(n+1)$-solid,
 \item letting $p=j(p_{n+1}^N)$, we have:
 \begin{enumerate}
 \item $(U,p)$ is $(n+1)$-solid,
 \item\label{item:weak_solidity} $(U,p\cup\{\alpha\})$ is $(n+1)$-solid for each $\alpha<\eta$.
 \end{enumerate}
 \end{enumerate}
 \end{enumerate}

 \item\label{item:shift_defsss} Suppose $F^P$ is close to $N$. Let $X\sub\spc(F^P)$ \tu{(}recall $\spc(F^P)=\kappa$ if $F^P$ is short, and $\spc(F^P)=\kappa^{+P}$ if $F^P$ is long\tu{)}.
 Then 
 \[ X\text{ is }\bfrSigma_{n+1}^N\text{-definable}\iff X\text{ is }\bfrSigma_{n+1}^U\text{-definable}.\]
\end{enumerate}
\end{lem}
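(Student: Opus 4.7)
The plan is to handle the eight items in sequence, following the template of preservation-of-fine-structure arguments from the short extender theory (Steel, Schindler--Zeman), but paying attention wherever the long case forces new behaviour.

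First, items \ref{item:z,zeta<=p,rho} and \ref{item:solidity_charac_z,zeta} are purely combinatorial and need no ultrapower. Item \ref{item:z,zeta<=p,rho} holds because $(p_{n+1}^N,\rho_{n+1}^N)$ is already a candidate witnessing the condition in the definition of $(z_{n+1}^N,\zeta_{n+1}^N)$. For item \ref{item:solidity_charac_z,zeta}, unravelling the two definitions shows that $(n+1)$-solidity gives precisely the extra solidity witnesses needed to force the lex-least pair $(z,\zeta)$ to coincide coordinate-wise with $(p,\rho)$; conversely, equality of either coordinate forces the other (so the three conditions are equivalent). Item \ref{item:U_is_pm} is the standard \L o\'s theorem for $\Ult_n$: $\rSigma_0$-elementarity is routine, and the non-proto-anomaly hypothesis is what excludes the degenerate situation in which a short $N$ and a long $P$ share a critical point and would force a contradiction between the Jensen ISC and the long ISC at the image of $\crit(j)$ in $U$. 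Then the $n$-embedding clauses (preservation of $\rSigma_n$-theories, of $\pvec_n$, and of $T_n$) are forced by the usual calculation; premouse-hood of $U$ (Dodd-absent-soundness and the condensation conditions in Definition \ref{dfn:pm} for segments above $\lh(F^P)$) transfers by elementarity from $N$. Item \ref{item:z,zeta_pres} is then immediate from \ref{item:U_is_pm} together with the definition of $(z,\zeta)$: $j$ is $n$-nice, so it carries the lex-least failing pair to the lex-least failing pair, and $\zeta_{n+1}^U$ must be $\sup j``\zeta_{n+1}^N$ because $j$ is continuous on ordinals below $\rho_n^N$.

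Next I would do items \ref{item:when_dom_<=rho} and \ref{item:standard_pres}, which are structurally the same two-sided calculation. Upper bound $\rho_{n+1}^U\leq \sup j``\rho_{n+1}^N$ (resp.~$\leq\rho_{n+1}^N$ in item \ref{item:standard_pres}): if the relevant $\rSigma_{n+1}^U$-theory belonged to $U$, then using the ultrapower representation $[a,f]^{N,n}_{F^P}$ one could define the corresponding $\rSigma_{n+1}^N$-theory of $\rho_{n+1}^N\cup\{p_{n+1}^N,\pvec_n^N\}$ inside $N$, contradicting the definition of $\rho_{n+1}^N$. Lower bound requires closeness: since each $E_a$ is $\bfrSigma_1^N$, the $\rSigma_{n+1}^U$-theory of any proper initial piece is $\bfrSigma_{n+1}^N$-computable, so a witness in $U$ to the projectum dropping below $\sup j``\rho_{n+1}^N$ would pull back to a witness in $N$. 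Preservation of $p_{n+1}$ and $(n+1)$-solidity is then the standard argument: $j(p_{n+1}^N)$ inherits the solidity witnesses via $n$-niceness of $j$, and lex-leastness is stable.

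The main obstacle is item \ref{item:long_proj}. Here $\rho_{n+1}^N=\kappa^{+N}$ and $\dom(F^P)=\kappa^{++N}$, so the critical interval of $F^P$ straddles $\rho_{n+1}^N$ and the new ordinal $\eta=\sup j``\kappa^{+N}$ is genuinely new in $U$: it is a $\bfrSigma_1^U$ sup of $j``\kappa^{+N}$ but is not in $\rg(j)$. The inequality $\rho_{n+1}^U\leq\rho_{n+1}^N$ is seen as in the previous paragraph (pulling a hypothetical theory back through $[a,f]^{N,n}_{F^P}$). Under closeness, the reverse inequality holds by the same $\bfrSigma_1^N$-definability argument. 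When $\rho_{n+1}^U=\rho_{n+1}^N$ and $N$ is $(n+1)$-solid, I would show $p_{n+1}^U=j(p_{n+1}^N)\cup\{\eta\}$ as follows. The ordinal $\eta$ must belong to $p_{n+1}^U$ because from $(j(p_{n+1}^N),\eta)$ one can $\rSigma_{n+1}^U$-code the full $\rSigma_{n+1}^N$-theory $\Th_{\rSigma_{n+1}}^N(\rho_{n+1}^N\cup\{p_{n+1}^N,\pvec_n^N\})$ (which is not in $N$, hence the corresponding theory is not in $U$), whereas no strictly smaller finite tuple of ordinals above $\sup j``\rho_{n+1}^N$ suffices; above $\eta$, lex-leastness is inherited from $j(p_{n+1}^N)$ by $n$-niceness. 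Then the $\eta$-solidity witness for $U$ with parameter $p_{n+1}^U$ is exactly the theory below $\eta$, which is \emph{not} in $U$, giving the failure of $(n+1)$-solidity for $U$. For the weak solidity assertion \ref{item:weak_solidity}: given $\alpha<\eta$, pick $\xi<\kappa^{+N}$ with $j(\xi)>\alpha$; then
\[
\Th_{\rSigma_{n+1}}^U(\alpha\cup\{p,\eta,\pvec_n^U\})
\]
can be computed over $U$ from $j\rest\xi\in U$ together with the relevant $\bfrSigma_1^N$-definitions of the component measures (closeness), and from $\Th_{\rSigma_{n+1}}^N(\xi\cup\{p_{n+1}^N,\pvec_n^N\})$, which belongs to $N$ by $(n+1)$-solidity of $N$; hence it belongs to $U$. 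The $(U,p)$-solidity assertion (a) is analogous but easier, using $n$-niceness on the solidity witnesses in $N$.

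Finally, item \ref{item:shift_defsss} is a direct corollary of closeness plus $\rSigma_n$-elementarity of $j$: for $X\subseteq\spc(F^P)$, the forward direction uses that the $\bfrSigma_{n+1}^N$-definition of $X$ can be evaluated over $U$ using $j$-images of parameters and the $\bfrSigma_1^N$-definitions of the $E_a$; the converse direction uses the ultrapower representation to translate a $\bfrSigma_{n+1}^U$-definition back into $N$. I expect the bulk of the writing to go into item \ref{item:long_proj}, specifically into pinning down the precise way $\eta$ enters $p_{n+1}^U$ and into verifying the weak solidity bookkeeping; the rest proceeds by standard adaptations.
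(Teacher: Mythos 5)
Your overall plan is close to the paper's, and items \ref{item:z,zeta<=p,rho}, \ref{item:solidity_charac_z,zeta}, \ref{item:U_is_pm}, \ref{item:when_dom_<=rho}, \ref{item:standard_pres}, and the $\bfrSigma_1^N$-closeness pullback for \ref{item:closeness_preserves_rho} are all argued in essentially the same way the paper handles them (by deferring to the short-extender template). But there is a genuine gap in the core long-extender clause \ref{item:long_proj}\ref{item:rho^U<=rho^N}, and it infects the forward direction of \ref{item:shift_defsss} as well.

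For \ref{item:rho^U<=rho^N} you claim $\rho_{n+1}^U\leq\rho_{n+1}^N$ is ``seen as in the previous paragraph (pulling a hypothetical theory back through $[a,f]^{N,n}_{F^P}$).'' That previous paragraph only yields $\rho_{n+1}^U\leq\sup j``\rho_{n+1}^N$. In the present case $\rho_{n+1}^N=\kappa^{+N}$ and $F^P$ is long, so $j$ is \emph{discontinuous} at $\kappa^{+N}$: $\eta=\sup j``\kappa^{+N}$ is strictly greater than $\kappa^{+N}=\kappa^{+U}$ (indeed $j(\kappa)<\eta$ and $j(\kappa)>\kappa^{+N}$). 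So your argument only delivers $\rho_{n+1}^U\leq\eta$, which is strictly weaker than what must be shown. (Incidentally, this same discontinuity at $\kappa^{+N}$ falsifies the claim, used for item \ref{item:z,zeta_pres}, that ``$j$ is continuous on ordinals below $\rho_n^N$.'') The missing idea is the long ISC: the short part $k=j\rest\kappa^{+N}$ of $F^P$ is an element of $U$ (it is $F^{U|\eta}$). So given $X\sub\kappa^{+N}$ that is $\bfrSigma_{n+1}^N$ but $\notin N$, one rewrites $\alpha\in X\iff N\sats\varphi(x,\alpha)\iff U\sats\varphi(j(x),k(\alpha))$, a $\bfrSigma_{n+1}^U$ definition with parameters $j(x),k\in U$. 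Combined with the agreement $N|\kappa^{++N}=U|\kappa^{++U}$ (which forces $X\notin U$), this gives $\rho_{n+1}^U\leq\kappa^{+N}$. This push-forward via $k$, not a pull-back through the ultrapower representation, is the mechanism. (You do implicitly use it later when you say $\eta$ lets one $\rSigma_{n+1}^U$-code the missing $N$-theory, but you do not articulate it where it is first needed.)

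The same confusion shows up in \ref{item:shift_defsss}: you say the forward direction (``$\bfrSigma_{n+1}^N$ on $\spc(F^P)$ $\Rightarrow$ $\bfrSigma_{n+1}^U$'') uses closeness and the $\bfrSigma_1^N$-definitions of the $E_a$, but closeness is a statement about definability \emph{over $N$} and gives you nothing over $U$. The forward direction again rests on $k\in U$ (long ISC), while closeness is used only in the converse direction (via the ultrapower representation and canonical functions). Finally, in your argument for \ref{item:weak_solidity} you invoke closeness, but closeness is not among the hypotheses of \ref{item:if_rhos_match_and_solid}; the correct route is the usual one (applying $j$ to the solidity witnesses $\Th_{\rSigma_{n+1}}^N(\beta\cup\{p_{n+1}^N,\pvec_n^N\})\in N$ for $\beta<\kappa^{+N}$ with $j(\beta)>\alpha$), which needs no closeness at all.
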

\begin{proof}
Parts \ref{item:z,zeta<=p,rho},
\ref{item:solidity_charac_z,zeta}, \ref{item:U_is_pm}, \ref{item:z,zeta_pres},
\ref{item:when_dom_<=rho},
 \ref{item:standard_pres}
are straightforward or like in the short extender hierarchy (see \cite[\S2]{extmax} for details on $(z_{n+1},\zeta_{n+1})$,
and in particular part \ref{item:z,zeta_pres}).

Part \ref{item:long_proj}\ref{item:rho^U<=rho^N}: Let $X\sub\rho_{n+1}^N=\kappa^{+N}$ and $x\in N$ and $\varphi$ be $\rSigma_{n+1}$, with
\[ \alpha\in X\iff N\sats\varphi(x,\alpha).\]
Then
\[ \alpha\in X\iff U\sats\varphi(j(x),j(\alpha)).\]
But $k=j\rest\kappa^{+N}$ is (the map of) 
the short part of $F^P$, so $k\in U$.
So the above equivalence gives an $\rSigma_{n+1}^U(\{k,j(x)\})$-definition of $X$. Since also $N|\kappa^{++N}=U|\kappa^{++U}$, it follows that $\rho_{n+1}^U\leq\kappa^{+N}=\rho_{n+1}^N$.

Part \ref{item:long_proj}\ref{item:closeness_preserves_rho}: It remains to see that $\rho_{n+1}^U\geq\rho_{n+1}^N$.
This follows from part \ref{item:shift_defsss} below.

Part \ref{item:long_proj}\ref{item:if_rhos_match_and_solid}: This follows from earlier parts and their proofs, especially the preservation of $(z_{n+1},\zeta_{n+1})$ (as in part \ref{item:z,zeta_pres}),
and the proof of part \ref{item:long_proj}\ref{item:rho^U<=rho^N}, and the fact that the short part $E$ of $F^U$ is just $F^{U|\eta}$,
and for part \ref{item:weak_solidity},
using that for each $\alpha<\rho_{n+1}^N$,
if $t=\Th_{\rSigma_{n+1}}^N(\alpha\cup\{x\})$
then we can recover
$t'=\Th_{\rSigma_{n+1}}^N(j(\alpha)\cup\{j(x)\})$
from $j(t)$, and therefore $t'\in U$,
as in the usual proof of the preservation of the standard parameter under solidity from the short extender context.

Part \ref{item:shift_defsss}:
If $E=F^P$ is short this is as usual (and anyway is a simplification of the long case), so consider
the case that
 $E$ is long.
 Let $\kappa=\crit(E)$. For simplicity assume $n=0$; the general case is very analogous.
 
 We have $\spc(E)=\kappa^{+N}=\kappa^{+U}$.
 We first show that if $X\sub\kappa^{+U}$
 is $\bfSigma_1^U$ then $X$ is $\bfSigma_1^N$.
  Fix an $\rSigma_0$ formula $\varphi$
 and $x\in U$
 and consider the set\[ X=\Big\{\xi<\kappa^{+U}\Bigm|
 U\sats\exists y\ \varphi(y,x,\xi)\Big\}.\]
 Fix $f_x\in N$ and $b\in[\nu(E)]^{<\om}$ such that $f_x:(N|\kappa^{+N})\to N$ 
 and 
 $x=[b,f_x]^{N,0}_{E}$. We may assume $\kappa=\min(b)$.
 For $\xi<\kappa^{+N}$
 let $f_\xi:\kappa\to\kappa$
 be the $\xi$th canonical function.
 So $i_{E}(f_\xi)(\kappa)=\xi$
 for each such $\xi$.
 Then $\xi\in X$ iff
 there is $A\in E_b$
 and a function $f\in N$
 such that for all $u\in A$,
 \[ N\sats\varphi(f(u),f_x(u),f_\xi(\min(u))).\]
 But we know that $E$ is close to $N$,
 so this shows that $X$ is $\bfrSigma_1^{N}$, as desired.
 
 Now consider the converse.
 Let $\varphi$ be an $\rSigma_0$ formula,  $x\in N$ and consider the set
 \[ X=\Big\{\xi<\kappa^{+N}\Bigm|N\sats\exists y\ \varphi(y,x,\xi)\Big\}.\]
 Then 
  $\xi\in X$ iff
 \[ U\sats\exists y\ \varphi(y,j(x),j(\xi)),\]
 where $j:N\to U$
 is the (degree $0$) ultrapower map via $E$. But since $E$ is long,
 the short segment $E\rest\lambda(E)$ is in $U$, and hence so is the map $k=j\rest\kappa^{+N}$.
 So from the parameter $k$,
 $U$ can compute $\xi\mapsto j(\xi)$, so the above equivalence gives a definition of $X$ over $U$
 which is $\Sigma_1^{U}(\{j(x),k\})$, which suffices. 
\end{proof}

\begin{dfn}
 Let $N$ be an $n$-sound premouse.
 We say that $N$ is \emph{$(n+1)$-stretched-solid}
 iff $p_{n+1}^N\neq\emptyset$ and letting $\eta=\min(p_{n+1}^N)$ and $p=p_{n+1}^N\cut\{\eta\}$, then we have:
 \begin{enumerate}
  \item $(N,p)$
 is $(n+1)$-solid, 
 \item for each $\alpha<\eta$,
 $(N,p\cup\{\alpha\})$ is $(n+1)$-solid,
 \item $N|\eta$ is active with a short extender $E$
 which is $N$-total,
 \item $\rho_{n+1}^N=\kappa^{+N}$ where $\kappa=\crit(E)$,
 \item there is an $(n+1)$-sound premouse $H$ and an $n$-embedding $\pi:H\to N$
 such that $\kappa=\crit(\pi)$
 (where $\kappa$ is as above)
 and $\rho_{n+1}^H=\kappa^{+N}=\kappa^{+H}$
 and $\pi(p_{n+1}^H)=p=p_{n+1}^N\cut\{\eta\}$,
 \item $E$ is the short extender derived from $\pi$,
 \item $H||\kappa^{++H}=N||\kappa^{++N}$.\qedhere
 \end{enumerate}
\end{dfn}

 Note that the witnessing $H,\pi$
 are uniquely determined by $N$, if they exist
 (indeed, 
 \[ H=\cHull_{n+1}^N\Big(\rg(E)\cup\Big\{\pvec_{n}^N,p\Big\}\Big)\]
 and $\pi$ is the uncollapse map).

\subsection{Protomice and their conversions}
\begin{dfn}\label{dfn:Ult_n}
 Let $N$ be an $n$-sound premouse.
 Let $M$ be an active premouse.
 Let $\xi=\OR^{\dom(F^M)}$ (so if $F^M$ is short then $\xi=\kappa^{+M}$ and if $F^M$ is long then $\xi=\kappa^{++M}$, where $\kappa=\crit(F^M)$). Suppose  $\dom(F^M)=N||\xi$,
 $\xi\leq\rho_n^N$, and either $\xi=\OR^N$ or $\xi$ is an $N$-cardinal. Then we define $\Ult_n(N,F^M)$
 in the conventional manner, except in case $n=0$, $F^M$ is long, $N$ is active short and $\crit(F^N)=\kappa=\crit(F^M)$. In the latter case,
 first define $U^*=\Ult_0'(N,F^M)$
 in the conventional manner. Then $U^*$ is  a proper protomouse. Let $E$ be the short part of $F^M$. By the long-ISC for $M$ and coherence, $E\in\es^{U^*}$. Now define
 \[\Ult_0(N,F^M)=((U^*)^{\passive},\es^{U^*},F^{U^*}\com E).\qedhere\]
\end{dfn}

\begin{lem}
 Continue with the setup of Definition \ref{dfn:Ult_n}. Let $U=\Ult_0(N,F^M)$.
 Then $U$ is an active short premouse.
\end{lem}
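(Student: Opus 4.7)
In the ``non-special'' branch of Definition~\ref{dfn:Ult_n}, $U$ is formed as a standard degree-$n$ ultrapower, and I verify it is a premouse (active short provided $F^N$ is short) by the usual short-extender-hierarchy arguments: the ultrapower map $j:N\to U$ is an $n$-embedding by Lemma~\ref{lem:param_proj_pres}, and preserves $\om$-soundness and Dodd-absent soundness of proper segments via the same lemma, while the Jensen-ISC, coherence, and the condensation clauses of Definition~\ref{dfn:pm} transfer by $\Sigma_0$-elementarity. So I focus on the special case: $n=0$, $F^M$ long, $N$ active short with $\crit(F^N)=\kappa=\crit(F^M)$.

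Write $j:N\to U^*$ for the degree-$0$ ultrapower map by $F^M$. By elementarity the naive active extender of $U^*$ is $j(F^N)$, a short extender with critical point $j(\kappa)=\lambda(F^M)$ and domain $U^*||\lambda(F^M)^{+U^*}$; this is precisely why $U^*$ is only a protomouse. The first step is to locate the short part $E:=F^M\rest\lambda(F^M)$ in $\es^{U^*}$ at index $\lambda(F^M)$. Since $F^M$ is long, the long-ISC places the trivial completion of $E$ in $\es^M$ at that index; since $F^M$ coheres $\es^M$ up to $\lh(F^M)$ and $\dom(F^M)=N||\kappa^{++N}$, this same trivial completion reappears on $\es^{U^*}$ at $\lambda(F^M)$.

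Granted this, set $F^U:=F^{U^*}\circ E$: an extender with $\crit(F^U)=\kappa$, domain $N|\kappa^{+N}=U|\kappa^{+U}$, length $\OR^{U^*}$, and space $\kappa$; hence $F^U$ is short and coded amenably into $U$. The main verification is the Jensen-ISC for $F^U$. For a putative whole proper fragment $F^U\rest\bar\lambda$ with $\bar\lambda\in(\kappa,\lambda(F^U))$, I split on $\bar\lambda\leq\lambda(F^M)$ versus $\bar\lambda>\lambda(F^M)$. In the first range $F^U\rest\bar\lambda$ coincides with a whole fragment of $E$, which is itself a fragment of $F^M$, and so lies in $\es^M$ (hence in $\es^{U^*}$) by the Jensen-ISC for the short part of $F^M$ in $M$ together with the long-ISC at $\bar\lambda=\lambda(F^M)$. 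In the second range $F^U\rest\bar\lambda$ pulls back through $E$ to a whole proper fragment of $F^{U^*}=j(F^N)$, which lies in $\es^{U^*}$ by elementarity of $j$ applied to the Jensen-ISC for $F^N$ in $N$. Coherence of $F^U$ with $\es^U=\es^{U^*}$ is handled by the same two-case split.

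The remaining clauses of Definition~\ref{dfn:pm} for proper segments $U|\beta$---$\om$-soundness, Dodd-absent soundness, and the condensation clauses---transfer either by elementarity of $j$ (for $\beta$ above $\lambda(F^M)$) or directly from the corresponding properties of $M|\beta$ (for $\beta\leq\lambda(F^M)$, where $\es^U\rest\beta$ matches $\es^M\rest\beta$ and the segment structures agree). The main obstacle I anticipate is the Jensen-ISC bookkeeping in the gap $(\kappa,\lambda(F^M))$: because $F^U$ is a composition rather than a direct pushforward of $F^N$, one must carefully identify its whole fragments with those of $E$ in this range and use the long-ISC together with coherence of $F^M$ to produce the required members of $\es^{U^*}$.
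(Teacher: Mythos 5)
Your decomposition of the Jensen ISC verification into the two ranges $\bar\lambda\leq\lambda(F^M)$ and $\bar\lambda>\lambda(F^M)$ is a reasonable reorganization, and your treatment of the first range is essentially correct: there $F^U\rest\bar\lambda=E\rest\bar\lambda$, and whole fragments of $E$ are handled by the long ISC (giving $E\in\es^M$) plus the premousehood of $M|\lh(E)$. But the second range contains a genuine gap, and it is precisely the point the paper's proof is designed to address.

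If $F^U\rest\bar\lambda$ (with $\bar\lambda>\lambda(F^M)$) is whole, then it is of the form $j(G)\com E$ where $G$ is a whole proper fragment of $F^N$. You correctly observe that $j(G)$ lies in $\es^{U^*}$ by elementarity, but you then stop, as though this finishes the case. It does not: the Jensen ISC for $F^U$ requires that (the trivial completion of) $j(G)\com E$ itself lie in $\es^{U^*}$, and $j(G)\com E$ is a \emph{different} extender from $j(G)$ --- it has critical point $\kappa$ rather than $\lambda(F^M)$, and is therefore indexed at a different place. The paper supplies the missing step explicitly: $E$ lies in $\es^{U^*}$ at an index below $\crit(j(G))^{+U^*}$, so $i^{U^*}_{j(G)}(E)$ appears in $\es^{\Ult(U^*,j(G))}$ below $\lh(j(G))$, and coherence (of $j(G)$ as a member of the sequence $\es^{U^*}$) places it in $\es^{U^*}$; a standard computation identifies $i^{U^*}_{j(G)}(E)$ with the trivial completion of $j(G)\com E$. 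Your closing paragraph flags "Jensen-ISC bookkeeping" as the anticipated obstacle but misplaces it in the gap $(\kappa,\lambda(F^M))$ --- that range is the easy one --- and suggests "long-ISC together with coherence of $F^M$" as the tool, whereas the correct tool is coherence of $j(G)$ inside $U^*$ together with $E\in\es^{U^*}$ at a suitably low index.
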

\begin{proof}
 This is mostly clear, except possibly for the Jensen ISC. But letting $F^*=F^{U^*}$
 and $j:N\to U$ be the ultrapower map, which is continuous at $\lambda(F^N)$, just note that either:
 \begin{enumerate}
  \item $F^N$ has no whole proper fragment
  and $E\in\es^U=\es^{U^*}$ is the largest whole proper fragment of $F^*\com E$, or
  \item $F^N$ has unbounded whole proper fragments, as does $F^*\com E$, and in fact, for each whole proper fragment $G$ of $F^N$,
  $j(G)\com E$ is a whole proper fragment of $F^*\com E$, and $i^{U^*}_{j(G)}(E)\in\es^{U^*}$, or
  \item $F^N$ has a largest whole proper fragment $G$, and then much as in the previous case,
  $j(G)\com E$ is the largest whole proper fragment of $F^*\com E$, and $i^{U^*}_{j(G)}(E)\in\es^{U^*}$.\qedhere
 \end{enumerate}
\end{proof}

So we avoid the protomouse $U^*$ by replacing it with $U$. There seem to be some subtleties in understanding the fine structural relationship between $U$ and $U^*$, however, particularly
because the language $\mathscr{L}$ involves the constant $\dot{F}_J$, which can have different interpretations in $U$ and $U^*$, and they don't seem to be easily inter-computable. Consider
in particular the case that $N$ has a largest whole proper segment, so this is $G=F^N_J$.
Then $j(G)$ is compatible with $F^{U^*}$,
and there is no larger such whole extender in $U$.
We get that $j(G)\com E$ is the largest whole proper segment of $F^{U}$. The fine structure
of $U^*$, taking it to be determined by $N,j$,
uses $j(G)$ as a constant. It seems non-trivial to convert in either direction between $j(G)$ and $j(G)\com E$. If we have $E$ available as a parameter, we can of course compute $j(G)\com E$ from $j(G)$ (and $E$). But the converse,
computing $j(G)$ from $j(G)\com E$ (and $E$)
doesn't seem obvious; note that $\lambda(j(G))<\lh(j(G)\com E)<\lh(j(G))$. Since $\Sigma_1^{U^*}$ and $\Sigma_1^{U}$ are defined relative these parameters, this seems to make the determination of the relationship between $p_1^{U^*}$ and $p_1^U$ somewhat unclear. The
$1$-soundness or otherwise of $U$ of course also depends on these things. In the end we will partially (and enough for our overall purposes) resolve such issues by considering Dodd-absent-solidity and Dodd-absent-soundness.

\subsection{Preservation of Dodd structure under ultrapowers}

In this section we establish results analogous to those of the previous one, but for Dodd structure replacing standard fine structure.
Recall that the Dodd-absent projectum $\rho_{\mathrm{D}}$ and parameter $p_{\mathrm{D}}$, and associated notions, were introduced in \ref{dfn:rho_D,p_D}.
Although the (standard) Dodd parameter and projectum $\tau,t$ can in general differ from $\rho_{\mathrm{D}},p_{\mathrm{D}}$,
in the presence of either soundness notion, everything coincides. A straightforward calculation gives:
\begin{lem}\label{lem:rho_D,p_D_basic_facts}
 Let $N$ be an active short premouse
 and $\kappa=\crit(F^N)$.
 Then:
 \begin{enumerate}
 \item\label{item:rho_1=rho_D_mod_mu^+} $\rho_{\D}^N$ is an $N$-cardinal
 and $\max(\rho_1^N,\kappa^{+N})=\max(\rho_{\D}^N,\kappa^{+N})$,
  \item $\max(\kappa^{+N},\rho_{\D}^N)\leq\tau^N$,
  \item if $\max(\kappa^{+N},\rho_{\D}^N)=\tau^N$
  then $p_{\D}^N\cut\{\kappa\}\leq t^N$,
  \item if $\rho_{\D}^N>\kappa^{+N}$
  or $\kappa\in p_{\mathrm{D}}^N$ then
  \[ N\text{ is
 Dodd-absent-sound iff }N\text{ is Dodd-sound,}\]
  \item if $\rho_{\D}^N=0$ and $\kappa\notin p_{\mathrm{D}}^N$ then
  the following are equivalent:
  \begin{enumerate}[label=\tu{(}\alph*\tu{)}]
  \item $N$ is Dodd-absent-sound
  \item $N$ is Dodd-sound
  and $\{\kappa\}$ is generated by $p_{\mathrm{D}}^N$.
  \end{enumerate}
 
  \item if $N$ is Dodd-absent-sound then
  $t^N=p_{\D}^N\cut\{\kappa\}$ and $\tau^N=\max(\rho_{\D}^N,\kappa^{+N})$.
  \item $(s^N,\sigma^N)\leq(p_{\D}^N,\rho_{\D}^N)$,
 \item $N$ is Dodd-absent-solid $\iff$ $s^N=p_{\D}^N$ $\iff$ $\sigma^N=\rho_{\D}^N$.
 \end{enumerate}
\end{lem}

\begin{lem}\label{lem:Dodd_param_proj_pres}
 Let $N$ be a an active short premouse. Let $P$ be an active  premouse.
 Suppose $F^P$ is an $N$-extender and $(N,P,0)$ is not proto-anomalous. 
 Let $U=\Ult_0(N,F^P)$ and $j:N\to U$ be the ultrapower map. Let $\kappa=\crit(j)$. Let $\eta=\sup j``\kappa^{+N}$. Suppose $U$ is wellfounded. Let $\mu=\crit(F^N)$. Then:
 \begin{enumerate}
 
 \item $U$ is an active short premouse and $j$ is a $0$-embedding.
 \item\label{item:pres_s,sigma}$(s^U,\sigma^U)=(j(s^N),\sup j``\sigma^N)$.
 \item\label{item:dom(F^P)_low_pres_Dodd-solid}If $\dom(F^P)\leq\max(\rho_{\D}^N,\mu^{+N})$  then:
 \begin{enumerate}[label=\tu{(}\alph*\tu{)}]
  \item $\rho_{\D}^U=\sup j``\rho_{\D}^N$.
  \item If $N$ is Dodd-absent-solid, then
$p_{\D}^U=j(p_{\D}^N)$
and $U$ is Dodd-absent-solid.
 \end{enumerate}
 \item\label{item:Dodd_make_simulated} Suppose $\mu^{+N}<\rho_{\D}^N=\kappa^{+N}<\kappa^{++N}=\dom(F^P)$,
 so $F^P$ is long.
 Then:
 \begin{enumerate}[label=\tu{(}\alph*\tu{)}]
  \item\label{item:rho^U_D_leq_rho^N_D_when_mu^+N<rho^N_D=kappa^+N<kappa^++N=dom(F^P)} $\rho_{\D}^U\leq\rho_{\D}^N$.
  \item\label{item:close_implies_rho_D_match} If $F^P$ is close to $N$ then $\rho_{\D}^U=\rho_{\D}^N$.
  \item\label{item:rho^U_D_leq_rho^N_D_when_mu^+N<rho^N_D=kappa^+N<kappa^++N=dom(F^P)_if_rho^N_D=rho^U_D_and_N_Dodd-abs-solid} If $\rho_{\D}^N=\rho_{\D}^U$ and
  $N$ is Dodd-absent-solid then:
  \begin{enumerate}
  \item $p_{\D}^U=j(p_{\D}^N)\cup\{\eta\}$,
  \item $U$ is not Dodd-absent-solid,
  \item letting $p=j(p_{\D}^N)$, we have:
  \begin{enumerate}
  \item $(U,p)$ is Dodd-absent-solid,
  \item $(U,p\cup\{\alpha\})$ is Dodd-absent-solid for each $\alpha<\eta$.
  \end{enumerate}
  \end{enumerate}
   \end{enumerate}
  \item\label{item:pres_Dodd_structure_when_rho^N_D_leq_kappa} Suppose $\rho_{\D}^N\leq\kappa$.
  Then:
  \begin{enumerate}[label=\tu{(}\alph*\tu{)}]
  \item\label{item:pres_Dodd_structure_when_rho^N_D_leq_kappa_rho^U_D_leq_rho^N_D} $\rho_{\D}^U\leq\rho_{\D}^N$.
  \item If $F^P$ is close to $N$ then $\rho_{\D}^U=\rho_{\D}^N$.
  \item If $\rho_{\D}^N=\rho_{\D}^U$ and $N$ is Dodd-absent-solid then:
  \begin{enumerate}
  \item $p_{\D}^U=j(p_{\D}^N)$,
  \item $U$ is Dodd-absent-solid.
  \end{enumerate}
  \end{enumerate}
  \item\label{item:pres_Dodd-soundness} $U$ is Dodd-absent-sound iff  $N$ is Dodd-absent-sound
  and $\dom(F^P)\leq\max(\rho_{\D}^N,\mu^{+N})$.
 \end{enumerate}

\end{lem}
\begin{proof}
 This is essentially completely analogous to Lemma \ref{lem:param_proj_pres}, but one should use Lemma \ref{lem:rho_D,p_D_basic_facts}(\ref{item:rho_1=rho_D_mod_mu^+}) to establish part \ref{item:Dodd_make_simulated}\ref{item:close_implies_rho_D_match} and other similar parts.
 
 Part \ref{item:pres_Dodd-soundness}:
 Suppose $N$ is Dodd-absent-sound and $\dom(F^P)\leq\max(\rho_{\D}^N,\mu^{+N})$.
 Then one can use part \ref{item:dom(F^P)_low_pres_Dodd-solid}
 together with some standard calculations
 showing that $F^U$ is generated by $p^U_{\D}\cup\rho_{\D}^U$. Now suppose $N$ is Dodd-absent-sound
 but $\dom(F^P)>\max(\rho_{\D}^N,\mu^{+N})$.
 Suppose $U$ is Dodd-absent-sound. Then by Lemma \ref{lem:rho_D,p_D_basic_facts} and parts \ref{item:pres_s,sigma}, \ref{item:Dodd_make_simulated}\ref{item:rho^U_D_leq_rho^N_D_when_mu^+N<rho^N_D=kappa^+N<kappa^++N=dom(F^P)} and \ref{item:pres_Dodd_structure_when_rho^N_D_leq_kappa}\ref{item:pres_Dodd_structure_when_rho^N_D_leq_kappa_rho^U_D_leq_rho^N_D},
 $\rho_{\D}^N=\sigma^N=\sigma^U=\rho_{\D}^U$
 and $j(p_{\D}^N)=j(s^N)=s^U=p_{\D}^U$.
So by part \ref{item:Dodd_make_simulated}\ref{item:rho^U_D_leq_rho^N_D_when_mu^+N<rho^N_D=kappa^+N<kappa^++N=dom(F^P)_if_rho^N_D=rho^U_D_and_N_Dodd-abs-solid},
 $\kappa\geq\max(\rho^N_{\D},\mu^{+N})$,
and clearly then $F^N\rest(\rho_{\D}^N\cup p_{\D}^N)$ is isomorphic to $F^U\rest(\rho_{\D}^U\cup p_{\D}^U)$, and by Dodd-absent-soundness, these are equivalent to $F^N,F^U$ respectively. But $F^N\neq F^U$, contradiction.
 Now  suppose that $N$ is not Dodd-absent-sound
 but $U$ is Dodd-absent-sound.
 If $N$ is Dodd-absent-solid then $F^N$ is not generated by $s^N\cup\sigma^N$, but then easily $F^U$
 is not generated by $s^U\cup\sigma^U$ (using part \ref{item:pres_s,sigma}), and therefore $F^U$ is not Dodd-absent-sound. So suppose $N$ is not Dodd-absent-solid. 
 Then $s^N<p_{\D}^N$ and $\sigma^N>\rho_{\D}^N$,
 and $p^U_{\D}=s^U=j(s^N)$ and $\rho_{\D}^U=\sigma^U=\sup j``\sigma^N>0$, so $\rho^U_{\D}>\theta^{+U}$ where $\theta=\crit(F^U)$.
 But in fact, $\rho_{\D}^U\leq\sup j``\rho_{\D}^N$, by the proof of part \ref{item:pres_s,sigma} (see \cite[\S2]{extmax}).
\end{proof}

\begin{lem}\label{lem:long_proto_Ult}
 Let $N$ be a Dodd-absent-sound active short premouse. Let $M$ be an active long premouse with $\kappa=\crit(F^N)=\crit(F^M)$.
 Suppose that $F^M$ is an $N$-extender which is close to $N$.
 Suppose $\rho_{\D}^N=0$, so $\rho_1^N\leq\kappa^{+N}$. Let $U=\Ult_0(N,F^M)$ \tu{(}defined as in \ref{dfn:Ult_n}, so $U$ is a premouse and $F^U=F^{U^*}\com E$, with notation as there\tu{)}.
Let $j:N\to U$ be the ultrapower map. Then:
 \begin{enumerate}
  \item\label{lem:long_proto_Ult_U_Dodd-abs-solid_not_Dodd-abs-sound}  $U$ is Dodd-absent-solid, but
 \underline{not} Dodd-absent-sound.
  \item\label{item:long_proto_Ult_rho^U_D=rho^N_D} $\rho_{\D}^U=0=\rho_{\D}^N$,
  \item\label{item:long_proto_Ult_p^U_D=j(p^N_D)} $p^U_{\D}=j(p^N_{\D})$,

  \item\label{item:rg(j)=X} $\rg(j)$ is the set of all elements of $U$ which are generated by $p^U_{\D}$; that is,
  letting $k:U\to\Ult(U,F^U)$ be the ultrapower map, then $\rg(j)=X$ where
  \[ X=\Big\{k(f)(p^U_{\D})\Bigm|f\in U|\kappa^{+U}\Big\}.\]
  In particular, $\{\kappa\}$
  is not generated by $p^U_{\D}$.
  \item\label{item:recover_N,j}
   $N^{\passive}$ is the transitive collapse
  of the substructure of $U^{\passive}$ whose universe is $X$, and $j$ is the transitive collapse map. Moreover, for all $a\in[\lambda(F^N)]^{<\om}$
  and $A\in\pow([\kappa]^{|a|})\cap N$, we have
  \[ A\in (F^N)_a\iff j(A)\cap[\kappa]^{|a|}\in (F^U)_{j(a)}\iff A\in (F^U)_{j(a)}.\]
 \item\label{item:proto_shift_defsss}  Let $X\sub\kappa^{+N}$.
 Then 
 \[ X\text{ is }\bfrSigma_{1}^N\text{-definable}\iff  X\text{ is }\bfrSigma_{1}^{U^*}\text{-definable}\iff X\text{ is }\bfrSigma_{1}^U\text{-definable}.\]
 \end{enumerate}
\end{lem}
\begin{proof}
Let $U^*=\Ult(N,F^M)$, formed directly, so $U^*$ is a proper protomouse, and $F^U=F^{U^*}\com E$,
where $E$ is the short part of $F^M$.

 Suppose $\alpha=\max(p_{\D}^N)>\kappa$.
 Then $F^N\rest\alpha\in N$,
 and $j(F^N\rest\alpha)$ is an extender compatible
 with $F^{U^*}$ (but of course $j(F^N\rest\alpha)$
 is $U^*$-total, whereas $F^{U^*}$ is not;
 so what we have is that $j(F^N\rest\alpha)\rest\dom(F^{U^*})\sub F^{U^*}$).
 But then $j(F^N\rest\alpha)\com E=F^U\rest j(\alpha)$. So this gives the Dodd-solidity witness needed for $j(\alpha)$. We similarly
 get Dodd-solidity witnesses at each $\alpha\in p_{\D}^N\cut\{\kappa\}$.
 
 Suppose that $\kappa\notin p_{\D}^N$.
 We have that $F^N\rest p_{\D}^N\notin N$,
 and since $F^N$ is short, this is a subset of $\kappa^{+N}$ which is missing from $N$.
 But $N|\kappa^{++N}=M|\kappa^{++M}=U|\kappa^{++U}$,
 so $F^N\rest p_{\D}^N\notin U$.
 But note that $F^U\rest j(p_{\D}^N)=F^N\rest p_{\D}^N$, so we have $F^U\rest j(p_{\D}^N)\notin U$. 
  
  Suppose instead that $\kappa\in p_{\D}^N$.
  Then  $F^N\rest q\in N$
  where $q=p_{\D}^N\cut\{\kappa\}$ (this doesn't even use Dodd-absent-solidity; it  holds anyway since $q<p^N_{\D}$).
  This is a measure over $\kappa$.
  So $j(F^N\rest q)$ is a measure over $j(\kappa)$ compatible with $F^{U^*}$.
  But then calculating as before,
  we get that $F^U\rest (j(q)\cup j(\kappa))\in U$. But also as before,
  since $F^N\rest p_{\D}^N\notin N$,
  we have $F^U\rest j(p_{\D}^N)\notin U$.
  
  In either case, 
  it follows that $\rho_{\D}^U=0$,
  $p_{\D}^U=j(p_{\D}^N)$ (giving parts \ref{item:long_proto_Ult_rho^U_D=rho^N_D} and \ref{item:long_proto_Ult_p^U_D=j(p^N_D)}) and $U$ is Dodd-absent-solid.
  
  Now let $X$ be defined as in part \ref{item:rg(j)=X};
  let us observe that $X=\rg(j)$.
  Well, given $f\in U|\kappa^{+U}=N|\kappa^{+N}$,
   \[ k(f)=F^{U^*}\com E(f)=F^{U^*}(j(f))=j(F^U(f))\in\rg(j).\]
  But $p_{\D}^U=j(p_{\D}^N)\in\rg(j)$,
  so $k(f)(p^U_{\D})\in\rg(j)$.
  Conversely, if $x\in N$ then there is $f\in N|\kappa^{+N}$ such that $x=\bar{k}(f)(p_{\D}^N)$,
  where $\bar{k}:N\to\Ult_0(N,F^N)$ is the ultrapower map. But $j$ preserves this
  to $U^*$, so 
  \[ j(x)=i_{F^{U^*}}(j(f))(j(p_{\D}^N))=k(f)(p_{\D}^U)\in X. \]
  This establishes part \ref{item:rg(j)=X},
and since $\rho^U_{\D}=0$, therefore $U$ is not Dodd-absent-sound, completing the proof of part \ref{lem:long_proto_Ult_U_Dodd-abs-solid_not_Dodd-abs-sound}.
  
  Part \ref{item:recover_N,j} follows easily from the foregoing considerations.
  
  Part \ref{item:proto_shift_defsss}: This equivalence between $N$ and $U^*$ (the protomouse) is as usual, so we just need to see
  that $X$ is $\bfrSigma_1^{U^*}$ iff $X$ is $\bfrSigma_1^{U}$. But in fact, it is easy to see that
  $\rSigma_1^{U^*}(\{E\})$ (where $j(F^N_J)$ is made automatically available as a constant) is inter-computable
  with $\rSigma_1^{U}(\{E,j(F^N_J)\})$
  (where $F^U_J$ is automatically available),
  which suffices.
\end{proof}

\begin{lem}\label{lem:Ult_0_avoiding_proto_when_rho_D>kappa^+}
 Let $N$ be a Dodd-absent-sound active short premouse. Let $M$ be an active long premouse with $\kappa=\crit(F^N)=\crit(F^M)$.
 Suppose that $F^M$ is an $N$-extender.
 Suppose $\rho_{\D}^N>0$, so $\rho_1^N=\rho_{\D}^N>\kappa^{+N}$. Let $U=\Ult_0(N,F^M)$ \tu{(}as in  \ref{dfn:Ult_n}, so $U$ is a premouse and $F^U=F^{U^*}\com E$, with notation as there\tu{)}.
Let $j:N\to U$ be the ultrapower map. Then:
 \begin{enumerate}
  \item  $U$ is Dodd-absent-sound,
  \item $\rho_{\D}^U=\sup j``\rho_{\D}^N$,
  \item $p^U_{\D}=j(p^N_{\D})$.
\end{enumerate}
\end{lem}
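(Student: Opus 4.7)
The plan is to adapt the proof of Lemma \ref{lem:Dodd_param_proj_pres} (in particular parts \ref{item:pres_s,sigma}, \ref{item:dom(F^P)_low_pres_Dodd-solid}, and \ref{item:pres_Dodd-soundness}) to the proto-anomalous setting handled by the special $\Ult_0$ of Definition \ref{dfn:Ult_n}, where $F^U = F^{U^*}\com E$ with $E$ the short part of $F^M$. The key preliminary observation is that, by Lemma \ref{lem:rho_D,p_D_basic_facts}, $\rho_{\D}^N$ is an $N$-cardinal, and the hypothesis $\rho_{\D}^N>\kappa^{+N}$ therefore forces $\rho_{\D}^N\geq\kappa^{++N}=\dom(F^M)$. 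This is the analogue of the hypothesis $\dom(F^P)\leq\max(\rho_{\D}^N,\mu^{+N})$ in parts \ref{item:dom(F^P)_low_pres_Dodd-solid}--\ref{item:pres_Dodd-soundness} of Lemma \ref{lem:Dodd_param_proj_pres}, so we expect full preservation of Dodd-absent-soundness rather than the exotic behavior seen in Lemma \ref{lem:long_proto_Ult}.

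The argument then proceeds in three steps. First, I would establish $(s^U,\sigma^U)=(j(s^N),\sup j``\sigma^N)$ by the standard calculation referenced in \cite[\S2]{extmax}, which carries over here since it uses only that $j$ is a $0$-embedding and $\dom(F^M)\leq\sigma^N$; because $N$ is Dodd-absent-sound, Lemma \ref{lem:rho_D,p_D_basic_facts} gives $s^N=p_{\D}^N$ and $\sigma^N=\rho_{\D}^N$, yielding upper bounds $s^U\leq j(p_{\D}^N)$ and $\sigma^U\leq\sup j``\rho_{\D}^N$. Second, I would transfer Dodd-absent-solidity witnesses: for each $\alpha\in p_{\D}^N$, the witness $G_\alpha=F^N\rest(\alpha\cup(p_{\D}^N\cut(\alpha+1)))$ lies in $N$, so $j(G_\alpha)\in U^*$ and hence $j(G_\alpha)\com E\in U$. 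I would verify that this equals $F^U\rest(j(\alpha)\cup j(p_{\D}^N\cut(\alpha+1)))$, which together with the bounds from step one gives $p_{\D}^U=j(p_{\D}^N)$, $\rho_{\D}^U=\sup j``\rho_{\D}^N$, and Dodd-absent-solidity of $U$.

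Third, for Dodd-absent-soundness I would show that $F^U$ is generated by $p_{\D}^U\cup\rho_{\D}^U$. Since $N$ is Dodd-absent-sound, $F^N$ is generated by $p_{\D}^N\cup\rho_{\D}^N$; and every $x\in U$ is represented as $[a,f]^{N,0}_{F^M}$ with $a\in[\nu(F^M)]^{<\om}$ and $f\in N|\dom(F^M)\sub N|\rho_{\D}^N$, so all functions witnessing generation of $F^N$ by $p_{\D}^N\cup\rho_{\D}^N$ are available. Applying $j$ and composing with $E$ then shows $F^U$ is generated by $j(p_{\D}^N)\cup\sup j``\rho_{\D}^N=p_{\D}^U\cup\rho_{\D}^U$, which combined with the solidity from step two yields Dodd-absent-soundness.

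The main obstacle is the fragment identity in step two, namely $j(G_\alpha)\com E=F^U\rest(j(\alpha)\cup j(p_{\D}^N\cut(\alpha+1)))$. This amounts to checking at the level of component measures that restriction to a set of generators commutes with composition by $E$ under the formula $F^U=F^{U^*}\com E$. I would carry this out along the lines of the short-part absorption in the proof of Lemma \ref{lem:long_proto_Ult}, exploiting $j(\kappa)=\lambda(F^M)=\lambda(E)$, the coherence of $E$ with $\es^{U^*}$ granted by the long ISC for $F^M$, and the fact that the relevant functions in the representation of elements of $U^*$ live in $N|\dom(F^M)\sub N|\rho_{\D}^N$.
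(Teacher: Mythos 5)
Your proof is correct and takes essentially the same approach as the paper: it pushes the Dodd-absent-solidity witnesses and the $(s,\sigma)$-pair through the composition $F^U = F^{U^*}\com E$, and establishes that $F^U$ is generated by $j(p_{\D}^N)\cup\sup j``\rho_{\D}^N$. The paper's proof is a two-line pointer deferring the details to the preceding argument (the proof of Lemma \ref{lem:long_proto_Ult}), which is exactly where the witness-transfer identity and the generation calculation you describe are carried out.
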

\begin{proof}
 Let $\rho=\sup j``\rho_{\D}^N$ and $p=j(p_{\D}^N)$. Then $F^U$ is generated by $\rho\cup p$,
 like in the preceding proof.
 Also like in that proof, we get the relevant
 Dodd-solidity witnesses below $(p,\rho)$.
 This suffices.
\end{proof}

\begin{rem}
In the context of Lemma \ref{lem:Ult_0_avoiding_proto_when_rho_D>kappa^+},
 we also know that $\rho_1^U=\sup j``\rho_1^N$ (since $\rho_1^N=\rho_{\D}^N$ and $\rho_1^U=\rho_{\D}^U$).
 Note, though, that we haven't verified that $U$ is $1$-sound, nor described $p_1^U$ in terms of $p_1^N$. This is because of the discrepancy between the constants $F_J^U$ and $F_J^{U^*}$,
 which might not be in $U|\rho_{\D}^U$.
\end{rem}

\begin{dfn}\label{dfn:stretched-Dodd-absent-solid}
 Let $N$ be an active short  premouse.
 We say that $N$ is \emph{stretched-Dodd-absent-solid}
 iff $p_{\D}^N\neq\emptyset$ and letting $\eta=\min(p_{\D}^N)$ and $p=p_{\D}^N\cut\{\eta\}$, then we have:
 \begin{enumerate}
  \item $(N,p)$
 is Dodd-absent-solid, 
 \item for each $\alpha<\eta$,
 $(N,p\cup\{\alpha\})$ is Dodd-absent-solid,
 \item $N|\eta$ is active with a short extender $E$
 which is $N$-total,
 \item $\crit(F^N)<\kappa<\kappa^{+N}=\rho_{\D}^N$ where $\kappa=\crit(E)$,
 \item there is a Dodd-absent-sound premouse $H$ and a $0$-embedding $\pi:H\to N$
 such that $\kappa=\crit(\pi)$
 (where $\kappa$ is as above)
 and $\rho_{\D}^H=\kappa^{+N}=\kappa^{+H}$
 and $\pi(p_{\D}^H)=p$,
 \item $E$ is the short extender derived from $\pi$,
 \item $H||\kappa^{++H}=N||\kappa^{++N}$.\qedhere
 \end{enumerate}
\end{dfn}

 The witnessing $H,\pi$
 are uniquely determined by $N$, if they exist
 ($\rg(\pi)$ is just the set of points generated (by $F^N$) with elements of $\rg(E)\cup\{p\}$).
 
 \subsection{Cores}
 
 In this section we define cores and core embeddings, and consider their relationship to (especially dropping) iteration maps.
 \begin{dfn}
  Let $N$ be an active short premouse.
  We define the \emph{Dodd-absent-core} $\core_{\D}(N)$,
  and the associated \emph{core map} $\sigma:\core_{\D}(N)\to N$, as follows:
  \begin{enumerate}
   \item\label{item:stretched_Dodd-absent-core} If $N$ is  stretched-Dodd-absent-solid,
   as witnessed by $\pi:H\to N$
   (see Definition \ref{dfn:stretched-Dodd-absent-solid})
   then $\core_{\D}(N)=H$ and $\sigma=\pi$.
   \item\label{item:direct_Dodd-absent-core} Otherwise, $C=\core_{\D}(N)$ 
   and $\sigma:C\to N$ are such that
    $\rg(\sigma)$
    is the set of elements of $N$ generated by elements of $\rho_{\D}^N\cup p_{\D}^N$,
    and $F^C$ is the natural extender this induces. That is, noting that $\lambda(F^N)\in\rg(\sigma)$,
    $\kappa=\crit(F^N)\sub\rg(\sigma)$, $\kappa\leq\sigma(\kappa)$
and $\pow([\kappa]^{|a|})\cap C=\pow([\kappa]^{|a|})\cap N$,
    we set $\sigma(\lambda(F^C))=\lambda(F^N)$,
    $\crit(F^C)=\kappa$,
    and 
    for $a\in[\lambda(F^C)]^{<\om}$
    and $X\in\pow([\kappa]^{|a|})\cap C$,
    we put $a\in F^C(X)$ iff $\sigma(a)\in F^N(X)$, and note  $X=\sigma(X)\cap[\kappa]^{|a|}$.
    \qedhere
  \end{enumerate}
 \end{dfn}

 \begin{rem}
 Although in case $N$ is stretched-Dodd-absent-solid, we directly define $\core_{\D}(N)$ via clause \ref{item:stretched_Dodd-absent-core}, we could have equivalently
 first used clause \ref{item:direct_Dodd-absent-core} and then clause \ref{item:stretched_Dodd-absent-core}. That is,
 suppose $N$ is stretched-Dodd-absent-solid,
 but define $C$ and $\sigma:C\to N$ as in clause \ref{item:direct_Dodd-absent-core} above
 (instead of using the definition in clause \ref{item:stretched_Dodd-absent-core}).
Then $C$ is also stretched-Dodd-absent-solid, $\bar{C}=\core_{\D}(C)=\core_{\D}(N)$
(with both of these defined via clause \ref{item:stretched_Dodd-absent-core}),
 and letting $\sigma_{\bar{C}C}:\bar{C}\to C$ and $\sigma_{\bar{C}N}:\bar{C}\to N$ be the core maps, then $\sigma\com\sigma_{\bar{C}C}=\sigma_{\bar{C}N}$. \end{rem}

 \begin{dfn}
 Let $N$ be an active short premouse. We say that $N$ is \emph{Dodd-absent-universal}
 iff either:
 \begin{enumerate}
 \item $N$ is stretched-Dodd-absent-solid,
 or
 \item otherwise, and letting $C=\core_{\D}(N)$,
 $\kappa=\crit(F^N)$
 and $\rho=\max(\rho^N_{\D},\kappa^{+N})$, we have
  $C||\rho^{+C}=N||\rho^{+N}$.\qedhere
\end{enumerate}
 \end{dfn}
\begin{lem}\label{lem:stretch_Dodd-abs-sound_short_ext_above_its_crit}
 Let $N,P$ be as in Lemma \ref{lem:Dodd_param_proj_pres}
 part \ref{item:Dodd_make_simulated}, and suppose further that $N$ is Dodd-absent-sound and $F^P$ is close to $N$. Then $U$ is stretched-Dodd-absent-solid,
 $N=\core_{\D}(U)$ and $j$ is the Dodd-absent-core map.
\end{lem}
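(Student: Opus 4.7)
The plan is to verify each clause of Definition~\ref{dfn:stretched-Dodd-absent-solid} for $U$, using as candidate witnesses $\eta$ (as the minimum of $p_{\D}^U$), the tail $p=p_{\D}^U\cut\{\eta\}$, the short extender $E=F^P\rest\lambda(F^P)$ (the short part of $F^P$), and $(H,\pi)=(N,j)$. Once this is done, the conclusion $N=\core_{\D}(U)$ with core map $j$ follows immediately from the uniqueness of the witnessing pair remarked after Definition~\ref{dfn:stretched-Dodd-absent-solid}.

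First I would invoke Lemma~\ref{lem:Dodd_param_proj_pres} part~\ref{item:Dodd_make_simulated} (applicable since $N$ is in particular Dodd-absent-solid and $F^P$ is close to $N$), obtaining $\rho_{\D}^U=\rho_{\D}^N=\kappa^{+N}$, $p_{\D}^U=j(p_{\D}^N)\cup\{\eta\}$, the Dodd-absent-solidity of $(U,j(p_{\D}^N))$, and the Dodd-absent-solidity of $(U,j(p_{\D}^N)\cup\{\alpha\})$ for each $\alpha<\eta$. To pin down $\eta=\min(p_{\D}^U)$, I would argue that $p_{\D}^N\sub[\kappa^{+N},\OR^N)$ by lex-minimality: any $\alpha\in p_{\D}^N\cap\kappa^{+N}$ could be dropped from $p_{\D}^N$ without affecting the defining non-membership, since $\rho_{\D}^N=\kappa^{+N}$ absorbs $\alpha$, contradicting minimality. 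Applying $j$ and using that $F^P$ is long (so $j$ is discontinuous at the cardinal $\kappa^{+N}$) yields $\min j(p_{\D}^N)\geq j(\kappa^{+N})>\eta$, whence $\eta=\min(p_{\D}^U)$ and $p=j(p_{\D}^N)$; the first two clauses of Definition~\ref{dfn:stretched-Dodd-absent-solid} are then supplied verbatim by the invoked lemma.

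Next I would identify the active structure at index $\eta$ in $U$. By the long ISC for $F^P$, the short part $E$ sits in $\es^P$ at its natural index $\eta$, and by standard coherence properties of $F^P$ (as an $N$-extender) $\es^U$ agrees with $\es^P$ below $\lh(F^P)$, so $E\in\es^U$ at index $\eta$ and $U|\eta$ is active with $F^{U|\eta}=E$. Writing $\kappa'=\crit(E)=\kappa$, I have $\crit(F^U)=\mu<\kappa=\kappa'$ (since $\mu<\kappa=\crit(j)$ is fixed by $j$), and $(\kappa')^{+U}=\kappa^{+U}=\kappa^{+N}=\rho_{\D}^U$ via $N||\kappa^{++N}=U||\kappa^{++U}$; $U$-totality of $E$ follows likewise. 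Taking $(H,\pi)=(N,j)$, the remaining demands are immediate: $N$ is Dodd-absent-sound by hypothesis, $j$ is a $0$-embedding by Lemma~\ref{lem:Dodd_param_proj_pres} with $\crit(j)=\kappa$, the projectum and parameter values match as above, $E$ is the short extender derived from $j$ by the very definition of the long ultrapower (for $a\in[\lambda(F^P)]^{<\om}$ and $X\in\pow([\kappa]^{|a|})\cap N$, $X\in E_a\iff a\in j(X)$), and $N||\kappa^{++N}=U||\kappa^{++U}$ is forced because $\dom(F^P)=N|\kappa^{++N}$ is preserved by the ultrapower. This verifies that $U$ is stretched-Dodd-absent-solid, and uniqueness identifies $\core_{\D}(U)=N$ with core map $\sigma=j$.

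The main obstacle is the bookkeeping around the identification $F^{U|\eta}=E$, which requires careful application of the long ISC for $F^P$ together with coherence to place $E$ at the correct index in $\es^U$ rather than elsewhere. Everything else reduces to a direct check against Definition~\ref{dfn:stretched-Dodd-absent-solid} using the data already produced by Lemma~\ref{lem:Dodd_param_proj_pres} part~\ref{item:Dodd_make_simulated}.
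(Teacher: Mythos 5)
The paper leaves this lemma without an explicit proof, but your verification is correct and is exactly the direct check one would expect: you apply Lemma~\ref{lem:Dodd_param_proj_pres}\,\ref{item:Dodd_make_simulated} (available since Dodd-absent-soundness implies Dodd-absent-solidity and closeness gives $\rho_{\D}^U=\rho_{\D}^N$), note that $p_{\D}^N\cap\rho_{\D}^N=\emptyset$ by lex-minimality so that discontinuity of $j$ at $\kappa^{+N}$ puts $\eta$ below $\min j(p_{\D}^N)$, and then read off the remaining clauses of Definition~\ref{dfn:stretched-Dodd-absent-solid} with $(H,\pi,E)=(N,j,F^P\rest\lambda(F^P))$, using the long ISC and coherence to see that $E$ is indexed in $\es^U$ at $\eta=\sup j``\kappa^{+N}$. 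The uniqueness remark following the definition then yields $N=\core_{\D}(U)$ with core map $j$, completing the argument; the only point worth flagging is that the coherence step (placing $E\in\es^U$ at index $\eta$) quietly uses that $\Ult_0(N,F^P)$ and $P$ agree below $\lh(F^P)$, which is the same unstated agreement fact the paper itself invokes elsewhere (e.g.\ in the weak-amenability lemma), so this is consistent with the paper's level of detail.
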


\begin{lem}
 Let $N,M$ be as in Lemma \ref{lem:long_proto_Ult}. \tu{(}That lemma
 already assumed that $N$ is Dodd-absent-sound
 and that $F^M$ is close to $N$,
 and by that lemma, $U$ is Dodd-absent-solid.\tu{)} Then
 $N=\core_{\D}(U)$ and $j$ is the Dodd-absent-core map.
\end{lem}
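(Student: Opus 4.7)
The plan is to verify that $U$ falls into the ``otherwise'' branch (clause \ref{item:direct_Dodd-absent-core}) of the definition of $\core_{\D}$, and then read off the identification of $\core_{\D}(U)$ with $N$ directly from Lemma \ref{lem:long_proto_Ult}.

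First I check that $U$ is not stretched-Dodd-absent-solid. Definition \ref{dfn:stretched-Dodd-absent-solid} demands $\rho_{\D}^U=\kappa^{+U}$ for a cardinal $\kappa$ strictly above $\crit(F^U)$, whereas Lemma \ref{lem:long_proto_Ult} gives $\rho_{\D}^U=0$. So $\core_{\D}(U)$ and its core map $\sigma$ are produced by clause \ref{item:direct_Dodd-absent-core}. Concretely, $\rg(\sigma)$ is the set of elements of $U$ generated (via $F^U$) from $\rho_{\D}^U\cup p_{\D}^U=p_{\D}^U$ (using $\rho_{\D}^U=0$), $\core_{\D}(U)^{\passive}$ is its transitive collapse with $\sigma$ the uncollapse, and $F^{\core_{\D}(U)}$ is specified by the pullback prescription $a\in F^{\core_{\D}(U)}(X)\iff \sigma(a)\in F^U(X)$.

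Next I match this against $N$ and $j$. By Lemma \ref{lem:long_proto_Ult}\ref{item:rg(j)=X}, the set of elements of $U$ generated by $p_{\D}^U$ is exactly $\rg(j)$. By Lemma \ref{lem:long_proto_Ult}\ref{item:recover_N,j}, $N^{\passive}$ is the transitive collapse of the substructure of $U^{\passive}$ supported on $\rg(j)$, with $j$ as the collapse map. Hence $\core_{\D}(U)^{\passive}=N^{\passive}$ and $\sigma=j$ on passive parts. The ``moreover'' clause of the same part then says precisely that for all $a\in[\lambda(F^N)]^{<\om}$ and $A\in\pow([\kappa]^{|a|})\cap N$, $A\in (F^N)_a$ iff $j(A)\cap[\kappa]^{|a|}\in (F^U)_{j(a)}$, which is exactly the pullback recipe for $F^{\core_{\D}(U)}$ via $\sigma=j$. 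Therefore $F^{\core_{\D}(U)}=F^N$, completing the identification $\core_{\D}(U)=N$ with $\sigma=j$.

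All the serious work was done in Lemma \ref{lem:long_proto_Ult}; the only content here is selecting the correct clause of the definition of $\core_{\D}$, immediately resolved by $\rho_{\D}^U=0$, and matching two descriptions of the same hull in $U$. There is no real obstacle.
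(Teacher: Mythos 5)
The paper gives no explicit proof of this lemma, treating it as an immediate consequence of Lemma \ref{lem:long_proto_Ult}, and your proposal supplies exactly the verification the paper intends. You correctly observe that $\rho_{\D}^U=0$ rules out stretched-Dodd-absent-solidity (clause 4 of Definition \ref{dfn:stretched-Dodd-absent-solid} forces $\rho_{\D}^U$ to be a successor cardinal above $\crit(F^U)$), so $\core_{\D}(U)$ falls under clause \ref{item:direct_Dodd-absent-core}, and then parts \ref{item:rg(j)=X} and \ref{item:recover_N,j} of Lemma \ref{lem:long_proto_Ult} identify the hull, its transitive collapse $N^{\passive}$ with uncollapse map $j$, and the pullback recipe for the active extender. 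This is correct and complete.
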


\begin{lem}\label{lem:iterate_sim_Dodd-solid}
 Let  $N$ be an active short premouse which is either Dodd-absent-solid or stretched-Dodd-absent-solid. 
Let $P$ be an active premouse.
Suppose $F^P$ is an $N$-extender and $\max(\crit(F^N),\rho_{\D}^N)<\kappa=\crit(F^P)<\dom(F^P)\leq\OR^N$
\tu{(}so $(N,P,0)$ is not proto-anomalous\tu{)}.
Let $U=\Ult_0(N,F^P)$ and $j:N\to U$ be the ultrapower map.
Suppose $U$ is wellfounded. Then:
\begin{enumerate}
 \item $\rho_{\D}^U\leq\rho_{\D}^N$,
 \item if $F^P$ is close to $N$ then $\rho_{\D}^U=\rho_{\D}^N$,
 \item\label{item:iterate_sim_Dodd-solid_if_rho^U_D=rho^N_D} if $\rho_{\D}^U=\rho_{\D}^N$ then:
 \begin{enumerate}[label=\tu{(}\alph*\tu{)}]
 \item $p_{\D}^U=j(p_{\D}^N)$,
 \item if $N$ is Dodd-absent-solid then so is $U$,
 \item\label{item:iterate_sim_Dodd-solid_if_rho^U_D=rho^N_D_if_N_stretched_Dodd-solid} if $N$ is stretched-Dodd-absent-solid then so is $U$, and $j$ is continuous at $\eta=\min(p_{\D}^N)$.
 \item $C=\core_{\D}(U)=\core_{\D}(N)$
 and letting $\pi_N:C\to N$ and $\pi_U:C\to U$
 be the Dodd-absent-core maps, we have $\pi_U=j\com\pi_N$.
 \end{enumerate}
\end{enumerate}
\end{lem}
\begin{proof}
This is mostly standard; one key point is that in part \ref{item:iterate_sim_Dodd-solid_if_rho^U_D=rho^N_D}\ref{item:iterate_sim_Dodd-solid_if_rho^U_D=rho^N_D_if_N_stretched_Dodd-solid},
 $j$ is continuous at $\min(p_{\D}^N)=\eta=\lh(E)$ where $E=F^{N|\eta}$, because letting $\crit(E)=\mu$, we have $\cof^N(\eta)=\mu^{+N}=\rho^N_{\D}<\kappa=\crit(F^P)$.
\end{proof}

We will define the higher fine structural cores by first taking the Dodd-absent-core
(or demanding that the structure is already Dodd-absent-sound),
and then proceeding in the usual way.
 \begin{dfn}
 Let $N$ be an active short premouse.
 We (attempt to) define the \emph{1st core} $\core_1(N)$,
 and the associated \emph{core map} $\sigma:\core_1(N)\to N$, as follows.
 Let $D=\core_{\mathrm{D}}(N)$ and $\sigma_{DN}:D\to N$ be the Dodd-absent core embedding. In order to define $\core_1(N)$ and $\sigma$, we first require that either:
 \begin{enumerate}[label=--]
 \item $N$ is stretched-Dodd-absent-solid (hence $D$ is Dodd-absent-sound
 with $\rho_{\D}^D=\rho_{\D}^N$ etc), or
 \item
 $N$ is Dodd-absent-solid and Dodd-absent-universal (hence $\rho_{\D}^D=\rho_{\D}^N$ and $\sigma_{DN}(p_{\D}^D)=p_{\D}^N)$),
and $D$ is Dodd-absent-solid (hence Dodd-absent-sound).
 \end{enumerate}
Given this, then we define $\core_1(N)$
and $\sigma:\core_1(N)\to N$ as follows:
 \begin{enumerate}
  \item If $D$ is stretched-$1$-solid,
  as witnessed by $\pi:H\to D$,
  then $\core_1(N)=H$ and $\sigma=\sigma_{DN}\com\pi$.
  \item Otherwise, letting $C=\core_1(D)$ be the traditionally defined 1st core of $D$,
  and $\pi:C\to D$ the associated core map,
  then $\core_1(N)=C$ and $\sigma=\sigma_{DN}\com\pi$.
  (That is, $C=\cHull_1^D(\rho_1^D\cup\{p_1^D\})$ and $\pi$ is the uncollapse map.)
  \end{enumerate}

  Now let $n<\om$ and let $N$
  be an $n$-sound premouse,
  and suppose that\[ \text{ if }N\text{ is active short then [}n>0\text{
  and }N\text{ is Dodd-absent-sound].}\]
  We define the $(n+1)$th core $\core_{n+1}(N)$,
  and the associated \emph{core map} $\sigma:\core_{n+1}(N)\to N$, as follows:
  \begin{enumerate}
   \item If $N$ is  stretched-$(n+1)$-solid,
   as witnessed by $\pi:H\to N$,
   then $\core_{n+1}(N)=H$ and $\sigma=\pi$;
   \item Otherwise, $\core_{n+1}(N)$ and the core map are defined in the traditional manner.
   That is, $\core_{n+1}(N)=\cHull_{n+1}^N(\rho_{n+1}^N\cup\{\pvec_{n+1}^N\})$
   and $\sigma$ is the uncollapse map.\qedhere
  \end{enumerate}
 \end{dfn}

\begin{lem}\label{lem:stretch_n+1-sound_if_short_Dodd-abs-sound}
 Let $n,N,P$ be as in Lemma \ref{lem:param_proj_pres} part \ref{item:long_proj}, and suppose further that $N$ is $(n+1)$-sound, $F^P$ is close to $N$,
 and if $N$ is active short
 then $N$ is Dodd-absent-sound. Then $U$ is stretched-$(n+1)$-solid,
 $N=\core_{n+1}(U)$ and $j$ is the core map.

 Suppose further that $N$ is active short. If either $n>0$ or  $\crit(F^P)<\crit(F^N)$ then $U$ is Dodd-absent-sound,
 whereas if $n=0$ and $\crit(F^P)>\crit(F^N)$ then $U$ is not Dodd-absent-sound.\footnote{Suppose $N$ is active short and $n=0$. By the hypotheses of Lemma \ref{lem:param_proj_pres}
 part \ref{item:long_proj},
  then $F^P$ is long and letting $\kappa=\crit(F^P)$, we have
 $\crit(F^N)\neq\kappa$ and
 $\rho_{n+1}^N=\kappa^{+N}$. So if $\crit(F^N)<\kappa$, Lemma \ref{lem:stretch_Dodd-abs-sound_short_ext_above_its_crit} applies and yields
 Lemma \ref{lem:stretch_n+1-sound_if_short_Dodd-abs-sound} in this case;
 if $\kappa<\crit(F^N)$
 then Lemma \ref{lem:Dodd_param_proj_pres} gives that $U$ is Dodd-absent-sound.}
\end{lem}

\begin{lem}\label{lem:iterate_sim_solid}
 Let $n<\om$ and $N$ be a stretched-$(n+1)$-solid $n$-sound premouse,
 and suppose that if $N$ is active short
 then $N$ is
 Dodd-absent-sound. 
Let $P$ be an active premouse.
Suppose $F^P$ is an $N$-extender and $\rho_{n+1}^N<\kappa=\crit(F^P)<\dom(F^P)\leq\rho_n^N$.
Suppose $(N,P,n)$ is not proto-anomalous.
Let $U=\Ult_n(N,P)$ and $j:N\to U$ be the ultrapower map. 
Suppose $U$ is wellfounded. Then:
\begin{enumerate}
 \item $\rho_{n+1}^U\leq\rho_{n+1}^N$,
 \item if $F^P$ is close to $N$ then $\rho_{n+1}^U=\rho_{n+1}^N$,
 \item if $\rho_{n+1}^U=\rho_{n+1}^N$ then:
 \begin{enumerate}
 \item $p_{n+1}^U=j(p_{n+1}^N)$,
 \item $U$ is stretched-$(n+1)$-solid,
 \item $j$ is continuous at $\eta=\min(p_{n+1}^N)$.
 \item $C=\core_{n+1}(U)=\core_{n+1}(N)$
 and letting $\pi_N:C\to N$ and $\pi_U:C\to U$
 be the core maps, we have $\pi_U=j\com\pi_C$.
 \end{enumerate}
\end{enumerate}
\end{lem}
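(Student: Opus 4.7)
The plan is to mirror the proof of Lemma~\ref{lem:iterate_sim_Dodd-solid}, with the stretched-$(n+1)$-solidity witness replacing the stretched-Dodd-absent-solidity one. Let $\pi_N\colon C\to N$ witness stretched-$(n+1)$-solidity of $N$, with $C=\core_{n+1}(N)$, $\crit(\pi_N)=\kappa_E$, and derived short $N$-total extender $E=F^{N|\eta}$ at $\eta=\min(p_{n+1}^N)$, so $\rho_{n+1}^N=\kappa_E^{+N}<\kappa=\crit(F^P)$. Parts (1) and (2) follow immediately from Lemma~\ref{lem:param_proj_pres}(\ref{item:standard_pres})(a)--(b), whose proof does not use $(n+1)$-solidity of $N$. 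For the continuity claim (3)(c), $\eta=\lh(E)$ is an $N$-cardinal of $N$-cofinality $\kappa_E^{+N}=\rho_{n+1}^N<\kappa=\crit(j)$, so $j(\eta)=\sup j``\eta$.

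Now assume $\rho_{n+1}^U=\rho_{n+1}^N$; I split (3)(a) into two pieces. Above $\eta$, the propagation of solidity witnesses from Lemma~\ref{lem:param_proj_pres}(\ref{item:standard_pres})(c)(i) applies verbatim to push the $N$-witnesses for $p_{n+1}^N\setminus\{\eta\}$ forward to $U$-witnesses for $j(p_{n+1}^N\setminus\{\eta\})$. At $\eta$, continuity of $j$ together with the weak-solidity witnesses built into the stretched-solid definition force $j(\eta)=\min(p_{n+1}^U)$: any candidate $\alpha<j(\eta)=\sup j``\eta$ is bounded by $j(\alpha')$ for some $\alpha'<\eta$, and the $N$-witness for $(N,(p_{n+1}^N\setminus\{\eta\})\cup\{\alpha'\})$ being $(n+1)$-solid maps under $j$ to a corresponding $U$-witness, ruling $\alpha$ out as a minimum. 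Combined with the preservation of $(z_{n+1},\zeta_{n+1})$ from Lemma~\ref{lem:param_proj_pres}(\ref{item:z,zeta_pres}), this yields $p_{n+1}^U=j(p_{n+1}^N)$.

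For (3)(b) and (3)(d), set $\pi_U=j\com\pi_N\colon C\to U$. Since $\kappa_E<\kappa=\crit(j)$, we have $\crit(\pi_U)=\kappa_E$; $\pi_U$ is an $n$-embedding as a composition of $n$-embeddings; and by (3)(a), $\pi_U(p_{n+1}^C)=j(p_{n+1}^N\setminus\{\eta\})=p_{n+1}^U\setminus\{j(\eta)\}$. The short extender derived from $\pi_U$ is $F^{U|j(\eta)}=j(E)$ (using continuity), which is $U$-total. Moreover $\kappa_E^{++N}\leq\kappa=\crit(j)$, so $C||\kappa_E^{++C}=N||\kappa_E^{++N}=U||\kappa_E^{++U}$. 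All clauses of stretched-$(n+1)$-solidity for $U$ witnessed by $\pi_U$ now follow, so $\core_{n+1}(U)=C$ with core map $\pi_U=j\com\pi_N$. The main obstacle is the minimality argument for $j(\eta)$ in (3)(a): without both the continuity of $j$ at $\eta$ and the weak-solidity witnesses intrinsic to the stretched structure, an ordinal in $(\sup j``\eta,\,j(\eta))$ might serve as a smaller minimum for $p_{n+1}^U$ and destroy stretched solidity of $U$; together these two ingredients exactly close the gap.
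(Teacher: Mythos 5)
Your proof is correct and follows exactly the approach the paper intends: the paper's own proof of Lemma~\ref{lem:iterate_sim_solid} is only the remark that it is ``like for Lemma~\ref{lem:iterate_sim_Dodd-solid},'' and the proof of that lemma isolates as the one nonstandard ingredient the continuity of $j$ at $\eta=\min(p_{n+1}^N)=\lh(E)$, which you establish the same way (via $\cof^N(\eta)=\kappa_E^{+N}=\rho_{n+1}^N<\kappa=\crit(j)$) and use in the same two places (to identify $\min(p_{n+1}^U)$ with $j(\eta)$ via the weak-solidity clause of the stretched-solid definition, and to see that $j(E)=F^{U|j(\eta)}$ is the short extender derived from $\pi_U=j\circ\pi_N$). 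Your filling in of the remaining clauses of stretched-$(n+1)$-solidity for $U$ from $\pi_U$, and the use of Lemma~\ref{lem:param_proj_pres}(\ref{item:standard_pres})(a)--(b) for parts (1)--(2), match the ``mostly standard'' content the paper is alluding to.
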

\begin{proof}
 This is like for Lemma \ref{lem:iterate_sim_Dodd-solid}. 
\end{proof}

\subsection{Iteration trees}\label{sec:it_trees}

\begin{rem}
We now define $n$-maximal iteration trees. The main difference here is in the rules for determining tree order.
We will set $\pred^\Tt(\alpha+1)$
to be the least $\beta$ such that $\spc(E^\Tt_\beta)<\lgcd(\exit^\Tt_\alpha)$.\footnote{Here ``$\lgcd(P)$'' denotes the largest cardinal of $P$, if it has one.
So $\lgcd(\exit^\Tt_\alpha)=\lambda(E^\Tt_\alpha)$ unless $E^\Tt_\alpha$
is long with no largest generator,
and then $\lgcd(\exit^\Tt_\alpha)=\lambda^{+\exit^\Tt_\alpha}=\lambda^{+M^\Tt_{\alpha+1}}=\nu(E^\Tt_\alpha)$. So in the latter case, generators of $E^\Tt_\alpha$ are not moved by maps $i^\Tt_{\alpha+1,\gamma}$ unless the first extender used along $(\alpha+1,\gamma]^\Tt$ is long.}

 Traditional $n$-maximal iteration trees $\Tt$  (where $n\leq\om$)
 come with an associated \emph{degree} function $\deg^\Tt$. It will be important for us to also keep track of the Dodd-absent-(un)soundness of the models $M^\Tt_\alpha$ of $\Tt$, analogously to how $\deg^\Tt_\alpha$ traditionally keeps track of the degree of (un)soundness of the models $M^\Tt_\alpha$. Toward this purpose,
 we will consider ``degrees'' $d\in(\om+1)\cup\{0^-\}$ (where $0^-=(0,0)$ say).
 In case $M^\Tt_\alpha$ is active short
 and either there is a drop in model along $[0,\alpha]^\Tt$
 or $\deg^\Tt_\alpha\neq n$ (where $\Tt$ is $n$-maximal), we will have:
 \begin{enumerate}[label=--]
  \item 
$F(M^\Tt_\alpha)$ is Dodd-absent-sound
 iff $\deg^\Tt_\alpha\in\om$  iff $\deg^\Tt_\alpha\neq 0^-$, and
 \item if $\deg^\Tt_\alpha=0^-$
 then letting $\beta+1\leq^\Tt\alpha$
 be least such that $(\beta+1,\alpha]^\Tt$ is non-model-dropping
 and $\deg^\Tt_{\beta+1}=0^-$,
 we have that $M^{*\Tt}_{\beta+1}=\core_{\D}(M^\Tt_\alpha)$ and  $i^{*\Tt}_{\beta+1,\alpha}:M^{*\Tt}_{\beta+1,\alpha}\to M^\Tt_\alpha$ is just the Dodd-absent core map.
 \end{enumerate}
\end{rem}
\begin{dfn}\label{dfn:0^-}
 Let $0^-=(0,0)$. (So $0^-\notin\om+1$.)
 Every  premouse is considered \emph{$0^-$-sound}.  We order $(\om+1)\cup\{0^-\}$
 with $0^-<0<1<2<\ldots<\om$.
 If $m\in(\om+1)\cup\{0^-\}$
 and $M$ is an $m$-sound premouse,
 then given any $(N,n)$ such that
$n\in(\om+1)\cup\{0^-\}$ and $N$ is an $n$-sound premouse,
 we write $(N,n)\ins (M,m)$
 iff $N\ins M$ and if $N=M$ then $n\leq m$.
 Given a premouse $M$,
 define $\rho_{0^-}^M=\OR^M$
 and $\rho_{0^-+1}^M=\rho_0^M$ unless $M$ is active short, in which case $\rho_{0^-+1}^M=\max(\kappa^{+M},\rho_{\D}^M)$, where $\kappa=\crit(F^M)$. Define $\Ult_{0^-}=\Ult_0$.
\end{dfn}

\begin{dfn}\label{dfn:n-max}
Let $n\in(\om+1)\cup\{0^-\}$.
 Let $M$ be an $n$-sound premouse
 such that if $M$ is active short
 and $0^-<n$ then $M$ is Dodd-absent-sound.
 An \emph{$n$-maximal iteration tree}
 on $M$ is a tuple \[\Tt=(\left<M_\alpha\right>_{\alpha<\theta},\left<M^*_{\alpha+1},E_\alpha\right>_{\alpha+1<\theta},{<}^{\Tt},\dropset,\deg) \] satisfying the usual requirements
 of $n$-maximality (or of $0$-maximality in case $n=0^-$), except that for each $\alpha+1<\lh(\Tt)$, we have:
 \begin{enumerate}
  \item $\pred(\alpha+1)$ is the least $\beta\leq\alpha$
  such that either:
  \begin{enumerate}
  \item $\crit(E_\alpha)<\lambda(E_\beta)$, or
  \item $E_\alpha$ is short, $E_\beta$ is long, $\nu(E_\beta)$ is a limit ordinal and $\crit(E_\alpha)=\lambda(E_\beta)$.
  \end{enumerate}
  \item Let $\beta=\pred(\alpha+1)$.
  Then $(M^{*}_{\alpha+1},\deg_{\alpha+1})$ is the largest $(N,n)$ such that \[(\exit^\Tt_\beta,0^-)\ins(N,n)\ins (M_\beta,\deg_\beta)\text{ and
  }\dom(E_\alpha)\leq\rho_n^N,\]
  \emph{unless} this results in an active short $N$
  and $n=0$ 
  and $\rho_{0^-+1}^N<\dom(E_\alpha)$,
  in which case $(M^*_{\alpha+1},\deg_{\alpha+1})=(N,0^-)$.
  \item $M_{\alpha+1}=\Ult_d(M^{*}_{\alpha+1},E_\alpha)$ where $d=\deg_{\alpha+1}$, noting that we are using the definition of $\Ult_n$ given by Definitions \ref{dfn:Ult_n} and \ref{dfn:0^-}.
 \end{enumerate}
 
 Suppose $\beta\leq^\Tt\alpha$.
 We say  $\Tt$ \emph{drops in model in $(\beta,\alpha]^\Tt$} if $(\beta,\alpha]^\Tt\cap\dropset^\Tt\neq\emptyset$.
 If $\Tt$ does not drop in model in $(\beta,\alpha]^\Tt$,
 we say $\Tt$ \emph{drops in degree in $(\beta,\alpha]^\Tt$} if $\deg^\Tt_\beta>0$ and $\deg^\Tt_\beta\neq\deg^\Tt_\alpha$.
 If $\Tt$ does not drop in model or degree
 in $(\beta,\alpha]^\Tt$,
 we say  $\Tt$ \emph{drops in Dodd-degree in $(\beta,\alpha]^\Tt$}
 if $\deg^\Tt_\beta=0$ and $\deg^\Tt_\alpha=0^-$. A \emph{drop of any kind} is a drop in model, degree or Dodd-degree.

 We define \emph{$(n,\alpha)$-} and \emph{$(n,\alpha,\beta)^*$-iteration strategies} for $M$ (where $n,M$ are as above) in the usual manner,
 based on the notion of $n$-maximal trees introduced above. (So an $(n,\alpha)$-strategy is a winning strategy for player II in the iteration game producing an $n$-maximal tree through length $\alpha$, and an $(n,\alpha,\beta)^*$-strategy is one for stacks of length $\beta$, consisting of such trees, with the usual conditions.
 The meaning of the superscript ``$*$''
 is as in the game $G^*_k(\om_1,\om_1+1)$, described in \cite[p.~1202, prior to Corollary 1.10]{cmwmwc}.
\end{dfn}

\begin{rem}
 Note that if $M$ is active short, then by definition, we only consider $n$-maximal trees on $M$
 for $n\in\om+1$ assuming that $M$ is Dodd-absent-sound and $n$-sound. (It would have been possible to define $n$-maximal trees on such $M$ without the Dodd-absent-soundness assumption, but we will have no use for them. If we had allowed it, $0$-maximal trees $\Tt$ would anyway be equivalent to $0^-$-maximal  $\Tt'$, except that $\deg^\Tt$ could differ from $\deg^{\Tt'}$.)
\end{rem}

\begin{lem}[Closeness]\label{lem:closeness}
Let $n\in(\om+1)\cup\{0^-\}$.
 Let $\Tt$ be an $n$-maximal tree on an $n$-sound premouse $N$ \tu{(}so if $n\neq 0^-$ and $N$ is active short then $N$ is Dodd-absent-sound\tu{)}.
 Then:
 \begin{enumerate}
  \item for each $\alpha+1<\lh(\Tt)$,
  $E^\Tt_\alpha$ is close to $M^{*\Tt}_{\alpha+1}$,
  \item\label{item:shift_definitions} for all $\alpha+1<^\Tt\beta<\lh(\Tt)$,
  if $(\alpha+1,\beta]^\Tt\cap\dropset^\Tt=\emptyset$ and $k=\max(\deg^\Tt_{\alpha+1},0)=\max(\deg^\Tt_\beta,0)$, then
 for all $X\sub\spc(E^\Tt_\alpha)$,
  \[ X\text{ is }\bfrSigma_{k+1}^{M^{*\Tt}_{\alpha+1}}\text{-definable iff }X\text{ is }\bfrSigma_{k+1}^{M^\Tt_\beta}\text{-definable}.\]
 \end{enumerate}
\end{lem}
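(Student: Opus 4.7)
The plan is to establish (1) and (2) by simultaneous induction on $\lh(\Tt)$: (2) for a segment ending at $\beta$ relies on (1) at all stages $\gamma+1\leq\beta$, while (1) at stage $\alpha+1$ relies on both (1) and (2) at earlier stages.

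Part (2) is proved by iterating the preservation lemmas along the branch $(\alpha,\beta]^\Tt$. At each successor step $\gamma+1\leq^\Tt\beta$ with $\gamma\geq\alpha$, the model $M^\Tt_{\gamma+1}$ is $\Ult_{k'}(M^{*\Tt}_{\gamma+1},E^\Tt_\gamma)$, and by (1) inductively, $E^\Tt_\gamma$ is close to $M^{*\Tt}_{\gamma+1}$. Under the $\pred$ rule of Definition \ref{dfn:n-max}, $\spc(E^\Tt_\alpha)<\crit(E^\Tt_\gamma)\leq\spc(E^\Tt_\gamma)$, so $X\subseteq\spc(E^\Tt_\alpha)$ is bounded in $\spc(E^\Tt_\gamma)$; Lemma \ref{lem:param_proj_pres} part \ref{item:shift_defsss} (standard case) or Lemma \ref{lem:long_proto_Ult} part \ref{item:proto_shift_defsss} (proto-anomalous case) then preserves $\bfrSigma_{k+1}$-definability of $X$ across the ultrapower. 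Composition over successors and elementarity at limits give the full equivalence.

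For part (1), let $E=E^\Tt_\alpha$, $\eta=\lh(E)$, $\kappa=\crit(E)$, $\beta=\pred^\Tt(\alpha+1)$. Each component $E_a$ is $\rSigma_1$-definable over $M^\Tt_\alpha|\eta$ via the amenable top predicate $\dot F$. The task is to transfer this to a $\bfrSigma_1^{M^{*\Tt}_{\alpha+1}}$-definition. Following the Mitchell-Steel pattern, choose $\gamma\leq^\Tt\alpha$ least (after the last drop in $[0,\alpha]^\Tt$) with $\kappa<\crit(i^\Tt_{\gamma,\alpha})$. If $E$ lies in the range of $i^\Tt_{\gamma,\alpha}$, pull the $\rSigma_1$-definition back through elementarity to a definition of the preimage of $E_a$ over $M^\Tt_\gamma$, and use tree agreement at the $\dom(E)$ level (common to $M^\Tt_\alpha$, $M^\Tt_\gamma$, $M^\Tt_\beta$, and $M^{*\Tt}_{\alpha+1}$) to carry it to $M^{*\Tt}_{\alpha+1}$. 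Otherwise $E$ (or $E_a$) is image-derived from some earlier tree extender $E^\Tt_{\alpha'}$ with $\alpha'<\alpha$, and the inductive (1) for $E^\Tt_{\alpha'}$ together with (2) along $[\alpha'+1,\alpha]^\Tt$ yields the required definition.

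The principal obstacle is the proto-anomalous step in Definition \ref{dfn:Ult_n}, where $F^U=F^{U^*}\com E$ and the constants $F^U_J,F^{U^*}_J$ are not literally inter-computable; Lemma \ref{lem:long_proto_Ult} part \ref{item:proto_shift_defsss} supplies the three-way inter-computability of $\bfrSigma_1$-definability on bounded subsets among $N$, $U^*$, and $U$, which is just what part (2) requires. Secondary bookkeeping is needed for the exceptional case (b) of the $\pred$ rule in Definition \ref{dfn:n-max}, where effective agreement between $M^\Tt_\alpha$ and $M^{*\Tt}_{\alpha+1}$ reduces to $\kappa^+$ rather than $\kappa^{++}$; this still suffices for the components $E_a$ of a short $E^\Tt_\alpha$ by the long ISC applied to the predecessor extender.
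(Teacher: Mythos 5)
Your part (2) argument is essentially the paper's: iterate Lemma~\ref{lem:param_proj_pres} part~\ref{item:shift_defsss} (and Lemma~\ref{lem:long_proto_Ult} part~\ref{item:proto_shift_defsss} at the proto-anomalous step) across the non-dropping branch, using closeness from part (1) at each successor stage and elementarity at limits. That is fine, and the observation that $\spc(E^\Tt_\alpha)<\crit(E^\Tt_\gamma)\leq\spc(E^\Tt_\gamma)$ makes the preservation lemmas a fortiori applicable to $X\sub\spc(E^\Tt_\alpha)$.

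The gap is in your primary route for part (1). You propose, in the hard case $E^\Tt_\alpha=F(M^\Tt_\alpha)$, to find $\gamma$ with $\kappa<\crit(i^\Tt_{\gamma,\alpha})$ and ``pull the $\rSigma_1$-definition back through elementarity to a definition of the preimage of $E_a$ over $M^\Tt_\gamma$,'' and then carry this over by tree agreement at the $\dom(E)$ level. But this conflates pullback with the shift-of-definitions phenomenon, and they are not the same operation. First, a generic $a\in[\nu(E^\Tt_\alpha)]^{<\om}$ need not lie in $\rg(i^\Tt_{\gamma,\alpha})$ at all, since $\nu(E^\Tt_\alpha)$ sits far above $\crit(i^\Tt_{\gamma,\alpha})$; so ``pull back'' has no uniform meaning for the indices you actually need. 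Second, even for $a$ in the range, what you obtain over $M^\Tt_\gamma$ is a $\bfrSigma_1$-definition of the \emph{preimage} $\bar E_{\bar a}$; tree agreement at the $\dom(E)$ level does not convert that into a $\bfrSigma_1^{M^{*\Tt}_{\alpha+1}}$-definition of $E_a$ itself, because the hull one needs is not simply an initial segment of $M^{*\Tt}_{\alpha+1}$, and the definition would involve parameters ($F_J$, $F_\downarrow$, etc.) of $M^\Tt_\gamma$ rather than of $M^{*\Tt}_{\alpha+1}$.

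What is actually needed is clause (2) of this very lemma, applied inductively as a \emph{shift} of definitions, not as a pullback. Fix $\gamma$ least with $\gamma+1\leq^\Tt\alpha$, $(\gamma+1,\alpha]^\Tt$ non-model/degree-dropping, and $\beta\leq\gamma$. The combinatorics of $n$-maximality ($\crit(E^\Tt_{\gamma'})\geq\crit(E^\Tt_\alpha)$ for $\gamma+1\leq^\Tt\gamma'+1\leq^\Tt\alpha$, with equality only at $\gamma'=\gamma$ in the long-short crossover) force $\deg^\Tt_{\gamma+1}\in\{0,0^-\}$ and show $\spc(E^\Tt_\alpha)\leq\spc(E^\Tt_\gamma)$. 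Since each $(E^\Tt_\alpha)_a$ is $\bfrSigma_1^{M^\Tt_\alpha}$-definable, clause (2) for $\Tt\rest(\alpha+1)$ (the shift-of-definitions statement, which holds for the \emph{same} $X$ on both sides) gives that $(E^\Tt_\alpha)_a$ is $\bfrSigma_1^{M^{*\Tt}_{\gamma+1}}$-definable. The remaining work — establishing that $M^{*\Tt}_{\gamma+1}\ins M^{*\Tt}_{\alpha+1}$, or else that $(E^\Tt_\alpha)_a$ already belongs to $M^\Tt_\delta$ and hence to $\exit^\Tt_\beta\ins M^{*\Tt}_{\alpha+1}$ — is a case split on $\delta=\pred^\Tt(\gamma+1)$ versus $\beta=\pred^\Tt(\alpha+1)$, and this is where the second $\pred$-rule exception and the protomouse-avoiding ultrapower genuinely enter; your proposal's ``secondary bookkeeping'' sentence gestures at this but does not carry it out. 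In short: replace ``pull back'' with ``apply clause (2) inductively,'' and you then need the $\ins$-comparison argument, not tree agreement, to land in $M^{*\Tt}_{\alpha+1}$.

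Your ``otherwise'' clause (image-derived from an earlier tree extender) does not seem to correspond to a case that actually arises here; once $E^\Tt_\alpha=F(M^\Tt_\alpha)$ and $\rho_1(M^\Tt_\alpha)\leq\tau$, the only route is the shift argument. If $E^\Tt_\alpha\in M^\Tt_\alpha$, or $\rho_1(M^\Tt_\alpha)>\tau$, closeness is immediate by tree agreement, and those are the only easy dichotomies.
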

\begin{proof}
 This is a direct generalization of the short extender proof  \cite[6.1.5]{fsit},
 and partly like in \cite{voellmer}, though also different to \cite{voellmer}, because we do not assume projectum free spaces, and 
 have the modified $\Ult_0$ notion.
 
 We proceed by induction on $\lh(\Tt)$.
 
 Let $\alpha+1<\lh(\Tt)$ be given,
 and suppose the lemma holds for $\Tt\rest(\alpha+1)$.
 
 Let us first see that $E^\Tt_\alpha$ is close to $M^{*\Tt}_{\alpha+1}$.
 Let $\beta=\pred^\Tt(\alpha+1)$. We may assume that $\beta<\alpha$.

 If $E^\Tt_\alpha\in M^\Tt_\alpha$
 or $\rho_1(M^\Tt_\alpha)>\tau=\OR(\dom(E^\Tt_\alpha))$
 then for each $a$, we have $(E^\Tt_\alpha)_a\in M^\Tt_\beta$, which suffices.
 So from now on assume
 \begin{equation}\label{eqn:E^T_alpha_is_active_ext} E^\Tt_\alpha=F(M^\Tt_\alpha)\text{ and } \rho_1(M^\Tt_\alpha)\leq\tau=\OR(\dom(E^\Tt_\alpha)).
\end{equation}
 
 Let $\gamma$ be least such that $\beta\leq\gamma$ and $\gamma+1\leq^\Tt\alpha$
 and $(\gamma+1,\alpha]^\Tt$ does not drop in model or degree. From our assumption in line (\ref{eqn:E^T_alpha_is_active_ext}),  $\deg^\Tt_{\gamma+1}\in\{0,0^-\}$ (since $\lambda(E^\Tt_\gamma)\geq\lambda(E^\Tt_\beta)$).

 Now note that for all $\gamma'$ with $\gamma+1\leq^\Tt\gamma'+1\leq^\Tt\alpha$,
 we have $\crit(E^\Tt_{\gamma'})\geq\crit(E^\Tt_\alpha)$, and if $\crit(E^\Tt_{\gamma'})=\crit(E^\Tt_\alpha)$
 then $\gamma'=\gamma$ and $E^\Tt_\gamma$
 is long and $E^\Tt_\alpha$ is short,
 and hence $F(M^{*\Tt}_{\gamma+1})$ is short
 and $\crit(F(M^{*\Tt}_{\gamma+1}))=\crit(E^\Tt_\alpha)$ (and we form $M^\Tt_{\gamma+1}=\Ult_0(M^{*\Tt}_{\gamma+1},E^\Tt_\gamma)$ as in the special case described in Definition \ref{dfn:Ult_n}, modifying the protomouse).
 But each  measure $(E^\Tt_\alpha)_a$ is $\bfrSigma_1^{M^\Tt_\alpha}$-definable, and so by
 clause \ref{item:shift_definitions}
 and induction, it follows that it is also $\bfrSigma_1^{M^{*\Tt}_{\gamma+1}}$-definable.

 Let $\delta=\pred^\Tt(\gamma+1)$.
 Suppose
  $\beta<\delta$.
 If $M^{*\Tt}_{\gamma+1}\pins M^\Tt_\delta$,
 then by the previous discussion,
 each $(E^\Tt_\alpha)_a$ is in $M^\Tt_\delta$,
 and hence in $\exit^\Tt_\beta$, and hence in $M^{*\Tt}_{\alpha+1}$, which suffices.
 So suppose $M^{*\Tt}_{\gamma+1}=M^\Tt_\delta$.
 Then $\deg^\Tt_\delta>0$ (by choice of $\gamma+1$)
 and since $\beta<\delta$, therefore $\OR(\exit^\Tt_\beta)\leq\rho_1(M^\Tt_\delta)$,
 so each $(E^\Tt_\alpha)_a$ is again in $M^\Tt_\delta$, which again suffices.
 
 So we may assume $\delta\leq\beta$.
 But  $\crit(E^\Tt_\alpha)\leq\crit(E^\Tt_\gamma)$, so
 considering the rules for $\pred^\Tt$,
 if $\delta<\beta$ then $\crit(E^\Tt_\gamma)=\crit(E^\Tt_\alpha)$,
 and $E^\Tt_\gamma$ is \emph{short} but $E^\Tt_\alpha$ long (and $E^\Tt_\delta$ is long with $\nu(E^\Tt_\delta)$ a limit).
 But
 since $(\gamma+1,\alpha]^\Tt$ does not drop in model
 and $E^\Tt_\alpha=F(M^\Tt_\alpha)$,
 if $\crit(E^\Tt_\gamma)=\crit(E^\Tt_\alpha)$
  then $E^\Tt_\gamma$ is \emph{long},
  and $M^\Tt_{\gamma+1}=\Ult_0(M^{*\Tt}_{\gamma+1},E^\Tt_\gamma)$ is formed
  according to the special case of  \ref{dfn:Ult_n}; contradiction.
  
  So $\pred^\Tt(\gamma+1)=\delta=\beta=\pred^\Tt(\alpha+1)$.
  So $M^{*\Tt}_{\gamma+1}\ins M^\Tt_\beta$,
  and we already saw that each $(E^\Tt_\alpha)_a$
  is $\bfrSigma_1^{M^{*\Tt}_{\gamma+1}}$.
  But either
  \begin{enumerate}[label=--]
   \item $\crit(E^\Tt_\alpha)<\crit(E^\Tt_\gamma)$,
   or
   \item $\crit(E^\Tt_\alpha)=\crit(E^\Tt_\gamma)$,
   $E^\Tt_\alpha$ is short and $E^\Tt_\gamma$ long,
   (and $M^\Tt_{\gamma+1}=\Ult_0(M^{*\Tt}_{\gamma+1},E^\Tt_\gamma)$ is formed according to the special case of  \ref{dfn:Ult_n}), so $\spc(E^\Tt_\alpha)<\spc(E^\Tt_\gamma)$,
  \end{enumerate}
  and so in either case, we have $M^{*\Tt}_{\gamma+1}\ins M^{*\Tt}_{\alpha+1}$, and hence $(E^\Tt_\alpha)_a$
   is $\bfrSigma_1^{M^{*\Tt}_{\alpha+1}}$, as desired. This completes the proof that $E^\Tt_\alpha$ is close to $M^{*\Tt}_{\alpha+1}$.
   
Clause  \ref{item:shift_definitions}
for $\Tt\rest(\alpha+2)$ follows by the same fact for $\Tt\rest(\alpha+1)$ and Lemmas \ref{lem:param_proj_pres} and \ref{lem:long_proto_Ult}.
 Propagating clause \ref{item:shift_definitions}
 through limit stages is easy. This completes the proof.
\end{proof}

We can now describe iteration maps
produced by $k$-maximal trees, above a drop of some kind along a branch, in terms of core embeddings:

\begin{lem}\label{lem:dropped_iterated_core_embedding}
Let $m\in(\om+1)\cup\{0^-\}$,
 let $M$ be an $m$-sound premouse
  and $\Tt$ be an $m$-maximal tree on $M$
 of length $\alpha+1$.
 Suppose that $[0,\alpha]^\Tt$ drops in model, degree or Dodd-degree. Let $\beta+1\leq^\Tt\alpha$ be
 least such that $(\beta+1,\alpha]^\Tt$
 does not drop in model, degree or Dodd-degree.
 Let $d=\deg^\Tt_\alpha$.
 Then:
 \begin{enumerate}
  \item If $d\neq 0^-$  then:
  \begin{enumerate}[label=\tu{(}\alph*\tu{)}]
   \item 
$M^{\Tt}_{\alpha}$ is $d$-sound but not $(d+1)$-sound,
\item if $M^\Tt_\alpha$ is active short then it is Dodd-absent-sound,
\item $M^{*\Tt}_{\beta+1}=\core_{d+1}(M^\Tt_\alpha)$ is $(d+1)$-sound and if active short is Dodd-absent-sound, and
\item $i^{*\Tt}_{\beta+1,\alpha}$ is the $(d+1)$-core embedding.
\end{enumerate}
  \item\label{item:d=0,active_short} If $d=0^-$ then:
  \begin{enumerate}[label=\tu{(}\alph*\tu{)}]
  \item $M^\Tt_\alpha$ is active short
  and non-Dodd-absent-sound,
  \item $M^{*\Tt}_{\beta+1}=\core_{\D}(M^\Tt_\alpha)$ is Dodd-absent-sound,
  and
  \item $i^{*\Tt}_{\beta+1,\alpha}$ is
  the Dodd-absent core map.
  \end{enumerate}
  \item\label{item:d=0^-_with_degree_drop} If $d=0^-$ and $[0,\alpha]^\Tt$ drops in model or degree,
  then letting $\beta_1+1\leq^\Tt\alpha$ be least such that $(\beta_1+1,\alpha]^\Tt$ does not drop in model or degree, we have:
  \begin{enumerate}
  \item $M^{*\Tt}_{\beta_1+1}=\core_1(M^\Tt_\alpha)$ is $1$-sound and Dodd-absent-sound, and
  \item $i^{*\Tt}_{\beta_1+1,\alpha}$ is the $1$-core embedding.
  \end{enumerate}
 \end{enumerate}
\end{lem}

The lemma follows readily from the preceding material on preservation of fine structure and Dodd structure.
(Recall that in part \ref{item:d=0^-_with_degree_drop}, the $1$-core is defined
as $\core_1(\core_{\D}(M^\Tt_\alpha))$,
and the $1$-core embedding $\core_1(M^\Tt_\alpha)\to M^\Tt_\alpha$
is the composition $f\com g$ of the two core embeddings $f:\core_{\D}(M^\Tt_\alpha)\to M^\Tt_\alpha$
and $g:\core_1(\core_{\D}(M^\Tt_\alpha))\to\core_{\D}(M^\Tt_\alpha)$.
It is important that we do it in this manner,
as if we instead directly define the $1$-core of $M^\Tt_\alpha$ in the traditional fashion, we can get a different result.)

\subsection{Copying and weak Dodd-Jensen}

We now want to adapt the Shift Lemma and copying construction (for lifting an iteration tree on a premouse $M$ to one on a premouse $N$ via an embedding $\pi:M\to N$), and the usual weak Dodd-Jensen property for iteration strategies, to the premice under consideration. This is somewhat complicated
because we need to consider iteration maps and copy maps
which fail to even be $\Sigma_0$-elementary with respect to the active extender predicate.
These arise because of how we define $\Ult_0$
in order to ``avoid a protomouse'', which leads to embeddings which fail to preserve the critical point of an active short extender, but which have the following property:
\begin{dfn}
 Let $M,N$ be active short premice.
 Let $\pi:M\to N$. We say that $\pi$
 is \emph{$0$-deriving}
 iff $\pi:M^\passive\to N^\passive$ is elementary
 (so $\pi(\lambda^M)=\lambda^N$, as these are the largest cardinals of each), and letting $\kappa=\crit(F^M)$ and $\mu=\crit(F^N)$:
 \begin{enumerate}[label=--] \item $\mu\leq\pi(\kappa)<\lambda(F^N)$,
 and
 \item
 $\pi(i^M_{F^M}(X))=i^N_{F^N}(\pi(X)\cap\mu)$ for all $X\in\pow(\kappa)^M$.\qedhere
 \end{enumerate}
\end{dfn}

\begin{dfn}
Let $m\in(\om+1)\cup\{0^-\}$,
let $M$ be an $m$-sound premouse,
and $\Tt$ be an $m$-maximal tree on $M$. Let $\alpha+1<\lh(\Tt)$.
We say that $\alpha$ is \emph{$\Tt$-proto} iff $\deg^\Tt_{\alpha+1}\in\{0,0^-\}$, $E^\Tt_\alpha$ is long,
$M^{*\Tt}_{\alpha+1}$ is active short and $\crit(E^\Tt_\alpha)=\crit(F(M^{*\Tt}_{\alpha+1}))$.
\end{dfn}
The  following lemma mostly follows from the fine structure preservation facts established earlier.

\begin{lem}
Let $m\in(\om+1)\cup\{0^-\}$, let $M$ be an $m$-sound premouse, and
let $\Tt$ be a successor length $m$-maximal
tree on $M$. Let $\alpha,\beta<\lh(\Tt)$
with $\alpha\leq^\Tt\beta$ and $(\alpha,\beta]^\Tt\cap\dropset^\Tt=\emptyset$
and $\deg^\Tt_\alpha=\deg^\Tt_\beta=n$. Then:
\begin{enumerate}
\item If there is no $\gamma+1\in(\alpha,\beta]^\Tt$ such that $\gamma$ is $\Tt$-proto then $i^\Tt_{\alpha\beta}$ is a $\max(n,0)$-embedding.
\item If $\alpha$ is a successor
and there is no $\gamma+1\in[\alpha,\beta]^\Tt$ such that $\gamma$ is $\Tt$-proto then $i^{*\Tt}_{\alpha\beta}$ is a $\max(n,0)$-embedding.
\item Suppose there is $\gamma+1\in[\alpha,\beta]^\Tt$
such that $\gamma$ is $\Tt$-proto. Then there is a unique such $\gamma$, and letting $\delta=\pred^\Tt(\gamma+1)$, either:
\begin{enumerate}
\item we have:
\begin{enumerate}
\item $M$ is active short
and $(0,\beta]^\Tt$ does not drop in any way,
\item $n=m\in\{0,0^-\}$,
\item $i^\Tt_{0\delta}$ and $i^\Tt_{\gamma+1,\beta}$ are  $0$-embeddings,
\item $i^\Tt_{\delta,\gamma+1}$
and $i^\Tt_{0\beta}$ are $0$-deriving and are cofinal and elementary as maps between passive reducts \tu{(}that is, as maps $(M^\Tt_\delta)^{\passive}\to(M^\Tt_{\gamma+1})^{\passive}$ and $M^\passive\to(M^\Tt_\beta)^{\passive}$ respectively\tu{)}, but are not $0$-embeddings, and in fact
\[ i^\Tt_{0\delta}(\crit(F^M))=\crit(F^{M^\Tt_\delta})=\crit(E^\Tt_\gamma)=\crit(F(M^\Tt_{\gamma+1}))=\crit(F^{M^\Tt_\beta}),\]
but $i^\Tt_{0\beta}(\crit(F^M))>\crit(F(M^\Tt_\beta))$,
\item if $m=0$ \tu{(}so $n=0$\tu{)} then $F(M^\Tt_\beta)$ is Dodd-absent-sound and $\rho_{\D}^{M^\Tt_\beta}>0$ and $\rho_{\D}^M>0$,
\end{enumerate}
or
\item  we have:
\begin{enumerate}
\item $(0,\beta]^\Tt$ drops in some way, and in fact,
$\Tt$ drops in some way at $\gamma+1$, so $\alpha=\gamma+1$,
\item $n=0^-$,
\item $i^{*\Tt}_{\gamma+1,\beta}$ is $0$-deriving
and is cofinal and elementary as a map $(M^{*\Tt}_{\gamma+1})^{\passive}\to(M^\Tt_\beta)^{\passive}$, but is not a $0$-embedding, and in fact,
\[ i^{*\Tt}_{\gamma+1,\beta}(\crit(F^{M^{*\Tt}_{\gamma+1}}))>\crit(F^{M^{*\Tt}_{\gamma+1}})=\crit(E^\Tt_\gamma)=\crit(F^{M^\Tt_{\gamma+1}})=\crit(F^{M^\Tt_\beta}).\]
\item $i^{\Tt}_{\gamma+1,\beta}$ is a $0$-embedding,
\item $M^{*\Tt}_{\gamma+1}=\core_{\D}(M^\Tt_\beta)$
is Dodd-absent-sound,
and $i^{*\Tt}_{\gamma+1,\beta}$ is the Dodd-absent-core map,
\item $\rho_{\D}^{M^{*\Tt}_{\gamma+1}}=0=\rho_{\D}^{M^\Tt_\beta}$.
\end{enumerate}
\end{enumerate}
\end{enumerate}
\end{lem}

\begin{lem}\label{lem:0-deriving}
 Let $\pi:M\to N$ be $0$-deriving
 and $\kappa=\crit(F^M)$ and $\mu=\crit(F^N)$. Then:
 \begin{enumerate}
 \item\label{item:component_measures} For all $a\in[\lambda(F^M)]^{<\om}$
 and all $X\in\pow([\kappa]^{|a|})\cap M$,
 we have
 \[ a\in i^M_{F^M}(X)\iff \pi(a)\in i^N_{F^N}(\pi(X)\cap[\mu]^{|a|}).\]
 \item\label{item:pi``kappa}  $\pi``\kappa\sub\mu\leq\pi(\kappa)$, so
  $\mu\in\rg(\pi)$ iff $\mu=\pi(\kappa)$.
 \item\label{item:pi(X)=restriction_of_image_of_restriction} For all $n<\om$ and all $X\in\pow([\kappa]^{n})\cap M$, we have
 \[ \pi(X)=i^N_{F^N}(\pi(X)\cap[\mu]^n)\cap[\pi(\kappa)]^n.\]
 \end{enumerate}
 \end{lem}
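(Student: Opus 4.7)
The plan is to derive all three parts from the single-variable $0$-deriving equation $\pi(i^M_{F^M}(Y)) = i^N_{F^N}(\pi(Y)\cap\mu)$ for $Y\in\pow(\kappa)^M$, interpreted as an equality of two specific elements of $N$, together with elementarity of $\pi\colon M^\passive\to N^\passive$. The second clause is essentially immediate: apply the equation with $Y=\xi$ for $\xi<\kappa$; since $\xi<\crit(F^M)$ we have $i^M_{F^M}(\xi)=\xi$, so $\pi(\xi)=i^N_{F^N}(\pi(\xi)\cap\mu)$; if $\pi(\xi)\geq\mu$ then the right side equals $i^N_{F^N}(\mu)=\lambda(F^N)$, contradicting $\pi(\xi)<\pi(\kappa)<\lambda(F^N)$. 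Hence $\pi``\kappa\sub\mu$, and the two displayed consequences follow from injectivity and monotonicity of $\pi$ together with $\pi(\kappa)\geq\mu$.

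The main step is a multi-variable strengthening: for each $n<\om$ and each $X\in\pow([\kappa]^n)\cap M$,
\[\pi(i^M_{F^M}(X)) = i^N_{F^N}(\pi(X)\cap[\mu]^n).\]
I plan to reduce this to the single-variable case via the canonical $\Delta_1$-definable G\"odel pairing $\Phi_n\colon[\OR]^n\to\OR$, whose restriction to any infinite cardinal $\gamma$ is a bijection onto $\gamma$. With $Y=\Phi_n[X]\sub\kappa$ and $Z=\pi(X)\cap[\mu]^n$, absoluteness of $\Phi_n$ and elementarity yield $i^M_{F^M}(Y)=\Phi_n[i^M_{F^M}(X)]$, $\pi(Y)=\Phi_n[\pi(X)]$, $\pi(Y)\cap\mu=\Phi_n[Z]$ (using closure of $\mu$ under $\Phi_n$), and $i^N_{F^N}(\pi(Y)\cap\mu)=\Phi_n[i^N_{F^N}(Z)]$; applying the single-variable equation to $Y$ and stripping $\Phi_n$ by bijectivity then yields the displayed identity. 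The first clause follows directly: $a\in i^M_{F^M}(X)$ iff $\pi(a)\in\pi(i^M_{F^M}(X))$ by elementarity plus injectivity of $\pi$, and by the multi-variable identity this is in turn equivalent to $\pi(a)\in i^N_{F^N}(Z)$.

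For the third clause, I would invoke the internal identity $i^M_{F^M}(X)\cap[\kappa]^n = X$ in $M$ (since $i^M_{F^M}$ fixes every element of $[\kappa]^n$), noting that $i^M_{F^M}(X)\in M$ by the standard agreement $M|\lambda(F^M)^{+M}=\Ult(M,F^M)|\lambda(F^M)^{+\Ult}$. Applying elementarity of $\pi$ on $M^\passive$ then gives $\pi(i^M_{F^M}(X))\cap[\pi(\kappa)]^n=\pi(X)$, which combined with the multi-variable $0$-deriving identity delivers the desired equation. The main point requiring care is the correct reading of ``$\pi(i^M_{F^M}(X))$'' as $\pi$ applied to this element of $M$ rather than as a pointwise image, and the verification that the chosen G\"odel pairing has the requisite closure and absoluteness properties at $\mu,\kappa,\lambda(F^M),\lambda(F^N)$.
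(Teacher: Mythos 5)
Your argument is correct and follows essentially the same route as the paper's: derive parts~\ref{item:component_measures} and \ref{item:pi(X)=restriction_of_image_of_restriction} from a multi-variable form of the defining equation of $0$-deriving, and part~\ref{item:pi``kappa} from a direct application of the single-variable equation (your choice $Y=\xi$ is a harmless variant of the paper's $X=\kappa\cut\gamma$). The one place you add rigor is in explicitly reducing the multi-variable identity $\pi(i^M_{F^M}(X))=i^N_{F^N}(\pi(X)\cap[\mu]^n)$ to the single-variable case via G\"odel pairing (and in noting that $i^M_{F^M}(X)\in M$ by coherence, so that $\pi$ may be applied to it); the paper uses both facts without comment, since the definition of $0$-deriving is literally only stated for $X\in\pow(\kappa)^M$.
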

 \begin{proof}
 Part \ref{item:component_measures}: This follows immediately from the definitions.
 
 Part \ref{item:pi``kappa}: Let $\gamma<\kappa$
 and  $X=\kappa\cut\gamma$.
 Then $X\neq\emptyset$, so $i^M_{F^M}(X)\neq\emptyset$, so
 \[ \emptyset\neq\pi(i^M_{F^M}(X))=i^N_{F^N}(\pi(X)\cap\mu),\]
 so $\pi(X)\cap\mu\neq\emptyset$,
 but $\pi(X)\cap\mu=(\pi(\kappa)\cut\pi(\gamma))\cap\mu=\mu\cut\pi(\gamma)$, and so
  $\pi(\gamma)<\mu$.
  
  Part \ref{item:pi(X)=restriction_of_image_of_restriction}:  We have $X=i^M_{F^M}(X)\cap[\kappa]^n$.
  So applying $\pi$ and the definitions,
  \[ \pi(X)=\pi(i^M_{F^M}(X))\cap[\pi(\kappa)]^n=i^N_{F^N}(\pi(X)\cap[\mu]^n)\cap[\pi(\kappa)]^n.\qedhere\]
\end{proof}

We now proceed to generalize the usual Shift Lemma to our context. This is somewhat complicated by having to deal with $0$-deriving embeddings.
We state 4 distinct variants (Lemmas \ref{lem:shift_lemma_(i)}--\ref{lem:shift_lemma_iv}), so that we don't need to deal with all possibilities at once.
\begin{lem}[Shift Lemma I]\label{lem:shift_lemma_(i)}
\label{item:case_m-lifting}
Let  $m<\om$ and $M,N$ be $m$-sound
 premice and $\pi:M\to N$ be $m$-lifting.
Let $P,Q$ be active  premice
 and $\sigma:P\to Q$ be
 such that either:
 \begin{enumerate}[label=\tu{(}\alph*\tu{)}]
 \item\label{item:case_short} $F^P,F^Q$ are short and $\sigma:P\to Q$ is $0$-deriving,
 or
 \item\label{item:case_long} $F^P,F^Q$ are long and $\sigma:P\to Q$ is $0$-lifting.
 \end{enumerate}
 Suppose that $F^P$ is an $M$-extender with
 $\dom(F^P)\leq\rho_m^M$
 and $F^Q$ is an $N$-extender with $\dom(F^Q)\leq\rho_m^N$.
 Suppose that $\pi\rest\dom(F^P)=\sigma\rest\dom(F^P)$.
 Let $\lambda=\lambda(F^P)$.
 Then there is a unique
 embedding
 \[\psi:\Ult_m(M,F^P)\to\Ult_m(N,F^Q) \]
 such that
 $\psi$ is $m$-lifting,
   $\psi\com i^{M,m}_{F^P}=i^{N,m}_{F^Q}\com\pi$,
  $\psi\rest\lambda=\sigma\rest\lambda$
  and
if \ref{item:case_long} holds, $\psi\rest\lambda^{+P}=\sigma\rest\lambda^{+P}$.
Moreover,
 \[ \psi\Big([a,f_{t,q}^M]^{M,m}_{F^P}\Big)=[\sigma(a),f_{t,\pi(q)}^{N}]^{N,n}_{F^Q}\]
for all $\rSigma_m$ Skolem terms $t$ and  $q\in M$, where  $f^M_{t,q}:_{\mathrm{p}}M\to M$ is the partial function where $f^M_{t,q}(x)=t^M(q,x)$ for $x\in M$, and likewise for $f^N_{t,\pi(q)}$,
and all $a\in[\lambda]^{<\om}$,
 and in fact all $a\in[\lambda^{+P}]^{<\om}$ in case \ref{item:case_long} holds.
\end{lem}
\begin{proof} Let $\kappa=\crit(F^P)$ and $\mu=\crit(F^Q)$.
\begin{case}\label{case:SLi_case_short_holds}Hypothesis \ref{item:case_short} holds;
so $F^P,F^Q$ are short.

Here one can just use the proof of the usual Shift Lemma (see \cite[Lemma 5.2]{fsit}),
 noting that for all $a\in[\lambda]^{<\om}$
 and all $X\in\pow([\kappa]^{|a|})^M$,
 we have
 \[ (X,a)\in F^P\iff (\sigma(X)\cap[\mu]^{|a|},\sigma(a))\in F^Q, \]
 by Lemma \ref{lem:0-deriving}, which is enough for the basic elementarity
 calculations.
 \end{case}
  \begin{case} Hypothesis \ref{item:case_long} holds;
  so $F^P,F^Q$ are long.

This is also via the usual proof,
  except when $m=0$,
  $M,N$ are active short
  and $\crit(F^M)=\crit(F^P)$,
  and hence (since $\pi,\sigma$ are both $0$-lifting) $\crit(F^N)=\crit(F^Q)$.
  This case is novel, since $F^M,F^N$ are short and $F^P,F^Q$ long, so
   we form $Y=\Ult_0(M,F^P)$
  and $Z=\Ult_0(N,F^Q)$ via  \ref{dfn:Ult_n}, avoiding the protomouse.
  But note that we still want to establish that $\psi$ (defined as usual) is $0$-lifting.
  Let $Y'$ and $Z'$ be the
  protomice associated to $Y$ and $Z$.
  Then certainly
  $\psi:Y'\to Z'$
  is $\Sigma_0$-elementary in the language with $\dot{\in},\dot{\es},\dot{F}$, and $\psi\com i^{M}_{F^P}=i^N_{F^Q}\com\pi$. But $F^Y=F^{Y'}\com (F^P\rest\lambda(F^P))$,  $F^Z=F^{Z'}\com (F^Q\rest\lambda(F^Q))$,  $F^P\rest\lambda(F^P)\in P$ and \[\psi(F^P\rest\lambda(F^P))=\sigma(F^P\rest\lambda(F^P))=F^Q\rest\lambda(F^Q), \]
  and it follows that $\psi:Y\to Z$ is $\Sigma_0$-elementary in the language with $\dot{\in},\dot{\es},\dot{F}$. It remains
  to see  $\psi(F_J^Y)=F_J^Z$.
  But if $M,N$ are type A then $F_J^Y=F^P\rest\lambda(F^P)$
  and $F_J^Z=F^Q\rest\lambda(F^Q)$,
  which suffices.
  And if $M,N$ are type B then $F_J^Y=i^{M}_{F^P}(F_J^M)\com (F^P\rest\lambda(F^P))$
  and $F_J^Z=i^N_{F^Q}(F_J^N)\com(F^Q\rest\lambda(F^Q))$,
  so $\psi(F_J^Y)=F_J^Z$.
  Finally if $M,N$ are type C then so are $Y,Z$, and $F_J^Y=\emptyset=F_J^Z$.\qedhere
\end{case}
 \end{proof}

\begin{lem}[Shift Lemma II]\label{lem:shift_lemma_(ii)}\label{item:case_0-deriving}
Let
 $M,N$ be active short premice and $\pi:M\to N$ be $0$-deriving.
Let $P,Q$ be active  premice
 and $\sigma:P\to Q$ where either:
 \begin{enumerate}[label=\tu{(}\alph*\tu{)}]
 \item\label{item:shift_lemma_(ii)_case_short} $F^P,F^Q$ are short and $\sigma:P\to Q$ is $0$-deriving,
 and if
 $\crit(F^P)=\crit(F^M)$
 then $\crit(F^Q)=\crit(F^N)$,
 or
 \item\label{item:shift_lemma_(ii)_case_long} $F^P,F^Q$ are long and $\sigma:P\to Q$ is $0$-lifting.
 \end{enumerate}
 Suppose that $F^P$ is an $M$-extender with
 $\dom(F^P)\leq\rho_0^M$
 and $F^Q$ is an $N$-extender with $\dom(F^Q)\leq\rho_0^N$.
 Suppose that:
 \begin{enumerate}[label=--]
 \item if $F^P,F^Q$ are short then
 $\pi(X)\cap\crit(F^Q)=\sigma(X)\cap\crit(F^Q)$ for all $X\in\dom(F^P)$ \tu{(}so $\pi(\crit(F^P))\geq\crit(F^Q)$\tu{)}, and
 \item if $F^P,F^Q$ are long then $\pi\rest\dom(F^P)=\sigma\rest\dom(F^P)$.
 \end{enumerate}
 Let $\lambda=\lambda(F^P)$.
 Then there is a unique
 embedding
 \[\psi:\Ult_0(M,F^P)\to\Ult_0(N,F^Q) \]
 such that
$\psi$ is $0$-deriving,
  $\psi\com i^{M,0}_{F^P}=i^{N,0}_{F^Q}\com\pi$, $\psi\rest\lambda=\sigma\rest\lambda$
  and
if \ref{item:shift_lemma_(ii)_case_long} holds, $\psi\rest\lambda^{+P}=\sigma\rest\lambda^{+P}$.
 Moreover,
 \[\psi\Big([a,f]^{M,0}_{F^P}\Big)=[\sigma(a),\pi(f)]^{N,0}_{F^Q}\]
 for all $f\in M$, all $a\in[\lambda]^{<\om}$,
 and in fact all $a\in[\lambda^{+P}]^{<\om}$ in case \ref{item:shift_lemma_(ii)_case_long} holds.\end{lem}
\begin{proof}

  \begin{casetwo} Hypothesis \ref{item:shift_lemma_(ii)_case_short} holds; so $F^P,F^Q$ are short.

 Suppose   $\crit(F^P)=\crit(F^M)$, so by  \ref{item:shift_lemma_(ii)_case_short},
 also  $\crit(F^Q)=\crit(F^N)$. Let $U_M=\Ult_0(M,F^P)$
 and $U_N=\Ult_0(N,F^Q)$.
We have  $\psi``\crit(F^{U_M})\sub\crit(F^{U_N})$, since
 $\crit(F^{U_M})=\lambda(F^P)$
 and $\crit(F^{U_N})=\lambda(F^Q)$
 and $\sigma``\lambda(F^P)\sub\lambda(F^Q)$
 and $\sigma\rest\lambda(F^P)\sub\psi$. Now the commuting diagram required for $\psi$ to be $0$-deriving results from the corresponding diagram for $\pi$ and the fact that every $A\in U_M|\crit(F^{U_M})^{+U_M}$
 is of form $i^{M,0}_{F^P}(f)(a)$
 for some $f\in M|\crit(F^M)^{+M}$
 and $a\in[\lambda(F^P)]^{<\om}$.
 That is, because $\pi$ is $0$-deriving,
 letting $\mu'=\crit(F^{U_N})$,
 for all $X\sub\crit(F^{U_M})$ with $X\in\rg(i^{M,0}_{F^P})$, we have
 \begin{equation}\label{eqn:0-deriving_comm}\psi(i^{U_M,0}_{F^{U_M}}(X))=i^{U_N,0}_{F^{U_N}}(\psi(X)\cap[\mu']^{<\om}).\end{equation}
 But then because every
 $X\in U_M\cap\pow(F^{U_M})$ is
 of the form $X=i^{M,0}_{F^P}(f)(a)$
 for some $f\in M$ and $a\in[\lambda(F^P)]^{<\om}$, and so $\psi(a)\in U_N|\mu'$, it follows that
 line (\ref{eqn:0-deriving_comm})
 also holds for all such $X$.

If instead $\crit(F^P)\neq\crit(F^M)$ then it is similar or easier; if $\crit(F^P)<\crit(F^M)$
 then  note that  $\crit(F^Q)\leq\sigma(\crit(F^P))<\crit(F^N)$, since by Lemma \ref{lem:0-deriving},
 we have $\sup\pi``\crit(F^M)\leq\crit(F^N)$.
 \end{casetwo}
\begin{casetwo} Hypothesis \ref{item:shift_lemma_(ii)_case_long} holds.

If $\crit(F^P)\neq\crit(F^M)$ then things are much as above.
So suppose $\crit(F^P)=\crit(F^M)$.
If $\crit(F^N)=\pi(\crit(F^M))$ then it is easy (we don't need to consider $F_J^M$, as no consideration of this is made in the definition of \emph{$0$-deriving}).
So, using Lemma \ref{lem:0-deriving},
we may assume that $\sup\pi``\kappa\leq\mu<\pi(\kappa)$ where $\kappa=\crit(F^M)$ and $\mu=\crit(F^N)$.
So $\Ult_0(M,F^P)$ is formed to avoid the protomouse, whereas $\Ult_0(N,F^Q)$ is formed directly. We get $\psi(\kappa)=\pi(\kappa)>\mu=\crit(F^N)=\crit(F^Z)$. Considering these things, it is now just a small diagram chasing computation to see
that $\psi$ is $0$-deriving; Lemma \ref{lem:0-deriving} part \ref{item:pi(X)=restriction_of_image_of_restriction} is useful. We leave the details to the reader.\qedhere
 \end{casetwo}
\end{proof}
\begin{lem}[Shift Lemma III]\label{lem:shift_lemma_iii}
Let  $M,N$ be active short premice
and $\pi:M\to N$ be $0$-deriving
with $\pi(\kappa)>\crit(F^N)$
where $\kappa=\crit(F^M)$.
Let $P,Q$ be active short premice
 and $\sigma:P\to Q$ be
  $0$-lifting.
 Suppose  $F^P$ is an $M$-extender with
 $\kappa=\crit(F^P)$
 and $F^Q$ is an $N$-extender.
 Suppose that $\pi\rest(M|\kappa^{+M})=\sigma\rest(M|\kappa^{+M})$.
 Then there is a unique $0$-deriving
 embedding
 \[\psi:P\to\Ult_0(N,F^Q) \]
 such that $\psi\rest\lambda=\sigma\rest\lambda$ where $\lambda=\lambda(F^P)$.
\end{lem}
\begin{proof}
 First let $W=\Ult_0(M,F^P)$ and $X=\Ult_0(N,F^Q)$  and define
 $\tau: W\to X$
 via the Shift Lemma, so
 \[ \tau\com i^{M,0}_{F^P}=i^{N,0}_{F^Q}\com\pi \]
 and $\tau\rest P^{\passive}=\sigma$.
 Although $\tau$ is fully elementary
 as a map $W^{\passive}\to X^{\passive}$,
 it is not $0$-deriving
 (by Lemma \ref{lem:0-deriving} part \ref{item:pi``kappa}, and since $\kappa<\crit(F^W)=\lambda$ but
 $\tau(\kappa)=\sigma(\kappa)=\pi(\kappa)>\crit(F^N)=\crit(F^X)$).
 We have $\lambda=\crit(F^W)$ and $P^{\passive}=W|\lambda^{+W}$.
 So define
 $\psi:P\to X$ by \[\psi=\tau\com i^{P,0}_{F^W}=\tau\com i^{W|\lambda^{+W},0}_{F^W}.\] Then since $\crit(i^{P,0}_{F^W})=\lambda$,
 we have $\psi\rest\lambda=\tau\rest\lambda=\sigma\rest\lambda$. (But $\psi(\lambda)>\sigma(\lambda)$.) Clearly $\psi$ is elementary as a map $P^{\passive}\to X^{\passive}$. So to see that $\psi$ is $0$-deriving, we have to verify that for all
$A\sub\kappa$ with \[A\in P|\kappa^{+P}=M|\kappa^{+M},\]
we have $\psi(i^{P|\kappa^{+P},0}_{F^P}(A))=i^{X,0}_{F^X}(\psi(A)\cap\mu)$, where $\mu=\crit(F^X)=\crit(F^N)$.
Well,
\[\begin{split}
\psi(i^{P|\kappa^{+P},0}_{F^P}(A))&=\tau(i^{W|\lambda^{+W},0}_{F^W}(i^{P|\kappa^{+P},0}_{F^P}(A)))\\
&=\tau(i^{M,0}_{F^P}(i^{M|\kappa^{+M},0}_{F^M}(A)))\\
&=i^{N,0}_{F^Q}(\pi(i^{M|\kappa^{+M},0}_{F^M}(A)))\\
&=i^{N,0}_{F^Q}(i^{N|\mu^{+N},0}_{F^N}(\pi(A)\cap \mu))\\
&=i^{X,0}_{F^X}(\pi(A)\cap\mu),\end{split}\]
the latter being because $\crit(F^Q)=\pi(\kappa)>\mu$. Since $\psi(A)=\sigma(A)=\pi(A)$,  we are done.
\end{proof}

The following further variant
is easy to see:

\begin{lem}[Shift Lemma IV]\label{lem:shift_lemma_iv}
 Let $m<\om$ and let $M,N$ be $m$-sound
 premice
 and $\pi:M\to N$ be $m$-lifting.
 Let $P,Q$ be active short premice
 and $\sigma:P\to Q$ be $0$-deriving.
 Let $\kappa=\crit(F^P)$ and $\mu=\crit(F^Q)$.
 Suppose that $F^P$ is an $M$-extender
 with $\kappa^{+M}\leq\rho_m^M$
 and $F^Q$ is an $N$-extender
 with $\mu^{+N}\leq\rho_m^N$.
 Suppose that $\pi(X)\cap\mu=\sigma(X)\cap\mu$
  for all $X\in\pow(\kappa)\cap M$.
  Then there is a unique embedding
  \[ \psi:\Ult_m(M,F^P)\to\Ult_m(N,F^Q)\]
  such that
 $\psi$ is $m$-lifting,
 $\psi\com i^{M,m}_{F^P}=i^{N,m}_{F^Q}\com\pi$ and
 $\psi\rest\lambda=\sigma\rest\lambda$
  where $\lambda=\lambda(F^P)$.
Moreover,
\[ \psi\Big([a,f^M_{t,q}]^{M,m}_{F^P}\Big)=[\sigma(a),f^N_{t,\pi(q)}]^{N,n}_{F^Q}\]
for all $a,t,q$ as in Shift Lemma I \tu{(}\ref{lem:shift_lemma_(i)}\tu{)}.
\end{lem}

We next discuss a version of the copying construction
 starting with $0$-deriving embeddings
$\pi:M\to N$ where $M,N$ are active short and $\pi(\crit(F^M))>\crit(F^N)$,
using the  Shift Lemmas above,
and in particular Shift Lemma III \ref{lem:shift_lemma_iii}. All extenders used in $\Tt$ will lift to copies used in $\Uu$, but in certain situations, $\Uu$ also uses extra extenders.
 These situations correspond to applications of Shift Lemma III.
 Here we will have a copy map $\pi_\beta:M^\Tt_\beta\to M^\Uu_\beta$ and some $\alpha\geq\beta$ with $\pred^\Tt(\alpha+1)=\beta$
 and $E^\Tt_\alpha$  short
 with $\crit(E^\Tt_\alpha)=\crit(F(M^\Tt_\beta))$ and $E^\Tt_\alpha$ being $M^\Tt_\beta$-total,
 and we will have $\pi_\beta(\crit(F(M^\Tt_\beta)))>\crit(F(M^\Uu_\beta))$,
 but $E^\Tt_\alpha$
 not the image of $F^M$,
and we will have a $0$-lifting embedding $\exit^\Tt_\alpha\to\exit^\Uu_\alpha$. Here we ``insert'' $E^\Uu_\alpha$,
and $\Uu$ has an extra node $(\alpha,0)$, indexed  with $\alpha<(\alpha,0)<(\alpha+1)$ (note this is just the index order, not tree order).
We apply Shift Lemma III
to produce a $0$-deriving embedding $\psi_\alpha:\exit^\Tt_\alpha\to M^\Uu_{\alpha 0}=\Ult_0(M^\Uu_\beta,E^\Uu_\alpha)$,
and then proceed with copying,
using $E^\Tt_\alpha$ and its copy $E^\Uu_{\alpha 0}=F(M^\Uu_{\alpha 0})$.
The set $S$ in \ref{dfn:0-deriving_copying_con} below is the set of all $\alpha$ at which this situation occurs.

\begin{dfn}[Copying construction for $0$-deriving embeddings]\label{dfn:0-deriving_copying_con}
 Let $M,N$ be  active short premice.
 Let $\pi:M\to N$ be $0$-deriving.
Let $\Tt$ be $0^-$-maximal on $M$
and $\Uu$ be $0^-$-maximal on $N$.
Then we say that $\Uu$ is the \emph{$\pi$-copy} of $\Tt$ (written $\Uu=\pi\Tt$), as witnessed by \emph{copy maps} \[\pi_\alpha:M^\Tt_\alpha\to M^\Uu_\alpha\]
for $\alpha<\lh(\Tt)$
and \emph{shift set} $S$ and \emph{shifted copy maps}
\[ \psi_\alpha:\exit^\Tt_\alpha\to M^\Uu_{\alpha0}
\]
for $\alpha\in S$,
in the following circumstances:
\begin{enumerate}
 \item $S\sub\lh(\Tt)^-=\{\alpha\bigm|\alpha+1<\lh(\Tt)\}$, and $\Uu$ is indexed by $\lh(\Tt)\cup (S\cross\{0\})$, with indices ordered as $\alpha<(\alpha,0)<\alpha+1$ when $\alpha\in S$.
 \item $({<^{\Uu}}\rest\lh(\Tt))={<^{\Tt}}$
 and for every $\alpha\in S$, $(\alpha,0)$ is an end-node of $<^{\Uu}$ (that is, there is no $x\in\lh(\Tt)\cup (S\cross\{0\})$ such that $(\alpha,0)<^{\Uu}x$).
 \item Letting $\alpha+1<\lh(\Tt)$
 and $\beta=\pred^\Tt(\alpha+1)$,
 we have
  $\alpha\in S$ iff
  \begin{enumerate}[label=--]
  \item $\pi(\crit(F^M))>\crit(F^N)$,\footnote{So if $\pi(\crit(F^M))=\crit(F^N)$ then $S=\emptyset$, and everything is much more routine.}
   \item 
$[0,\alpha+1]^\Tt\cap\dropset^\Tt=\emptyset$
(so in particular,
$[0,\beta]^\Tt\cap\dropset^\Tt=\emptyset$),
\item 
  $E^\Tt_\alpha$ is short,
  \item 
  $\crit(E^\Tt_\alpha)=\crit(F(M^\Tt_\beta))$,
  and
  \item either $[0,\alpha]^\Tt\cap\dropset^\Tt\neq\emptyset$
  or $E^\Tt_\alpha\neq F(M^\Tt_\alpha)$.
  \end{enumerate}
 \item For all $\alpha<\lh(\Tt)$, we have
 $[0,\alpha]^\Tt\cap\dropset^\Tt=\emptyset$
 iff $[0,\alpha]^\Uu\cap\dropset^\Uu=\emptyset$.
 \item For every $\alpha\in S$,
 we have $[0,(\alpha,0)]^\Uu\cap\dropset^\Uu=\emptyset$.
 \item If $\alpha<\lh(\Tt)$ and $[0,\alpha]^\Tt\cap\dropset^\Tt=\emptyset$ then:
 \begin{enumerate}
 \item $\pi_\alpha:M^\Tt_\alpha\to M^\Uu_\alpha$
 is $0$-deriving,
 \item 
$\pi_\alpha\com i^{\Tt}_{\beta\alpha}=i^\Uu_{\beta\alpha}\com\pi_\beta$
for all $\beta\leq^\Uu\alpha$.\footnote{So $\pi_\beta(\crit(F(M^\Tt_\beta)))>\crit(F(M^\Uu_\beta))$ iff $\pi(\crit(F^M))>\crit(F^N)$.}
 \end{enumerate}
 \item If $\alpha<\lh(\Tt)$ and $[0,\alpha]^\Tt\cap\dropset^\Tt\neq\emptyset$ and $d=\deg^\Tt_\alpha$ and $d'=\max(d,0)$ then:
 \begin{enumerate}
  \item  $d=\deg^\Uu_\alpha$,
 \item $\pi_\alpha:M^\Tt_\alpha\to M^\Uu_\alpha$
 is a near $d'$-embedding,\footnote{Note that this uses the argument of \cite{fs_tame} for propagating near embeddings; this uses the inductive hypothesis
 \ref{item:fstame_inductive} to  keep
 things going.}
 \item letting $\beta+1\leq^\Tt\alpha$
 be least such that $(\beta+1,\alpha]^\Tt$
 does not drop in model, degree or Dodd-degree, and $\gamma=\pred^\Tt(\beta+1)$, then:
 \begin{enumerate}
 \item  $\beta+1\leq^\Uu\alpha$
 is also least such that $(\beta+1,\alpha]^\Uu$ does not drop in model,
 degree or Dodd-degree, and $\gamma=\pred^\Uu(\beta+1)$, 
 \item $\pi_\gamma(M^{*\Tt}_{\beta+1})=M^{*\Uu}_{\beta+1}$,
 \item for all $\xi\in[\beta+1,\alpha]^\Tt$
 we have $\pi_\alpha\com i^\Tt_{\xi\alpha}=i^\Uu_{\xi\alpha}\com\pi_\xi$,
 \item $\pi_\alpha\com i^{*\Tt}_{\beta+1,\alpha}=i^{*\Uu}_{\beta+1,\alpha}\com(\pi_\gamma\rest M^{*\Tt}_{\beta+1})$,
 \end{enumerate}
 \item\label{item:fstame_inductive} for each $\varepsilon+1\leq^\Tt\alpha$
 with $(\varepsilon+1,\alpha]^\Tt\cap\dropset^\Tt=\emptyset$
 and $d'=\max(\deg^\Tt_{\varepsilon+1},0)$,
 for each $\rSigma_{d'+1}$ formula $\varphi$
 and each $q\in M^\Tt_\alpha$,
 there is $q'\in M^{*\Tt}_{\varepsilon+1}$
 and an $\rSigma_{d'+1}$ formula  $\varphi'$
 such that
 letting $\gamma=\pred^\Tt(\varepsilon+1)$ and $\mu=\spc(E^\Tt_{\varepsilon})$, for all $\xi<\mu$, we have
 \[ M^{\Tt}_{\alpha}\sats\varphi(q,\xi)\iff M^{*\Tt}_{\varepsilon+1}\sats\varphi'(q',\xi),\]
 and for all $\xi<\pi_\alpha(\mu)=\pi_\gamma(\mu)=\pi_{\varepsilon}(\mu)=\spc(E^\Uu_\varepsilon)$, we have
 \[ M^{\Uu}_{\alpha}\sats\varphi(\pi_\alpha(q),\xi)\iff M^{*\Uu}_{\varepsilon+1}\sats\varphi'(\pi_{\gamma}(q'),\xi).\]
 \end{enumerate}
 
  \item For each $\alpha+1<\lh(\Tt)$ (independent of whether $\alpha\in S$),
  we have  $E^\Uu_\alpha=\pi_\alpha(E^\Tt_\alpha)$ 
 (as usual, in case $E^\Tt_\alpha=F(M^\Tt_\alpha)$, this means that $E^\Uu_\alpha=F(M^\Uu_\alpha)$); note that therefore, we have
 \[ \pi_\alpha\rest\exit^\Tt_\alpha:\exit^\Tt_\alpha\to\exit^\Uu_\alpha \]
 and\[\lambda(E^\Uu_\alpha)=\pi_\alpha(\lambda(E^\Tt_\alpha)) \]
 (but   if   $\pi(\crit(F^M))>\crit(F^N)$ and $[0,\alpha]^\Tt\cap\dropset^\Tt=\emptyset$
 and $E^\Tt_\alpha=F(M^\Tt_\alpha)$ 
 then $\crit(E^\Uu_\alpha)>\pi_\alpha(\crit(E^\Tt_\alpha))$).
 \item If $\alpha\in S$ and $\beta=\pred^\Tt(\alpha+1)$ then $\psi_\alpha:\exit^\Tt_\alpha\to M^\Uu_{\alpha0}=\Ult_0(M^\Uu_\beta,E^\Uu_\alpha)$ is defined from $\pi_\alpha\rest\exit^\Tt_\alpha$ and $\pi_\beta$ via  Shift Lemma III \ref{lem:shift_lemma_iii},
$E^\Uu_{\alpha0}=F(M^\Uu_{\alpha 0})$,
 and $\pi_{\alpha+1}$ is defined
 via Shift Lemma II \ref{lem:shift_lemma_(ii)}\ref{item:shift_lemma_(ii)_case_short}.
 
 \item If $\alpha\notin S$ and either
  $[0,\alpha]^\Tt\cap\dropset^\Tt\neq\emptyset$ or
$E^\Tt_\alpha\neq F(M^\Tt_\alpha)$, then
  $M^{*\Uu}_{\alpha+1}=\pi_\gamma(M^{*\Tt}_{\alpha+1})$, $\deg^\Tt_{\alpha+1}=\deg^\Uu_{\alpha+1}$, and $\pi_{\alpha+1}$ is defined via Shift Lemma I \ref{lem:shift_lemma_(i)}
  or Shift Lemma II \ref{lem:shift_lemma_(ii)}
  as appropriate.
 \item If $\alpha\notin S$ and $[0,\alpha]^\Tt\cap\dropset^\Tt=\emptyset$ and $E^\Tt_\alpha=F(M^\Tt_\alpha)$, then
$\pi_{\alpha+1}$ is defined via Shift Lemma II
\ref{lem:shift_lemma_(ii)}\ref{item:shift_lemma_(ii)_case_short}.

  \item For all $\alpha<\beta<\lh(\Tt)$,
  \begin{enumerate}[label=--]
   \item if $\alpha\notin S$ then $\pi_\alpha\rest\exit^\Tt_\alpha\sub\pi_\beta$, and
   \item if $\alpha\in S$ then $\pi_\alpha\rest\lambda(E^\Tt_\alpha)
   \sub\psi_\alpha$
   and $\pi_\alpha(\lambda(E^\Tt_\alpha))<\psi_\alpha(\lambda(E^\Tt_\alpha))$ and $\psi_\alpha\rest\exit^\Tt_\alpha\sub\pi_\beta$.
  \end{enumerate}
\item At limit stages things are defined in the usual manner.\qedhere
\end{enumerate}
\end{dfn}

\begin{lem}[Copying construction for $0$-deriving embeddings]\label{lem:0-deriving_copying}
 Let $M,N$ be active short premice
 and $\pi:M\to N$ be $0$-deriving.
 Let $\Tt$ be a $0^-$-maximal tree on $M$.
 Then $\pi\Tt$ is well-defined and uniquely determined iff all models encountered are wellfounded \tu{(}just as for the standard copying construction\tu{)}. Thus, if $\Sigma$ is a $0^-$-maximal  iteration strategy for $N$, then the $\pi$-pullback $\Sigma_{\leftarrow\pi}$ of $\Sigma$ is a well-defined $0^-$-maximal strategy
 for $M$, and a putative $0^-$-maximal tree $\Tt$ on $M$ is via 
 $\Sigma_{\leftarrow\pi}$ iff $\pi\Tt$ is via $\Sigma$.
\end{lem}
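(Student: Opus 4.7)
The plan is to proceed by induction on $\lh(\Tt)$, defining $\pi\Tt$, the copy maps $\pi_\alpha$, the shift set $S$, and the shifted maps $\psi_\alpha$ simultaneously, and verifying at each step all the clauses of Definition \ref{dfn:0-deriving_copying_con} as inductive hypotheses. The case split mirrors the clauses of the definition: at each successor stage $\alpha+1$ we must decide whether $\alpha\in S$, produce $E^\Uu_\alpha=\pi_\alpha(E^\Tt_\alpha)$, and then construct $\pi_{\alpha+1}$ (and $\psi_\alpha$ if $\alpha\in S$) via the appropriate Shift Lemma.

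At a successor stage, let $\beta=\pred^\Tt(\alpha+1)$. First set $E^\Uu_\alpha=\pi_\alpha(E^\Tt_\alpha)$ (or $F(M^\Uu_\alpha)$ when $E^\Tt_\alpha=F(M^\Tt_\alpha)$), which makes sense because $\pi_\alpha$ is $0$-deriving or a near embedding as appropriate. If $\alpha\notin S$, then $\pred^\Uu(\alpha+1)=\beta$ follows from the tree-order rules together with the fact that $\pi_\alpha\rest\lambda(E^\Tt_\alpha)$ and $\pi_\beta$ agree on the relevant domain (by the coherence of the copy maps); we then appeal to Shift Lemma I \ref{lem:shift_lemma} (in the $m$-lifting case, or in case \ref{item:case_0-deriving}\ref{item:case_short} when both the applicator and target remain active short with equal critical points) to produce $\pi_{\alpha+1}$, noting that the starred $M^{*\Uu}_{\alpha+1}=\pi_\gamma(M^{*\Tt}_{\alpha+1})$ makes sense by the degree/projectum preservation facts in Lemma \ref{lem:param_proj_pres} and Lemma \ref{lem:Dodd_param_proj_pres}. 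If instead $\alpha\in S$, then $E^\Tt_\alpha$ is short, $E^\Uu_\alpha$ is long (because $\pi(\crit(F^M))>\crit(F^N)$ forces $\crit(F(M^\Uu_\beta))<\pi_\beta(\crit(F(M^\Tt_\beta)))$), and one step of protomouse-avoiding $\Ult_0$ is needed on the $\Uu$ side before the next copy arrow is defined: this is precisely the content of Shift Lemma II \ref{lem:shift_lemma_ii}, which supplies the $0$-deriving shifted copy map $\psi_\alpha:\exit^\Tt_\alpha\to M^\Uu_{\alpha 0}$, after which Shift Lemma I \ref{lem:shift_lemma} (case \ref{item:case_0-deriving}\ref{item:case_short}) produces $\pi_{\alpha+1}:M^\Tt_{\alpha+1}\to M^\Uu_{\alpha+1}$. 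The commutativity of the copy maps with the tree/starred embeddings, together with the agreements $\psi_\alpha\rest\exit^\Tt_\alpha\sub\pi_{\alpha+1}$, follow from the moreover clause of each Shift Lemma. Whenever the branch $[0,\alpha+1]^\Tt$ has dropped (in model, degree, or Dodd-degree), I maintain that $\pi_{\alpha+1}$ is a near $d'$-embedding, propagating this via the argument of \cite{fs_tame}; this is where the auxiliary inductive hypothesis \ref{item:fstame_inductive} about shifting $\rSigma_{d'+1}$ formulas is crucial, as it provides the witnesses needed to propagate nearness through the ultrapower and across the reset point $\gamma$.

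At limit $\lambda<\lh(\Tt)$, let $b=[0,\lambda)^\Tt$; then $[0,\lambda)^\Uu=b$ as well because the copy maps respect the tree order, and I define $M^\Uu_\lambda=\dirlim_{\alpha\in b}M^\Uu_\alpha$, $\pi_\lambda$ as the unique map making the diagram commute. Wellfoundedness of $M^\Uu_\lambda$ is the standing hypothesis. The fact that along the eventually non-dropping tail of $b$ the copy maps remain $0$-deriving (respectively near $\deg^\Tt_\lambda$-embeddings) passes to the direct limit, and coherence with the starred embeddings follows from how $(b,\Tt)$-cofinal sequences generate $M^\Tt_\lambda$.

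The main obstacle is bookkeeping the special case $\alpha\in S$ and maintaining hypothesis \ref{item:fstame_inductive} through it. Because the $\Uu$-side may have critical points strictly greater than the images of the $\Tt$-side critical points, the tree-rule for $\pred^\Uu$ must be re-checked case by case; the insertion of $(\alpha,0)$ and the fact that it is an $<^\Uu$-end-node are exactly what reconcile this discrepancy, and must be shown to be forced by the tree-rules in Definition \ref{dfn:n-max}. The second half of the statement, regarding the pullback strategy $\Sigma_{\leftarrow\pi}$, is then an immediate formal consequence: given $\Sigma$ on $N$, one sets $\Sigma_{\leftarrow\pi}(\Tt)=b$ iff $\Sigma(\pi\Tt)=b$ (for limit $\Tt$), and well-definedness and the stated equivalence follow because $\pi\Tt$ is uniquely determined by $\Tt$ (and $\pi,\Sigma$) by the induction just carried out, while every putative extension of $\Tt$ on $M$ corresponds uniquely to a putative extension of $\pi\Tt$ on $N$, so cofinal branches on the two sides match.
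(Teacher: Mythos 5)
Your overall plan is the intended one: the paper states the lemma without proof, having already codified the whole copying protocol in Definition \ref{dfn:0-deriving_copying_con} and supplied the Shift Lemmas as the toolkit; the proof is then an induction on $\lh(\Tt)$ with the case split you describe, and your treatment of the non-shift successor case, the near-embedding propagation via \cite{fs_tame} and hypothesis \ref{item:fstame_inductive}, the limit case, and the pullback strategy are all correct.

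However, your description of the shift-set mechanism contains a genuine error. You claim that when $\alpha\in S$, ``$E^\Tt_\alpha$ is short, $E^\Uu_\alpha$ is long.'' In fact $E^\Uu_\alpha=\pi_\alpha(E^\Tt_\alpha)$, and since membership in $S$ explicitly requires $E^\Tt_\alpha$ to be \emph{short}, $E^\Uu_\alpha$ is short as well. Correspondingly, the auxiliary ultrapower $M^\Uu_{\alpha 0}=\Ult_0(M^\Uu_\beta,E^\Uu_\alpha)$ is not an instance of the protomouse-avoiding $\Ult_0$ of Definition \ref{dfn:Ult_n} --- that modification only kicks in for a \emph{long} $F^M$ applied to an active short $N$ with matched critical point, and neither condition holds here. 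What is actually going on is a pure critical-point mismatch: since $\pi_\alpha$ is $0$-deriving with $\pi(\crit(F^M))>\crit(F^N)$, we have $\crit(E^\Uu_\alpha)=\pi_\alpha(\crit(E^\Tt_\alpha))>\crit(F(M^\Uu_\beta))$, whereas $\crit(E^\Tt_\alpha)=\crit(F(M^\Tt_\beta))$. This means $\pi_\alpha\rest\exit^\Tt_\alpha:\exit^\Tt_\alpha\to\exit^\Uu_\alpha$ is $0$-lifting but \emph{not} $0$-deriving, and the hypotheses of Shift Lemma I, case \ref{item:case_0-deriving}\ref{item:case_short}, fail (neither ``$\crit(F^P)\neq\crit(F^M)$'' nor ``$\crit(F^Q)=\crit(F^N)$'' holds). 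Shift Lemma II is precisely the repair: it post-composes with $i^{P,0}_{F^W}$ to land the $0$-deriving map $\psi_\alpha$ into the auxiliary node $M^\Uu_{\alpha 0}$, restoring a $0$-deriving arrow that Shift Lemma I can then consume. The node $(\alpha,0)$ is a dead end added so that $\Uu$ can record both $E^\Uu_\alpha$ (reached by the $0$-lifting arrow) and the extender $E^\Uu_{\alpha0}=F(M^\Uu_{\alpha0})$ that the copy actually moves up. Once the mechanism is corrected in this way, the rest of your bookkeeping stands, and the proof does go through.
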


We  now adapt weak Dodd-Jensen  \cite{wDJ} to $0$-deriving embeddings:
\begin{dfn}
 Let $M$ be a countable active short premouse.
 Let $\vec{x}=\left<x_n\right>_{n<\om}$
 be an enumeration of the universe of $M$.
 Say a $0^-$-maximal stack $\vec{\Tt}$ on $M$  is \emph{$(M,\vec{x})$-large} iff $M^{\vec{\Tt}}_\infty$ exists
 and there is a $0$-deriving embedding $\pi:M\to M^{\vec{\Tt}}_\infty$.
 
 Let $\Sigma$ be a $(0^-,\om_1+1)$-iteration strategy, or an optimal-$(0^-,\om_1,\om_1+1)^*$-strategy, for $M$. We say that $\Sigma$ has the \emph{weak Dodd-Jensen property with respect to $\vec{x}$} iff for all hereditarily  countable trees/stacks $\Tt$
 via $\Sigma$, if $\Tt$ is $(M,\vec{x})$-large
 then:
 \begin{enumerate}
  \item $[0,\infty]^\Tt$ does not drop in model (hence not in any way), and
  \item $i^\Tt_{0\infty}:M\to M^\Tt_\infty$
  is the lexicographically least $0$-deriving embedding $M\to M^\Tt_\infty$ with respect to $\vec{x}$.
 \end{enumerate}
 
 Often the particular choice of enumeration $\vec{x}$ is not  important, and we suppress it from the notation/terminology.
\end{dfn}

\begin{lem}
 Let  $M$ be a countable active short premouse and let $\vec{x}$ enumerate the universe of $M$ in ordertype $\omega$.
 Suppose that $M$ is $(0^-,\om_1,\om_1+1)^*$-iterable. Then there is a $(0^-,\om_1,\om_1+1)^*$-strategy for $M$ with the weak Dodd-Jensen property with respect to  $\vec{x}$.
\end{lem}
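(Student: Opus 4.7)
The argument follows the classical weak Dodd--Jensen construction, adapted to the $0$-deriving copying apparatus of Lemma~\ref{lem:0-deriving_copying}. I proceed by contradiction: suppose no $(0^-,\om_1,\om_1+1)^*$-strategy for $M$ has wDJ with respect to $\vec{x}$. Starting from any such strategy $\Sigma_0$ (which exists by hypothesis), I recursively build, for each $\alpha<\om_1$, a $(0^-,\om_1,\om_1+1)^*$-strategy $\Sigma_\alpha$ for $M$, a countable stack $\vec{\Tt}_\alpha$ via $\Sigma_\alpha$ that is $(M,\vec{x})$-large and witnesses the failure of wDJ for $\Sigma_\alpha$, and the lex-least $0$-deriving embedding $\sigma_\alpha\colon M\to M^{\vec{\Tt}_\alpha}_\infty$. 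By the choice of $\vec{\Tt}_\alpha$, either $[0,\infty]^{\vec{\Tt}_\alpha}$ drops in some way, or it does not drop and $\sigma_\alpha$ is lex-strictly below $i^{\vec{\Tt}_\alpha}_{0\infty}$.

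At successor stages I define $\Sigma_{\alpha+1}$ as the ``$\sigma_\alpha$-pullback of $\Sigma_\alpha$ after $\vec{\Tt}_\alpha$'': given a stack $\vec{\Uu}$ on $M$, form its copy $\sigma_\alpha\vec{\Uu}$ on $M^{\vec{\Tt}_\alpha}_\infty$ via Lemma~\ref{lem:0-deriving_copying}, concatenate to get $\vec{\Tt}_\alpha\conc(\sigma_\alpha\vec{\Uu})$ on $M$, and let $\Sigma_{\alpha+1}$ return the moves that $\Sigma_\alpha$ plays on this concatenated stack. Lemma~\ref{lem:0-deriving_copying} then provides copy maps $\pi^{\vec{\Uu}}_\alpha\colon M^{\vec{\Uu}}_\infty\to M^{\sigma_\alpha\vec{\Uu}}_\infty$ which are $0$-deriving (or near embeddings in the dropping case) and commute appropriately with the iteration and core maps. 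At countable limits I diagonalise against the countably many stacks already built, using countability of $M$ to absorb all tails into a single strategy.

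To derive a contradiction, I track for each $\alpha$ the least index $n_\alpha<\om$ at which $\sigma_\alpha$ witnesses the failure of wDJ along $\vec{\Tt}_\alpha$. Composing through successive $\pi^{\vec{\Tt}_{\alpha+1}}_\alpha$ and using the commuting diagrams, the pairs $(n_\alpha,\sigma_\alpha(x_{n_\alpha}))$ descend strictly in the lex order on $\om\cross\OR$: once $n_\alpha$ stabilises at some $n$, the ordinal components $\sigma_\alpha(x_n)$ are strictly decreasing, and an $\om_1$-decreasing sequence of ordinals cannot exist. This contradicts the ongoing construction and forces some $\Sigma_\alpha$ to have wDJ.

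The main obstacle is controlling the shift set $S$ from Definition~\ref{dfn:0-deriving_copying_con}: copying a $0^-$-maximal tree via a $0$-deriving embedding can insert auxiliary nodes $(\alpha,0)$, so the indexings of $\vec{\Uu}$ and $\sigma\vec{\Uu}$ differ. I must carefully verify that the copy maps $\pi^{\vec{\Uu}}_\alpha$ continue to satisfy the commuting relations of Lemma~\ref{lem:0-deriving_copying} under iterated concatenation, so that the lex-comparison of $\sigma_{\alpha+1}$ against the composed embedding $\pi^{\vec{\Tt}_{\alpha+1}}_\alpha\com\sigma_{\alpha+1}$ pulls back correctly to the desired strict lex-decrease. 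The pullback strategies must also absorb arbitrary concatenated stacks of the form $\vec{\Tt}_0\conc(\sigma_0\vec{\Tt}_1)\conc(\sigma_0\sigma_1\vec{\Tt}_2)\conc\cdots$, which is precisely where the optimal-$*$ strength of the iterability hypothesis enters.
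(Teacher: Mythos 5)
Your proposal matches the paper's approach, which simply defers to the classical Neeman--Steel weak Dodd--Jensen argument (citing \cite{wDJ}) and notes that the only new ingredient is the $0$-deriving copying construction, Lemma~\ref{lem:0-deriving_copying} --- exactly the point you emphasize. One remark: the classical argument needs only $\om$ steps, so the length-$\om_1$ recursion and ``diagonalize at limits'' step are superfluous and in fact awkward to make precise (there is no canonical $\Sigma_\lambda$ or composed embedding at a countable limit); the contradiction already arises from the $\om$-length decreasing sequence of lex-pairs once the index $n_\alpha$ stabilizes. You should also spell out how a drop in $[0,\infty]^{\vec{\Tt}_\alpha}$ feeds into the descent --- your contradiction sketch only treats the non-dropping clause of the failure of weak Dodd--Jensen.
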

\begin{proof}
The standard proof (for which see \cite{wDJ})
adapts routinely,
using the key fact, by Lemma \ref{lem:0-deriving_copying}, that copying constructions
are well-behaved. We leave the details to the reader.
\end{proof}

\begin{dfn}
Let $M,\vec{x}$ be as above,
and suppose also that $M$ is Dodd-absent-sound
and $\rho_{\D}^M=0$.
For a degree $0$ iteration strategy $\Sigma$ for $M$,
we define \emph{weak Dodd-Jensen} (with respect to $\vec{x}$) as above, but consider only trees $\Tt$ such that $\deg^\Tt_\infty\neq 0^-$.
\end{dfn}
\begin{lem}
 Let  $M,\vec{x}$ be as above,
 and suppose also that $M$ is Dodd-absent-sound
 and $\rho_{\D}^M=0$.
 Suppose that $M$ is $(0,\om_1,\om_1+1)^*$-iterable. Then there is a $(0,\om_1,\om_1+1)^*$-strategy for $M$ with the weak Dodd-Jensen property with respect to  $\vec{x}$.
\end{lem}
\begin{proof}
This is essentially the usual proof, just noting that if $\Tt$ is a $0$-maximal tree on $M$ of successor length,
then $M^\Tt_\infty$ is Dodd-absent-sound iff $\deg^\Tt_\infty\neq 0^-$,
and in case $b^\Tt$ does not drop in any way,
$i^\Tt$ is a $0$-embedding.\footnote{The same does not work if $\rho_{\D}^M>0$,
since there can be $0$-maximal trees which do not drop in any way along $b^\Tt$,
but $i^\Tt$ is $0$-deriving but not $0$-maximal.}
\end{proof}
\begin{rem}
The two cases above handle the situations in which we will apply weak Dodd-Jensen for a $(d,\om_1,\om_1+1)^*$-strategy for a premouse $M$
which is active short
and $d\in\{0,0^-\}$.
(Thus, in some situations,
we will first reduce the case in which $M$ is active short with $\rho_{\D}^M>0$
to the case that $\rho_{\D}^M=0$.)
In other cases (in which either $d>0$ or $M$ is not active short)
everything is a much more direct generalization of the standard methods, so we won't discuss those further.
\end{rem}

\section{Fine structure from iterability}

We have now completed the development of the basic theory of premice and iteration trees at the level of $\kappa^+$-supercompactness. We now turn to the suite of comparison proofs, establishing the  basic fine structural properties --  Dodd-absent-solidity and Dodd-absent-universality  for short extenders, solidity and universality of the standard parameter, condensation, and the Mitchell-Steel initial segment condition for short extenders. The proofs have very similar structure to the corresponding ones from the short extender context, but as in Woodin's development, Neeman-Steel \cite{nsp1}, \cite{nsp1fs}, and Voellmer \cite{voellmer}, new issues and details must be dealt with. A large part of that here is like in those papers, but there are new details involved here, firstly in connection with the modified hierarchy, and secondly because we also prove various new facts whose analogues are not discussed in those papers (especially Theorems
\ref{tm:rho^M_D=0_implies_Dodd-absent-solid_and_universal}, \ref{tm:Dodd-absent-soundness},  \ref{tm:first_cond} under its hypothesis \ref{item:cond_moving_below_proj},  \ref{tm:second_cond}, and \ref{tm:MS-ISC}).
\subsection{Comparison}

We start with the basic comparison theorem, the argument for which underlies the later ones. The proof follows very much that of Woodin and Neeman-Steel \cite{nsp1}, \cite{nsp1fs}.
\begin{tm}[Basic comparison]\label{tm:comparison}
 Let $m,n\in(\om+1)\cup\{0^-\}$, and let $M,N$ be countable $m$-sound and $n$-sound premice respectively. Suppose that if $m\neq 0^-$ \tu{(}$n\neq 0^-$\tu{)} then $M$ \tu{(}$N$\tu{)} is Dodd-absent-sound.
 Let $\Sigma$ be an $(m,\om_1+1)$-strategy for $M$
 and $\Gamma$ be an $(n,\om_1+1)$-strategy for $N$.
 Then there are countable successor length trees $\Tt,\Uu$ on $M,N$ via $\Sigma,\Gamma$ respectively, such that either:
 \begin{enumerate}[label=--]
  \item $b^\Tt$ does not drop in model, degree or Dodd-degree, and $M^\Tt_\infty\ins M^\Uu_\infty$, or
  \item $b^\Uu$ does not drop in model,
  degree or Dodd-degree,
  and $M^\Uu_\infty\ins M^\Tt_\infty$.
 \end{enumerate}
\end{tm}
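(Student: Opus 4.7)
The plan is to run the familiar iterated comparison by least disagreement simultaneously on $M$ and $N$, using $\Sigma$ and $\Gamma$ to choose branches at limits, and to argue that the process terminates in countably many stages with one side being an initial segment of the other. More concretely, I recursively build padded $m$-maximal $\Tt$ on $M$ and $n$-maximal $\Uu$ on $N$: at stage $\alpha+1$, let $\gamma_\alpha$ be least with $M^\Tt_\alpha|\gamma_\alpha \neq M^\Uu_\alpha|\gamma_\alpha$; use the extender indexed at $\gamma_\alpha$ on whichever side(s) it appears, with $\pred$ and the choice of $M^*$ and $\deg$ determined by Definition \ref{dfn:n-max} (including the $0^-$ convention when the resulting $*$-model would otherwise have $\rho_{0^-+1}^N < \dom(E)$). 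Closeness (Lemma \ref{lem:closeness}) guarantees that every extender applied is close to the model it hits, so the fine-structure and Dodd-structure preservation results (Lemmas \ref{lem:param_proj_pres}, \ref{lem:Dodd_param_proj_pres}, \ref{lem:long_proto_Ult}, \ref{lem:iterate_sim_Dodd-solid}, \ref{lem:iterate_sim_solid}) apply at every step.

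For termination, I would use the standard reflection argument: if the comparison went to length $\om_1+1$, pick a countable elementary hull of a sufficiently large $V_\theta$ containing $M,N,\Sigma,\Gamma$ and the first $\om_1$ stages of the comparison; applying the uncollapse map through the copying construction of Definition \ref{dfn:0-deriving_copying_con} and Lemma \ref{lem:0-deriving_copying} (with $n$-lifting variants in the non-active-short cases), one obtains two cofinal branches through a reflected countable tree, contradicting branch uniqueness from iterability. So some countable successor length $(\Tt,\Uu)$ has no disagreement, meaning $M^\Tt_\infty \ins M^\Uu_\infty$ or vice-versa; the ``last-extender'' case where the two final models would both be active of the same ordinal height is ruled out as usual by the Jensen ISC, the $\{\nu^-\}$-Jensen ISC, and the long ISC, together with the agreement $M|\crit(F)^{++M}=N|\crit(F)^{++N}$ forced on long-extender sides by the coherence of the two ultrapower sequences.

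It remains to show non-dropping on the winning side. Suppose $M^\Tt_\infty \ins M^\Uu_\infty$ but $b^\Uu$ drops in model, degree, or Dodd-degree. Then by Lemma \ref{lem:dropped_iterated_core_embedding}, $M^\Uu_\infty$ fails to be $(d+1)$-sound for some $d$ (or fails Dodd-absent-soundness while being active short), and indeed $M^{*\Uu}_{\beta+1}$ is a proper core of $M^\Uu_\infty$ for the appropriate $\beta$; meanwhile the portion of $b^\Tt$ below this core has not dropped (or we switch to the symmetric case), so $M^\Tt_\infty$, being $\ins M^\Uu_\infty$ with $M^\Uu_\infty$ projecting below $\OR(M^\Tt_\infty)$, would give an embedded witness against the $n$- and Dodd-absent-soundness hypotheses on $N$. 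The symmetric argument handles the other orientation.

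The main obstacle I expect is the bookkeeping around the two new wrinkles: the proto-anomaly case (where $\Ult_0$ is redefined as in Definition \ref{dfn:Ult_n} to avoid a protomouse), which requires checking that least disagreement genuinely advances and does not spuriously re-create the same disagreement through the compressed ultrapower; and the tracking of $0^-$-degree drops, where one must verify that the winning-side non-dropping argument still goes through when the losing side's drop is to Dodd-absent level rather than to a standard fine-structural level. Both reduce to careful case analyses built on the preservation lemmas already established, so no conceptually new tool should be needed beyond the short extender template.
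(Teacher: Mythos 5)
Your overall plan (least-disagreement comparison, closeness from Lemma \ref{lem:closeness}, preservation lemmas, then a non-dropping argument via Lemma \ref{lem:dropped_iterated_core_embedding}) matches the paper, and you correctly flag the protomouse-avoidance and $0^-$-degree tracking as the new bookkeeping. But two pieces of the argument are not right as stated.

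\textbf{Termination.} The argument ``applying the uncollapse map through the copying construction \ldots one obtains two cofinal branches through a reflected countable tree, contradicting branch uniqueness from iterability'' is not how comparison termination works, here or in the short-extender case. Iterability gives existence of wellfounded branches, not uniqueness, and the copying construction plays no role. What the paper actually does is the standard reflection argument carried out to the point of a concrete ISC contradiction: take elementary $\pi:X\to V_\gamma$ with $X$ countable transitive and $(\Tt,\Uu)\in\rg(\pi)$, set $\kappa=\crit(\pi)$, deduce $\kappa\in[0,\om_1]^\Tt\cap[0,\om_1]^\Uu$ with no drops above $\kappa$ and $i^\Tt_{\kappa\om_1},\,i^\Uu_{\kappa\om_1}\sub\pi$, and then analyze the first extenders $E^\Tt_\alpha$, $E^\Uu_\beta$ used above $\kappa$. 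The genuinely new content for long extenders is right there: at least one must be long, and then both must be long (because otherwise the short one would be a whole fragment of the branch extender and hence appear in the opposite model, contradicting least disagreement); and then a case split on whether each has a largest generator or a limit of generators, using the long ISC, the $\{\nu^-\}$-Jensen ISC, and continuity of the iteration maps at the index of the short part of the next long extender used, concludes that $E^\Tt_\alpha=E^\Uu_\beta$, which is impossible. You have the ISC appearing instead as a post-termination ``last-extender case'' device, but that case does not arise: once there is no disagreement, you already have $M^\Tt_\infty\ins M^\Uu_\infty$ or the reverse, with nothing left to rule out. The ISC is needed \emph{inside} the termination argument, not after it.

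\textbf{Non-dropping.} The case you set up, ``$M^\Tt_\infty\ins M^\Uu_\infty$ but $b^\Uu$ drops,'' is not the relevant one: when $M^\Tt_\infty$ is the initial segment, the theorem only asks for $b^\Tt$ not to drop, and what happens on $b^\Uu$ is then irrelevant. Moreover ``$M^\Uu_\infty$ projecting below $\OR(M^\Tt_\infty)$'' contradicts $M^\Tt_\infty\ins M^\Uu_\infty$, and ``an embedded witness against the $n$- and Dodd-absent-soundness hypotheses on $N$'' is not a contradiction as stated (those hypotheses concern $N$, not the iterates). The correct split: if $M^\Tt_\infty\pins M^\Uu_\infty$, then $M^\Tt_\infty$ is a proper segment of a premouse, hence $\om$-sound and Dodd-absent-sound, so by Lemma \ref{lem:dropped_iterated_core_embedding} the branch $b^\Tt$ cannot have dropped in model, degree, or Dodd-degree, and we are done. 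If $M^\Tt_\infty=M^\Uu_\infty$ and neither side satisfies the conclusion, then both branches drop; by Lemma \ref{lem:dropped_iterated_core_embedding} they have the same final degree $k\in\om\cup\{0^-\}$, and the $(k+1)$-core embedding (resp.\ the Dodd-absent-core embedding when $k=0^-$) of the common final model is an iteration map along both $b^\Tt$ and $b^\Uu$, giving the usual contradiction. Your observation that the $0^-$-drop has to be tracked here is correct, but the argument as written does not carry it out.
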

\begin{proof}
 We compare by least disagreement,
 noting that we are forming $m$-maximal and $n$-maximal trees $\Tt,\Uu$ respectively,
 as defined in Definition \ref{dfn:n-max},
 in particular using the iteration rules there
 and forming ultrapowers according to Definitions \ref{dfn:Ult_n} and \ref{dfn:0^-}.
 
 Suppose that the comparison lasts through $\om_1+1$ stages. Let $\pi:X\to V_\gamma$
 be elementary, with $X$ countable and transitive, and $(\Tt,\Uu)\in\rg(\pi)$. Let $\kappa=\crit(\pi)$,
 so $\kappa=\om_1^X<\om_1$.
 The usual calculations show that $\kappa\in[0,\om_1]^\Tt\cap[0,\om_1]^\Uu$,
 that $(\kappa,\om_1]^\Tt\cap\dropset^\Tt=\emptyset$
 and $(\kappa,\om_1]^\Uu\cap\dropset^\Uu=\emptyset$,
 $\deg^\Tt_{\kappa}=\deg^\Tt_{\omega_1}$,
 $\deg^\Uu_{\kappa}=\deg^\Uu_{\omega_1}$,
 and 
 that $i^\Tt_{\kappa\om_1}\sub\pi$ and $i^\Uu_{\kappa\om_1}\sub\pi$. Note that
 \[M^\Tt_\kappa|\kappa^{+M^\Tt_\kappa}=M^\Tt_{\omega_1}|\kappa^{+M^\Tt_{\omega_1}}=M^\Uu_{\omega_1}|\kappa^{+M^\Uu_{\omega_1}}=M^\Uu_\kappa|\kappa^{+M^\Uu_\kappa}.\]
 
 Let $\alpha+1=\min((\kappa,\om_1]^\Tt)$
 and $\beta+1=\min((\kappa,\om_1]^\Uu)$.
 The usual argument 
 with Jensen ISC shows that at least one of $E^\Tt_\alpha,E^\Uu_\beta$ is long. Say $E^\Tt_\alpha$ is long. Then $E^\Tt_\alpha\rest\lambda(E^\Tt_\alpha)\in M^\Tt_{\alpha+1}$, and $i^\Tt_{\alpha+1,\om_1}(E^\Tt_\alpha\rest\lambda(E^\Tt_\alpha))=E\rest\om_1$, where $E$ is the extender derived from $i^\Tt_{\kappa\om_1}$ (that is, $E\rest\om_1$ is the short extender derived from this embedding).
 If $E^\Uu_\beta$ were short, then $E^\Uu_\beta\notin M^\Uu_{\om_1}$,
 but by the compatibility, then $E^\Uu_\beta=E\rest\lambda(E^\Uu_\beta)$,
so this \emph{is} in $M^\Uu_{\om_1}$, contradiction.
So $E^\Uu_\beta$ is also long. Since both $E^\Tt_\alpha,E^\Uu_\beta$ are long, it now follows that
\[M^\Tt_\kappa||\kappa^{++M^\Tt_\kappa}=M^\Tt_{\omega_1}|\kappa^{++M^\Tt_{\omega_1}}=M^\Uu_{\omega_1}|\kappa^{++M^\Uu_{\omega_1}}=M^\Uu_\kappa||\kappa^{++M^\Uu_\kappa}, \]
and so writing $\theta=\omega_1$, also
\begin{equation}\label{eqn:agmt_thru_theta^++} M^\Tt_{\theta}||\theta^{++M^\Tt_\theta}=M^\Uu_{\theta}||\theta^{++M^\Uu_\theta}.\end{equation}

Suppose $E^\Tt_\alpha$ has a largest generator $\nu^-$. Then $E^\Tt_\alpha\rest\nu^-\in M^\Tt_{\alpha+1}$, by the long ISC. So $i^\Tt_{\alpha+1,\om_1}(E^\Tt_\alpha\rest\nu^-)=E\rest i^\Tt_{\alpha+1,\om_1}(\nu^-)\in M^\Tt_{\om_1}$. But $E\rest (i^\Tt_{\alpha+1,\om_1}(\nu^-)+1)\notin M^\Tt_{\om_1}$, because from this extender
we can derive $E\rest (\lambda(E^\Tt_\alpha)\cup\{i^\Tt_{\alpha+1,\om_1}(\nu^-)\})$,
which is equivalent to $E^\Tt_\alpha$,
which is not in $M^\Tt_{\alpha+1}$,
and hence not in $M^\Tt_{\om_1}$,
since $\pow(\lambda(E^\Tt_\alpha))\cap M^\Tt_{\alpha+1}=\pow(\lambda(E^\Tt_\alpha))\cap M^\Tt_{\om_1}$.

Suppose now that $E^\Uu_\beta$ also has a largest generator $\widetilde{\nu}^-$.
Then by the preceding paragraph applied also to $\Uu$, we have $i^\Uu_{\beta+1,\om_1}(\widetilde{\nu}^-)=i^\Tt_{\alpha+1,\om_1}(\nu^-)$; denote this ordinal $\xi$.
Let $\lambda=\min(\lambda(E^\Tt_\alpha),\lambda(E^\Uu_\beta))$. Then (much as in the previous paragraph) $E\rest(\lambda\cup\{\xi\})\notin M_{\om_1}^\Tt$, and also $\notin M^\Uu_{\om_1}$ (also after coding this extender naturally with a subset of $\lambda$). By the long Jensen ISC,
it follows that $\lambda(E^\Tt_\alpha)=\lambda(E^\Uu_\beta)$.
But now note that $E^\Tt_\alpha=E^\Uu_\beta$,
which is impossible.

So $\nu(E^\Uu_\beta)$ is a limit ordinal.
Let $\eta=\sup i^\Uu_{\beta+1,\om_1}``\nu(E^\Uu_\beta)$. Note that $\eta\leq\theta^{+M^\Uu_\theta}=\theta^{+M^\Tt_\theta}$, and for each $\xi<\eta$,
we have $E\rest\xi\in M^\Uu_{\om_1}$.
Therefore $\eta\leq i^\Tt_{\alpha+1,\om_1}(\nu^-)$, and therefore $E\rest\eta\in M^\Uu_{\om_1}$ (cf.~line (\ref{eqn:agmt_thru_theta^++})).

But  $E^\Uu_\beta\rest\nu(E^\Uu_\beta)\notin M^\Uu_{\beta+1}$, and now the argument in \cite[***]{extmax} shows that it follows
that $E\rest\eta\notin M^\Uu_{\om_1}$, a contradiction.

So by symmetry,  both $\nu(E^\Tt_\alpha),\nu(E^\Uu_\beta)$ are limit ordinals.
By the preceding analysis,
letting $\eta$ be as above, we also have that $\eta=\sup i^\Tt_{\alpha+1,\om_1}``\nu(E^\Tt_\alpha)$. Note also that $\nu(E^\Tt_\alpha)=\delta^{+M^\Tt_{\alpha+1}}$
where $\delta=\lambda(E^\Tt_\alpha)$,
and likewise $\nu(E^\Uu_\beta)=\mu^{+M^\Uu_{\beta+1}}$ where $\mu=\lambda(E^\Uu_\beta)$.
Let $\alpha'+1=\min((\alpha+1,\om_1]^\Tt)$
and $\beta'+1=\min((\beta+1,\om_1]^\Uu)$.
Then by our rules on (not) moving generators,
$E^\Tt_{\alpha'},E^\Uu_{\beta'}$ are both long.
Therefore the short part $E^\Tt_{\alpha'}\rest\lambda(E^\Tt_{\alpha'})$ of $E^\Tt_{\alpha'}$ is indexed
on $\es^{M^\Tt_{\alpha'+1}}$, at $\chi=\sup i_{E^\Tt_{\alpha'}}``\delta^{+M^\Tt_{\alpha+1}}$,
and likewise for $\Uu,\beta',\mu$.
But $\crit(i^\Tt_{\alpha'+1,\om_1})=\lambda(E^\Tt_{\alpha'})$,
and $\cof^{M^\Tt_{\alpha'+1}}(\chi)=\delta^{+M^\Tt_{\alpha+1}}=\delta^{+M^\Tt_{\alpha'+1}}$,
and so $i^\Tt_{\alpha'+1,\om_1}$ is continuous at $\chi$; likewise for $\Uu$ etc,
and let $\zeta$ be the index of the short part of $E^\Uu_{\beta'}$.
It follows that \[i^\Tt_{\alpha'+1,\om_1}(\chi)=\eta=i^\Uu_{\beta'+1,\om_1}(\zeta).\]
Since $i^\Tt_{\alpha'+1,\om_1}$ and $i^\Uu_{\beta'+1,\om_1}$ do not shift the critical points of the extenders indexed at $\chi,\zeta$ respectively,
therefore those extenders have the same critical points, which are
 $\lambda(E^\Tt_\alpha)$ and $\lambda(E^\Uu_\beta)$ respectively, so 
 $\lambda(E^\Tt_\alpha)=\lambda(E^\Uu_\beta)$.
But we now get $E^\Tt_\alpha=E^\Uu_\beta$,
because they are both reduced to $E$
via the (common) extender indexed at $i^\Tt_{\alpha'+1,\om_1}(\chi)=i^\Uu_{\beta'+1,\om_1}(\zeta)$. This is a contradiction, showing that the comparison terminates.

Lemma \ref{lem:dropped_iterated_core_embedding}
shows that if $b^\Tt$ drops in model, degree or Dodd-degree then $M^\Tt_\infty$ is not sound or not Dodd-absent-sound; likewise for $\Uu$.

So now suppose that $M^\Tt_\infty\pins M^\Uu_\infty$. Then $M^\Tt_\infty$ is sound and  Dodd-absent-sound,
so $b^\Tt$ does not drop (in model, degree or Dodd-degree),
so we are done. Likewise if $M^\Uu_\infty\pins M^\Tt_\infty$.

So  suppose  $M^\Tt_\infty=M^\Uu_\infty$.
We may assume
both $b^\Tt$ and $b^\Uu$ drop in model, degree or Dodd-degree. But then the usual core embedding argument,
but now using Lemma \ref{lem:dropped_iterated_core_embedding},
gives a contradiction.
(That is, we have $k=\deg^\Tt_\infty=\deg^\Uu_\infty$,
and $k\in\om\cup\{0^-\}$.
If $k\in\om$ then the $(k+1)$-core embedding $\core_{k+1}(M^\Tt_\infty)\to M^\Tt_\infty$,
or equivalently, $\core_{k+1}(M^\Uu_\infty)\to M^\Uu_\infty$,
is an iteration map for both $\Tt$ and $\Uu$,
leading to the usual contradiction.
If $k=0^-$,
then it is similar, but using the Dodd-absent core and Dodd-absent core embedding.)
\end{proof}

\subsection{Dodd solidity}

In this section we prove results establishing that  active short mice $M$ have nice Dodd-absent structure. We first deal with the case that the Dodd-absent projectum $\rho_{\D}^M=0$; recall that this means that some measure derived from $F^M$ is not in $M$. In Theorem
\ref{tm:rho^M_D=0_implies_Dodd-absent-solid_and_universal}, we prove that every such mouse $M$ is Dodd-absent-solid and Dodd-absent-universal.
In Theorems
\ref{tm:Dodd-absent-core_of_M_when_rho_D^M=0} and
\ref{tm:1-core_of_Dodd-abs-sound} together,
we show that  $\core_1(M)$ is also Dodd-absent-sound (and in \S\ref{sec:solidity}) we  will show that it is also $1$-sound).
Recall here that $\core_1(M)$
is defined as $\core_1(\core_{\D}(M))$;
Theorem \ref{tm:Dodd-absent-core_of_M_when_rho_D^M=0} alone
first establishes the relevant properties for $\core_{\D}(M)$.
After this, in Theorem
\ref{tm:Dodd-absent-soundness},
we deal with the case that $\rho_{\D}^M>0$, so $\rho_{\D}^M>\kappa^{+M}$ where $\kappa=\crit(F^M)$.
Here we also assume that $M$ is $1$-sound, and we will deduce that $M$ is Dodd-absent-sound.

\begin{rem}\label{rem:standard_Dodd_proj_and_param}
 Let $M$ be an active short premouse
 and $\kappa=\crit(F^M)$.
 Recall that $\tau^M$ is the least $\tau\geq\kappa^{+M}$
 such that $F^M$ is generated by $\tau\cup t$ for some $t\in[\OR^M]^{<\om}$,
 and $t^M$ is the least $t\in[\OR^M]^{<\om}$ such that $F^M$ is generated by $\tau^M\cup t$.\end{rem}

 \begin{lem}\label{lem:tau^M=max(rho_1,kappa^+)}
  Let $M$ be a $1$-sound active short premouse and $\kappa=\crit(F^M)$. Then $\tau^M=\max(\kappa^{+M},\rho_1^M)$.
  In particular, if $\nu(F^M)$
  is a limit ordinal then $\nu(F^M)=\tau^M=\rho_1^M>\kappa^{+M}$.
 \end{lem}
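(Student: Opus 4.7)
Write $\rho=\max(\rho_1^M,\kappa^{+M})$. By Lemma~\ref{item:rho_1=rho_D_mod_mu^+} we have $\rho=\max(\rho_{\D}^M,\kappa^{+M})$, and Lemma~\ref{lem:rho_D,p_D_basic_facts}(2) then gives $\rho\leq\tau^M$ for free. The real content is therefore the reverse inequality $\tau^M\leq\rho$, i.e.\ producing some $t\in[\OR^M]^{<\om}$ such that $F^M$ is generated by $\rho\cup t$.

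I would take $t=p_1^M\cup\{\kappa\}$. Fix any $\xi<\nu(F^M)$. By $1$-soundness, $\xi=\theta^M(\vec{\alpha},p_1^M)$ for some $\rSigma_1$ Skolem term $\theta$ and $\vec{\alpha}\in[\rho_1^M]^{<\om}$. Because the language $\mathscr{L}$ contains $\dot{F}$ interpreted as $F^M$, the term $\theta$ has built-in access to the extender. A standard Skolem-term unwinding (precisely the ``standard calculation'' underlying Lemma~\ref{item:rho_1=rho_D_mod_mu^+}) rewrites $\xi$ as $i^M_{F^M}(f)(a)$ for some $f\in M|\kappa^{+M}$ and some $a\in[\rho\cup p_1^M\cup\{\kappa\}]^{<\om}$: the ordinal parameters $\vec{\alpha}$ are absorbed into $a$ (via canonical functions at $\kappa$ for any components below $\kappa^{+M}$), and $p_1^M$ is absorbed into $a$ directly. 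Hence $F^M$ is generated by $\rho\cup p_1^M\cup\{\kappa\}$, giving $\tau^M\leq\rho$.

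For the ``in particular'' clause, assume $\nu(F^M)$ is a limit; this part does not use $1$-soundness. If $\xi$ were a generator of $F^M$ with $\xi\geq\tau^M$ and $\xi>\max(t^M)$, then writing $\xi=i^M_{F^M}(f)(a)$ with $a\in[\tau^M\cup t^M]^{<\om}\subseteq[\xi]^{<\om}$ would contradict that $\xi$ is a generator. So every generator of $F^M$ which is $\geq\tau^M$ lies in the finite set $t^M$, and since $\nu(F^M)$ is the strict supremum of generators and is a limit, the finite tail in $t^M$ cannot determine that supremum; hence $\nu(F^M)\leq\tau^M$. With the trivial $\tau^M\leq\nu(F^M)$, we get $\tau^M=\nu(F^M)$, so $\rho_1^M=\tau^M=\nu(F^M)$. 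Finally $\nu(F^M)>\kappa^{+M}$ in this case, because then $\nu(F^M)=\lambda(F^M)=i^M_{F^M}(\kappa)$ is the largest $M$-cardinal and strictly exceeds the $M$-cardinal $\kappa^{+M}$.

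The main technical point is the Skolem-term unwinding of the second paragraph. The extra complication over the classical short-extender argument is that $\mathscr{L}$ also contains the constant $\dot{F}_J$ for the largest whole proper fragment of $F^M$, so a $\rSigma_1$ Skolem term may invoke $F_J^M$ in addition to $F^M$. Since $F_J^M$ is itself a fragment of $F^M$, any such invocation still reduces to an $i^M_{F^M}(f)(a)$-application, but one must verify carefully that the resulting seed $a$ remains inside $\rho\cup p_1^M\cup\{\kappa\}$, rather than picking up new ordinals from the type-B fragment structure.
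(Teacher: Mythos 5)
Your overall approach matches the paper's: use $1$-soundness to get the upper bound $\tau^M\leq\max(\rho_1^M,\kappa^{+M})$ via Skolem-term unwinding, and get the reverse inequality from the relation between $\tau^M$, $\rho_{\D}^M$ and $\rho_1^M$. The paper's version simply notes that if $\tau^M<\rho_1^M$ then $F^M\rest(\tau^M\cup t^M)\in M$ (a $\bfrSigma_1^M$-definable object coded below $\rho_1^M$), which is impossible since $F^M$ is generated by $\tau^M\cup t^M$; your route of citing Lemma~\ref{item:rho_1=rho_D_mod_mu^+} together with Lemma~\ref{lem:rho_D,p_D_basic_facts}(2) is the same fact packaged slightly differently and is fine.

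Two points that need attention. First, your commitment to $t=p_1^M\cup\{\kappa\}$ is not justified, and the worry you raise in your last paragraph is genuine, not a formality. The issue is precisely that $\dot{F}_J$ is a constant of $\mathscr{L}$, and when $F_J^M\neq\emptyset$, the ordinal $\lambda(F_J^M)$ is a \emph{generator} of $F^M$ (it is not in the range of the factor map $\Ult(M,F_J^M)\to\Ult(M,F^M)$, whose critical point is exactly $\lambda(F_J^M)$). So $\lambda(F_J^M)$ cannot be recovered as $i^M_{F^M}(f)(a)$ with $a\in[\lambda(F_J^M)]^{<\om}$, and there is no reason for it to lie in $\rho\cup p_1^M\cup\{\kappa\}$. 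Consequently the set $\{i^M_{F^M}(f)(a):f\in M|\kappa^{+M},\ a\in[\rho\cup p_1^M\cup\{\kappa\}]^{<\om}\}$ need not be $\rSigma_1$-elementary in $M$ (in the full language) and need not equal $M$. The fix is trivial — throw $\lambda(F_J^M)$ into $t$, or, as the paper does, just assert that $F^M$ is generated by $\rho_1^M\cup\{x\}$ for \emph{some} $x$ without naming it — but as written the step is a gap, since you are asserting an equality that you yourself flag as unverified.

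Second, in the ``in particular'' clause you justify $\nu(F^M)>\kappa^{+M}$ by claiming $\nu(F^M)=\lambda(F^M)$ when $\nu(F^M)$ is a limit. That identity is not something you have established and is not immediate from the definitions (nothing in the definition of pot-pm forces the generators of a short active extender to be cofinal in $\lambda(F^M)$). You don't need it: for a short extender, every ordinal in $(\kappa,\kappa^{+M})$ equals $i^M_{F^M}(f_\alpha)(\kappa)$ for the $\alpha$th canonical function $f_\alpha$, hence is not a generator, so the generators of $F^M$ lie in $\{\kappa\}\cup[\kappa^{+M},\lambda(F^M))$. If $\nu(F^M)$ is a limit then $\nu(F^M)\neq\kappa+1$, so $F^M$ has a generator $\geq\kappa^{+M}$, whence $\nu(F^M)>\kappa^{+M}$. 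Combined with the trivial $\tau^M\leq\nu(F^M)$ and your bounded-tail argument (which also implicitly uses that $t^M\subseteq\nu(F^M)$ — worth noting, as it follows from the minimality of $t^M$, since any $t_0\in t^M$ with $t_0\geq\nu(F^M)$ can be replaced by a finite subset of $\nu(F^M)$, giving a lex-smaller parameter), this yields the clause correctly.
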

\begin{proof}
We have $\tau^M\geq\kappa^{+M}$ by definition.
 Using the $1$-soundness of $M$,
 we get that $F^M$ is generated by
 $\rho_1^M\cup\{x\}$ for some $x$,
 and so $\tau^M\leq\max(\rho_1^M,\kappa^{+M})$. But if $\tau^M<\rho_1^M$
 then  $F^M\rest(\tau^M\cup t^M)\in M$,
 which is impossible.
\end{proof}

\begin{lem}\label{lem:condensation_for_self-solid}
 Let  $M,H$ be premice of the same kind
 and $\pi:H\to M$ be $0$-lifting and c-preserving.
 Suppose that $M$ is $(0^-,\om_1,\om_1+1)^*$-iterable.
 Suppose that 
  $\rho=\rho_1^H<\rho_1^M$,
  $\rho$ is an $M$-cardinal,
   $\crit(\pi)=\rho^{+H}<\rho^{+M}$,
   and there is $x\in H$ such that $H=\Hull_1^H(\rho\cup\{x\})$.
Let $J\pins M$ be such that $\rho^{+H}=\rho^{+J}$ and $\rho_\om^J=\rho$.
Then $\Th_{\rSigma_1}^H(\rho\cup\{x\})$
is $\bfrSigma_1^J$, after coding
this theory naturally as a subset of $\rho$.
\end{lem}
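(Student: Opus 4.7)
The approach is the classical condensation-via-comparison. First, I would pull back the given $(0^-,\om_1,\om_1+1)^*$-strategy for $M$ along $\pi$ to obtain an iteration strategy $\Sigma$ for $H$, using a routine adaptation of the copying construction in Definition~\ref{dfn:0-deriving_copying_con} and Lemma~\ref{lem:0-deriving_copying} to $0$-lifting c-preserving embeddings. The c-preservation hypothesis is precisely what is needed to make the Shift Lemmas~\ref{lem:shift_lemma}--\ref{lem:shift_lemma_iii} apply both to short and to long extenders copied along $\pi$, so that $H$ is $(0^-,\om_1,\om_1+1)^*$-iterable via $\Sigma$.

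Next, I would apply the basic comparison Theorem~\ref{tm:comparison} to $H$ and $M$ with their respective strategies, obtaining successor-length trees $\Tt$ on $H$ and $\Uu$ on $M$ with a common last model on a non-dropping branch on (at least) one side. The central claim is that $\Tt$ is the trivial tree and every extender used in $\Uu$ has critical point strictly above $\rho$, whence $H = M^\Tt_\infty \ins M^\Uu_\infty$ and $M^\Uu_\infty$ agrees with $M$ below $\rho^{+M}$; in particular $H \ins M|\rho^{+M}$. The input to this claim is that the set $A = \Th_{\rSigma_1}^H(\rho\cup\{x\}) \sub \rho$ is not an element of $H$ because $\rho_1^H = \rho$ and $H = \Hull_1^H(\rho\cup\{x\})$, while every $\bfrSigma_1^M$-definable subset of $\rho$ lies in $M$ because $\rho_1^M > \rho$ and $\rho$ is an $M$-cardinal. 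Combined with the agreement $H|\rho^{+H} = M|\rho^{+H}$ coming from $\crit(\pi) = \rho^{+H}$ and $\pi$ being $0$-lifting, a least-disagreement analysis rules out low-critical-point extenders on the $M$-side and forces $\Tt$ to be trivial, since otherwise iteration on the $H$-side would have to reproduce $A$ inside an iterate of $H$, contradicting $(n+1)$-soundness preservation along iteration maps.

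Having placed $H$ as an initial segment of $M|\rho^{+M}$, the conclusion for the given $J \pins M$ follows by comparing $H$ and $J$ as two initial segments of $M|\rho^{+M}$ sharing the common cardinal-successor $\rho^{+H} = \rho^{+J}$ above $\rho$ and both projecting to $\rho$. If $H \ins J$ then $A \in J$ is trivially $\bfrSigma_1^J$; if $H = J$ then $A$ is $\rSigma_1^J(\{x\})$; and if $J \pins H$, then the hypothesis $\rho_\om^J = \rho$ yields a standard $\bfrSigma_\om^J$ surjection of $\rho$ onto $J$ uniformly in $p_\om^J$, which is enough to express $A$ as a $\bfrSigma_1^J$ subset of $\rho$ at the parameter $p_\om^J$ together with the $\rSigma_1$-predicate of $H$ transferred through the surjection.

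The main obstacle will be the comparison analysis in the middle step. One must rule out that $\Uu$ applies a long extender $E$ with $\spc(E) = \rho^{+H}$ or a short extender with $\crit = \rho$, either of which exists in principle on $M$'s sequence and would prevent $H \ins M|\rho^{+M}$. Handling this requires the long-ISC and the fact that $\rho^{+H} < \rho^{+M}$ constrains the coherence of such extenders with $H|\rho^{+H} = M|\rho^{+H}$, together with the standard Mitchell--Steel style argument pushing $A$ around the comparison to derive a contradiction at the least point of disagreement. The secondary subtlety is to verify that the copying in step one genuinely succeeds for $0$-lifting (rather than only $0$-deriving) c-preserving $\pi$ in the long-extender setting treated here.
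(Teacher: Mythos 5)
Your approach is genuinely different from the paper's and, as written, contains a fatal gap in the middle step.

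The paper does \emph{not} pull a strategy back to $H$ and then compare $H$ against $M$ directly. Instead it compares the phalanx $\ph = ((M,{<\rho}),H)$ against $M$, lifting phalanx trees to trees on $M$ via the fixed weak-Dodd-Jensen strategy. The phalanx is not a cosmetic choice: it is what forces extenders with space below $\rho$ to be applied to $M$ rather than to (iterates of) $H$, and it is what licenses the weak-Dodd-Jensen argument showing $b^\Uu$ is above $H$ and non-model-dropping with $M^\Uu_\infty \ins M^\Tt_\infty$. Your direct comparison of $H$ against $M$ does not have these features built in.

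The decisive gap is your claim that "$\Tt$ is trivial and every extender used in $\Uu$ has critical point strictly above $\rho$," leading to $H \ins M|\rho^{+M}$. This is not true, and is not the content of the lemma. The lemma is interesting precisely because $H$ is in general \emph{not} an initial segment of $M$: the conclusion is only that the theory of $H$ is $\bfrSigma_1^J$, not that $H \pins M$. If you could prove $H \pins M$ you would get $H = J$ by uniqueness (both project to $\rho$ with the same $\rho^+$), and the conclusion would be trivial; your case analysis of $H$ versus $J$ in the final paragraph is then vacuous. The justification you offer ("iteration on the $H$-side would have to reproduce $A$ inside an iterate of $H$, contradicting $(n+1)$-soundness preservation") does not establish that the $M$-side stays above $\rho$: the first disagreement between $H$ and $M$ is at $\rho^{+H} < \rho^{+M}$, and there is nothing that a priori prevents the comparison from using an extender there with critical point below $\rho$. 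In the paper's phalanx comparison, the genuinely interesting case is $M^\Uu_\infty = M^\Tt_\infty$ with $b^\Tt$ dropping; then $J = \core_\om(M^\Tt_\infty)$, and the theory of $H$ transfers to $J$ because both $H$ and the core of the common last model compute the same subsets of $\rho$ as the last model does (using the closeness of the extenders applied, as verified via Lemma~\ref{lem:closeness}). That case is absent from your argument. Finally, you also do not invoke weak Dodd-Jensen at all, which the paper uses essentially to orient the comparison (to get $b^\Uu$ above $H$, non-dropping, and $M^\Uu_\infty \ins M^\Tt_\infty$); without it you cannot rule out the comparison going the wrong way.
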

\begin{proof}
We may assume that $M$ is countable,
and fix a $(0^-,\om_1+1)$-strategy $\Sigma$
for $M$ with weak Dodd-Jensen.
 Compare the phalanx $\ph=((M,{<\rho}),H)$
 versus $M$, lifting ($0^-$-maximal) trees  $\Uu$ on  $\ph$ to ($0^-$-maximal) trees 
 $\Uu^*$ on $M$ via $\Sigma$.
 Here the ``${<\rho}$'' refers to the \emph{space} of extenders $E$: if $\spc(E)<\rho$ then $E$ applies to $M$,
 and otherwise to $H$ or a later model of the tree. In particular, if $\rho=\kappa^{+M}=\kappa^{+H}$ and $E$ is long with $\crit(E)=\kappa$, then $E$ will \emph{not} apply to $M$ (in fact it will apply to $H$).\footnote{Actually, we could have been a little more careful here,
  applying such a long extender to $J$,
  at the appropriate degree.  But for the present argument don't need to.}
 Let $(\Uu,\Tt)$ be the resulting comparison
 of $(\ph,M)$ (which succeeds in countably many stages, by a slight variant of the
 proof of comparison termination within the proof of Theorem \ref{tm:comparison}).
 
 The usual arguments with weak-Dodd-Jensen
 yield that $b^\Uu$ is above $H$,
 is non-model-dropping (hence does not drop in degree, nor Dodd-degree), and $M^\Uu_\infty\ins M^\Tt_\infty$. For every $\alpha+1<\lh(\Uu)$, we have that $E^\Uu_\alpha$ is close to $M^{*\Uu}_{\alpha+1}$; the proof of this is almost as in Lemma \ref{lem:closeness},
 with the usual slight tweak in the case that $\alpha$ is above $H$
 and $\spc(E^\Uu_\alpha)<\rho$,
 in which case we first get, in essentially the usual manner, that $E^\Uu_\alpha$ is close to $H$, and then deduce that it is close to $M$, since $\pi:H\to M$ is $0$-lifting.
 
 Therefore, since $\rho_1^H=\rho$,
 the preservation lemmas in \S\ref{sec:long_mice}
 give that $\rho_1(M^\Uu_\infty)=\rho_1^H=\rho$. If $M^\Uu_\infty\pins M^\Tt_\infty$
 then it follows that $H=M^\Uu_\infty\pins M$, so $H=J$. So we may assume that $M^\Uu_\infty=M^\Tt_\infty$.
 Since $\rho_1^M>\rho$, it follows that $b^\Tt$ drops in model. Recall $\crit(\pi)=\rho^{+H}$. So letting $\eta=\rho^{+H}$,
 we have $H|\eta=M||\eta$,
 so $\eta<\lh(E^\Uu_0)$ and $\eta\leq\lh(E^\Tt_0)$.
 It easily follows that $J=\core_\om(M^\Uu_\infty)$,
 and the subsets of $\rho$ which are $\bfrSigma_1^H$ are exactly those which are $\bfrSigma_1^{J}$, which suffices.
\end{proof}
\begin{lem}\label{lem:condensation_for_self-solid_2}
 Let  $M,H$ be $n$-sound premice of the same kind
 and $\pi:H\to M$ be $n$-lifting c-preserving $\pvec_n$-preserving.
 Suppose that either:
 \begin{enumerate}[label=--]
 \item $H,M$ are not active short, and $M$ is $(n,\om_1,\om_1+1)^*$-iterable, or
 \item $H,M$ are active short, $n=0$ and $M$ is $(0^-,\om_1,\om_1+1)^*$-iterable, or
 \item $H,M$ are active short, $n>0$, $H,M$ are Dodd-absent-sound and $M$ is $(n,\om_1,\om_1+1)^*$-iterable.
 \end{enumerate}
 Suppose that 
  $\rho=\rho_{n+1}^H<\rho_{n+1}^M$,
  $\rho$ is an $M$-cardinal,
   $\crit(\pi)=\rho^{+H}<\rho^{+M}$,
   and there is $x\in H$ such that $H=\Hull_{n+1}^H(\rho\cup\{x\})$.
Let $J\pins M$ be such that $\rho^{+H}=\rho^{+J}$ and $\rho_\om^J=\rho$.
Then $\Th_{\rSigma_{n+1}}^H(\rho\cup\{x\})$
is $\bfrSigma_{n+1}^J$, after coding
this theory naturally as a subset of $\rho$.
\end{lem}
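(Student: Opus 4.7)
The plan is to mimic the proof of Lemma \ref{lem:condensation_for_self-solid}, replacing degree $0$ everywhere by degree $n$ and using the corresponding $(n,\om_1,\om_1+1)^*$-strategy with weak Dodd-Jensen (which is available, as noted in the remark following Lemma \ref{lem:0-deriving_copying} generalizing weak Dodd-Jensen to higher degrees and to non-active-short premice). First I would reduce to the case that $M$ is countable by the usual elementary submodel argument, and fix an $(n,\om_1,\om_1+1)^*$-strategy $\Sigma$ for $M$ with the weak Dodd-Jensen property with respect to some enumeration of $M$.

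Next I would form the $n$-maximal phalanx $\ph=((M,n,{<\rho}),(H,n))$ and compare $\ph$ with $M$, lifting the tree $\Uu$ on $\ph$ to a tree $\Uu^*$ on $M$ via $\Sigma$ using the copy maps generated by $\id\colon M\to M$ on the $M$-root and $\pi\colon H\to M$ on the $H$-root. The convention that extenders $E$ with $\spc(E)<\rho$ apply to the $M$-model, and all others apply to $H$ (or later models on the $H$-side) is as in the previous proof; in particular a long extender with critical point $\kappa$ where $\rho=\kappa^{+M}=\kappa^{+H}$ applies on the $H$-side. Since $\pi$ is $n$-lifting, c-preserving and $\pvec_n$-preserving, the Shift Lemmas together with the inductive properties of the copying construction established earlier show that the copy construction proceeds without issue, and the comparison terminates in countably many stages by a slight variant of the termination argument inside the proof of Theorem \ref{tm:comparison}. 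Call the resulting pair $(\Uu,\Tt)$, with $\Tt$ the tree on $M$.

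Weak Dodd-Jensen applied to the lifted tree $\Uu^*$ on $M$ gives, in the standard way, that $b^\Uu$ is above $H$, does not drop in model (hence not in degree or Dodd-degree), and that $M^\Uu_\infty\ins M^\Tt_\infty$. I would then verify closeness of each $E^\Uu_\alpha$ to $M^{*\Uu}_{\alpha+1}$ as in Lemma \ref{lem:closeness}, with the usual tweak in the case that $\alpha$ is above $H$ and $\spc(E^\Uu_\alpha)<\rho$: one gets closeness to $H$ first, then transfers to $M$ via $\pi$. Combined with the preservation lemmas of Section \ref{sec:long_mice} (in particular Lemma \ref{lem:param_proj_pres}), closeness yields $\rho_{n+1}(M^\Uu_\infty)=\rho_{n+1}^H=\rho$ and $i^{\Uu}_{H,\infty}$ preserves the standard parameter appropriately.

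For the final case split: if $M^\Uu_\infty\pins M^\Tt_\infty$, then since $b^\Tt$ is likewise non-dropping (else $M^\Tt_\infty$ is not sound or not Dodd-absent-sound, contradicting that it has the proper segment $M^\Uu_\infty$ with $\rho_{n+1}=\rho$), it follows that $H=M^\Uu_\infty\pins M$, so $H=J$ and the conclusion is immediate. Otherwise $M^\Uu_\infty=M^\Tt_\infty$. Then since $\rho_{n+1}^M>\rho=\rho_{n+1}(M^\Tt_\infty)$, necessarily $b^\Tt$ drops in model. Because $\crit(\pi)=\rho^{+H}$ and $\pi$ is the identity below, $H|\rho^{+H}=M||\rho^{+H}$, which forces $\rho^{+H}<\lh(E^\Uu_0)$ and $\rho^{+H}\leq\lh(E^\Tt_0)$. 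The standard argument then identifies $J=\core_\om(M^\Uu_\infty)$, and since the $\bfrSigma_{n+1}$-definable subsets of $\rho$ computed in $H=\Hull_{n+1}^H(\rho\cup\{x\})$ coincide with those computed in $M^\Uu_\infty$ (via the $n$-embedding $i^\Uu_{H,\infty}$ and the fact that $\rho_{n+1}$ is preserved), and these in turn coincide with the $\bfrSigma_{n+1}^J$ subsets, we conclude that $\Th_{\rSigma_{n+1}}^H(\rho\cup\{x\})$, coded as a subset of $\rho$, is $\bfrSigma_{n+1}^J$. The main obstacle is verifying that the copying construction through the phalanx really propagates $n$-lifting and $\pvec_n$-preservation at every stage (in particular under drops in degree or Dodd-degree), but this is exactly what the Shift Lemmas and the copying construction of Definition \ref{dfn:0-deriving_copying_con} (and its analogues at higher degrees, as signalled in the remark after Lemma \ref{lem:0-deriving_copying}) are designed to deliver.
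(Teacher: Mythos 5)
Your proposal is correct and follows exactly the route the paper intends: the paper itself gives no separate argument here, just the one-line remark that the proof is ``very similar to the previous lemma, and we leave it to the reader,'' and your write-up is precisely that degree-$n$ adaptation of the proof of Lemma~\ref{lem:condensation_for_self-solid} (phalanx $((M,{<\rho}),H)$ at degree $n$, lifting via $\pi$ to a weak-Dodd-Jensen strategy, the two-case split on $M^\Uu_\infty\pins M^\Tt_\infty$ versus $M^\Uu_\infty=M^\Tt_\infty$, and identifying $J=\core_\om(M^\Uu_\infty)$). No further comment is needed.
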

\begin{proof}
The case that $H,M$ are active short and $n=0$ is covered by the previous lemma.
The other cases are very similar, and we leave them to the reader.
\end{proof}

We now come to the main result in this section, and our first application of comparison:
\begin{tm}\label{tm:rho^M_D=0_implies_Dodd-absent-solid_and_universal}
  Let $M$ be a $(0^-,\om_1,\om_1+1)^*$-iterable active short premouse
  with $\rho_{\mathrm{D}}^M=0$.
 Then $M$ is Dodd-absent-solid and Dodd-absent-universal.
\end{tm}

\begin{proof}
By Lemma \ref{lem:tau^M=max(rho_1,kappa^+)},
we may assume that $F^M$ has a largest generator.
 We may also assume $M$ is countable, and fix a $(0^-,\om_1+1)$-strategy $\Sigma$ for $M$ with weak Dodd-Jensen with respect to some enumeration $\vec{x}=\left<x_n\right>_{n<\om}$ of the universe of $M$
 with $p_{\D}^M\ins\vec{x}$.

 Let $p_{\mathrm{D}}^M=\{\alpha_0,\alpha_1,\ldots,\alpha_{n-1}\}$. Let $\alpha_n=0$.
 Given $i\leq n$ let \[F_i=F^M\rest(\{\alpha_0,\ldots,\alpha_{i-1}\}\cup\alpha_i) \]
 and $U_i=\Ult_0(M,F_i)$
 and $j_i:M\to U_i$ be the ultrapower map.
 Let \[\pi_i:U_i\to U'=\Ult_0(M,F^M) \]
 be the factor map, so if $i<n$ then $\crit(\pi_i)=\alpha_i$,
 and also $\crit(\pi_n)\geq\kappa$ where $
 \kappa=\crit(F^M)$. We know $F_n\notin M$.
 Let $i_0\leq n$ be least such that $F_{i_0}\notin M$. Let $U=U_{i_0}$ and $j=j_{i_0}$ and $\pi=\pi_{i_0}$.
 If $i_0<n$ then let 
 $\eta=\alpha_{i_0}$.
 If $i_0=n$ 
 then let $\eta=\kappa^{+M}$.
 
 \begin{clm}
 $i_0<n$ iff $\kappa^{+M}<\eta$ (and if $i_0=n$ then $\kappa^{+M}=\eta$).\end{clm}
 \begin{proof}
 If $i_0=n$ then $\eta=\kappa^{+M}$ by definition.
  So suppose $i_0<n$ and $\eta\leq\kappa^{+M}$. Then easily $\eta=\kappa$, and in fact $\kappa=\eta=\alpha_{i_0}\in p_{\D}^M$, so $\kappa=\min(p_{\D}^M)$.
  Since $i_0<n$,
  by minimality of $p_{\D}^M$,
  we have $F^M\rest(p_{\D}^M\cut\{\kappa\})\in M$. But this is equivalent to $F^M\rest(\kappa\cup(p_{\D}^M\cut\{\kappa\}))$, which verifies Dodd-absent-solidity at $i_0=n-1$, contradicting the choice of $i_0$.
 \end{proof}

Now we need to see that $i_0=n$ 
  and $U|\kappa^{++U}=M|\kappa^{++M}$.
So we may assume that either $\crit(\pi)=\eta$ (which certainly happens if $i_0<n$)
or $\eta^{+U}<\eta^{+M}$. Note that if $\eta=\gamma^{+U}=\gamma^{+M}$ for some $M$-cardinal $\gamma$, then $\eta\neq\crit(\pi)$,
and therefore $i_0=n$, so $\gamma=\kappa$
and $\eta=\kappa^{+U}=\kappa^{+M}$.

Now as in the usual proof of Dodd-solidity at the short extender level,
we will compare  $M$ with a certain phalanx $\ph$ derived from $M$.
Using  weak Dodd-Jensen, we will show that the comparison ends with the desired outcome, and then we will analyse that outcome to prove what we want (that $i_0=n$ and $U|\kappa^{++U}=M|\kappa^{++M}$).

 We define $\ph$ according to cases as follows; these cases are exhaustive by the preceding remarks:
 \begin{enumerate}[label=Ph\arabic*.,ref=Ph\arabic*]
 \item\label{case:Dodd-solidity_phalanx_eta_limit_card} If $\eta$ is a  cardinal of $M$ then
 $\ph=((M,{<\eta}),U)$. Note that this includes the possibility
 that $\eta=\kappa^{+M}=\kappa^{+U}$,
 in which case long extenders $E$ with $\crit(E)=\kappa$
 apply to $U$.

  \item\label{item:case_eta_non-card_active}  If $\gamma=\card^M(\eta)<\eta=\gamma^{+U}<\gamma^{+M}$ and $M|\eta$ is active, then
  \[ \ph=((M,{<\gamma}),(\Ult_0(M,F^{M|\eta}),\gamma),U).\]
  \item\label{item:case_eta_non-card_passive} If $\gamma=\card^M(\eta)<\eta=\gamma^{+U}<\gamma^{+M}$ and $M|\eta$ is passive,
  then
  \[ \ph=((M,{<\gamma}),((R,r),\gamma),U),\]
  where $(R,r)\pins(M,0)$ is least such that
  $\eta\leq\OR^R$ and $\rho_{r+1}^R=\gamma$.
  \end{enumerate}
  
  Our first task is to see that $\ph$ is iterable. This will take some work, eventually being achieved in Claim \ref{clm:Gamma_is_strat_for_ph} below.
  Let $\xi\in[\kappa^{+M},\lambda(F^M))$
  be an $M$-cardinal.
 Define the phalanxes $\mathfrak{Q}=((M,{<\xi}),M)$ and $\mathfrak{Q}'=((M,{<\xi}),U')$ where $U'=\Ult_0(M,F^M)$. For trees $\Tt'$ on $\mathfrak{Q}'$, we have two roots in $<^{\Tt'}$, which are $-1$ and $0$.
 We set $M^{\Tt'}_{-1}=M$ and $M^{\Tt'}_0=U'$. Given $\alpha<\lh(\Tt')$,
  $\mathrm{root}^{\Tt'}(\alpha)$
  denotes the root of $\alpha$  in $\Tt'$
  (that is, the unique $\beta\in\{-1,0\}$ such that $\beta<^{\Tt'}\alpha$).
  Likewise for trees $\Tt$ on $\mathfrak{Q}$, but in this case $M^{\Tt}_{-1}=M^{\Tt}_0=M$.
 We say that a pair of padded $0^-$-maximal trees $(\Tt,\Tt')$ on $\mathfrak{Q},\mathfrak{Q}'$ is a \emph{translation pair} iff:
 \begin{enumerate}
  \item $\lh(\Tt)=\lh(\Tt')$,
  \item $E^\Tt_\alpha\neq\emptyset$ for all $\alpha+1<\lh(\Tt)$ (but $\Tt'$ can use some padding),
  \item for all $\alpha+1<\lh(\Tt)$,
  if $E^{\Tt'}_\alpha\neq\emptyset$ then $E^\Tt_\alpha=E^{\Tt'}_\alpha$,
    \item if $\lh(\Tt)>1$ then $\xi<\lh(E^\Tt_0)$,
  \item\label{item:when_padding_in_Tt'} for all $\alpha+1<\lh(\Tt)$,
  we have $E^{\Tt'}_\alpha=\emptyset$
  iff \[\mathrm{root}^{\Tt}(\alpha)=0\text{ and }[0,\alpha]^\Tt\cap\dropset^\Tt=\emptyset
  \text{ and }E^\Tt_\alpha=F(M^\Tt_\alpha),\]
  \item\label{item:when_root^T(alpha)=0} Let $\alpha<\lh(\Tt)$
  be such that
   $\mathrm{root}^\Tt(\alpha)=0$.
  Then:
  \begin{enumerate}[label=--]
   \item
$\mathrm{root}^{\Tt'}(\alpha)=0$,
  \item $[0,\alpha]^{\Tt}=[0,\alpha]^{\Tt'}$,
  \item  $[0,\alpha]^{\Tt}\cap\dropset^\Tt=[0,\alpha]^{\Tt'}\cap\dropset^{\Tt'}$,
  \item $E^\Tt_\beta=E^{\Tt'}_\beta$ for all $\beta+1\leq^\Tt\alpha$,
  \item either:
  \begin{enumerate}
  \item $[0,\alpha]^{\Tt}\cap\dropset^\Tt=\emptyset=
  [0,\alpha]^{\Tt'}\cap\dropset^{\Tt'}$
  and $M^{\Tt'}_\alpha=\Ult_0(M,F(M^\Tt_\alpha))$, or
  \item $[0,\alpha]^{\Tt}\cap\dropset^\Tt\neq\emptyset\neq [0,\alpha]^{\Tt'}\cap\dropset^{\Tt'}$
  and $M^\Tt_\alpha=M^{\Tt'}_\alpha$
  and $\deg^\Tt_\alpha=\deg^\Tt_{\alpha'}$.
  \end{enumerate}
  \end{enumerate}
  \item Let $\alpha<\lh(\Tt)$ be such that
   $\mathrm{root}^\Tt(\alpha)=-1$
   and $\mathrm{root}^{\Tt'}(\alpha)=0$.
  Then
  there is a unique $\beta+1\leq^{\Tt'}\alpha$
  such that $E^{\Tt'}_\beta=\emptyset$;
  fixing this $\beta$, we have:
  \begin{enumerate}[label=--]
  \item $\pred^{\Tt'}(\beta+1)=\beta$,
  \item $\pred^\Tt(\beta+1)=-1$
  (and by clause \ref{item:when_padding_in_Tt'}, we have
   $\mathrm{root}^\Tt(\beta)=0$ and $[0,\beta]^\Tt\cap\dropset^\Tt=\emptyset$ and $E^\Tt_\beta=F(M^\Tt_\beta)$,
   so also clause \ref{item:when_root^T(alpha)=0}
   attains at $\beta$),
  \item  $[\beta+1,\alpha]^{\Tt}=[\beta+1,\alpha]^{\Tt'}$,
  \item $(\beta+1,\alpha]^{\Tt}\cap\dropset^\Tt=(\beta+1,\alpha]^{\Tt'}\cap\dropset^{\Tt'}$,
  and
  \item  $E^\Tt_\gamma=E^{\Tt'}_\gamma$
  for all $\gamma+1\in(\beta+1,\alpha]^\Tt$,
  \item $M^\Tt_\alpha=M^{\Tt'}_\alpha$
  and $\deg^\Tt_\alpha=\deg^{\Tt'}_\alpha$.
  \end{enumerate}
  \item Let $\alpha<\lh(\Tt)$ be such that
  $\mathrm{root}^{\Tt}(\alpha)=-1=\mathrm{root}^{\Tt'}(\alpha)$. Then:
  \begin{enumerate}[label=--]
    \item $(-1,\alpha]^{\Tt'}=(-1,\alpha]^{\Tt}$,
    \item $E^\Tt_\beta=E^{\Tt'}_\beta$ for all $\beta+1\leq^{\Tt}\alpha$,
   \item $(-1,\alpha]^{\Tt}\cap\dropset^\Tt=(-1,\alpha]^{\Tt'}\cap\dropset^{\Tt'}$
  \item $M^{\Tt}_\alpha=M^{\Tt'}_\alpha$
  and $\deg^\Tt_\alpha=\deg^{\Tt'}_\alpha$.
  \end{enumerate}
  \end{enumerate}

  Clearly $\mathfrak{Q}$ is $(0^-,\om_1+1)$-iterable, and we consider
  $\Sigma$ as acting on trees on $\mathfrak{Q}$ in the obvious manner.
  
  \begin{clm}
   $\mathfrak{Q}'$ is $(0^-,\om_1+1)$-iterable, via the strategy $\Sigma'$,
   where a putative $0^-$-maximal tree on $\mathfrak{Q}'$ is via $\Sigma'$
   iff there is $\Tt$ on $\mathfrak{Q}$ via $\Sigma$ such that $(\Tt,\Tt')$
   is a translation pair.
  \end{clm}
\begin{proof}
 This is mostly straightforward,
 but there is one detail
 which is new in the long extender context. Let $\alpha<\lh(\Tt)$ be such that $\mathrm{root}^\Tt(\alpha)=0$ and
 letting $\beta+1=\min((0,\alpha]^\Tt)$,
 then $E^\Tt_\beta$ is long with $\crit(E^\Tt_\beta)=\kappa$.
 Then we form $M^\Tt_{\beta+1}=\Ult_0(M,E^\Tt_\beta)$ so as to avoid the protomouse. We form $M^{\Tt'}_{\beta+1}=\Ult_0(U,E^\Tt_\beta)$
 in the usual manner.
 We need to see here that
 \[ M^{\Tt'}_{\beta+1}=\Ult_0(M,F(M^\Tt_{\beta+1})).\]
 But  $F(M^\Tt_{\beta+1})=F((M^\Tt_{\beta+1})^*)\com E$,
 where $E$ is the short part of $E^\Tt_\beta$, and $(M^\Tt_{\beta+1})^*$ is the protomouse associated to $\Ult_0(M,E^\Tt_\beta)$.
 We also have
 \[ M^{\Tt'}_{\beta+1}=\Ult_0(\Ult_0(M,F^M),E^\Tt_\beta).\]
 So we need to see that the 2-step ``iteration'' of $M$ given by first applying $F^M$, and then applying $E^\Tt_\beta$, is equivalent to
 that given by first applying $E$,
 and then applying $F((M^{\Tt}_{\beta+1})^*)$. But this is indeed true, basically by the usual proof
 from the short extender context
 (show that the resulting overall extenders are equivalent; they are both short). (See for example \cite{extmax}
 or \cite{fullnorm} for versions of this in the short extender context.)\end{proof}

We now want to define a $0^-$-maximal strategy $\Gamma$ for $\ph$, by lifting trees $\Tt$ on $\ph$  via $\Gamma$
to 
 trees $\Tt'$ on $\mathfrak{Q}'$ via $\Sigma$, where $\xi$
is taken appropriately
(recall $\mathfrak{Q}'=((M,{<\xi}),U')$,
and $\xi\in[\kappa^{+M},\lambda(F^M))$
was a given $M$-cardinal).
We will produce copy maps $\pi_\alpha$,
possibly with $\pi_\alpha:M^\Tt_\alpha\to M^{\Tt'}_\alpha$, or a variant thereof depending on the case.

Suppose first that we are in case \ref{case:Dodd-solidity_phalanx_eta_limit_card}
of the definition of $\ph$,
so $\ph=((M,{<\eta}),U)$
and $\eta$ is an $M$-cardinal.
We have the factor map $\pi:U\to U'$,
so $\pi(\eta)$ is a $U'$-cardinal
and $\pi(\eta)<\lambda(F^M)$,
so $\pi(\eta)$ is
an $M$-cardinal.  In this case take $\xi=\pi(\eta)$. So if $\eta\leq\crit(\pi)$ then note that we can lift trees $\Tt$ to trees $\Tt'$  in the usual manner, starting with base copy maps $(\pi_{-1},\pi_0)=(\id,\pi)$. Now suppose that $\crit(\pi)<\eta$, and therefore $\eta=\kappa^{+M}$ and $\crit(\pi)=\kappa$. For all copy maps
$\pi_\alpha$ with $\alpha\geq 0$,
we will have $\pi\rest\kappa^{+M}\sub\pi_\alpha$. And we will have $\pi_\alpha:M^\Tt_\alpha\to M^{\Tt'}_\alpha$. Thus, as long as $\crit(E^\Tt_\alpha)<\kappa$,
we can copy to $E^{\Tt'}_\alpha=\pi_\alpha(E^\Tt_\alpha)$
and define $\pi_{\alpha+1}$ as usual.
Suppose $\crit(E^\Tt_\alpha)=\kappa$.
If $E^\Tt_\alpha$ is long then $\pred^\Tt(\alpha+1)=0$ and $\pred^{\Tt'}(\alpha+1)=0$,
and we again proceed as usual. Suppose $E^\Tt_\alpha$ is short.
Then $\pred^\Tt(\alpha+1)=-1$,
but note that $\pi_\alpha(\crit(E^\Tt_\alpha))=\pi(\kappa)>\kappa^{+M}$, and if we set $E^{\Tt'}_\alpha=\pi_\alpha(E^\Tt_\alpha)$
we would have $\pred^{\Tt'}(\alpha+1)=0$.
Instead we use
the fact that in this case,
the short extender $E$
derived from $\pi$ is in $\es^{U'}$,
and $\pi\rest\kappa^{+M}=\pi_\alpha\rest\kappa^{+M}$,
and we set $E^{\Tt'}_\alpha=\pi_\alpha(E^\Tt_\alpha)(E)$.
So $\crit(E^{\Tt'}_\alpha)=\kappa$
and $E^{\Tt'}_\alpha$ is short.
So $\pred^{\Tt'}(\alpha+1)=-1$,
and we define
\[ \pi_{\alpha+1}:\Ult_0(M,E^\Tt_\alpha)\to\Ult_0(M,\pi_\alpha(E^\Tt_\alpha)(E)) \]
via Shift Lemma IV \ref{lem:shift_lemma_iv}
applied to $\pi_{-1}=\id$ and the $0$-deriving embedding
\[\pi_\alpha\rest\exit^\Tt_\alpha:\exit^\Tt_\alpha\to \exit^{\Tt'}_\alpha.\]
Otherwise things are as usual.

Now suppose we are in case
\ref{item:case_eta_non-card_passive}
of the definition of $\ph$.  So \[\ph=((M,{<\gamma}),((R,r),\gamma),U),\]
$\gamma$ is an $M$-cardinal,
$\eta=\gamma^{+U}<\gamma^{+M}$ and $M|\eta$ is passive). So $\kappa^{+M}\leq\gamma$ in this case,
and $\eta=\crit(\pi)$. Use $\xi=\pi(\eta)=\gamma^{+M}$.
Trees $\Tt$ on $\ph$ have 3 roots: $-2$, $-1$, $0$.
We write $M^\Tt_{-2}=M$,
$M^\Tt_{-1}=R$, $\deg^\Tt_{-1}=r+1$,
$M^\Tt_0=U$. We lift to padded trees $\Tt''$ on $\mathfrak{Q}''=((M,{<\gamma}),(M,\gamma),U')$,
with $\lh(E^{\Tt''}_0)>\gamma^{+M}$.
We start with copy maps $\pi_{-2}=\id:M\to M$, $\pi_{-1}=\id:R\to R\pins M$, and $\pi_0=\pi:U\to U'$.
We will have $\mathrm{root}^\Tt(\alpha)=\mathrm{root}^{\Tt'}(\alpha)$ for all $\alpha$.
 We will have $\pi_\alpha:M^\Tt_\alpha\to M^{\Tt''}_\alpha$, except in case
$\alpha$ is a successor
  with $\pred^\Tt(\alpha)=-1$,
  in which case 
   $\pi_\alpha:M^\Tt_\alpha\to R^{\Tt''}_\alpha\pins M^{\Tt''}_\alpha$,
  where $R^{\Tt''}_{-1}=R$
  and $R^{\Tt''}_\alpha$ will be specified in what follows.
  We will pad in $\Tt''$ exactly
  at those $\beta$ such that
  $\spc(E^\Tt_\beta)=\gamma$
  (that is, we set $E^{\Tt''}_\beta=\emptyset$ at exactly those $\beta$),
  and here we will define $M^{\Tt''}_{\beta+1}=M^{\Tt''}_\beta$
  and $\pred^{\Tt''}(\beta+1)=-1$.
  In this case
  we will still define an exchange ordinal $\lambda^{\Tt''}_\beta$,
  to be used in place of the usual one.
  
  The copying process proceeds as usual,
  except in case  $\spc(E^\Tt_\beta)=\gamma$, so that $\pred^\Tt(\beta+1)=-1$ and $M^{*\Tt}_{\beta+1}=R$ (and $\deg^\Tt_{\beta+1}=r$). As just mentioned,
  we pad in $\Tt''$ at this stage, setting $E^{\Tt''}_\beta=\emptyset$, so
 $M^{\Tt''}_{\beta+1}=M^{\Tt''}_\beta$,
 and we set $\pred^{\Tt''}(\beta+1)=-1$ (the latter is really just for consistency with $\Tt$). Let $F=\pi_\beta(E^\Tt_\beta)$.
 Define the exchange ordinal associated to $\beta$ in $\Tt''$ as $\lambda^{\Tt''}_{\beta}=$ the largest cardinal of $\pi_\beta(\exit^\Tt_\beta)$ (note that this is just the exchange ordinal that we would have had if we had set $E^{\Tt''}_\beta=F$).
 We will define $R^{\Tt''}_{\beta+1}\pins M^{\Tt''}_{\beta+1}=M^{\Tt''}_\beta$, in fact with $R^{\Tt''}_{\beta+1}\pins\pi_\beta(\exit^\Tt_\beta)$,
 with $\rho_{r+1}(R^{\Tt''}_{\beta+1})=\rho_\om(R^{\Tt''}_{\beta+1})=\lambda^{\Tt''}_\beta$.
 Note that $\gamma<\crit(\pi)=\crit(\pi_\beta)=\gamma^{+U}$ and $\pi(\gamma^{+U})=\pi_\beta(\gamma^{+U})=\gamma^{+M}$,
 so $\spc(F)=\gamma$ and $F$ is $M$-total. Recall that $R\pins M$ with $\rho_{r+1}^R=\rho_\om^R=\gamma$,
 so letting $S=i^{M,0}(R)$,
 we have $S\pins\Ult_0(M,F)$
 and $\rho_{r+1}^S=\rho_\om^S$
 is a cardinal of $\Ult_0(M,F)$.
 There are three cases to consider.
  
  \begin{casethree}\label{case:short_ext_in_anomalous_lift}
 $\gamma$ is a limit cardinal of $M$.

 We define
 \[ \widetilde{R}=R^{\Tt''}_{\beta+1}=\Ult_r(R,F).\]
 Note that we only use generators ${<\lambda(F)}$
 to define this ultrapower.
 Note that $F$ is an $M$-extender,
 and hence \emph{not} an $R$-extender, since $\pow(\gamma)\cap R\psub\pow(\gamma)\cap M$. 
 
 Since $R\pins M$ with $\rho_{r+1}^R=\rho_\om^R=\gamma$, we have $S\pins\pi_\beta(\exit^\Tt_\beta)$
 and $\rho_{r+1}^S=\rho_\om^S=\lambda(F)=\lambda_\beta^{\Tt''}$.
 We claim that $\widetilde{R}\ins S$,
 so $\widetilde{R}\pins\pi_\beta(\exit^\Tt_\beta)$.
 For let
 \[ \sigma: \widetilde{R}\to S \]
 be the factor map; that is,
 \[ \sigma([a,f_{t,q}^R]^{R,r}_F)=i^{M,0}_F(f_{t,q}^R)(a) \]
 for all $a\in[\lambda(F)]^{<\om}$
 and $q\in R$ and $\rSigma_r$ Skolem terms $t$. Then $\widetilde{R}$
 is $(r+1)$-sound with $\rho_{r+1}(\widetilde{R})=\lambda(F)=\rho_{r+1}(S)$ and $\sigma$ is $r$-lifting with $\crit(\sigma)>\lambda(F)$, 
 and note that $\sigma,\widetilde{R}\in\pi_\beta(\exit^\Tt_\beta)$,
 so by internal condensation for $M$,
 we get $\widetilde{R}\ins S$.
 Now just let $\pi_{\beta+1}:M^\Tt_{\beta+1}\to R^{\Tt''}_{\beta+1}$ be defined via the natural variant of the Shift Lemma,
 applied to the maps $\id:R\to R$ and $\pi_\beta\rest\exit^\Tt_\beta$. Note that $\pi_{\beta+1}$
 is an $r$-embedding (it is $r$-lifting as usual, but by commutativity and
 since $R^{\Tt''}_{\beta+1}=\Ult_r(R,F)$,
 we get that $\rho_{r}(R^{\Tt''}_{\beta+1})=\sup \pi_{\beta+1}``\rho_{r+1}(M^\Tt_{\beta+1})$).
  \end{casethree}
 
 \begin{casethree}
  $E^\Tt_\beta$
 is long with $\crit(E^\Tt_\beta)=\mu$
 where $\mu^{+M}=\gamma$,
 and $E^\Tt_\beta$ has a largest generator.
 
In this case $i^{M,0}_F(\gamma)=i^{M,0}_F(\mu^{+M})=\lambda^{+\Ult_0(M,F)}=\lh(F)$ where $\lambda=\lambda(F)$.
So
  $\rho_\om^S=\rho_{r+1}^S=\lh(F)$.
 Again define
 \[ \widetilde{R}=\Ult_r(R,F),\]
 but note here that we literally use
 $F$ to form the ultrapower, not just $F\rest\nu(F)$;
 this makes a difference since
 $\dom(F)\not\sub R$.
 As before,
 $\widetilde{R}$ is $(r+1)$-sound with $\rho_{r+1}^{\widetilde{R}}=\rho_\om^{\widetilde{R}}=\lh(F)$.
  We get $\widetilde{R}\ins S$ by condensation, and have the $r$-embedding $i^{R,r}_{F}:R\to\widetilde{R}$. However,
  if we had set $E^\Tt_\beta=F$ and $R^{\Tt''}_{\beta+1}=\widetilde{R}$, then
  the fact that
  $\rho_{r+1}^{\widetilde{R}}=\lh(F)>\lambda(F)$ would not be so convenient for later copying,
  because if $\Tt$
  uses a short extender $E^\Tt_\gamma$
  with $\beta<\gamma$
  and $\crit(E^\Tt_\gamma)=\lambda(E^\Tt_\beta)$, then we would have $\pred^\Tt(\gamma+1)=\beta+1$
  and $\deg^\Tt_{\gamma+1}=r$,
  and by copying $E^\Tt_\gamma$
  in the usual way,
  we would get $\pred^{\Tt''}(\gamma+1)=\beta+1$ but 
  $\widetilde{R}\pins M^{*\Tt''}_{\gamma+1}$. 
  Instead we set $E^{\Tt''}_\beta=\emptyset$ and replace $\widetilde{R}$
  with some $R'\pins\widetilde{R}$
  with $\rho_{r+1}^{R'}=\rho_\om^{R'}=\lambda(F)$,
  so that in the latter situation,
  we will get $M^{*\Tt''}_{\gamma+1}=R^{\Tt''}_{\beta+1}$ and $\deg^{\Tt''}_{\gamma+1}=r$.
  For this we use some more condensation. Given $\xi\in(\lambda(F),\lh(F))$, let
  \[ R'_\xi=\cHull_{r+1}^{\widetilde{R}}((\xi+1)\cup\{\pvec_{r+1}^{\widetilde{R}}\})\]
  and $\sigma'_\xi:R'_\xi\to\widetilde{R}$ be the uncollapse.
  
  We claim that
  for all sufficiently large $\xi<\lh(F)$, letting $\chi=\xi^{+R'_\xi}$,
  we have that $R'_\chi$ is $(r+1)$-sound with $\rho_{r+1}^{R'_\chi}=\lambda(F)$ and $\chi=\min(p_{r+1}^{R'_\chi})$
  and $\sigma'_\chi(q)=p_{r+1}^{\widetilde{R}}$
  where $q=p_{r+1}^{R'_\chi}\cut\{\chi\}$, and if $\widetilde{R}$ is active short then
  $R'_\chi$ is Dodd-absent-sound. For
  first let $\xi$ be large enough that
  all $(r+1)$-solidity witnesses
  for $p_{r+1}^{\widetilde{R}}$ are in $\rg(\sigma'_\xi)$,
  and if $\widetilde{R}$ is active short,
  then likewise for all Dodd-absent-solidity witnesses for $p_{\D}^{\widetilde{R}}$.
  Let $\chi=\lambda(F)^{+R'_\xi}$.
  Let $J\pins\widetilde{R}$
  be such that $\chi=\lambda(F)^{+J}$ and $\rho_\om^J=\lambda(F)$.
  Let $\sigma'_\xi(q)=p_{r+1}^{\widetilde{R}}$.
  By Lemma \ref{lem:condensation_for_self-solid_2}, $t=\Th_{\rSigma_{r+1}}^{R'_\xi}(\lambda(F)\cup\{\xi,q\})$
  is $\bfrSigma_{r+1}^J$.
  But now  the desired properties hold for $\chi$ (note: not $\xi$).
  In particular, $R'_\chi$
   is $(r+1)$-sound with $\sigma'_\chi(p_{r+1}^{R'_\chi})=p_{r+1}^{\widetilde{R}}\cup\{\chi\}$,
  because $\rg(\sigma'_\xi)\sub\rg(\sigma'_\chi)$ and $t\in R'_\chi$
  and $t$ yields the bottom $(r+1)$-solidity
  witness  for $p_{r+1}^{\widetilde{R}}\cup\{\chi\}$. And if $\widetilde{R}$ is active short, then the Dodd-absent-soundness of $R'_\chi$ is only
  not immediate if $\crit(F^{\widetilde{R}})\leq\lambda(F^M)$ and $r=0$
  (if $r>0$ then elementarity considerations suffice).
  So suppose this is the case.
  If $\crit(F^{\widetilde{R}})=\lambda(F^M)$ then $F^{R'_\chi}$
  is generated by $(\sigma'_\chi)^{-1}(p_{\D}^{\widetilde{R}})$,
  by $\rSigma_1$-elementarity,
  and since $\xi$ was taken large enough,
  $(\sigma'_\chi)^{-1}(p_{\D}^{\widetilde{R}})$ is also Dodd-absent-solid for $R'_\chi$.
  So suppose $\crit(F^{\widetilde{R}})<\lambda(F^M)$.
  Then $R'_\chi$ is Dodd-absent-solid with $\pi(p_{\D}^{R'_\chi})=p_{\D}^{\widetilde{R}}\cup\{\chi\}$, for reasons similar to those just mentioned, and that the Dodd-absent-solidity witness at $\chi$ is also coded by $t$.
  
  Now let $\chi$ be least such.
  Since $\crit(\sigma'_\chi)=\chi^{+R'_\chi}$, internal condensation then gives
  that $R'_\chi\pins\widetilde{R}$, so $R'_\chi\pins\pi_\beta(\exit^\Tt_\beta)$. Also, since $i^{R,r}_{F}:R\to\widetilde{R}$ is an $r$-embedding and $\rg(i^{R,r}_F)\sub\rg(\sigma'_\chi)$,
  $\sigma'_\chi$ is also an $r$-embedding,
  as is the factor map $\tau_\chi:R\to R'_\chi$
  given by $\tau_\chi=(\sigma'_\chi)^{-1}\com i^{R,r}_F$. Now let $\chi$ be least
  as above such that $\nu(F)\sub(\chi+1)$ and $\sup\pi_\beta``\lh(E^\Tt_\beta)\sub(\chi+1)$. Then we define $R^{\Tt''}_{\beta+1}=R'_\chi$.
  Note that $R'_\chi=\Ult_r(R,G)$ where $G=F\rest\chi^{+R'_\chi}$ and $\tau_\chi=i^{R,r}_G$.
  Using this characterization, define $\pi_{\beta+1}:M^{\Tt}_{\beta+1}\to R'_\chi$
  via the natural version of the Shift Lemma. By commutativity,
  $\pi_{\beta+1}$ is an $r$-embedding.
 \end{casethree}
 
 \begin{casethree}
 $E^\Tt_\beta$ is long with $\crit(E^\Tt_\beta)=\mu$ where $\mu^{+M}=\gamma$, and $E^\Tt_\beta$ has no largest generator.

 This is more like Case \ref{case:short_ext_in_anomalous_lift}.
 We have  $\lambda^{\Tt''}_{\beta}=\nu(F)=\lambda^{+\pi_\beta(\exit^\Tt_\beta)}$
 where $\lambda=\lambda(F)$.
 Define
\[\widetilde{R}=R^{\Tt''}_{\beta+1}=\Ult_r(R,F\rest\nu(F)).
\]
As in Case \ref{case:short_ext_in_anomalous_lift}, we get $\widetilde{R}\ins S\pins\pi_\beta(\exit^\Tt_\beta)\ins M^{\Tt''}_{\beta+1}$,
and $\rho_{r+1}(\widetilde{R})=\rho_\om(\widetilde{R})=\nu(F)$.
We define $\pi_{\beta+1}:M^{\Tt}_{\beta+1}\to R^{\Tt''}_{\beta+1}$
via the natural version of the Shift Lemma, and this is an $r$-embedding.
 \end{casethree}
 
 This completes all cases.
 Note that
 $\pi_{\beta+1}:M^\Tt_{\beta+1}\to R^{\Tt''}_{\beta+1}\pins M^{\Tt''}_{\beta+1}$ is an $r$-embedding
 with $\pi_\beta\rest(\lambda^{\Tt}_\beta+1)\sub\pi_{\beta+1}$,
 where $\lambda^{\Tt}_\beta$ is the exchange ordinal associated to $E^\Tt_\beta$ in $\Tt$,
 and
  $\rho_{r+1}(R^{\Tt''}_{\beta+1})=\pi_{\beta+1}(\lambda^{\Tt}_\beta)$.
  This helps ensure that if $\gamma+1<\lh(\Tt)$ with $\pred^\Tt(\gamma+1)=\beta+1$,
  then $\pred^{\Tt''}(\gamma+1)=\beta+1$
  and $M^{*\Tt''}_{\gamma+1}=\pi_{\beta+1}(M^{*\Tt}_{\gamma+1})$
  and $\deg^{\Tt''}_{\gamma+1}=\deg^{\Tt}_{\gamma+1}$.

  The rest of the copying in Case \ref{item:case_eta_non-card_passive}
  of the definition of $\ph$
  is routine, and we omit further details.
  
  It just remains to discuss
  Case \ref{item:case_eta_non-card_active} of the definition of $\ph$. This case is  parallel to case \ref{item:case_eta_non-card_passive}, but with a difference. We will not use any padding in $\Tt''$. 
 We have $\gamma=\card^M(\eta)<\eta=\crit(\pi)=\gamma^{+U}<\gamma^{+M}=\pi(\eta)$, $M|\eta$ is active, and
  \[ \ph=((M,{<\gamma}),(\Ult_0(M,F^{M|\eta}),\gamma),U).\]
 We have $\kappa^{+M}\leq\gamma$. Use $\xi=\gamma^{+M}$.
Write $M^\Tt_{-2}=M$,
$M^\Tt_{-1}=\Ult_0(M,F^{M|\eta})$,
$M^\Tt_0=U$. We lift to padded trees $\Tt''$ on $\mathfrak{Q}''=((M,{<\gamma}),U')$,
with $\lh(E^{\Tt''}_0)>\gamma^{+M}$,
and $\Tt''$ will not use any extenders with space $\gamma$.
We start with copy maps $\pi_{-2}=\id:M\to M$ and $\pi_0=\pi:U\to U'$.
We will have $\mathrm{root}^{\Tt'}(\alpha)=0$ if $\mathrm{root}^{\Tt}(\alpha)=0$, and $\mathrm{root}^{\Tt'}(\alpha)=-2$ otherwise.
 We will have $\pi_\alpha:M^\Tt_\alpha\to M^{\Tt''}_\alpha$.
 
 As in case \ref{item:case_eta_non-card_passive},
 the copying process
 is as usual except when
 $\spc(E^\Tt_\beta)=\gamma$,
 so consider this case.
 So $\pred^\Tt(\beta+1)=-1$
and
\[ M^\Tt_{\beta+1}=\Ult_0(M,E^\Tt_\beta\com F^{M|\eta}).\]
 Let $F=\pi_\beta(E^\Tt_\beta)$.
 We will set
   $E^{\Tt''}_\beta$
  to be either $i^{M,0}_F(F^{M|\eta})$ or an initial segment thereof,
 so $\spc(E^{\Tt''}_\beta)<\gamma$,
 so $\pred^{\Tt''}(\beta+1)=-2$
 and $M^{*\Tt''}_{\beta+1}=M$.
 We will ensure that the exchange ordinal associated to $E^{\Tt''}_\beta$
 is just $\pi_\beta(\nu)$,
 where $\nu$ is the exchange ordinal
 associated to $E^\Tt_\beta$.
 
 Let $R=M|\eta$,
 so $R$ is active with largest cardinal $\rho_\om^R=\rho_1^R=\gamma$.
 Let $S=i^{M,0}_F(R)$,
 so $S\pins\Ult_0(M,F)$
 is active with largest cardinal $\rho_\om^S=\rho_1^S=i^{M,0}_F(\gamma)$.
 There are three cases to consider.
  
  \begin{casefour}\label{case:short_ext_in_anomalous_lift_active}
 $\gamma$ is a limit cardinal of $M$.
 
 Set $E^{\Tt''}_\beta=F^S$.
 Because $\gamma=\rho_1^R$ is a limit cardinal of $M$,  $R$ is either active short or active long with a largest generator, and likewise $S$,
 so the exchange ordinal associated to $F^S$ is just $i^{M,0}_F(\gamma)=\lambda(F)$,
 and $F$ is short as $\spc(E^\Tt_\beta)=\gamma$,
 so $\lambda(F)=\pi_\beta(\lambda(E^\Tt_\beta))$ and $\lambda(E^\Tt_\beta)$ is the exchange ordinal for $E^\Tt_\beta$.
 Let \[j:M\to\Ult_0(M,F^{M|\eta}),\]
  \[k:\Ult_0(M,F^{M|\eta})\to\Ult_0(\Ult_0(M,F^{M|\eta}),E^\Tt_\beta),\]
 \[ j':M\to\Ult_0(M,F^S) \]
 be the ultrapower maps.
 Let $\nu^-$ be the largest generator
 of $F^{M|\eta}$, if this extender is long (notice that if it is long then it has a largest generator),
 and $\nu^-=0$ if $F^{M|\eta}$ is short.
 We define $\pi_{\beta+1}:M^\Tt_{\beta+1}\to M^{\Tt''}_{\beta+1}$ in the natural way; that is,
 \[ \pi_{\beta+1}:\Ult_0(\Ult_0(M,F^{M|\eta}),E^\Tt_\beta)\to\Ult_0(M,F^S),\]
 is given by setting
 \[ \pi_{\beta+1}\Big(k(j(f))\big(a,k(\nu^-),b\big)\Big)=j'(f)\big(a,i^{M,0}_F(\nu^-),\pi_\beta(b)\big)\]
 for all $f\in M$,
 $a\in[\gamma]^{<\om}$ and
 $b\in[\lambda(E^\Tt_\beta)]^{<\om}$.
  \end{casefour}
 
 \begin{casefour}\label{case:long_ext_largest_gen_in_anomalous_lift_active}
  $E^\Tt_\beta$
 is long with $\crit(E^\Tt_\beta)=\mu$
 where $\mu^{+M}=\gamma$,
 and $E^\Tt_\beta$ has a largest generator.
 
 Since $R$ has largest cardinal $\gamma=\mu^{+M}$, $F^R$ is long with $\nu(F^R)=\gamma$, so $F^R$ has no largest generator, and so $S\pins\Ult_0(M,F)$ and $F^S$ is long with $\nu(F^S)=i^{M,0}_F(\gamma)=\lh(F)$. We set $E^{\Tt''}_\beta$ to be a sufficiently large initial segment of $F^S$ which has a largest generator:
 just let $E^{\Tt''}_\beta$
 be (the trivial completion of)
 $F^S\rest(\nu^-(F)+1)$
 (note $\nu^-(F)=\pi_\beta(\nu^-(E^\Tt_\beta))$.
 Note that the exchange ordinal
 associated to $E^{\Tt''}_\beta$
 is then $\pi_\beta(\lambda(E^\Tt_\beta))$.
 Now define $\pi_{\beta+1}$ by
 \[ \pi_{\beta+1}(k(j(f))(k(a),b))=j'(i^{M,0}_F(a),\pi_\beta(b)) \]
 for all $a\in[\gamma]^{<\om}$
 and $b\in[\nu^-(E^\Tt_\beta)+1]^{<\om}$, where $j,k,j'$ are as before.
 \end{casefour}
 
 \begin{casefour}
 $E^\Tt_\beta$ is long with $\crit(E^\Tt_\beta)=\mu$ where $\mu^{+M}=\gamma$, and $E^\Tt_\beta$ has no largest generator.

 Set $E^{\Tt''}_\beta=F^S$.
 Define $\pi_{\beta+1}$ like in Case \ref{case:long_ext_largest_gen_in_anomalous_lift_active}, but now
 $b$ comes from $[\nu(E^\Tt_\beta)]^{<\om}$.
 \end{casefour}
 
 This completes all cases.
 Note that
 $\pi_{\beta+1}:M^\Tt_{\beta+1}\to  M^{\Tt''}_{\beta+1}$ is $0$-embedding
 with $\pi_\beta\rest(\lambda^{\Tt}_\beta+1)\sub\pi_{\beta+1}$,
 where $\lambda^{\Tt}_\beta$ is the exchange ordinal associated to $E^\Tt_\beta$ in $\Tt$.

  The rest of the copying in Case \ref{item:case_eta_non-card_passive}
  of the definition of $\ph$
  is routine.
  
  Let $\Gamma$ be the (putative)
  $0^-$-maximal strategy for $\ph$
  given by lifting to $0^-$-maximal
  trees on $M$ via $\Sigma$, in the manner described above. Then:
  
  \begin{clm}\label{clm:Gamma_is_strat_for_ph}
   $\Gamma$ is a $(0^-,\omega_1+1)$-strategy for $\ph$.
  \end{clm}

  \begin{clm}\label{clm:pow(eta)^U_sub_M}
   We have:
   \begin{enumerate}
    \item Suppose $\ph$ is defined via case \ref{case:Dodd-solidity_phalanx_eta_limit_card}. Then
    $U|\eta^{+U}=M||\eta^{+U}$,
    so $\pow(\eta)^U\sub M$.
    \item In case \ref{item:case_eta_non-card_active}
   of the definition of $\ph$, we have
   $U|\eta^{+U}=U|\gamma^{++U}=\Ult_0(M,F^{M|\eta})||\eta^{+U}$,
   so
   $\pow(\gamma)^U\sub M$ and in fact $\pow(\eta)^U\sub\Ult_0(M,F^{M|\eta})$,
    \item In case \ref{item:case_eta_non-card_passive}
    of the definition of $\ph$, we have
    $U|\eta^{+U}=U|\gamma^{++U}=R||\eta^{+U}$, so $\pow(\gamma)^U\sub M$ and in fact $\pow(\eta)^U\sub R$.
   \end{enumerate}
  \end{clm}
\begin{proof}
 By internal condensation (as specified in the definition of \emph{premouse},
 \ref{dfn:pm}) applied to restrictions of $\pi$.
\end{proof}

As usual, we want to know that extenders used in $\Tt$ on $\ph$ are close to the models to which they apply:
\begin{clm}\label{clm:Dodd-solidity_closeness}
Let $\Tt$ be a $0^-$-maximal
tree on $\ph$ with $\eta^{+U}<\lh(E^\Tt_0)$. Let $\alpha+1<\lh(\Tt)$.
Then $E^\Tt_\alpha$ is close to $M^{*\Tt}_{\alpha+1}$.
\end{clm}
\begin{proof}
This is a slight adaptation of the proof of Closeness \ref{lem:closeness}, showing
by induction on $\alpha+1<\lh(\Tt)$
that $E=E^\Tt_\alpha$ is close to $M^{*\Tt}_{\alpha+1}$.
We just focus on the details which are different. These arise when $\spc(E)<\eta$ (so $\pred^\Tt(\alpha+1)<0$)
and $\pred^\Tt(\alpha+1)\neq\mathrm{root}^\Tt(\alpha)$,
so consider this situation.

 \begin{casefive}
  $\ph$ is defined via Case \ref{case:Dodd-solidity_phalanx_eta_limit_card};
  that is, $\eta$ is a cardinal of $M$
  and $\ph=((M,{<\eta}),U)$.
  
  Suppose $\mathrm{root}^\Tt(\alpha)=0$
  (so $M^\Tt_\alpha$ is above $U$)
  and $\spc(E)<\eta$,
  so $\pred^\Tt(\alpha+1)=-1$,
  so $M^{*\Tt}_{\alpha+1}=M$.
  By induction and the usual proof of Closeness, every component
  measure $E_a$ of $E$
  is in $U|\eta^{+U}$, since $E_a\sub U|\eta$
  and $\crit(F^U)>\eta$.
  But then $E_a\in M$
  by Claim \ref{clm:pow(eta)^U_sub_M},
  so $E$ is close to $M$.
 \end{casefive}
 
 \begin{casefive}
  $\ph$ is defined via case \ref{item:case_eta_non-card_active},
  so $\ph=((M,{<\gamma}),(\Ult_0(M,F^{M|\eta}),\gamma),U)$.

  Suppose $\mathrm{root}^\Tt(\alpha)=0$
  and $\pred^\Tt(\alpha+1)<0$.
  So $M^\Tt_\alpha$ is above $U$.
  Because $\gamma<\crit(F^U)$,
  we get each $E_a\in U$.
  But then by Claim \ref{clm:pow(eta)^U_sub_M},
  if $\pred^\Tt(\alpha+1)=-2$ then $E_a\in M$, and otherwise $E_a\in\Ult_0(M,F^{M|\eta})$.
  
  Now suppose $\mathrm{root}^\Tt(\alpha)=-1$ and $\pred^\Tt(\alpha+1)=-2$. Then
  as usual, $E$ is close to $\Ult_0(M,F^{M|\eta})$. If each $E_a\in\Ult_0(M,F^{M|\eta})$,
  then each $E_a\in M$. So suppose some $E_a\notin\Ult_0(M,F^{M|\eta})$. Then
  $E$ is a non-dropping image of $F$ where $F=F^{\Ult_0(M,F^{M|\eta})}$.
  So $\spc(E)=\spc(F)<\gamma$. Therefore $\spc(F^M)=\spc(F)=\spc(E)<\spc(F^{M|\eta})$.
  But then every $\bfSigma_1^{\Ult_0(M,F^{M|\eta})}$ subset of $\spc(E)$ is $\bfSigma_1^M$,
  and so $E$ is close to $M$.
 \end{casefive}

  \begin{casefive}
  $\ph$ is defined via case \ref{item:case_eta_non-card_passive},
  so $\ph=((M,{<\gamma}),((R,r),\gamma),U)$.
  
 The argument in the previous
 case also works here when $\mathrm{root}^\Tt(\alpha)=0$ and $\pred^\Tt(\alpha+1)<0$.
 If $\mathrm{root}^\Tt(\alpha)=-1$
 and $\pred^\Tt(\alpha+1)=-2$
 then, also as there, $E$ is close to $R$. But $R\in M$, so each $E_a$ is in $M$.\qedhere
 \end{casefive}
\end{proof}

We now compare $\ph$ versus $M$,
producing trees $\Uu$ and $\Tt$ respectively, via $\Gamma$ and $\Sigma$.

\begin{clm}
 The comparison terminates with $b^\Uu$ above $U$, $b^\Uu,b^\Tt$ are non-dropping, and $i^\Uu\com j=i^\Tt$.\end{clm}
 \begin{proof}
 Comparison termination is as in the proof of Theorem \ref{tm:comparison}.
 Also as there, it is not the case that $b^\Uu$ drops in model, degree or Dodd-degree, and $b^\Tt$ drops in model.
 (Here all base nodes are at degree $0^-$, except that in case \ref{item:case_eta_non-card_passive},
 $R$ is at degree $r>0^-$.)
 Weak Dodd-Jensen arguments give that $b^\Uu$ is not above $M$,
 and that if $b^\Uu$ is above $U$
 then neither $b^\Tt$ nor $b^\Uu$ drops, and that $i^\Uu\com j=i^\Tt$.
 If $b^\Uu$ is above $(R,r)$
 then by weak Dodd-Jensen
 (and considering how images of $R$ are lifted), $b^\Tt$ drops, but then because $R$ is $(r+1)$-sound and Dodd-sound, we have the same contradiction
 as when a drop occurs on both sides.
 So suppose $\ph$ is defined via case \ref{item:case_eta_non-card_active}
 and $b^\Uu$ is above $\Ult_0(M,F^{M|\eta})$. Again weak Dodd-Jensen
 gives that $b^\Uu,b^\Tt$ are non-dropping and $i^\Uu\com i^{M,0}_{F^{M|\eta}}=i^\Tt$. Let $\beta+1=\min((-1,\infty]^\Uu)$. So $\spc(E^\Uu_\beta)=\gamma$.
 Clearly $E^\Tt_0=F^{M|\eta}$.
 Now use calculations as in the proof of Theorem \ref{tm:comparison}
 to see that $1<^\Tt\infty$,
 so $F^{M|\eta}$ is the first extender used along $b^\Tt$, and letting
 $\alpha+1=\min((1,\infty]^\Tt)$, 
 then $E^\Uu_\beta=E^\Tt_\alpha$
 (so in fact $\beta=\alpha$),
 contradicting comparison.
 (Note here that if $\gamma$ is a limit cardinal of $M$ then $F^{M|\eta}$ is either short or long with a largest generator,
 whereas if $\gamma=\mu^{+M}$
  where $\mu$ is an $M$-cardinal
  then
  $F^{M|\eta}$ is long with no largest generator, and $\nu(F^{M|\eta})=\gamma$. These cases  handled similarly to how they were handled in the proof of Theorem \ref{tm:comparison}.)
 \end{proof}

 Recall from the start of the proof that $U=U_{i_0}$ and $j=j_{i_0}:M\to U$
 is the ultrapower map, and so $F_{i_0}$ is derived from $j$.
 We have $F_{i_0}\notin M$.
 If $i_0<n$,
 we want to reach a contradiction
 by showing that $F_{i_0}\in N$.
 Well, we now know that $F_{i_0}$
 can also be derived from $i^\Tt$,
 possibly shifting generators up with $i^\Uu$. The hope is to show that if $i_0<n$, then enough of $i^\Tt$ can be internalized into $M$ that we can deduce that $F_{i_0}\in M$.
 For this we need to analyse more carefully the extenders used in $\Tt$,
 and especially along $b^\Tt$.

 Let $\lambda_\infty=i^{\Uu}_{0\infty}(j(\kappa))$.
 
  \begin{clm}\label{clm:chain_of_exts_in_branches_DS}\ 
\begin{enumerate}
\item For every $\gamma+1\in b^\Uu$,
we have $\lambda(E^\Uu_\gamma)<\lambda_\infty$;
equivalently,
  $\crit(E^\Uu_\gamma)<i^\Uu_{0\delta}(j(\kappa))$ where $\delta=\pred^\Uu(\gamma+1)$.
  
 \item  For every $\gamma+1\in b^\Tt$,
 $E^\Tt_\gamma$ is short and
 $\crit(E^\Tt_\gamma)=i^\Tt_{0\delta}(\kappa)$
 where $\delta=\pred^\Tt(\gamma+1)$.
 Moreover,
  $\lambda(E^\Tt_\gamma)\leq\lambda_\infty$.
 \end{enumerate}
\end{clm}
\begin{proof}
 Let $\xi$ be least in $b^\Uu$ such that
 either $M^\Uu_\infty=M^\Uu_\xi$
 or $i^\Uu_{0\xi}(j(\kappa))\leq\crit(i^\Uu_{\xi\infty})$.
 
 We claim that
 \begin{equation}\label{eqn:either_end_of_Uu_or_low_crit}\text{ either }M^\Uu_\infty=M^\Uu_\xi\text{
 or }i^\Uu_{0\xi}(j(\kappa))=\crit(i^\Uu_{\xi\infty})<\lambda_\infty.\end{equation}
 For suppose 
  $M^\Uu_\infty\neq M^\Uu_\xi$
 and $\lambda_\infty=i^\Uu_{0\xi}(j(\kappa))<\crit(i^\Uu_{\xi\infty})$.
 Let $\zeta\in b^\Tt$ be least such that $i^\Tt(\kappa)=\lambda_\infty$,
 so $\lambda_\infty=i^\Tt_{0\zeta}(\kappa)<\crit(i^\Tt_{\zeta\infty})$, whereas $\crit(i^\Tt_{\zeta'\infty})=i^\Tt_{0\zeta'}(\kappa)$ for all $\zeta'<^\Tt\zeta$.
 Now note that $M^\Uu_\xi=M^\Tt_\zeta$,
 which is impossible. This establishes line (\ref{eqn:either_end_of_Uu_or_low_crit}).

 Let $\theta=i^\Uu_{0\xi}(j(\kappa))$. We now claim that
 \begin{equation}\label{eqn:Tt_using_only_short_exts_when_crit<theta} E^\Tt_\delta\text{ is short for every }\delta+1\in b^\Tt\text{ with }\crit(E^\Tt_\delta)<\theta.\end{equation}
 For suppose otherwise and let $\delta$ be least such.
 Let $\mu=\crit(E^\Tt_\delta)$
 and $\beta=\pred^\Tt(\delta+1)$.
 Note that $i^\Tt_{0\beta}(\kappa)=\mu<\theta$ and $i^\Tt_{\beta\infty}(\mu)=\lambda_\infty$.
 Now $i^\Tt_{0\beta}$ is continuous at $\kappa^{+M}$, so $\mu^{+M^\Tt_\beta}=\sup i^\Tt_{0\beta}``\kappa^{+M}$.
 But $i^\Tt_{\beta\infty}$ is discontinuous at $\mu^{+M^\Tt_\beta}$
 and letting
 \[ \eta=\sup i^\Tt_{\beta\infty}``\mu^{+M^\Tt_\beta}=\sup i^\Tt``\kappa^{+M}, \]then $F^{M^\Tt_\infty|\eta}$ is  the short extender derived from $i^\Tt_{\beta\infty}$,
 and in particular has critical point $\mu$. But $i^\Uu_{0\xi}$ is continuous at $\kappa^{+M}$. So
 \[ \eta=\sup i^{\Uu}_{\xi\infty}``i^{\Uu}_{0\xi}(j(\kappa^{+M}))=\sup i^{\Uu}_{0\xi}``\theta^{+M^\Uu_\xi}.\]
 Since $M^\Uu_\infty|\eta$
 is active, there is a long extender used along $(\xi,\infty]^\Uu$.
 But letting $\gamma+1\in(\xi,\infty]^\Uu$
 be least such, then $F^{M^\Uu_\infty|\eta}$ has critical point $\crit(E^\Uu_\gamma)\geq\theta$,
 a contradiction. This gives line (\ref{eqn:Tt_using_only_short_exts_when_crit<theta}).
 
 It now suffices to see that $M^\Uu_\infty=M^\Uu_\xi$,
 so suppose otherwise,
 so $i^\Uu_{\xi\infty}\neq\id$.
 
  Let $E$ be the short extender derived from $i^\Uu\com j$.
 Note that $E\rest\theta$ is whole.
 The same short extender is derived from $i^\Tt$, so
 using line (\ref{eqn:Tt_using_only_short_exts_when_crit<theta}),
 it easily follows
 that $\lambda(E^\Tt_\delta)\leq\theta$ for every $\delta+1\in b^\Tt$
 with $\crit(E^\Tt_\delta)<\theta$.
Since $M^\Uu_\infty\neq M^\Uu_\xi$,
it follows that there is $\zeta+1\in b^\Tt$ with $\theta\leq\crit(E^\Tt_\zeta)$.
Let $\zeta$ be least such
and $\chi=\pred^\Tt(\zeta+1)$.
Now note that $M^\Uu_\xi=M^\Tt_\chi$,
again a contradiction.
\end{proof}

  Recall that
  $p^M_{\mathrm{D}}=\{p_0,p_1,\ldots,p_{n-1}\}$, $\rho_{\D}^M=0$,
  $\eta=p_{i_0}$ if $i_0<n$,
  and $\eta=\kappa^{+M}$ otherwise.
  Let $q=\{p_0,p_1,\ldots,p_{i_0-1}\}$ and $\pi(\bar{q})=q$. Recall that $i_0$ is the least $i\leq n$ such that $F_{i}\notin M$.
  
  \begin{clm}\label{clm:if_i_0<n_no_use_of_F^M}
   If $i_0<n$ (equivalently, $q\neq p_{\mathrm{D}}^M$) then there is no $\alpha+1\in b^\Tt$
   such that $[0,\alpha]^\Tt\cap\dropset^\Tt=\emptyset$ and $E^\Tt_\alpha=F(M^\Tt_\alpha)$.
  \end{clm}
\begin{proof}
 Suppose otherwise and let $\alpha_0$ be least such.
 
 \begin{sclm}
  Let $q'=i^\Tt_{0\alpha_0}(q)$.
  Then  $q'=i^\Uu_{0\infty}(\bar{q})$.
 \end{sclm}
\begin{proof}
This is trivial if $q=\emptyset$,
so suppose $q\neq\emptyset$.

Suppose $i^\Uu_{0\infty}(\bar{q})<q'$.
Let $\eta'=\sup i^\Uu_{0\infty}``\eta$;
so $\eta\leq\min(q')$.
Note that by the minimality of $\alpha_0$ and normality of $\Tt$, there is no $\beta+1\in(0,\alpha_0]^\Tt$ such that $[0,\beta]^\Tt\cap\dropset^\Tt=\emptyset$ and $E^\Tt_\beta=F(M^\Tt_\beta)$.

Therefore by a finite support argument, there is a linear ``iteration''
$\vec{\mu}$ internal to $M$ which captures $(M^\Tt_{\alpha_0},i^\Uu_{0\infty}(\bar{q})\cup\{\eta'\},\eta+1)$.
That is, there are finite sequences $\left<N_i\right>_{i\leq k+1}$
and $\vec{\mu}=\left<\mu_i\right>_{i\leq k}$
such that $N_0=M$, $\mu_i\in N_i\sats$``$\mu_i$ is an ultrafilter'', $N_{i+1}=\Ult_0(N_i,\mu_i)$, and letting $i^{M,0}_{\vec{\mu}}:M\to N_{k+1}$ be the ultrapower map, then there is also a $0$-embedding $\sigma:N_{k+1}\to M^\Tt_{\alpha_0}$
such that $\sigma\com i^{M,0}_{\vec{\mu}}=i^\Tt_{\alpha_0}$,
and $i^\Uu_{0\infty}(\bar{q})\cup\{\eta'\}\in\rg(\sigma)$
and $\eta+1\sub\rg(\sigma)$.

Now $(M,q)$ is Dodd-solid,
so let $w\in M$ be the  Dodd-solidity witness
corresponding to where $i^\Uu_{0\infty}(\bar{q})$
drops strictly below $q'$.
That is, let $j$ be largest such that $i^{\Uu}_{0\infty}(\bar{q})\rest j=q'\rest j$
(so $i^{\Uu}_{0\infty}(\bar{q})_j<(q')_j$),
and let 
\[ w=F^M\rest ((q\rest j)\cup q_j).\]
With $\sigma$ as above,
let $\sigma(\bar{\eta}')=\eta'$ and $\sigma(r)=i^\Uu_{0\infty}(\bar{q})$.
If $\eta'=\eta\leq\crit(i^\Uu)$,
then let $\bar{E}$ be the extender derived from $i^{M,0}_{\vec{\mu}}(w)$ 
with generators $\eta\cup\{r\}$, and note that this collapses to $F_{i_0}$,
so $F_{i_0}\in M$, a contradiction.
So $\crit(i^\Uu)<\eta$,
so letting $\beta+1=\min((0,\infty]^\Uu)$, then $E^\Uu_\beta$ is long with $\spc(E^\Uu_\beta)=\eta$.
Therefore $\eta<\eta'$ and
$M^\Uu_\infty|\eta'$ is active
with the short extender derived from $i^\Uu_{0\infty}$. This reflects into $N_{k+1}=\Ult_0(M,\vec{\mu})$ at $\bar{\eta}'$. Let $G=F^{\Ult_0(M,\vec{\mu})|\bar{\eta}'}$ and let 
$\bar{E}$ be the extender 
derived from $i^M_{\vec{\mu}}(w)$
with generators $(G``\eta)\cup\{r\}$,
and note that this is equivalent to $F_{i_0}$,
so $F_{i_0}\in M$ again, a contradiction.

So
$q'\leq i^\Uu_{0\infty}(\bar{q})$.
Suppose $q'<i^\Uu_{0\infty}(\bar{q})$.
By Claim \ref{clm:chain_of_exts_in_branches_DS},
we can fix a linear ``iteration'' $\vec{\mu}$ internal to $U$,
and in fact, with $\vec{\mu}\in U|j(\kappa)$, which captures $(M^\Uu_\infty,p',\eta+1)$,
where $p'=i^\Tt_{0\alpha_0}(p_{\mathrm{D}}^M)$.
Let $\sigma:\Ult_0(U,\vec{\mu})\to M^\Uu_\infty$ be the final copy map.
So $\crit(\sigma)>\eta$
and $p'\in\rg(\sigma)$.
Let $\sigma(p'')=p'$.
Let $N$ be the natural candidate
for a premouse with $N^{\passive}=U|j(\kappa)^{+U}$
and whose active extender is equivalent to $F_{i_0}$.
Thus, letting $\bar{\pi}=\pi\rest N$,
then $\bar{\pi}:N\to M$
is $0$-deriving.
We have $\vec{\mu}\in N|j(\kappa)=U|j(\kappa)$. Let $N_\infty=\Ult_0(N,\vec{\mu})$. Note that $F^M\rest p_{\D}^M$ is equivalent to $F^{N_\infty}\rest p''$.
Let $M_\infty=\Ult_0(M,\bar{\pi}(\vec{\mu}))$ and $p'''=\tau(p'')$
where $\tau:M_\infty\to N_\infty$ is the final copy map. Then $F^{N_\infty}\rest p''$ is equivalent to $F^{M_\infty}\rest p'''$.
It follows that there is a $0$-deriving embedding $k:M\to M_\infty$
with $k(p_{\D}^M)=p'''$.
(Use $\rg(k)=\{i^{M_\infty,0}_{F^{M_\infty}}(f)(p''')\bigm|f\in M_\infty|\kappa^{+M}\}$.)
But $p'''<i^{M,0}_{\pi(\vec{\mu})}(p_{\D}^M)$, which contradicts the weak Dodd-Jensen property for
$\Sigma$ with respect to $\vec{x}$,
since
 $p_{\D}^M\ins\vec{x}$.
\end{proof}

Now since $i_0<n$, we have $\kappa^{+M}<\eta$.
Let $p'=i^\Tt_{0\alpha_0}(p_{\D}^M)$.
Let $\vec{\mu}\in U|j(\kappa)$
be a linear ``iteration''
of $U$ capturing $(M^\Uu_{\infty},
p',\eta+1)$.
Let $\sigma:\Ult_0(U,\vec{\mu})\to M^\Uu_\infty$ be the final copy map,
so $\crit(\sigma)>\eta$
and $p'\in\rg(\sigma)$
and $\sigma\com i^{U,0}_{\vec{\mu}}=i^\Uu_{0\infty}$.
Let $\sigma(r)=p'$.
Let $a\in[\eta]^{<\om}$
be such that:
\begin{enumerate}[label=--]
 \item 
$\vec{\mu}$
is generated by $a\cup\bar{q}$,
in that $\vec{\mu}=j(f)(a\cup\bar{q})$
for some $f\in M|\kappa^{+M}$,
and\item letting $s$ be the canonical seed of $\vec{\mu}$, then  $r=i^{U,0}_{\vec{\mu}}(j(g)(a\cup\bar{q}))(s)$
for some $g\in M|\kappa^{+M}$.
\end{enumerate}
Then by the minimality of $p_{\D}^M$,
we have $F^M\rest (q\cup a)\in M$.
But note that using $F^M\rest (q\cup a)$, we can recover an extender equivalent to $F^M\rest p_{\D}^M$
(using the natural preimage of $r$).
So $F^M\rest p_{\D}^M\in M$, 
a contradiction.
\end{proof}

 \begin{clm}\label{clm:i_0=n}
  $i_0=n$.
 \end{clm}
\begin{proof}
 Suppose $i_0<n$.
 
 Suppose $\crit(i^\Uu)\geq\eta$.
 By Claim \ref{clm:if_i_0<n_no_use_of_F^M},
 we can find a linear ``iteration''
 $\vec{\mu}\in M|\lambda(F^M)$
 capturing $(M^\Tt_\infty,i^\Uu(\bar{q}),\eta+1)$,
 so letting $\sigma:\Ult_0(M,\vec{\mu})\to M^\Tt_\infty$ be the final copy map, we have $i^\Uu(\bar{q})\in\rg(\sigma)$,
 $\crit(\sigma)>\eta$, and $\sigma\com i^{M,0}_{\vec{\mu}}=i^\Tt$.
 But then note that we can derive $F^M\rest (q\cup\eta)$ as a subextender of that of $i^{M,0}_{\vec{\mu}}$, so $F^M\rest(q\cup\eta)\in M$, contradicting the choice of $i_0$.
 
 So $\crit(i^\Uu)<\eta$.
 So letting $\beta+1=\min((0,\infty]^\Uu)$, then $E^\Uu_\beta$ is long and $\crit(E^\Uu_\beta)=\mu$,
 and $\eta=\mu^{+U}<\mu^{+M}$. Let $\eta'=\sup i^\Uu``\eta$, so $F^{M^\Uu_\infty|\eta'}$ is the short part of $i^\Uu$. Let $\vec{\mu}$ etc be like before, but now also with $\eta'\in\rg(\sigma)$.
 Let $\sigma(\bar{\eta})=\eta'$.
 Then using $i^{M,0}_{\vec{\mu}}$
 and $F^{\Ult_0(M,\vec{\mu})|\bar{\eta}}$, we can recover $F^M\rest(q\cup\eta)$, so $F^M\rest(q\cup\eta)\in M$, contradicting the choice of $i_0$ again.
\end{proof}

By the previous claim, $M$ is Dodd-absent-solid,  $U=\Ult_0(M,F^M\rest p_{\D}^M)$, 
 $\eta=\kappa^{+M}=\kappa^{+U}$ and $\ph=((M,{<\kappa^{+M}}),U)$,
and it just remains to see that
$M$ is Dodd-absent-universal; that is, that $U|\kappa^{++U}=M|\kappa^{++M}$.
So we may assume that $\kappa^{++U}<\kappa^{++M}$, and so either $\crit(\pi)=\kappa$
or $\crit(\pi)=\kappa^{++U}$.
Let $\pi(\bar{p})=p_{\D}^M$.

 \begin{clm}There is $\alpha+1\in b^\Tt$
  such that $E^\Tt_\alpha=F(M^\Tt_\alpha)$
  and $[0,\alpha]^\Tt\cap\dropset^\Tt=\emptyset$.
 \end{clm}
 \begin{proof}
 Otherwise argue like in the proof of Claim \ref{clm:i_0=n} 
 to show  $F^M\rest p_{\mathrm{D}}^M\in M$,
 using that $F^M\rest p_{\mathrm{D}}^M$ is the same as the measure derived from $i^\Tt$ at $i^\Uu(\bar{p})$.
 \end{proof}

 Let $\alpha_0$ be least witnessing the claim.
 Let $p'=i^\Tt_{0\alpha_0}(p_{\mathrm{D}}^M)$.

  \begin{clm}\label{clm:short_ext_from_pi_is_in_M}
 Suppose $\crit(\pi)=\kappa$.
 Then the short extender $E$ derived from  $\pi$
 is a whole proper segment of $F^M$, so $E\in M$.\end{clm}
 \begin{proof}
 Let $F=F^M\rest p_{\mathrm{D}}^M$.
 Given $X\in\pow(\kappa)\cap M$,
  we have $F^M(X)=\pi(F(X))$.
  So $F^M(X)\cap\pi(\kappa)=\pi(F(X))\cap\pi(\kappa)=\pi(F(X)\cap\kappa)=\pi(X)$.
  So $E$ is a proper segment of $F^M$.
  Now let us see that $E$ is whole.
  Let $f:\kappa\to\kappa$ with $f\in M$.
  Suppose there is $\alpha<\pi(\kappa)$ such that $F^M(f)(\alpha)\geq\pi(\kappa)$.
  Letting $\alpha'$ be the least such,
  then $\alpha'\in[\kappa,\pi(\kappa))$,
  since $F^M(f)\rest\kappa=f:\kappa\to\kappa$.
  We have $\pi(F(f))=F^M(f)$.
  But let $g:\kappa\to\kappa$
  be the function where given $\eta<\kappa$,
  $g(\eta)$ is the least $\alpha<\eta$
  such that $f(\alpha)\geq\eta$, if there is such an $\alpha$, and $g(\alpha)=0$ otherwise.
  Then $\pi(F(g))(\pi(\kappa))=F^M(g)(\pi(\kappa))=
  \alpha'$, so $\alpha'\in\rg(\pi)$, contradiction.
  (Or more simply, since $\pi(X)=F(X)\cap\pi(\kappa)$ for all $X\sub\kappa$, if $f:\kappa\to\kappa$
  then $F^M(X)\cap\pi(\kappa)=\pi(f):\pi(\kappa)\to\pi(\kappa)$, so $\pi(\kappa)$ is closed under $F^M(f)$, as desired.)
  
  Since $E$ is a whole proper segment of $F^M$,
 $E\in M$ by the Jensen ISC.
 \end{proof}

 \begin{clm}\label{clm:p'=i^Uu(p-bar)}
  $p'=i^\Uu(\bar{p})$.
 \end{clm}
 \begin{proof}
 Suppose $p'>i^\Uu(\bar{p})$.
 Note that $F^M\rest p_{\mathrm{D}}^M$ is the measure derived from $i^\Tt$ at $i^\Uu(\bar{p})$. But we can again use a finite support calculation to find $\vec{\mu}\in M|\lambda(F^M)$
 capturing $(M^\Tt_{\alpha_0}, i^\Uu(\bar{p}),\kappa^{+M})$, producing $\bar{i}:M\to \Ult_0(M,\vec{\mu})$,
 and then use the image $\bar{i}(w)$ of the appropriate
 Dodd-absent-solidity witness $w$ for $p_{\mathrm{D}}^M$ in $M$, to compute $F^M\rest p_{\mathrm{D}}^M$ in $M$; contradiction. 
 
 Now suppose instead that $p'<i^\Uu(\bar{p})$. The foregoing argument
 doesn't immediately work here, since we don't know that the relevant Dodd-absent-solidity witnesses
 are in $U$. But instead, we can lift the relevant internal ``iteration'' of $U$ to one on $U'$, where we do have those witnesses. That is, since $j(\kappa)=\crit(F^U)$,
 $F^U$ and its images never get used in an essential manner in $\Uu$.
 That is,  we can do a finite support calculation to produce an internal ``iteration'' of $U$ capturing $p'$. Let $\muvec\in U$ be the resulting finite sequence of measures,
 let $\sigma:\Ult_0(U,\muvec)\to M^\Uu_\infty$
 be the final copy map, and let $\sigma(\bar{p})=p'$.
 So $\bar{p}<i^U_{\muvec}(\bar{p})$. Now we copy $\muvec$ up with $\pi$,
 producing $\pi\muvec$ on $U'$,
 and the maps commute. So $\pi\muvec\in U'$
 is internal to $U'$, and everything commutes, so since the short extender
 derived from $\pi$ is in $M$ (by Claim \ref{clm:short_ext_from_pi_is_in_M}) then we get $F^M\rest p_{\mathrm{D}}^M\in M$, contradiction.
 \end{proof}

 Fix some $f,f^+,f^{++}\in M|\kappa^{+M}$ such that $j(f)(\bar{p})=\kappa$
 and $j(f^+)(\bar{p})=\kappa^{+U}=\kappa^{+M}$ and $j(f^{++})(\bar{p})=\kappa^{++U}$,
 so also $i^M_{F^M}(f)(p_{\mathrm{D}}^M)=\pi(\kappa)$
and $i^M_{F^M}(f^+)(p_{\mathrm{D}}^M)=\pi(\kappa)^{+M}$
and $i^M_{F^M}(f^{++})(p_{\mathrm{D}}^M)=\pi(\kappa)^{++M}$.
 \begin{clm}\label{clm:p'=i^U(pi^-1(p^M_D))}
Suppose $\kappa<\crit(\pi)$. Then $U|\kappa^{++U}=M|\kappa^{++M}$.\end{clm}
\begin{proof}
Suppose otherwise; then $\xi=\kappa^{++U}=\crit(\pi)$
and $\pi(\kappa^{++U})=\kappa^{++U'}=\kappa^{++M}$. 
Let $\xi'=i^\Tt_{0\alpha_0}(\xi)$,
and note that $\kappa'=i^\Tt(f)(p')=i^\Tt_{0\alpha_0}(\kappa)$ and
\begin{equation}\label{eqn:where_eta'_is} (\kappa')^{+M^\Tt_\infty}=i^\Tt(f^+)(p')<\xi'<i^\Tt(f^{++})(p') \end{equation}
and
\begin{equation}\label{eqn:set_bd_by_eta'} \{i^\Tt(g)(p')\bigm|g\in M\}\cap[i^\Tt(f^+)(p'),i^\Tt(f^{++})(p'))\sub\xi'.\end{equation}
On the other hand, $\kappa=j(f)(\pi^{-1}(p_{\mathrm{D}}^M))$ and likewise for $\kappa^{+U}$ and $\kappa^{++U}$, so
\[ \{j(g)(\pi^{-1}(p_{\mathrm{D}}^M))\bigm|g\in M\}\cap[j(f^+)(\pi^{-1}(p_{\mathrm{D}}^M)),j(f^{++})(\pi^{-1}(p_{\mathrm{D}}^M))) \]
is unbounded in $\kappa^{++U}$, and $i^\Uu$ is continuous at $\kappa^{++U}$, and by Claim \ref{clm:p'=i^U(pi^-1(p^M_D))}, $i^\Uu(\pi^{-1}(p_{\mathrm{D}}^M))=p'$, so applying $i^\Uu$ pointwise to the previous line, we get that
\[ \{i^\Uu(j(g))(p')\bigm|g\in M\}\cap[i^\Uu(j(f^+))(p'),i^\Uu(j(f^{++}))(p')) \]
is unbounded in $i^\Uu(j(f^{++}))(p')$.
Since $i^\Uu\com j=i^\Tt$, this contradicts the conjunction of lines (\ref{eqn:where_eta'_is})
and (\ref{eqn:set_bd_by_eta'}).
\end{proof}

So from now on we can assume that $\kappa=\crit(\pi)$.
\begin{clm}
 Suppose $\kappa=\crit(\pi)$. 
 Then $\crit(i^\Uu)=\kappa$,
 and so the first extender used along $b^\Uu$ is long with critical point $\kappa$.
\end{clm}
\begin{proof}
With the help of Claim \ref{clm:p'=i^Uu(p-bar)}, we have
\[i^\Uu(\kappa)=i^\Uu(j(f))(i^\Uu(\bar{p}))=i^\Tt(f)(p')=i^\Tt_{0\alpha_0}(\pi(\kappa))\geq\pi(\kappa)>\kappa, \]
so $i^\Uu(\kappa)>\kappa$, as desired.
\end{proof}

Now let $\gamma_0+1=\min((0,\infty]^\Uu)$, so $\crit(E^\Uu_{\gamma_0})=\kappa$ and $E^\Uu_{\gamma_0}$ is long.
Then the short extender $E'$ derived from $i^\Uu$
is in $\es^{M^\Uu_\infty}$, and is indexed
at $\sup i^\Uu``\kappa^{+M}$.
In the following proof,
recall  from Claim  \ref{clm:short_ext_from_pi_is_in_M} that if $\crit(\pi)=\kappa$
then $E=F^M\rest\pi(\kappa)$ coincides with the short extender derived from $\pi$.

\begin{clm}\label{clm:pred(alpha_0+1)=0}
$\pred^\Tt(\alpha_0+1)=0$,
so $\crit(E^\Tt_{\alpha_0})=\kappa$.
\end{clm}
\begin{proof}
Suppose not. Let $\beta_0=\pred^\Tt(\alpha_0+1)$.
Since $\beta_0\in b^\Tt$, all extenders used along $(0,\beta_0]^\Tt$ are short (see Claim \ref{clm:chain_of_exts_in_branches_DS}).
Let $N=M^\Tt_{\beta_0}$
and $k=i^\Tt_{0\beta_0}:M\to N$.
We have $\kappa=\crit(k)$, and note that $\beta_0\leq^\Tt\alpha_0$.
So $\crit(k(E))=k(\kappa)=\crit(E^\Tt_{\alpha_0})>\kappa$.
Now note that letting $\bar{M}=M|\kappa^{+M}$, we have
\[\lh(E)=\sup\pi``\kappa^{+M}=\sup\Big(\Big\{i^{M,0}_{F^M}(g)(p_{\D}^M)\Bigm|g\in \bar{M}\Big\}\cap\pi(\kappa)^{+M}\Big).\]
Since $k$  is continuous at  $\kappa^{+M}$ and hence also at $\lh(E)$, we have
\begin{equation}\label{eqn:lh(k(E))} \lh(k(E))=\sup k\com\pi``\kappa^{+M}=\sup\Big(\Big\{i^{N,0}_{F^N}\com k(g)(k(p_{\D}^M))\Bigm|g\in \bar{M}\Big\}\cap k(\pi(\kappa))^{+N}\Big).\end{equation}

Now suppose that $\beta_0=\alpha_0$ or $k(\kappa)<\crit(i^\Tt_{\beta_0\alpha_0})$. Then
letting $N'=M^\Tt_{\alpha_0}$ and $k'=i^\Tt_{0\alpha_0}$, line (\ref{eqn:lh(k(E))}) still holds after replacing each ``$N$'' with ``$N'$''
and each ``$k$'' with ``$k'$''.
Since $p'=k'(p_{\D}^M)$
and $j(f^+)(\bar{p})=\kappa^{+M}$
and $i^{M,0}_{F^M}(f^+)(p_{\D}^M)=\pi(\kappa)^{+M}$,
this gives that
\[ \lh(k'(E))=\sup\Big(\Big\{i^\Tt(g)(p')\Bigm|g\in\bar{M}\Big\}\cap i^\Tt(f^+)(p')\Big).\]
Therefore
\[ \lh(k'(E))=\sup\Big(\Big\{i^\Uu\com j(g)(i^\Uu(\bar{p}))\Bigm|g\in\bar{M}\Big\}\cap i^\Uu\com j(f^+)(i^\Uu(\bar{p}))\Big).\]
But \[ j(f^+)(\bar{p})=\kappa^{+M}=\sup\Big(\Big\{j(g)(\bar{p})\Bigm|g\in\bar{M}\Big\}\cap j(f^+)(\bar{p})\Big)\]
and $\lh(E')=\sup i^\Uu``\kappa^{+M}$, 
and it follows that $\lh(E')=\lh(k'(E))$,
so $E'=k'(E)$. But $\crit(E')=\kappa$ whereas $\crit(k'(E))=\crit(E^\Tt_{\alpha_0})>\kappa$, a contradiction.

So in fact $\beta_0<^\Tt\alpha_0$
and $k(\kappa)=\crit(i^\Tt_{\beta_0\alpha_0})$, so letting
 $\gamma_0+1=\min((\beta_0,\alpha_0]^{\Tt})$, then $E^\Tt_{\gamma_0}$ is long with $k(\kappa)=\crit(E^\Tt_{\gamma_0})$,
so $M^\Tt_{\gamma_0+1}=\Ult_0(M^\Tt_{\beta_0},E^\Tt_{\gamma_0})$ is formed avoiding the protomouse. Let $\ell:M\to M^\Tt_{\gamma_0+1}$ be the ultrapower map. Then $\crit(\ell(E))=\ell(\kappa)$. Let $G$ be the short part of $E^\Tt_{\gamma_0}$, so $G\in\es(M^\Tt_{\gamma_0+1})$,
and $\lambda(G)=\ell(\kappa)$,
and note that $\lh(\ell(E)(G))=\sup\ell``\lh(E)$.
Moreover, $i^\Tt_{\gamma_0+1,\alpha_0}$ is continuous at $\lh(\ell(E)(G))$,
since $\cof^{M^\Tt_{\gamma_0+1}}(\lh(\ell(E)(G)))=\crit(E^\Tt_{\gamma_0})^{+M^\Tt_{\gamma_0+1}}$.
Also, $\crit(\ell(E)(G))=\crit(E^\Tt_{\gamma_0})$,
so $\crit(i^\Tt_{\gamma_0+1,\alpha_0}(\ell(E)(G)))=\crit(E^\Tt_{\gamma_0})$.
But now much as before,
we get that $i^\Tt_{\gamma_0+1,\alpha_0}(\ell(E)(G))=E'$, whereas $\crit(E')=\kappa$, a contradiction.
\end{proof}

Now if either $\alpha_0=0$ or $\crit(i^\Tt_{0\alpha_0})>\kappa$,
we immediately have that $\kappa^{++M}=\kappa^{++M^\Tt_\infty}=\kappa^{++M^\Uu_\infty}$,
so that $\kappa^{++U}=\kappa^{++M}$, as desired.
So suppose that $\alpha_0>0$ and $\crit(i^\Tt_{0\alpha_0})=\kappa$.
Let $\gamma_0+1=\min((0,\alpha_0]^\Tt)$.
By the Claim \ref{clm:pred(alpha_0+1)=0},
$\crit(E^\Tt_{\alpha_0})=\kappa$,
and so $E^\Tt_{\gamma_0}$ is long
with $\crit(E^\Tt_{\gamma_0})=\kappa$.
But $E^\Tt_{\gamma_0}$ is $M$-total, 
so measures all of $M|\kappa^{++M}$, and so $\kappa^{++M}=\kappa^{++M^\Tt_\infty}$, so  $\kappa^{++U}=M|\kappa^{++M}$ like before. This completes the proof of the lemma.
\end{proof}

We now want to show that  the Dodd-absent-core is well-behaved:

\begin{tm}\label{tm:Dodd-absent-core_of_M_when_rho_D^M=0}
 Suppose $M$ is $(0^-,\om_1,\om_1+1)^*$-iterable
 active short, and $\rho_{\mathrm{D}}^M=0$.
 Let $C=\core_{\mathrm{D}}(M)$ and $\pi:C\to M$ be the Dodd-absent core map. Let $\kappa=\crit(F^M)$.
 Then: 
 \begin{enumerate}
 \item\label{item:C_sats_Jensen_ISC}$C$ satisfies the Jensen ISC,
 and hence is a premouse,
 \item\label{item:C_is_Dodd-absent-sound} $C$ is Dodd-absent sound,
 \item $\rho_{\mathrm{D}}^C=0$ and $\pi(p_{\mathrm{D}}^C)=p_{\mathrm{D}}^M$,
 \item Either $F^M$ has no whole proper segment
 or has a largest one,
 \item Either $F^C$ has no whole proper segment or has a largest one,
 \item If $\kappa<\crit(\pi)$
 then $\pi(F_J^C)=F_J^M$.
 \item If $\kappa=\crit(\pi)$ and $F_J^C=\emptyset$
 then $F_J^M=F^M\rest\pi(\kappa)$.
 \item\label{item:kappa=crit(pi)_F^C_J_non-empty} If $\kappa=\crit(\pi)$
 and $F_J^C\neq\emptyset$
 then $F_J^M=F^M\rest\pi(\lambda)$
 where $F_J^C=F^C\rest\lambda$.
 \item\label{item:transfer_definability} Every $\bfSigma_1^M$ subset of $\kappa$ is $\bfSigma_1^C$.
 \end{enumerate}
\end{tm}
\begin{proof}
 The main thing is to verify parts
 \ref{item:C_sats_Jensen_ISC}
 and \ref{item:transfer_definability}.

 Assuming part \ref{item:C_sats_Jensen_ISC},
 let us deduce that parts \ref{item:C_is_Dodd-absent-sound}--\ref{item:kappa=crit(pi)_F^C_J_non-empty} hold.
 By Theorem \ref{tm:rho^M_D=0_implies_Dodd-absent-solid_and_universal}, $M$ is Dodd-absent-solid
and Dodd-absent-universal.
 By the Jensen ISC, we know that $C$ is a premouse,
 and clearly $\rho_{\mathrm{D}}^C=0$,
 and by Dodd-absent-universality of $M$,
 we have $p_{\mathrm{D}}^C=\pi^{-1}(p_{\mathrm{D}}^M)$.
 Also, $C$  is $(0^-,\om_1,\om_1+1)^*$-iterable
 (by Theorem \ref{lem:0-deriving_copying}),
 so $C$ is also Dodd-absent-solid,
 and hence Dodd-absent-sound.
 
 Now $F^M$ cannot have unbounded whole proper segments, since $\rho_1^M\leq\kappa^{+M}$,
 and likewise for $C$.
 If $F^C\rest\lambda$
 is whole, where $\kappa<\lambda<\lambda(F^C)$,
 as witnessed by $F\in C$, and $E$ is the short part of $E_\pi$ (which, recall, is just $F^M\rest\pi(\kappa)$), then note that $\pi(F)(E)=\pi(F)\com E$ is a segment of $F^M$,
 and since $E\in\es^M$,
 also $\pi(F)(E)\in\es^M$.
 Now suppose that $G\in\es^M$ is a whole proper segment of $F^M$ with $\lh(E)<\lambda(G)<\lambda(F^M)$.
 Let $\lambda=\pi^{-1}``\lambda(G)$.
 Then $\kappa<\lambda<\lambda(F^C)$,
 since $\pi(\kappa)=\lambda(E)$
 and $\pi(\lambda(F^C))=\lambda(F^M)$.
 It is easy to see that $F^C\rest\lambda$ is whole.
 So since $C$ satisfies the Jensen ISC,
 $F^C\rest\lambda\in C$, so $F^C\rest\lambda\in\es^C$ (as $C$ is iterable),
 and now note that $\pi(F^C\rest\lambda)(E)$
 is a whole proper segment of $F^M$.
 
 Now let us verify parts
 \ref{item:C_sats_Jensen_ISC}
 and \ref{item:transfer_definability}.
 Let $U=\Ult(M,F^C)$ and $j:M\to U$ be the ultrapower map. Compare the phalanx $((M,\kappa),U)$
 with $M$, using/lifting to a strategy for $M$ with weak Dodd-Jensen as in the proof of Theorem \ref{tm:rho^M_D=0_implies_Dodd-absent-solid_and_universal}
 (note that in particular,
 short extenders with critical point $\kappa$ are applied to $M$,
 even in case $\crit(\pi)=\kappa$).
 Also as there,
 we get $b^\Uu$
 above $U$ and $i^\Uu\com j=i^\Tt$, and there is $\alpha+1\in b^\Tt$ such that $[0,\alpha]^\Tt\cap\dropset^\Tt=\emptyset$
 and $E^\Tt_\alpha=F(M^\Tt_\alpha)$.
 Let $\alpha_0$ be least such.
 As before, letting $p'=i^\Tt_{0\alpha_0}(p_{\mathrm{D}}^M)$ and $\pi(\bar{p})=p_{\D}^M$, we have $p'=i^\Uu_{0\infty}(\bar{p})$;
 also for all $\beta+1\in b^\Uu$
 and $\delta=\pred^\Uu(\beta+1)$,
 we have $\crit(E^\Uu_\beta)<i^\Uu_{0\delta}(j(\kappa))$.

 Now suppose $C$ fails the Jensen ISC.
 Let $\lambda<j(\kappa)$ be least such that $F^C\rest\lambda$ is whole but $F^C\rest\lambda\notin C$. We have $U|j(\kappa)^{+U}=C^{\passive}$.
 So $F^C\rest\lambda\notin U$.
 
  As before, every extender used along $b^\Tt$
 is short. Let $\beta_0+1=\min((0,\infty]^\Tt)$.
 Now we get $\sup i^\Uu_{0\infty}``\lambda=\lambda(E^\Tt_{\beta_0})$,
 because fairly standard calculations show that $\sup i^\Uu_{0\infty}``\lambda$ is the location of the least failure of Jensen ISC for the branch embedding $i^\Uu_{0\infty}\com j=i^\Tt_{0\infty}$.
 (That is, suppose $\lambda$ is a limit of whole segments of $F^C$, so those are all in $C$.
 One can argue in the style of \cite[\S2]{extmax} 
 to see that this can't be increased in the ultrapower to $\geq\sup i^\Uu``\lambda$;
 however, letting $\delta_0+1=\min((0,\infty]^\Uu)$, if $E^\Uu_{\delta_0}$ is long with $\crit(E^\Uu_{\delta_0})=\kappa$,
 then the ultrapower by $E^\Uu_{\delta_0}$ involves a slight variant of the methods of \cite[\S2]{extmax}, which we now describe. Suppose that the extender $H_1$ derived from $i^\Uu_{0,\delta_0+1}\com j$
 of length $\sup i^\Uu_{0,\delta_0+1}``\lambda$ is in $M^\Uu_{\delta_0+1}$. 
 Since the short part $E$ of $E^\Uu_{\delta_0}$ is in $M^\Uu_{\delta_0+1}$,
 it follows that $\ell\in M^\Uu_{\delta_0+1}$, where letting $\bar{M}=M|\kappa^{+M}$, 
 \[\ell: \Ult_0(\bar{M},E)\to\Ult_0(\bar{M},H_1) \]
 is the canonical factor map
 \[\ell(i^{\bar{M}}_E(f)(b))=i^{\bar{M}}_{H_1}(f)(b).\]
 Thus, the extender $H'_1$ derived from $\ell$ is also in $M^\Uu_{\delta_0+1}$. 
  Let $H'_1=[a,f]^{U,0}_{E^\Uu_{\delta_0}}$
  (where $f\in U$ and $a\in[\nu(E^\Uu_{\delta_0})]^{<\om}$).
  Now letting $H_0$ be the short extender derived from $j$ of length $\lambda$ (so $H_0\notin U$ but $H_0\rest\gamma\in U$ for each $\gamma<\lambda$),
 note that for each $\xi<\kappa^{+M}$ and each $\gamma<\lambda$, there is $A\in (E^\Uu_{\delta_0})_a$
 such that \[f(x)=H_0\rest((U|\gamma)\cross[\gamma]^{<\om}) \]
 for all $x\in A$. 
 But $f,(E^\Uu_{\delta_0})_a\in U$
 and it follows that $H_0\in U$, a contradiction.
 Now suppose instead that  $\mu<\lambda$ and either
  $\mu=\kappa$ and there is no whole segment of $F^C$ in $U$,
  or $\kappa<\mu$ and $F^C\rest\mu$ is the largest whole  segment of $F^C$ in $U$. Then we have $\lambda=\sup(\{j(f)(\mu)\bigm|f\in M\})$.
 Either $F^C\rest\alpha\in C$
 for all $\alpha<\lambda$, in which case we can argue as in the limit case above,
 or there is some $\alpha<\lambda$
 with $F^C\rest\alpha\notin C$,
 and then again using instances of the \cite{extmax}
 argument show that we can't get more
 beyond the image of $\alpha$.)

 Suppose that $\kappa<\crit(i^\Uu_{0\infty})$.
 Then since $\kappa=j(f)(\bar{p})$
 for some $f\in M$, and $i^\Tt=i^\Uu\com j$,
 we get $i^\Tt(f)(p')=\kappa$,
 and note that this yields that
 $\kappa<\crit(\pi)$
 and $\kappa=\crit(E^\Tt_{\alpha_0})$
 and $\pred^\Tt(\alpha_0+1)=0$,
 and that there is no long extender used along $(0,\alpha_0]^\Tt$
 with critical point $\kappa$,
 and therefore if $0<^\Tt\alpha_0$
 then in fact $\kappa<\crit(i^\Tt_{0\alpha_0})$.
 By the preceding paragraph,
 it follows that $\lambda(E^\Tt_{\alpha_0})=\sup i^\Uu``\lambda$.
 But $i^\Uu(\bar{p})=p'\sub\lambda(E^\Tt_{\alpha_0})$, so
 $\bar{p}\sub\lambda$. So $p_{\mathrm{D}}^M\sub\sup\pi``\lambda$, and note that $F^M\rest\pi(\lambda)$ is also a Jensen segment of $F^M$. But then if $\pi(\lambda)<\lambda(F^M)$
 then by the Jensen ISC for $M$,
 we get $F^M\rest p_{\mathrm{D}}^M\in M$, contradiction. So $\pi(\lambda)=\lambda(F^M)$.
 But then $\lambda=\lambda(F^C)$, so $F^C\rest\lambda$
 is not a proper segment of $F^C$, contradiction.
 
 So $\kappa=\crit(i^\Uu_{0\infty})$.
 So letting $\delta_0+1=\min((0,\infty]^\Uu)$,
 then $E^\Uu_{\delta_0}$
 is long with $\crit(E^\Uu_{\delta_0})=\kappa$.
 So letting $\xi=\sup i^\Uu``\kappa^{+M}$, then $E'=F^{M^\Uu_\infty|\xi}$ is  the short extender derived from $i^\Uu$, or equivalently, from $i^\Uu\com j=i^\Tt$. So like in the proof of Claim \ref{clm:pred(alpha_0+1)=0}
 of the proof of Theorem \ref{tm:rho^M_D=0_implies_Dodd-absent-solid_and_universal},
 we get $\crit(E^\Tt_{\alpha_0})=\kappa$ and
 $\pred^\Tt(\alpha_0+1)=0$.
 But then as before, $\lambda(E^\Tt_{\alpha_0})=\sup i^\Uu``\lambda$, which leads to a contradiction,
establishing the Jensen ISC for $C$.

Part \ref{item:transfer_definability}:
because $C$ has the Jensen ISC,
considerations as above give that $\crit(E^\Tt_{\alpha_0})=\kappa$,
and that $E^\Tt_{\alpha_0}$ is the only extender used along $b^\Tt$.
Let $x\in M$ and $A\sub\kappa$
be $\rSigma_1^M(\{x\})$.
If $\crit(i^\Tt_{0\alpha_0})=\kappa$
then let $G\in M^\Tt_{\alpha_0}$
be the short part of $i^\Tt_{0\alpha_0}$, and otherwise let $G=\emptyset$.
By a finite support calculation 
and since all extenders $E^\Uu_\delta$
used along $b^\Tt$
are such that $\crit(E^\Uu_\delta)<i^\Uu_{0\beta}(j(\kappa))$ where $\beta=\pred^\Uu(\delta+1)$,
we can find $\vec{\mu}\in U|j(\kappa)$ capturing $(M^\Uu_\infty,(i^\Tt(x),G),\kappa+1)$.
Since $C^{\passive}=U|j(\kappa)^{+U}$, we can also apply $\vec{\mu}$
to $C$, with the obvious agreement. 
But the $\Sigma_1^M(\{x\})$
definition of $A$
shifts to a $\Sigma_1^{M^\Tt_{\alpha_0}}(\{i^\Tt_{0\alpha_0}(x),G\})$ definition of $A$.
This yields a $\bfSigma_1^{\Ult_0(C,\vec{\mu})}$ definition of $A$,
which in turn yields a $\bfSigma_1^C$ definition of $A$.
\end{proof}

\begin{lem}\label{lem:rho_D,rho_1_corresp}
 Let $M$ be an active short premouse. Then
 $\max(\kappa^{+M},\rho_1^M)=\max(\kappa^{+M},\rho_{\D}^M)$.
 
 In particular, we have $\kappa^{+M}<\rho_1^M$ iff $0<\rho_{\D}^M$ iff $\kappa^{+M}<\rho_{\D}^M$,
 and if $\kappa^{+M}<\rho_1^M$ then $\rho_1^M=\rho_{\D}^M$.
\end{lem}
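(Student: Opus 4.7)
My plan is to split the lemma into two parts: (i) the main equality $\max(\kappa^{+M},\rho_1^M)=\max(\kappa^{+M},\rho_{\D}^M)$, and (ii) the three ``in particular'' assertions. Part (i) is precisely Lemma \ref{item:rho_1=rho_D_mod_mu^+} (restated also as clause 1 of Lemma \ref{lem:rho_D,p_D_basic_facts}), so I will simply cite it. The ``in particular'' part then reduces, by simple arithmetic from the main equality, to a single auxiliary fact: $\rho_{\D}^M>0$ implies $\rho_{\D}^M>\kappa^{+M}$.

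To establish the auxiliary fact, I will assume $\rho_{\D}^M>0$ and show that $F^M\rest(\rho\cup q)\in M$ for every $\rho\leq\kappa^{+M}$ and every $q\in[\OR^M]^{<\om}$. The hypothesis $\rho_{\D}^M>0$ gives $F^M\rest p\in M$ for every finite $p$, so in particular (taking $p=a$ for each finite $a$) every component measure $(F^M)_a$ lies in $M$. I will then invoke the amenability of the coded predicate $\dot F$ (item 3 of the definition of potential premouse) applied to the set $z=[\rho\cup q]^{<\om}\times(M|\kappa^{+M})$; this set is itself an element of $M$ because $\rho\leq\kappa^{+M}<\OR^M$, $q$ is finite, and $M|\kappa^{+M}\in M$. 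Since $F^M\cap z=F^M\rest(\rho\cup q)$, amenability delivers the desired membership.

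With the auxiliary fact established, the equivalences read off as follows. First, $\kappa^{+M}<\rho_1^M$ is equivalent via the main equality to $\kappa^{+M}<\max(\kappa^{+M},\rho_{\D}^M)$, i.e.\ to $\kappa^{+M}<\rho_{\D}^M$. The implication $\kappa^{+M}<\rho_{\D}^M\Rightarrow 0<\rho_{\D}^M$ is trivial, and its converse is precisely the auxiliary fact. Finally, when $\kappa^{+M}<\rho_1^M$, the main equality gives $\max(\kappa^{+M},\rho_{\D}^M)=\rho_1^M>\kappa^{+M}$, so $\rho_{\D}^M=\rho_1^M$. I do not expect any substantial obstacle beyond making the amenability invocation precise; the rest is purely arithmetic from the already-established main equality.
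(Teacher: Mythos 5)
Your plan for the auxiliary fact ($\rho_{\D}^M>0\Rightarrow\rho_{\D}^M>\kappa^{+M}$) rests on a misreading of amenability, and this is where a genuine gap lies. The amenable coding referred to in clause~3 of the definition of pot-pm is \emph{not} the literal set of pairs $\{(a,X):a\in[\nu(F^M)]^{<\om},\ X\in(F^M)_a\}$; for that raw set, the scheme ``$F^M\cap u\in M$ for all $u\in M$'' is false. Taking $u=[\nu(F^M)]^{<\om}\times(M|\kappa^{+M})$ (which is in $M$ since $\nu(F^M)<\OR^M$), the set $F^M\cap u$ determines the whole map $a\mapsto(F^M)_a$ on $[\nu(F^M)]^{<\om}$, hence determines $\Ult(M|\kappa^{+M},F^M)$, which has ordinal height $\OR^M$; so $F^M\cap u\notin M$. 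More tellingly, when $\rho_{\D}^M=0$ there is a finite $q$ with $F^M\rest q\notin M$, and then already for your own set $z=[\rho\cup q]^{<\om}\times(M|\kappa^{+M})$ one has $F^M\cap z\notin M$. So the membership $F^M\cap z\in M$ cannot be a consequence of amenability alone; any correct argument must actually use the hypothesis $\rho_{\D}^M>0$, which a bare amenability invocation does not.

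The correct route uses the generator structure of $F^M$: every $\alpha\in(\kappa,\kappa^{+M})$ equals $i^M_{F^M}(f_\alpha)(\kappa)$ for a canonical function $f_\alpha:\kappa\to\kappa$, chosen uniformly via the $J$-hierarchy wellorder of $M|\kappa^{+M}$, and ordinals $<\kappa$ are fixed points. Given $\rho\leq\kappa^{+M}$, a finite $q$, and $a\in[\rho\cup q]^{<\om}$, split $a=b\cup c$ with $b=a\cap\kappa^{+M}$ and $c=a\cut\kappa^{+M}\sub q$; then $(F^M)_a$ is recoverable inside $M$ from $(F^M)_{\{\kappa\}\cup c}$, the finitely many canonical functions $\left<f_\alpha\right>_{\alpha\in b}$, and $M|\kappa^{+M}$, via the ultrapower representation. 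Since $\rho_{\D}^M>0$ puts the finitely many $(F^M)_{\{\kappa\}\cup c}$ (for $c\sub q$) into $M$, the map $a\mapsto(F^M)_a$ on $[\rho\cup q]^{<\om}$ is definable over some level of $M$ from parameters in $M$, hence is itself in $M$; that is, $F^M\rest(\rho\cup q)\in M$, giving $\rho_{\D}^M>\kappa^{+M}$. Regarding the main equality: your citation of Lemma~\ref{item:rho_1=rho_D_mod_mu^+} points to a statement asserted earlier without argument (``a standard calculation shows''), and the calculation the paper itself supplies is precisely the proof of the present statement, namely the easy bound $\rho_1^M\leq\max(\kappa^{+M},\rho_{\D}^M)$ plus a contradiction from $\rho_1^M<\rho_{\D}^M$ obtained by reconstructing $\cHull_1^M(\rho_1^M\cup\{p_1^M\})$ inside $M$ from $F^M\rest(\rho_1^M\cup\{p\})$ (with $p$ generating $\{p_1^M,F_J^M\}$) together with $M|\kappa^{+M}$. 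The citation is organizationally reasonable, but be aware that it does not itself supply the argument.
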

\begin{proof}
 Clearly $\rho_1^M\leq\max(\kappa^{+M},\rho_{\D}^M)$.
 Suppose $\rho_1^M<\max(\kappa^{+M},\rho_{\D}^M)$.
 Let $p\in[\OR]^{<\om}$
 be such that $p$ generates  $\{p_1^M,F_J^M\}$ (in the sense that $p_1^M=i^{M}_{F^M}(f)(p)$
 and $F_J^M=i^M_{F^M}(g)(p)$ for some $f,g\in M$). Then $F=F^M\rest\rho_1^M\cup\{p\}\in M$,
 but from $F$ and $M|\kappa^{+M}$
 one can produce the transitive
 collapse $C$ of some $X\preccurlyeq_{1}M$ with $\rho_1^M\cup\{p_1^M\}\sub X$, giving $C\in M$, which is impossible.
\end{proof}

Recall that $t^M$ was recalled in Remark \ref{rem:standard_Dodd_proj_and_param}.
The following lemma is similar
to one in \cite{zeman_dodd},
and a variant for Mitchell-Steel indexing in
 \cite{fsfni_v5}:
\begin{lem}\label{lem:char_t^M}
 Let $M$ be an active short premouse and $\kappa=\crit(F^M)$.
 Suppose that either:
 \begin{enumerate}[label=--]
  \item 
$M$ is $1$-sound,
 or
 \item $M$ is $\kappa^{+M}$-sound, or
 \item $M$ is
 Dodd-absent-sound.
 \end{enumerate}
 Then:
 \begin{enumerate}\item\label{item:char_t} $p_1^M\cut\kappa^{+M}\sub t^M$
 and letting $p_1^M\cut\kappa^{+M}=\{q_0,q_1,\ldots,q_{k-1}\}$
  and $t^M=\{\vec{t}_0,q_0,\vec{t}_1,q_1,\ldots,q_{k-1},\vec{t}_k\}$
  with each $\vec{t}_i=\{t_{i0},\ldots,t_{i,j_i-1}\}$, with everything in strictly decreasing order (so in particular, $q_0<\min(\vec{t}_0)$, etc), then:
 \begin{enumerate}[label=--]
  \item if $0<k$ (that is, $p_1^M\cut\kappa^{+M}\neq\emptyset$) then $\vec{t}_0$ is the least $s\in[\OR]^{<\om}$
  with $\{F_J^M\}$  generated by $s\cup q_0$
  \tu{(}so $q_0\notin s$\tu{)},
  \item if $i+1<k$ then $\vec{t}_{i+1}$ is the least $s$
  with  $\{F_J^M\}$  generated by \[\{\vec{t}_0,q_0,\ldots,\vec{t}_i,q_i,s\}\cup q_{i+1}\]
  (so $q_{i+1}\notin s$)
  \item $\vec{t}_k$ is the least $s$
  with $\{F_J^M\}$ generated by \[\{\vec{t}_0,q_0,\ldots,\vec{t}_{k-1},q_{k-1},s\}\cup\tau^M.\]
 \end{enumerate}
  \item\label{item:M_Das_rho_D=0}
If $M$ is Dodd-absent-sound with $\rho_{\D}^M=0$, then
 $M$ is $\kappa^{+M}$-sound and $\rho_1^M\leq\kappa^{+M}$.
 \item\label{item:M_Das_rho_D>0} 
 If $M$ is Dodd-absent-sound with $\rho_{\D}^M>0$,
 then $M$ is $1$-sound
 with $\rho_1^M=\rho_{\D}^M$.
 \end{enumerate}
\end{lem}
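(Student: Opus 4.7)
The plan is to exploit the common thread behind all three parts: under each of the three soundness hypotheses, every element of $M$ can be written as $i^M_{F^M}(f)(a)$ for some $f \in M|\kappa^{+M}$ and some finite set $a$ of generators of $F^M$, which in turn, since $F^M$ is generated by $\tau^M \cup t^M$, means $a \subseteq \tau^M \cup t^M$. Combined with weak amenability (Lemma 2.5) and the Jensen ISC, this lets us translate between ``generation of $F^M$'' (measured by $\tau^M, t^M$) and ``generation of $\rSigma_1$-definability'' (measured by $\rho_1^M, p_1^M$, or their Dodd-absent cousins).

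For part \ref{item:char_t}, I would first show $p_1^M \cut \kappa^{+M} \subseteq t^M$. Suppose for contradiction $q \in (p_1^M \cut \kappa^{+M}) \setminus t^M$. Since $F^M$ is generated by $\tau^M \cup t^M$, we can write $q = i^M_{F^M}(f)(b)$ for some $f \in M|\kappa^{+M}$ and $b \in [\tau^M \cup t^M]^{<\om}$, so $b \cap [q,\infty) \subseteq t^M \cut (q+1)$. Under the soundness hypothesis, $\Th_{\rSigma_1}^M(\kappa^{+M} \cup \{p_1^M\})$ (or the weaker Dodd-absent version) is external to $M$, but an inspection of the definition of $t^M$ together with a representation $q = i^M_{F^M}(f)(t^M \cut (q+1) \cup \{\text{stuff below } q\})$ produces a $\rSigma_1^M$-definition of $q$ from parameters strictly below $q$ in the lex order, contradicting the minimality of $p_1^M$. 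Once $p_1^M \cut \kappa^{+M} \subseteq t^M$ is established, the inductive clauses characterizing each $\vec{t}_i$ follow by the least-ness of $t^M$ as a generating set for $F^M$ relative to $\tau^M$ and the earlier pieces: at each level we must add exactly enough generators above $q_i$ to capture $F_J^M$ (since anything in $\es^M$ between adjacent $q_i$'s is coded by $F_J^M$ via coherence), and no more.

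For part \ref{item:M_Das_rho_D=0}, Dodd-absent-soundness with $\rho_{\D}^M = 0$ means $F^M$ is generated by $p_{\D}^M$ alone. Hence for every $x \in M$, pick $a \in [\nu(F^M)]^{<\om}$ with $x = i^M_{F^M}(g)(a)$ for some $g \in M|\kappa^{+M}$; then $a$ can be rewritten using $p_{\D}^M$, yielding $x = i^M_{F^M}(g')(p_{\D}^M)$ for some $g' \in M|\kappa^{+M}$. Since $(g, p) \mapsto i^M_{F^M}(g)(p)$ is $\rSigma_1^M$, we conclude $M = \Hull_1^M(\kappa^{+M} \cup \{p_{\D}^M\})$, giving both $\kappa^{+M}$-soundness and $\rho_1^M \leq \kappa^{+M}$. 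For part \ref{item:M_Das_rho_D>0}, Lemma \ref{lem:rho_D,rho_1_corresp} already yields $\rho_1^M = \rho_{\D}^M$; for the $1$-soundness itself, Dodd-absent-soundness gives that $F^M$ is generated by $\rho_{\D}^M \cup p_{\D}^M$, so the same substitution trick produces $M = \Hull_1^M(\rho_{\D}^M \cup \{p_{\D}^M\}) = \Hull_1^M(\rho_1^M \cup \{p_{\D}^M\})$, and now comparing candidates via part \ref{item:char_t} (applied to check that $p_{\D}^M$ is lex-minimal among such parameters) gives $p_1^M = p_{\D}^M$.

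The main obstacle I anticipate is the bookkeeping in part \ref{item:char_t}: one must track, level by level from the top of $t^M$ downwards, the interleaving of the ``coordinate gaps'' $q_i$ (coming from $p_1^M$) with the ``filler'' blocks $\vec{t}_i$ (coming from minimal generation of $F_J^M$), and verify both the containment $p_1^M \cut \kappa^{+M} \subseteq t^M$ and the leastness characterization of each $\vec{t}_i$ simultaneously. The subtlety is that $F_J^M$ (the largest Jensen-ISC witness) is itself $\rSigma_1^M$-definable but need not be in $\rg(i^M_{F^M}(\cdot)(p_1^M \cut \kappa^{+M}))$ directly, so the translation between ``generating $F^M$'' and ``generating the $\rSigma_1$-hull'' requires handling $F_J^M$ as a named constant in the language $\mathscr{L}$, exactly as the statement is phrased.
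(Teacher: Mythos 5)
Your argument for the inclusion $p_1^M\cut\kappa^{+M}\sub t^M$ does not go through. If $q\in(p_1^M\cut\kappa^{+M})\setminus t^M$ and $q=i^M_{F^M}(f)(b)$ with $b\cap[q,\infty)\sub t^M\cut(q+1)$, write $b_1=b\cap[q,\infty)$. You want to conclude that $q$ is redundant, so that a smaller parameter than $p_1^M$ works — but the relevant candidate is $p'=(p_1^M\cut\{q\})\cup b_1$, and $p'<p_1^M$ in the lex order only if $b_1\sub p_1^M$. If $b_1$ contains some $\beta\in t^M\setminus p_1^M$ with $\beta>q$, then $\max(p'\Delta p_1^M)=\max(b_1\setminus p_1^M)\in p'$, so $p'>p_1^M$ and you get no contradiction. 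Controlling those elements $\beta$ is exactly what the $\vec{t}_i$-characterization is supposed to give you, so the argument is circular as structured: you cannot first establish $p_1^M\cut\kappa^{+M}\sub t^M$ ``from minimality of $p_1^M$'' and only afterwards characterize $t^M\setminus p_1^M$.

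The paper avoids this by taking the opposite route. It first defines a candidate $t$ by the recursion from the statement (so $\vec{t}_i$ is by fiat the least filler making $\{F_J^M\}$ generated at that level), observes that $t\cup\tau^M$ generates $F^M$ (so $t^M\leq t$), and then supposes $t^M<t$ for contradiction, looking at the first index $i$ of disagreement. At that index the leastness of $\vec{t}_i$ forces $t(i)\in p_1^M$, and then — this is the key step your sketch is missing — the argument is \emph{not} a minimality-of-$p_1^M$ argument at all, but a solidity argument: the $1$-solidity witness at $(p_1^M\cut(t(i)+1),t(i))$ lies in $M$, and from it one can reconstruct $F^M\rest(t^M\cup\tau^M)$ (using that $\dot{F}_J$ is a free constant of $\mathscr{L}$ to recover the non-$p_1^M$ points of $t^M$ above $t(i)$ recursively). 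Since $F^M\rest(t^M\cup\tau^M)$ is equivalent to $F^M$, this yields $F^M\in M$, a contradiction. Trading ``$q$ is definable from below'' for ``the solidity witness reconstructs $F^M$'' is what sidesteps the lex-order problem you run into.

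Your parts \ref{item:M_Das_rho_D=0} and \ref{item:M_Das_rho_D>0} are essentially right (the substitution $x=i^M_{F^M}(g')(p_{\D}^M)$ gives the hull property, and Lemma \ref{lem:rho_D,rho_1_corresp} handles the projectum), but as you note they lean on part \ref{item:char_t} for the standard-parameter bookkeeping, so they inherit the gap. Also, ``$\kappa^{+M}$-sound'' includes a solidity component, which the hull argument alone does not give; the paper gets it from the part \ref{item:char_t} analysis, where solidity of $p_1^M$ is already in play.
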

\begin{proof}
Part \ref{item:char_t}:
In the case that $M$ is $1$-sound or $\kappa^{+M}$-sound, this is basically via the proof from the short extender context (see \cite{zeman_dodd},
 \cite{fsfni_v5}).
 The version assuming that $M$ is Dodd-absent-sound and $\rho_{\D}^M=0$ may be new,
 but its proof is almost the same.
 
 Suppose first that $M$ is $1$-sound or $\kappa^{+M}$-sound. We prove recursively from top down that $t^M$ has the claimed structure. Let $t=\{\vec{t}_0,q_0,\ldots,q_{k-1},\vec{t}_k\}$ be defined recursively as in the statement of the lemma.
 Note that by Lemma \ref{lem:tau^M=max(rho_1,kappa^+)}, $t\cup\tau^M$ generates (all of) $F^M$,
 so certainly $t^M\leq t$.
 Suppose $t^M<t$. Let $i<\lh(t)$ be such that $t^M\rest i=s=t\rest i$ and either $i=\lh(t^M)$
 or $t^M(i)<t(i)$. Then $t(i)\in p_1^M$,
 since otherwise $t^M\cup\tau^M$ does not generate $\{F_J^M\}$,
 contradicting that it generates  $F^M$.
 But then from the $1$-solidity witness at $(p_1^M\cut(t(i)+1),t(i))$, we can recover $F^M\rest t^M\cup\tau^M$ (the points of $t^M\cut t(i)$
 which are not in $p_1^M$ can be recursively recovered using the fact that the theory has the parameter $F_J^M$ available for free). This is impossible,
 since $F^M\rest t^M\cup\tau^M$ is equivalent to $F^M$.
 
 Now suppose instead that $M$ is Dodd-absent-sound. Then $t^M=p_{\D}^M\cut\kappa^{+M}$ and $\tau^M=\max(\rho_1^M,\kappa^{+M})$.
 Let $r\in[\lambda(F^M)]^{<\om}$
 be least such that $\{F^M_J\}$ is generated by $r\cup\tau^M$.
 So $r\leq t^M$.
 Let $\vec{t}_0$ be the largest $s\ins t^M$ such that $s\ins r$.
 Note that if $\vec{t}_0=t^M$ then $p_1^M\cut\tau^M=\emptyset$,
 and otherwise $p_1^M\cut\tau^M\sub(t^M(\lh(\vec{t}_0))+1)$.
 In particular, if $\vec{t}_0=t^M$ then  $p_1^M\cut\tau^M=\emptyset$,
 so we are done.
 Suppose $\vec{t}_0\pins t^M$.
 Let $q_0=t^M(\lh(\vec{t}_0))$,
 so either $r=\vec{t}_0$
 or $r(\lh(\vec{t}_0))<q_0$.
 Therefore $\{F^M_J\}$ is generated by $\vec{t}_0\cup q_0$, and so there is $H\preccurlyeq_1M$ with the universe of $H$
 generated by this set. Since the corresponding Dodd-absent-solidity witness is in $M$, so is the transitive collapse of $H$.
 Since we have seen that $\max(p_1^M)\leq q_0$,
 it follows that $\max(p_1^M)= q_0$ and the corresponding $1$-solidity witness is in $M$. We now repeat this process relative to $(\vec{t}_0,q_0)$, producing $\vec{t}_1,q_1$, etc.  
 
 Parts \ref{item:M_Das_rho_D=0}
 and \ref{item:M_Das_rho_D>0}
 follow easily from the considerations above.
 \end{proof}
 
 We can now deduce that the $1$-core is also well-behaved with respect to Dodd-absent structure (in the circumstances from Theorem \ref{tm:Dodd-absent-core_of_M_when_rho_D^M=0}):
 \begin{tm}\label{tm:1-core_of_Dodd-abs-sound}
  Let $M$ be an active short $(0,\om_1,\om_1+1)^*$-iterable
  Dodd-absent-sound premouse with $\rho_{\D}^M=0$. 
  Let $C=\core_1(M)$ then $\pi:C\to M$ the core map.  Then $C$ is Dodd-absent-sound with $\rho_{\D}^C=0$
  and $\pi(p_{\D}^C)=p_{\D}^M$.
 \end{tm}
 \begin{proof}
   We have that $C$ is $(0^-,\om_1,\om_1+1)^*$-iterable.
 So by Theorem \ref{tm:rho^M_D=0_implies_Dodd-absent-solid_and_universal},
 $C$ is Dodd-absent-solid and Dodd-absent-universal.
 By Lemma \ref{lem:char_t^M},
  $p_{\D}^M\in\rg(\pi)$ (we might have $\kappa\in p_{\D}^M$, but $\kappa\in\rg(\pi)$ anyway).
  Since $\rho_{\D}^M=0$ and $M$ is Dodd-absent-sound,
  it easily follows that $F^C$ is generated by $\pi^{-1}(p_{\D}^M)$,
  so $\rho_{\D}^C=0$
  and $p_{\D}^C\leq\pi^{-1}(p_{\D^M})$. It just remains to see that $p_{\D}^C=\pi^{-1}(p_{\D}^M)$,
  so suppose $p_{\D}^C<\pi^{-1}(p_{\D}^M)$.
  
Let
 $D=\core_{\D}(C)$
 and $\sigma:D\to C$ the Dodd-absent-core embedding.
 Let
 \[ A=\Th_1^M(\rho_1^M\cup\{p_1^M\}),\]
 so $A$ is $\bfSigma_1^C$.
 By Theorem \ref{tm:Dodd-absent-core_of_M_when_rho_D^M=0},
 $A$ is $\bfSigma_1^D$.
 Let $\bar{M}=M|\kappa^{+M}$ and \[H=\Big\{i^{M,0}_{F^M}(f)(\pi(p_{\D}^C))\Bigm|f\in \bar{M}\Big\}.\]
 Let $\widetilde{D}$ be the structure
 with passivization the transitive collapse of $H$, and with the transitive collapse of $F^M\rest\pi(p_{\D}^C)$ active.
 Let $\widetilde{\sigma}:\widetilde{D}\to M$ be the uncollapse map. Then we get a unique $0$-embedding $\tau:D\to\widetilde{D}$ such that $\widetilde{\sigma}\com\tau=\pi\com\sigma$. Therefore $A$ is $\bfSigma_1^{\widetilde{D}}$. But since $\pi(p_{\D}^C)<p_{\D}^M$,
 we have $\widetilde{D}\in M$,
 so $A\in M$, a contradiction.  
 \end{proof}

This completes our discussion of the case that $\rho_{\D}^M=0$.
We now want to consider the Dodd-absent structure of active short mice $M$ with $\rho_{\D}>0$. Here we will also assume that $M$ is $1$-sound, and will deduce that $M$ is Dodd-absent-sound. The plan here is to simply reduce to the case that $\rho_{\D}^M$, by taking an appropriate hull.

\begin{dfn}
 Let $M$ be a $1$-sound premouse
 and let $\theta$ be an $M$-cardinal with $\theta\leq\rho_1^M$. Let $q\in[\OR^M]^{<\om}$.
 We say that $(M,\theta,q)$ is \emph{$1$-self-solid} iff letting $H=\cHull_1^M(\theta\cup\{q\})$
 and $\pi:H\to M$ be the uncollapse,
 then $H$ is $1$-sound and $\pi(p_1^H)=q$.
\end{dfn}

\begin{lem}\label{lem:existence_of_1-self-solid-parameter}
 Let $M$ be a $1$-sound premouse
 and let $\theta$ be an $M$-cardinal with $\theta\leq\rho_1^M$. Let $x\in M$.
 Then there is $q\in[\OR^M]^{<\om}$
 such that $(M,\theta,q)$ is $1$-self-solid
 and  $x\in\Hull_1^M(\theta\cup\{q\})$
 and $p_1^M=q\cut\rho_1^M$.
\end{lem}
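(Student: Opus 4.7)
The plan is to define $q$ as the lexicographically least finite tuple of ordinals satisfying both of the required conditions: $q\cut\rho_1^M=p_1^M$ and $x\in\Hull_1^M(\theta\cup\{q\})$. Such a $q$ exists: by $1$-soundness of $M$, we have $M=\Hull_1^M(\rho_1^M\cup\{p_1^M\})$, so we can write $x=t^M(p_1^M,\vec\beta)$ for some $\rSigma_1$ Skolem term $t$ and some $\vec\beta\in[\rho_1^M]^{<\om}$; replacing components below $\theta$ (which are already in the hull) we may assume $\vec\beta\in[\theta,\rho_1^M)^{<\om}$, and then $q_0:=p_1^M\cup\vec\beta$ is a candidate. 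Observe that by standard properties of the standard parameter for a $1$-sound structure, $p_1^M\sub[\rho_1^M,\OR^M)$, so the condition $q\cut\rho_1^M=p_1^M$ is consistent.

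Let $H=\cHull_1^M(\theta\cup\{q\})$ with uncollapse $\pi:H\to M$ and $\bar{q}=\pi^{-1}(q)$. First I would observe that $\rho_1^H\leq\theta$ by the usual cardinality-counting argument (the $\rSigma_1$-theory of $\theta\cup\{\bar{q}\}$ cannot lie in $H$, since this theory codes all of $H$). Then $p_1^H\leq\bar{q}$ simply because $\bar{q}$ witnesses the condition defining $p_1^H$. The remaining assertion for $1$-self-solidity is that $p_1^H=\bar{q}$ (equivalently, $\pi(p_1^H)=q$) and that $H$ is $1$-sound; solidity witnesses for $\bar q$ are pulled back via $\pi$ from the corresponding witnesses in $M$ (the $1$-solidity witnesses for $p_1^M$ are in $M$, and one checks the needed fragments lie in $\rg(\pi)$ by the hull construction, exactly as in the traditional proof from the short extender realm).

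The main obstacle is ruling out $p_1^H<\bar{q}$. Assume this and let $r:=\pi(p_1^H)$, so $r<q$ in the lex order; I want to show $r$ also witnesses the conditions defining $q$, contradicting lex-minimality. For $x\in\Hull_1^M(\theta\cup\{r\})$: using that every element of $H$ is $\rSigma_1$-definable from $\rho_1^H\cup\{p_1^H\}$ together with parameters in $H$ (invoking a standard iteration of hull-taking, passing to $\cHull_1^H(\rho_1^H\cup\{p_1^H\})$ and showing it coincides with $H$ by lex-minimality), and since $\rho_1^H\leq\theta\leq\crit(\pi)$, applying $\pi$ we get $\rg(\pi)\sub\Hull_1^M(\theta\cup\{r\})$, and $x\in\rg(\pi)$ by construction. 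For $r\cut\rho_1^M=p_1^M$: since $\Th_{\rSigma_1}^H(\rho_1^H\cup\{p_1^H\})\notin H$, $\rSigma_1$-elementarity of $\pi$ gives $\Th_{\rSigma_1}^M(\rho_1^H\cup\{r\})\notin M$, and then bundling the ordinals in $r\cap\rho_1^M$ as additional $\rho_1^M$-parameters yields $\Th_{\rSigma_1}^M(\rho_1^M\cup\{r\cut\rho_1^M\})\notin M$; by lex-minimality of $p_1^M$ this forces $r\cut\rho_1^M\geq p_1^M$, and combined with $r<q$ and $q\cut\rho_1^M=p_1^M$ (so $r\cut\rho_1^M\leq p_1^M$ lex, after noting where $r$ and $q$ first differ must be below $\rho_1^M$ or exactly at $p_1^M$), we conclude $r\cut\rho_1^M=p_1^M$. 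This contradicts the lex-minimality of $q$, completing the proof.
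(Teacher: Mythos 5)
Your approach is fundamentally different from the paper's, and it contains a genuine gap. The paper builds the parameter $q$ incrementally: starting with $(\theta_0,q_0)=(\rho_1^M,p_1^M)$, it descends through $M$-cardinals one step at a time, at each stage choosing the next cardinal $\theta_j$ and appending to $q_{j-1}$ a point $\eta=\theta_j^{+H}$ chosen so that condensation (specifically Lemma~\ref{lem:condensation_for_self-solid}) can be invoked to place $H$ and the relevant level $J$ inside the next hull. You instead take $q$ lexicographically least with the two stated conditions and try to close the argument in one shot. That organization is not a mere stylistic difference: the incremental structure is precisely what makes the condensation lemma applicable, and your proof never invokes condensation or iterability at all.

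The concrete problems are these. First, the inference ``since $\Th_{\rSigma_1}^H(\rho_1^H\cup\{p_1^H\})\notin H$, $\rSigma_1$-elementarity of $\pi$ gives $\Th_{\rSigma_1}^M(\rho_1^H\cup\{r\})\notin M$'' is false in the only nontrivial case $\theta<\rho_1^M$. You have already established $\rho_1^H=\theta$, and $\Th_{\rSigma_1}^M(\theta\cup\{r\})$ is a $\bfrSigma_1^M$-definable subset of $\theta<\rho_1^M$, hence is \emph{in} $M$ by the definition of $\rho_1^M$; what fails is that this set lies in $H$ (it may enter $M$ only above $\theta^{+H}$), and $\rSigma_1$-elementarity of $\pi$ cannot by itself transfer the non-membership downward. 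So your argument that $r\cut\rho_1^M=p_1^M$ collapses. Second, and more fundamentally, you dispose of $1$-solidity of $H$ with ``one checks the needed fragments lie in $\rg(\pi)$ by the hull construction, exactly as in the traditional proof from the short extender realm.'' But there is no reason for the $p_1^M$-solidity witnesses to lie in $\Hull_1^M(\theta\cup\{q\})$ for an arbitrary (even lex-minimal) $q$, and the ``traditional proof'' you are alluding to is a phalanx comparison requiring iterability. The paper's proof of Lemma~\ref{lem:condensation_for_self-solid} is exactly such a comparison, and Lemma~\ref{lem:existence_of_1-self-solid-parameter} leans on it to capture the theory $\Th_1^M(\eta\cup q_{j-1})$ inside the hull at each step. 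Relatedly, the ``standard iteration of hull-taking'' you invoke to show $H=\Hull_1^H(\theta\cup\{p_1^H\})$ is not standard here: the iteration terminates by well-foundedness, but at the terminal stage you still owe the argument that the resulting parameter has the right restriction to $\rho_1^M$, and that again comes back to a condensation-type fact. Without condensation and the iterability hypothesis that enables it, the argument does not close.
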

\begin{proof}
 This is by the proof of \cite[Lemma***]{extmax}, using Lemma \ref{lem:condensation_for_self-solid}.
 That is, we will define $k<\om$ and  a sequence $\left<(\theta_i,q_i)\right>_{i\leq k}$
 such that:
 \begin{enumerate}[label=--]
  \item $(\theta_0,q_0)=(\rho_1^M,p_1^M)$,
  \item $(M,\theta_i,q_i)$ is $1$-self-solid, and moreover, the $1$-solidity witnesses for $q_i$ are all in $\Hull_1^M(\theta_i\cup q_i)$,
  \item $q_i\cut\rho_1^M=p_1^M$,
  \item $x\in\Hull_1^M(\theta_i\cup\{q_i\})$,
  \item $\theta_{i+1}<\theta_i$,
  \item $q_{i+1}=q_i\cup\{\gamma\}$ for some $\gamma$ such that $\theta_i<\gamma<\theta_i^{+M}$,
  \item $\theta_{k}=\theta$.
 \end{enumerate}
Clearly this suffices.

Starting with $(\theta_0,q_0)=(\rho_1^M,p_1^M)$, this trivially satisfies the inductive requirements. So suppose we have defined $\left<\theta_i,q_i\right>_{i<j}$
as above, but $\theta<\theta_{j-1}$.
Let $\theta_j$ be the least $\theta'\in[\theta,\theta_{j-1})$
such that $\theta'$ is an $M$-cardinal
and letting $w_{j-1,0},\ldots,w_{j-1,n-1}$ enumerate the $1$-solidity witnesses for $q_{j-1}$,
we have
\[ x,w_{j-1,0},\ldots,w_{j-1,n-1}\in\Hull_1^M((\theta')^{+M}\cup q_{j-1}). \]
Now let $\gamma\in[\theta',(\theta')^{+M})$
be least such that
\[ x,w_{j-1,0},\ldots,w_{j-1,n-1}\in\Hull_1^M((\gamma+1)\cup q_{j-1}).\]
Let
\[ H=\cHull_1^M((\gamma+1)\cup q_{j-1}) \]
and $\pi:H\to M$ the uncollapse.
Then $\rho_1^H=\theta_j<\rho_1^M$
and $H=\Hull_1^H(\theta_j\cup\{y\})$
for some $y$. Let $\eta=\theta_j^{+H}$.
Define $q_{j}=q_{j-1}\cup\{\eta\}$.

Let us verify that $(M,\theta_j,q_j)$
is $1$-self-solid.
We easily have the $1$-solidity witnesses
corresponding  to each $\alpha\in q_{j-1}$
in the relevant hull, so we just need to see that
\[ \Th_1^M(\eta\cup q_{j-1})\in\Hull_1^M(\theta_j\cup q_j).\]
Let
$J\pins M$ be such that $\eta=\theta_j^{+J}$ and $\rho_\om^J=\theta_j$.
Using Lemma \ref{lem:condensation_for_self-solid}, note then that
\[ H,J\in\Hull_1^M(\theta_j\cup q_j)\]
(since $J$ can be recovered in a $\Sigma_1$ fashion from the parameter $\eta$,
and then since $J$ is sound with $\rho_\om^J=\theta_j$ (actually $\rho_1^J=\theta_j$), using parameters ${<\theta_j}$, this can be used to recover $H$), but
$H=\cHull_1^M(\eta\cup q_{j-1})$,
which suffices.
\end{proof}

 \begin{rem}
Note that if $\alpha\in p_1^M\cut\kappa^{+M}$
 then $\{F_J^M\}$ is generated by $(t^M\cut(\alpha+1))\cup\alpha$.
 Otherwise, note that $\alpha$ is the least ordinal such that $F_J^M$ is generated by $(t^M\cut(\alpha+1))\cup(\alpha+1)$.
 But from $p_1^M\cut(\alpha+1)$, we can recover $t^M\cut(\alpha+1)$ in an $\rSigma_1^M$-manner, because we have access to the parameter $F_J^M$ for free, and hence can produce $\vec{t}_0$, and then we get $q_0$ from $p_1^M\cut(\alpha+1)$, and then we recover $\vec{t}_1$, etc, throughout all of $t^M\cut(\alpha+1)$.
\end{rem}

\begin{tm}\label{tm:Dodd-absent-soundness}
 Let $M$ be a $(0^-,\om_1,\om_1+1)^*$-iterable active short $1$-sound
 premouse with $\rho_{\D}^M>0$
 \tu{(}so $\rho_1^M=\rho_{\D}^M>\kappa^{+M}$\tu{)}.
 Then $M$ is Dodd-absent-sound.
\end{tm}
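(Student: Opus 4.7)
The approach closely parallels Lemma \ref{lem:rho^M_D=0_implies_Dodd-absent-solid_and_universal}, adapted to the short-extender regime. By Lemma \ref{lem:rho_D,rho_1_corresp} our hypothesis gives $\rho_1^M=\rho_{\D}^M>\kappa^{+M}$, and then Lemma \ref{lem:tau^M=max(rho_1,kappa^+)} yields $\tau^M=\rho_{\D}^M$. Lemma \ref{lem:char_t^M}(\ref{item:char_t}) then provides the canonical decomposition of the traditional Dodd parameter $t^M$ in terms of $p_1^M$ and generators of $F_J^M$; in particular $F^M$ is generated by $\rho_{\D}^M\cup t^M$, so $p_{\D}^M\leq t^M$. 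We must verify \textbf{(i)} Dodd-absent-solidity of $M$ and \textbf{(ii)} $F^M$ is generated by $\rho_{\D}^M\cup p_{\D}^M$, equivalently $p_{\D}^M=t^M$.

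For (i), we may pass to a countable hull and fix a $(0^-,\om_1+1)$-strategy $\Sigma$ for $M$ with the weak Dodd-Jensen property relative to an enumeration $\vec{x}$ with $p_{\D}^M\cup p_1^M\ins\vec{x}$. Enumerate $p_{\D}^M=\{\alpha_0>\cdots>\alpha_{n-1}\}$, put $\alpha_n=\rho_{\D}^M$, and set
\[F_i=F^M\rest(\{\alpha_0,\ldots,\alpha_{i-1}\}\cup\alpha_i)\]
for $i\leq n$. By minimality of $(p_{\D}^M,\rho_{\D}^M)$ we have $F_n\notin M$ while $F^M\rest(\{\alpha_0,\ldots,\alpha_{i-1}\}\cup\rho_{\D}^M)\in M$ for $i<n$. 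Let $i_0$ be least with $F_{i_0}\notin M$ and suppose for contradiction $i_0<n$. Since $F_{i_0}$ is a short proper subextender of $F^M$, $U=\Ult_0(M,F_{i_0})$ is a premouse; let $j:M\to U$ and $\pi:U\to U'=\Ult_0(M,F^M)$ be the ultrapower and factor maps. Since $\rho_{\D}^M$ is an $M$-cardinal (Lemma \ref{lem:rho_D,p_D_basic_facts}), form the phalanx $\ph=((M,{<\alpha_{i_0}}),U)$ and compare $\ph$ versus $M$, lifting $0^-$-maximal trees on $\ph$ to trees on $M$ via $\Sigma$ using $\pi$, exactly in the style of Lemma \ref{lem:rho^M_D=0_implies_Dodd-absent-solid_and_universal}. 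Weak Dodd-Jensen then forces $b^\Uu$ above $U$, both branches non-dropping, and $i^\Uu\com j=i^\Tt$. A finite-support calculation, capturing $i^\Uu_{0\infty}(\bar q)$ (where $\pi(\bar q)=\{\alpha_0,\ldots,\alpha_{i_0-1}\}$) by a linear internal iteration of $U$ using measures in $U|j(\kappa)$, combined with the Dodd-absent-solidity witnesses for $\{\alpha_0,\ldots,\alpha_{i_0-1}\}$ present in $M$ by the minimality of $i_0$, produces $F_{i_0}\in M$, contradicting the choice of $i_0$. This is the direct analog of Claims \ref{clm:if_i_0<n_no_use_of_F^M} and \ref{clm:i_0=n} of that proof.

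For (ii), with (i) in hand, suppose $p_{\D}^M\lneq t^M$. Then $F^*=F^M\rest(\rho_{\D}^M\cup p_{\D}^M)$ is a proper subextender of $F^M$ with $F^*\notin M$. A parallel phalanx comparison using $U^*=\Ult_0(M,F^*)$ in place of $U$, now with Dodd-absent-solidity witnesses for every element of $p_{\D}^M$ available in $M$, forces $F^*\in M$ by the same finite-support method, a contradiction. Hence $p_{\D}^M=t^M$, and $F^M$ is generated by $\rho_{\D}^M\cup p_{\D}^M$, completing the proof.

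The principal technical obstacle lies in the finite-support / internal-iteration argument in (i): one must verify, using weak Dodd-Jensen and the Dodd-absent-solidity witnesses assumed to be in $M$, that the capture of $i^\Uu_{0\infty}(\bar q)$ can be realized by an internal iteration of $U$ by measures drawn from $U|j(\kappa)$, and that those witnesses genuinely suffice to reconstruct $F_{i_0}$ inside $M$. Because no long-extender protomouse complications arise in forming $U=\Ult_0(M,F_{i_0})$, the argument is cleaner than that of Lemma \ref{lem:rho^M_D=0_implies_Dodd-absent-solid_and_universal}; but careful bookkeeping is still needed to track the role of the constant $F_J^M$ and the possible structural gap between $p_1^M$ and $p_{\D}^M=t^M$ arising from the extra blocks $\vec{t}_i$ in the canonical decomposition of $t^M$.
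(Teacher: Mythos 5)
Your approach is genuinely different from the paper's. The paper does \emph{not} redo a phalanx comparison for $M$ directly. Instead, it uses Lemma \ref{lem:existence_of_1-self-solid-parameter} to obtain a $1$-self-solid hull $\bar{M}\in M$ with an $\rSigma_1$-elementary $\pi:\bar{M}\to M$ such that $\rho_1^{\bar{M}}=\kappa^{+M}=\kappa^{+\bar{M}}$, hence $\rho_{\D}^{\bar{M}}=0$. One then quotes Lemma \ref{lem:rho^M_D=0_implies_Dodd-absent-solid_and_universal} (already established) to get that $\bar M$ is Dodd-absent-solid and Dodd-absent-universal, shows that $\bar p:=\pi^{-1}(t^M)\ins p_{\D}^{\bar M}$ via the canonical decomposition of Lemma \ref{lem:char_t^M} together with the structure of the Dodd-absent core $\core_{\D}(\bar M)$ (Lemma \ref{lem:Dodd-absent-core_of_M_when_rho_D^M=0}), and then lifts the Dodd-absent-solidity witnesses for $\bar p$ along $\pi$ to obtain Dodd-absent-solidity at $t^M$ in $M$, which forces $p_{\D}^M=t^M$ and hence Dodd-absent-soundness. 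This is a reflection argument: it deliberately reduces the $\rho_{\D}^M>\kappa^{+M}$ case to the $\rho_{\D}^M=0$ case, which was the whole point of establishing that lemma first.

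Your proposal instead attempts to rerun the phalanx comparison directly on $M$. This could in principle be made to work, but as written it has two concrete gaps. First, your phalanx $\ph=((M,{<\alpha_{i_0}}),U)$ is only well-posed when $\alpha_{i_0}$ is an $M$-cardinal; but $\alpha_{i_0}\in p_{\D}^M$ is a parameter coordinate and need not be a cardinal. In the proof of Lemma \ref{lem:rho^M_D=0_implies_Dodd-absent-solid_and_universal} this situation is handled by a three-case definition of $\ph$ that inserts an additional model at degree $r$ (or $\Ult_0(M,F^{M|\eta})$), together with the attendant lifting machinery via the translation pairs $\mathfrak{Q},\mathfrak{Q}'$ and Shift Lemmas II--III; none of that appears in your proposal, yet all of it is needed to make the lift to $\Sigma$ legitimate. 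Second, your ``part (ii)'' is not actually a separate iteration argument in the paper's framework: once Dodd-absent-solidity at $t^M$ is secured, a purely combinatorial computation shows $p_{\D}^M=t^M$ (if $p_{\D}^M<t^M$ then some Dodd-absent-solidity witness at $t^M$ would already put $F^M\rest(\rho_{\D}^M\cup p_{\D}^M)$ into $M$, contradicting the definition of $p_{\D}^M$). Your plan of running a second phalanx comparison for generation is therefore unnecessary, and as stated (``forces $F^*\in M$, a contradiction'') it conflates the desired conclusion with its negation: $F^*\notin M$ is the \emph{definition} of $p_{\D}^M$, so the comparison must produce a contradiction to $p_{\D}^M<t^M$ through some other means, which you do not identify.

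The trade-off: your direct route, if completed, would be self-contained but duplicates a large amount of technical lifting machinery; the paper's hull argument buys brevity by reusing Lemma \ref{lem:rho^M_D=0_implies_Dodd-absent-solid_and_universal} as a black box, at the cost of needing the slightly delicate core analysis (cases $\kappa<\crit(\sigma)$, $\kappa=\crit(\sigma)$ with $F_J^C=\emptyset$ or $\neq\emptyset$). If you want to pursue the direct route, you must at minimum supply the full phalanx case analysis for non-cardinal $\alpha_{i_0}$, and you should replace your part (ii) with the combinatorial observation above rather than a second comparison.
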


\begin{proof}
Using Lemma \ref{lem:existence_of_1-self-solid-parameter},
 let $\bar{M}\in M$ be a $1$-sound active short premouse 
 and $\pi:\bar{M}\to M$
 be an $\rSigma_1$-elementary map
 with
 $\crit(\pi)>\kappa^{+M}=\rho_1^{\bar{M}}$,
  $p_1^M\in\rg(\pi)$,
  $\pi(p_1^{\bar{M}})\cut\rho_1^M=p_1^M$, $F^M\rest t^M\in\rg(\pi)$
 and $\rho_1^M\in\rg(\pi)$.
 Note that $\rho_{\D}^{\bar{M}}=0$.
So by Theorem \ref{tm:rho^M_D=0_implies_Dodd-absent-solid_and_universal}, $\bar{M}$ is Dodd-absent-universal and Dodd-absent-solid.
 
Lemma \ref{lem:char_t^M}
characterizes $t^M$,
and note that this characterization is preserved by $\pi$;
 that is, letting $\pi(\bar{\rho})=\rho_1^M$
 and $\pi(\bar{p})=t^M$,
 then $\pi(p_1^{\bar{M}}\cut\bar{\rho})=p_1^M$
 and $(p_1^{\bar{M}}\cut\bar{\rho},\bar{p},\bar{\rho})$ is described over $\bar{M}$ 
 in the manner that $(p_1^M,t^M,\rho_1^M=\tau^M)$ is described over $M$ by Lemma \ref{lem:char_t^M}.
 Since $\bar{M}$ is Dodd-absent-solid,
 the following claim easily completes the proof:
 \begin{clmtwo} $\bar{p}\ins p_{\mathrm{D}}^{\bar{M}}$.
 \end{clmtwo}
\begin{proof}
 Suppose not. Let $i$ be largest such that $\bar{p}\rest i=p_{\mathrm{D}}^{\bar{M}}\rest i$.
 So $\bar{p}\rest i\neq \bar{p}$.
 Clearly $p_{\mathrm{D}}^{\bar{M}}\neq\bar{p}\rest i$, since we put a measure into $\bar{M}$
 which is equivalent to $F^{\bar{M}}\rest\bar{p}$.
 Noting that $F^C$ is generated by $\bar{\rho}\cup\bar{p}$, therefore
 $p_{\mathrm{D}}^{\bar{M}}(i)<\bar{p}(i)$.
 Let $\sigma:C\to \bar{M}$ be the Dodd-absent core and embedding.

 Now $\bar{p}(i)\notin p_1^{\bar{M}}$,
 because otherwise, using $p_1^{\bar{M}}\cut(\bar{p}(i)+1)$, we can recover $\bar{p}\rest i$
 in an $\rSigma_1$ manner
 (using the description given by \ref{lem:char_t^M}, as before), and 
 we have the $1$-solidity witness $w$ at this point in $\bar{M}$, but since $p_{\mathrm{D}}^{\bar{M}}(i)<\bar{p}(i)$,  and since $w$ encodes $\bar{p}\rest i$, we can therefore compute $F^{\bar{M}}\rest p_{\mathrm{D}}^{\bar{M}}$ from $w$, a contradiction.
 
Now suppose $\kappa<\crit(\sigma)$. Then by Theorem \ref{tm:Dodd-absent-core_of_M_when_rho_D^M=0},
 we have $\sigma(F_J^C)=F_J^{\bar{M}}$, and $\sigma(p_{\D}^C)=p_{\D}^{\bar{M}}$. But then since $p_{\D}^{\bar{M}}(i)<\bar{p}(i)$ and $F_J^{\bar{M}}\in\rg(\sigma)$, by the description of $(p_1^{\bar{M}}\cut\bar{\rho},\bar{p},\bar{\rho})$ mentioned above, we must have $\bar{p}(i)\in p_1^{\bar{M}}$,
 a contradiction.
 
 So $\kappa=\crit(\sigma)$.
 We have $\sigma(p_{\mathrm{D}}^C)=p_{\mathrm{D}}^{\bar{M}}$,
  so $p_{\mathrm{D}}^{\bar{M}}$ generates $\{\sigma(F_J^C)\}$.  But since  $\bar{p}(i)\notin p_1^{\bar{M}}$, the description of Lemma \ref{lem:char_t^M} gives that $\bar{p}(i)$ is the least $\gamma$
 such that $F_J^{\bar{M}}$ is generated from $(\bar{p}``i)\cup(\gamma+1)$. So $p_{\D}^{\bar{M}}$ does not generate $\{F^{\bar{M}}_J\}$.
 
  Suppose $F_J^C=\emptyset$.
 Then by Theorem \ref{tm:Dodd-absent-core_of_M_when_rho_D^M=0}, $F_J^{\bar{M}}=E_\sigma=$ the short extender derived from $\sigma$,
 which is indexed at $\lh(F_J^{\bar{M}})<\sigma(\kappa)^{+\bar{M}}$, and  $\{F_J^C\}$ is generated by the ordinal $\lh(F_J^C)$.
 Since $\{F_J^{\bar{M}}\}$ is not generated by ordinals in $(\bar{p}``i)\cup \bar{p}(i)$,
 we must have $\bar{p}(i)\leq\lh(F_J^{\bar{M}})$.
 But $\sigma(\kappa)\leq p_{\mathrm{D}}^{\bar{M}}(i)<\bar{p}(i)$, as $p_{\mathrm{D}}^{\bar{M}}\in\rg(\sigma)$.
 But then in fact $p_{\mathrm{D}}^{\bar{M}}(i)=\sigma(
\kappa)$, since each $\xi$
with $\sigma(\kappa)<\xi<\lh(F^{\bar{M}}_J)=\sup\sigma``\kappa^{+M}$
is generated by $(\sigma(\kappa)+1)$.
Since $p_{\D}^{\bar{M}}\in\rg(\sigma)$, it also follows that $p_{\D}^{\bar{M}}=(\bar{p}\rest i)\cup\{\sigma(\kappa)\}$.
But we have $F^{\bar{M}}\rest \bar{p}\in \bar{M}$
by construction, and therefore $F^{\bar{M}}\rest(\bar{p}\rest(i+1))\in\bar{M}$,
but $\{\bar{p}(i)\}$ generates $\sigma(\kappa)=\card^{\bar{M}}(\bar{p}(i))$,
and therefore $F^{\bar{M}}\rest(\bar{p}\rest(i+1))$
determines $F^{\bar{M}}\rest((\bar{p}\rest i)\cup\{\sigma(\kappa)\})$,
so this measure is in $\bar{M}$.
This contradicts the definition of $p_{\D}^{\bar{M}}$.

 So $F_J^C\neq\emptyset$.
 So by Theorem \ref{tm:Dodd-absent-core_of_M_when_rho_D^M=0}, $F_J^{\bar{M}}=\sigma(F_J^C)(E_\sigma)$, where $E_\sigma$ is as before. We have $E_\sigma\in\es^{\bar{M}}$. Therefore $F_J^{\bar{M}}$
 is generated by $\sigma(F_J^C)$ and $\lh(E_\sigma)$.
 But $\sigma(F_J^C)$ is  generated by $\sigma(p_{\mathrm{D}}^C)$. So like before,
 we get $\sigma(\kappa)=p_{\mathrm{D}}^{\bar{M}}(i)<\bar{p}(i)\leq\lh(E_\sigma)<\sigma(\kappa)^{+\bar{M}}$,
 which leads to contradiction like before.
 \end{proof}
 As mentioned earlier, the claim completes the proof (since the Dodd-absent-solidity witnesses for $\bar{p}$ lift under $\pi$ to such witnesses for $t^M$,
 yielding Dodd-absent-solidity for $M$).
\end{proof}
\subsection{Solidity}\label{sec:solidity}

In this section we prove Theorem \ref{tm:solidity},
which establishes, roughly,
that mice are solid and universal, in the fine structure sense.

\begin{dfn}
Let $M$ be an $n$-sound premouse with $\om<\rho_n^M$.
Given $s\in[\OR^M]^{<\om}$
and $\sigma\in\OR^M$, define
\[ S^M_{n+1}(s,\sigma)=\cHull^M_{n+1}(\{s,\pvec_n^M\}\cup\sigma),\]
the \emph{standard $(n+1)$-hull for $M$
at $(s,\sigma)$}
and $\pi^M_{n+1}(s,\sigma):S^M_{n+1,s}\to M$ as the uncollapse map.

Let $p=p_{n+1}^M\conc\left<\rho_{n+1}^M\right>$.
Given $i<\lh(p)$, define 
\[
 W_{n+1,i}^M=S^M_{n+1}(p\rest i, p(i))\]
 and $\pi_{n+1,i}^M=\pi^M_{n+1}(p\rest i,p(i))$.
\end{dfn}

\begin{tm}\label{tm:solidity}
Let $n\in\{0^-\}\cup\om$ and $M$ be such that either:
\begin{enumerate}[label=(\roman*)]
 \item\label{item:case_n-sound_Dodd-sound} $0\leq n$ and $M$ is an $n$-sound
 $(n,\om_1,\om_1+1)^*$-iterable premouse with $\om<\rho_n^M$,
 and if $M$ is active short and $n=0$ then $M$ is Dodd-absent-sound and $\rho_1^M\leq\kappa^{+M}$ where $\kappa=\crit(F^M)$, or
 \item\label{item:case_non-Dodd-sound} $n=0^-$ and $M$ is a $(0^-,\om_1,\om_1+1)^*$-iterable active short premouse
 with $\rho_1^M=\rho_{\D}^M>\kappa^{+M}$ where $\kappa=\crit(F^M)$.
\end{enumerate}
 Then writing $0^-+1=1$, either:
 \begin{enumerate}[label=--]
 \item $N$ is $(n+1)$-solid and $(n+1)$-universal, or
  \item $N$ is stretched-$(n+1)$-solid and stretched-$(n+1)$-universal.
 \end{enumerate}
\end{tm}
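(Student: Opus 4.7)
The plan is to adapt the phalanx-plus-weak-Dodd-Jensen argument from the proof of Lemma \ref{lem:rho^M_D=0_implies_Dodd-absent-solid_and_universal} to the $(n+1)$st fine-structural level. I would first reduce to the countable case and fix an $(n,\om_1,\om_1+1)^*$-strategy $\Sigma$ (respectively $(0^-,\om_1,\om_1+1)^*$-strategy in case \ref{item:case_non-Dodd-sound}) for $M$ with the weak Dodd-Jensen property with respect to an enumeration $\vec{x}$ of the universe of $M$ whose initial segment lists $p_{n+1}^M$ (and $p_{\D}^M$ in case \ref{item:case_non-Dodd-sound}).

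Next, for each $i\leq\lh(p_{n+1}^M)$ I would consider $W=W_{n+1,i}^M$ with uncollapse $\pi:W\to M$. Writing $\rho=p_{n+1}^M(i)$ when $i<\lh(p_{n+1}^M)$, and $\rho=\rho_{n+1}^M$ when $i=\lh(p_{n+1}^M)$ (the case governing universality), we have $\pi\rest\rho=\id$ and $\crit(\pi)=\rho$. I would then form a phalanx $\ph$ on $W$ and $M$ with exchange ordinal $\rho$, splitting into the three cases used in Lemma \ref{lem:rho^M_D=0_implies_Dodd-absent-solid_and_universal} -- $\rho$ an $M$-cardinal, $\rho=\gamma^{+W}<\gamma^{+M}$ with $M|\rho$ active, or $M|\rho$ passive. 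Iterability of $\ph$ would be established by lifting trees on $\ph$ to trees on $M$ via $\Sigma$ using the anomalous copying construction from that proof (including Shift Lemmas \ref{lem:shift_lemma}--\ref{lem:shift_lemma_iii} and the device of replacing a long extender used at the bottom by its short-part composition when lifting through $F^M$).

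I would then compare $\ph$ versus $M$ to produce trees $(\Uu,\Tt)$; termination is by the argument of Theorem \ref{tm:comparison}. Weak Dodd-Jensen forces $b^\Uu$ to lie above $W$ and to be non-dropping, and closeness (Lemma \ref{lem:closeness}), together with the preservation lemmas \ref{lem:param_proj_pres} and \ref{lem:long_proto_Ult} and the dichotomy of Lemma \ref{lem:dropped_iterated_core_embedding}, gives that either (a) $M^\Uu_\infty\pins M^\Tt_\infty$, in which case the $(n+1)$-solidity witness at $i$, or the universality of $W$ when $i=\lh(p_{n+1}^M)$, is recovered from an initial segment of $M$, or (b) $M^\Uu_\infty=M^\Tt_\infty$, which must be excluded by a minimality argument along $b^\Tt$ modelled on Claims \ref{clm:if_i_0<n_no_use_of_F^M} and \ref{clm:i_0=n}: the only drop on $b^\Tt$ would have to arise from using $F^{M^\Tt_\alpha}$, and a finite-support internal iteration capturing the corresponding parameter would then contradict the minimality of $p_{n+1}^M$.

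The main obstacle is the stretched case, which I expect arises precisely when the first extender used above $W$ along $b^\Uu$ is long with critical point $\kappa$ satisfying $\rho_{n+1}^W=\kappa^{+W}$. By Lemma \ref{lem:param_proj_pres}\ref{item:long_proj}, such an ultrapower necessarily adjoins an extra element $\eta=\sup j``\kappa^{+W}$ to the standard parameter of the target, so that $(n+1)$-solidity at $\eta$ fails while $(n+1)$-solidity of $p\cup\{\alpha\}$ holds for each $\alpha<\eta$. Tracing this phenomenon back through the comparison, the expected conclusion is that $p_{n+1}^M$ itself takes this form: $\eta:=\min(p_{n+1}^M)$ is the index of a short $M$-total extender $E$ with $\crit(E)=\kappa$, the map $\pi:W\to M$ is an $n$-embedding satisfying all the clauses of the definition of $(n+1)$-stretched-solidity (with $H=W$ and $E$ the short extender derived from $\pi$), and $W||\kappa^{++W}=M||\kappa^{++M}$ (the stretched-$(n+1)$-universality), which should follow by the same long-extender and internal-iteration arguments used to derive Dodd-absent universality in Claim \ref{clm:pow(eta)^U_sub_M} and the concluding claims of Lemma \ref{lem:rho^M_D=0_implies_Dodd-absent-solid_and_universal}. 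The main delicate point is confirming that the stretched alternative and the standard alternative are mutually exclusive and jointly exhaustive, i.e.\ that the dichotomy in the conclusion genuinely corresponds to whether the first critical point above $W$ equals $\rho_{n+1}^W$ with the first extender long.
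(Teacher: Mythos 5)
Your overall strategy -- reduce to countable $M$, fix a weak-Dodd-Jensen strategy, take a bad hull $W=W_{n+1,i}^M$, build a phalanx $\ph$ along the lines of Lemma \ref{lem:rho^M_D=0_implies_Dodd-absent-solid_and_universal}, and compare -- is the right one, and your intuition that the stretched alternative is generated by a long extender used at the bottom of $b^\Uu$ with $\rho_{n+1}^W=\kappa^{+W}$ is correct. However, there are several genuine gaps.

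First, the dichotomy you describe is mis-stated and, as written, inconsistent with your own later paragraph. In the paper's proof, case $M^\Uu_\infty\pins M^\Tt_\infty$ is ruled out (it forces $W\in M$, contradicting the choice of $i_0$), so $M^\Uu_\infty=M^\Tt_\infty$ \emph{is} what happens, and this is precisely the regime in which both the standard and the stretched alternatives are extracted. Your proposal to exclude $M^\Uu_\infty=M^\Tt_\infty$ by a finite-support internal iteration argument modelled on Claims \ref{clm:if_i_0<n_no_use_of_F^M} and \ref{clm:i_0=n} is a mistake: that argument is specific to Dodd-solidity, where the relevant extender $F^M$ can be captured internally. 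Here the point is not to exclude the case but to show that $b^\Tt$ does not drop, and the paper does this by a simpler definability/core argument: if $b^\Tt$ dropped, $\rho_{n+1}(M^\Uu_\infty)\leq\eta$ would let you compute $W$ from a proper segment of $M$. If you really exclude $M^\Uu_\infty=M^\Tt_\infty$, there is nothing left from which the stretched case could arise, so your two paragraphs contradict each other.

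Second, the stretched case is badly under-treated, and one clause is outright wrong. You say ``the map $\pi:W\to M$ is an $n$-embedding satisfying all the clauses of the definition of $(n+1)$-stretched-solidity (with $H=W$ and $E$ the short extender derived from $\pi$).'' But the witness $H$ in the definition of stretched-solidity is $\cHull_{n+1}^M(\rg(E)\cup\{\pvec_n^M,p\})$, a \emph{smaller} hull than $W=\cHull_{n+1}^M(\eta\cup\{\pvec_n^M,p\})$, and the map in the definition has critical point $\kappa=\crit(E)<\eta$, not $\eta$. The paper has to introduce a separate structure $C$ (the collapse of $H^W=H^M$) and check all the clauses for $C$; this is the content of Subclaim \ref{sclm:a_subclaim_defining_some_C} and the subclaim following it, which analyse $i^\Tt``X$ versus $i^\Uu``X$, the index of the short part of $i^\Uu$, and the agreement $C||\kappa^{++C}=M||\kappa^{++M}$. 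The preceding steps (Subclaims \ref{sclm:E^T_0_is_short}--\ref{sclm:sup_i^U``eta=sup_i^T``eta}: $E^\Tt_0$ is short, ``sub-solidity'' at $(q,\eta)$, $\crit(i^\Uu)<\eta$, and $\sup i^\Uu``\eta=\sup i^\Tt``\eta$) are each nontrivial and are not merely the same as the Dodd-absent universality argument; in particular the identity $\sup i^\Uu``\eta=\sup i^\Tt``\eta$ rests on the $(z_{n+1},\zeta_{n+1})$-preservation machinery from \cite[\S2]{extmax}, which your plan never invokes.

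Third, you omit the preparatory Claims \ref{clm:W_is_Dodd-absent-sound} and \ref{clm:if_eta=gamma^+M}. The first (that $W$, and $\Ult_n(M,F^{M|\eta})$ if relevant, are Dodd-absent-sound when $M$ is active short and $n=0$) is needed before the preservation lemmas and the phalanx degree-bookkeeping can be applied; the second (that if $\eta$ is a successor cardinal of $M$ then necessarily $i_0=\lh(p_{n+1}^M)$ and $\pi\rest\eta^{+W}=\id$) is used to set up the correct phalanx and to rule out spurious subcases. Also, Claims \ref{clm:i^Uu(q-bar)=i^Tt(q)}, \ref{clm:Tt_shift_rho} and \ref{clm:compare_rhos_of_M,W}, which relate the images of $q=p_{n+1}^M\rest i_0$ and compare $\rho_{n+1}^W$ with $\rho_{n+1}^M$ along the two branches, are the backbone of the final analysis and should not be left implicit.
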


\begin{proof}
 Suppose not. We may assume $M$ is countable
and fix an $(n,\om_1+1)$-strategy $\Sigma$
for $M$ with weak Dodd-Jensen with respect to $\vec{x}$, where $\vec{x}\rest\lh(p_{n+1}^M)=p_{n+1}^M$.

Let $i_0$ be the least $i\leq\lh(p_{n+1}^M)$
 such that $W=W_{n+1,i}^M\notin M$.
Let $\eta=p_{n+1}^M(i_0)$, if $i<\lh(p_{n+1}^M)$,
and $\eta=\rho_{n+1}^M$ otherwise.
Let \[\pi=\pi_{n+1,i_0}^M:W\to M.\] So $\eta\leq\crit(\pi)$ and if $i_0<\lh(p_{n+1}^M)$
then $\eta=\crit(\pi)$.

We begin with Claims \ref{clm:W_is_Dodd-absent-sound} and \ref{clm:if_eta=gamma^+M} below, which establish some simplifying facts.

\begin{clmthree}\label{clm:W_is_Dodd-absent-sound}
Suppose $M$ is active short
and $n=0$, so $\rho_1^M\leq\kappa^{+M}$. Then $\rho_1^M,\eta<\kappa$
and $W$ is Dodd-absent-sound. Moreover, if $M|\eta$ is active
then  $\Ult_0(M,F^{M|\eta})$
is Dodd-absent-sound.
\end{clmthree}
Recall here that we are arguing by contradiction overall;
this is needed to conclude that $\rho_1^M,\eta<\kappa$ in the claim.
\begin{proof}
By the hypotheses of the theorem and the claim, $M$ is Dodd-absent-sound
and by Lemma \ref{lem:char_t^M},
 $M$ is $\kappa^{+M}$-sound.
 Since we are proceeding by contradiction, therefore
 $\rho_1^M\leq\kappa$
 and $\eta<\kappa^{+M}$. So we just need to see that $W$ is Dodd-absent-sound.
Also by the characterization of $p_{\mathrm{D}}^M\cut\kappa^{+M}$
 given in  \ref{lem:char_t^M},
  $p_{\mathrm{D}}^M\in\Hull_1^M(p_1^M\cut\kappa^{+M})$.
 
 Suppose that $\rho_1^M=\kappa$.
 Let us first observe that
  $\kappa^{+M}\sub\Hull_1^M(\kappa\cup\{p_1^M\})$.
 Clearly $\kappa\in\Hull_1^M(\emptyset)$,
 and since $\rho_1^M=\kappa$,
 $\Hull_1^M(\kappa\cup\{p_1^M\})\notin M$,
 and therefore this hull
 is cofinal in $\OR^M$, and therefore its intersection with $\kappa^{+M}$ is cofinal in $\kappa^{+M}$, which suffices.
 If $p_1^M\cap\kappa^{+M}=\emptyset$
 then, with the previous paragraph, this completes everything. Suppose instead that $p_1^M\cap\kappa^{+M}=\{\eta\}$.
 Then it suffices to see that $M$ is $1$-solid
 at $(p_1^M\cut\kappa^{+M},\eta)$.
 We have that $\Hull_1^M(\eta\cup\{p_1^M\cut\kappa^{+M}\})$ is bounded in $\OR^M$, since otherwise
 the hull would be unbounded in $\kappa^{+M}$,
 and hence would contain the point $\eta$, which is impossible.
 But then this hull is in $M$,
 a contradiction.
 
 So $\rho_1^M<\kappa$. If $\eta\in p_1^M\cap(\kappa,\kappa^{+M})$
 then $M$ is $1$-solid at $(p_1^M\cut\kappa^{+M},\eta)$, just as above.
 So ${\eta<\kappa}$. Likewise,
 we can assume that the hulls in question are unbounded in $\OR^M$. Also for $1$-universality, the corresponding hull will be unbounded in $\OR^M$, since it is not in $M$ by definition.
 
So either $\eta\in p_1^M\cap\kappa$,
and let $s=p_1^M\cut(\eta+1)$, or $\eta=\rho_1^M$,  and let $s=p_1^M$,
 and let $W=\cHull_1^M(\eta\cup\{s\})$ and $\pi:W\to M$ the uncollapse.
 Then $F^W$ is generated by $\pi^{-1}(p_{\mathrm{D}}^M)$
 (recall $p_{\D}^M\in\Hull_1^M(p_1^M\cut\kappa^{+M})$), so $p_{\mathrm{D}}^W\leq\pi^{-1}(p_{\mathrm{D}}^M)$.
 If $p_{\mathrm{D}}^W=\pi^{-1}(p_{\mathrm{D}}^M)$ then $W$ is Dodd-absent-sound
 (since by Theorem \ref{tm:rho^M_D=0_implies_Dodd-absent-solid_and_universal},
  $W$ is Dodd-absent-solid).
  So assume that $p_{\mathrm{D}}^W<\pi^{-1}(p_{\mathrm{D}}^M)$. We claim that the Dodd-absent-core $C$ of $W$ is in $M$.
  For let $\sigma:C\to W$ be the uncollapse map. Let $q=p_{\mathrm{D}}^W=\sigma(p_{\mathrm{D}}^C)$
  and  $q'=\pi(q)$, so $q'<p_{\mathrm{D}}^M$,
  and therefore $F^M\rest q'\in M$.
  Let $C'$ be the structure with $C'|\kappa=M|\kappa$ and $F^{C'}$ equivalent to $F^M\rest q'$. So $C'\in M$, so it suffices to compute $C$ from $C'$.
  Let $\sigma':C'\to M$ be the natural map,
  so $x\in\rg(\sigma')$ iff $x=i^M_{F^M}(f)(q')$ for some $f\in M|\kappa^{+M}$. Then either
  \begin{enumerate}
  \item we have
  \begin{enumerate}
   \item $\crit(F^W)\in\rg(\sigma)$ and $\sigma:C\to W$ is cofinal $\Sigma_1$-elementary in the language without $\dot{F}_J$, and
   \item $\kappa\in\rg(\sigma')$ and
   $\sigma':C'\to M$ is cofinal $\Sigma_1$-elementary
   in the language without $\dot{F}_J$,
   \end{enumerate}
   or\item we have
   \begin{enumerate}
   \item $\crit(F^W)\notin\rg(\sigma)$
   and the short extender $E$ derived from $\sigma$ is in $\es^W$, and $E$ is a whole segment of $F^W$,
   and letting $W^*$ be the corresponding protomouse,
   i.e.~$(W^*)^{\passive}=W^\passive$
   and $F^{W^*}\com E=F^W=\sigma\com F^C$,
   then $\sigma:C\to W^*$
   is cofinal $\Sigma_1$-elementary in the language without $\dot{F}_J$,
    and
   \item $\kappa\notin\rg(\sigma')$
   and the short extender $E'$ derived from $\sigma'$ is in $\es^M$, and $E'$ is a whole segment of $F^M$,
   and letting $M^*$ be the corresponding 
   protomouse, i.e,~$(M^*)^{\passive}=M^\passive$ and $F^M=\com E'=F^M=\sigma'\com F^{C'}$,
   then $\sigma':C'\to M^*$
   is cofinal $\Sigma_1$-elementary in the language without $\dot{F}_J$,
   \end{enumerate}
  \end{enumerate}
and moreover, $\rg(\pi\com\sigma)\sub\range(\sigma')$; in fact \[\rg(\sigma)=\Big\{i^W_{F^W}(f)(q)\Bigm|f\in W|\crit(F^W)^{+W}\Big\},\]
so \[\rg(\pi\com\sigma)=\Big\{i^M_{F^M}(f)(q')\Bigm| f\in (M|\kappa^{+M})\cap\rg(\pi)\Big\},\]
and recall \[\rg(\sigma')=\Big\{i^M_{F^M}(f)(q')\Bigm|f\in M|\kappa^{+M}\Big\}.\]
And $\pi,\sigma,\sigma'$ are cofinal with elementarity as indicated above.
It follows that we get a unique map $\bar{\pi}:C\to C'$ such that $\sigma'\com\bar{\pi}=\pi\com\sigma$,
and moreover, $\bar{\pi}$ is $\Sigma_1$-elementary in the language without $\dot{F}_J$.
But $\crit(\bar{\pi})=\eta=\crit(\pi)$
(it seems that maybe $\crit(F^W)=\eta$,
but since $\eta<\kappa\in\rg(\pi)$,
certainly $\crit(\pi)=\eta\leq\crit(F^W)$).
Now by Theorem \ref{tm:Dodd-absent-core_of_M_when_rho_D^M=0},
every $\bfrSigma_1^W$ subset
of $\crit(F^W)$
is $\bfrSigma_1^C$.
Therefore \[ t=\Th_1^W(\eta\cup\pi^{-1}(p_1^M\cut(\eta+1)))\text{ is }\bfSigma_1^C.\]
But then $\bar{\pi}``t$ is $\bfSigma_1^{C'}$,
and therefore $\bar{\pi}``t\in M$,
but then $W\in M$, as desired.

Finally suppose $M|\eta$ is active.
Then since $\eta<\kappa=\crit(F^M)$,
the Dodd-absent-soundness of
$\Ult_0(M,F^{M|\eta})$
follows from that of $M$.
 \end{proof}

\begin{clmthree}\label{clm:if_eta=gamma^+M}
If $\eta=\gamma^{+M}$
 then $\eta=\rho_{n+1}^M$
 and $i_0=\lh(p_{n+1}^M)$,
 and if $\eta\in W$ then 
 $\pi\rest\eta^{+W}=\id$.
 \end{clmthree}
 \begin{proof}
 Suppose $i_0<\lh(p_{n+1}^M)$.
 So $\eta\in p_{n+1}^M$.
 If  $\eta\in W$
  then $\eta=\crit(\pi)$, but then $\pi(\eta)=\gamma^{+M}=\eta$, contradiction. So $\eta=\OR^W$,
  so $i_0=0$;
 that is, $\eta=\max(p_{n+1}^M)$,
 and note that $n=0$
 and $M$ is passive and $\eta$ is the largest cardinal of $M$.
 But then $\Hull_1^M(\eta)=M|\eta\in M$, giving $1$-solidity at $\eta$, a contradiction.
 
 So $i_0=\lh(p_{n+1}^M)$,
 so $\eta=\rho_{n+1}^M$,
 and the rest is clear.
 \end{proof}
 
We now define a phalanx $\ph$, according to the following cases,
analogous to those considered in the main comparison proof for Dodd-absent-solidity, \ref{tm:rho^M_D=0_implies_Dodd-absent-solid_and_universal}.
As usual, the exchange ordinal notation ``${\xi}$''
means ``$\leq\xi$'', and both ``${<\xi}$''
and ``${\leq\xi}$'' refer to the space of extenders.
If $\eta$ is not a limit cardinal of $M$ then let $\gamma$ be the largest $M$-cardinal with $\gamma<\eta$. Then:
\begin{enumerate}[label=Ph\arabic*.,ref=Ph\arabic*]
 \item\label{item:solidity_phalanx_eta_card} If $\eta$ is an $M$-cardinal then
 $\ph=((M,{<\eta}),W)$.
 \item\label{item:solidity_phalanx_eta_active} If $\eta=\gamma^{+W}<\gamma^{+M}$ and $M|\eta$ is active then 
 \[\ph=((M,{<\gamma}),(\Ult_n(M,F^{M|\eta}),\gamma),W).\]
  \item\label{item:solidity_phalanx_eta_passive} If $\eta=\gamma^{+W}<\gamma^{+M}$ and
  $M|\eta$ is passive
 then letting $(R,r)\pins (M,0)$
 be least such that $\eta\leq\OR^R$ and $\rho_{r+1}^R=\gamma<\rho_r^R$, 
 \[ \ph=((M,{<\gamma}),((R,r),\gamma),W).\]
 \end{enumerate}
 
 In the phalanxes, all models are at
\begin{enumerate}[label=--]
 \item degree $n$, in case \ref{item:case_n-sound_Dodd-sound} of the theorem, and
 \item degree $0^-$, in case
 \ref{item:case_non-Dodd-sound},
\end{enumerate}
 except when indicated as a pair $(R,r)$, which denotes the model $R$ at degree $r$ (and here $r\geq 0$).
We only consider $n$- or $0^-$-maximal trees $\Uu$ on $\ph$
with $E^\Uu_0\in\es_+^W$
and $\eta^{+W}<\lh(E^\Uu_0)$. \emph{Iterability} for $\ph$ is meant under these conditions.
As before, we consider trees $\Uu$ with roots in $\{0,-1,-2\}$,
with $M^\Uu_0=W$, etc.
Note that all models which are active short and at degrees $m\geq 0$ in $\ph$ are Dodd-absent-sound;
this follows from Claim \ref{clm:W_is_Dodd-absent-sound}
(that is, the model $R$, if it exists, and $M$, $W$ and $\Ult_n(M,F^{M|\eta})$ if $M$ is active short and $n\geq 0$).

Much like in the proof of Dodd-absent-solidity, before proceeding to the comparison, we establish some basic facts and deal  with some tasks.
Like in  Claim \ref{clm:pow(eta)^U_sub_M} of that proof, we have:
  \begin{clmthree}
   We have:
   \begin{enumerate}
    \item Suppose $\ph$ is defined via case \ref{item:solidity_phalanx_eta_card}. Then
    $W|\eta^{+W}=M||\eta^{+W}$,
    so $\pow(\eta)^W\sub M$.
    \item In case \ref{item:solidity_phalanx_eta_active}
   of the definition of $\ph$, we have
   $W|\eta^{+W}=W|\gamma^{++W}=\Ult_n(M,F^{M|\eta})||\eta^{+W}$,
   so
   $\pow(\gamma)^W\sub M$ and in fact $\pow(\eta)^W\sub\Ult_n(M,F^{M|\eta})$,
    \item In case \ref{item:solidity_phalanx_eta_passive}
    of the definition of $\ph$, we have
    $W|\eta^{+W}=W|\gamma^{++W}=R||\eta^{+W}$, so $\pow(\gamma)^W\sub M$ and in fact $\pow(\eta)^W\sub R$.
   \end{enumerate}
  \end{clmthree}

  We now establish the iterability of $\ph$ and closeness of extenders:
\begin{clmthree}
$\ph$ is $(\om_1+1)$-iterable
(under the conditions mentioned above).
In fact, there is an $(\om_1+1)$-iteration strategy $\Gamma$ for $\ph$ such that trees $\Uu$ on $\ph$ via $\Gamma$ lift to trees $\Uu^*$ on $M$ via $\Sigma$
in a manner similar to that done in the proof of Theorem \ref{tm:rho^M_D=0_implies_Dodd-absent-solid_and_universal}.
\end{clmthree}
\begin{proof}[Proof sketch]
Follow a similar procedure
to that from Theorem \ref{tm:rho^M_D=0_implies_Dodd-absent-solid_and_universal},
although slightly simplified,
as we have $\pi:W\to M$
instead of the map $\pi:U\to U'$
from there, so we can more directly reduce to trees on $M$. Also,
$\pi\rest\eta=\id$
here, which further simplifies things. (In the former proof it was possible that $\eta=\kappa^{+M}$
and $\crit(\pi)=\kappa$.
In the current proof, the analogue
of thar arises when taking
a hull corresponding to ``stretched-solidity''. But we are not presently considering that hull;
instead, we would be 
 presently dealing with $\eta=\min(p_{n+1}^M)$,
and so $\pi\rest\eta=\id$.) We leave the remaining details to the reader.
\end{proof}

\begin{clmthree}[Closeness analysis] Let $\Uu$ on $\ph$ be $n$- or $0^-$-maximal as is relevant, with $\eta^{+W}<\lh(E^\Uu_0)$.
Let $\alpha+1<\lh(\Uu)$. Then
$E^\Uu_\alpha$ is close to $M^{*\Uu}_{\alpha+1}$.
\end{clmthree}
\begin{proof}
 This is similar to the proof of Claim \ref{clm:Dodd-solidity_closeness} of the proof of Theorem \ref{tm:rho^M_D=0_implies_Dodd-absent-solid_and_universal}.
 Since most of that argument works,
 the key new case to consider
 is when $E^\Uu_\alpha$
 is close to $W$
 but $\spc(E^\Uu_\alpha)<\eta$,
 so that $\pred^\Uu(\alpha+1)<0$,
 and there is some $a$ such that $(E^\Uu_\alpha)_a\notin W$.
 Note then that $n=0$ or $n=0^-$
 and $M$ is active.
If $M^{*\Uu}_{\alpha+1}=M$
 (so $E^\Uu_\alpha$ is $M$-total)
 then because $\pi:W\to M$ is $\rSigma_1$-elementary,
 $E^\Uu_\alpha$ is also close to $M$. So suppose we are in case \ref{item:solidity_phalanx_eta_active} or \ref{item:solidity_phalanx_eta_passive}
 and $\pred^\Uu(\alpha+1)=-1$,
 so $\spc(E^\Uu_\alpha)=\gamma$,
 and $M^{*\Uu}_{\alpha+1}=\Ult_n(M,F^{M|\eta})$
 or $M^{*\Uu}_{\alpha+1}=R$ respectively. If $\gamma$
 is a limit cardinal then $\spc(E^\Uu_\alpha)=\crit(E^\Uu_\alpha)=\gamma$ and $\eta=\gamma^{+W}$,
 but then $F^W$ and $F^M$ are short
 and we have already ruled out this case (we have $\pi(\eta)=\gamma^{+M}$
 and $\eta\in p_1^M$
 and $\rg(\pi)$ is bounded in $\OR^M$, since $\rg(\pi)$ is bounded in $\gamma^{+M}$; so $W\in M$, a contradiction).
 But similarly,
 if $\gamma=\mu^{+M}$
 where $\mu$ is an $M$-cardinal
 then $E^\Uu_\alpha$
 is long with $\crit(E^\Uu_\alpha)=\mu$,
 and $\eta=\mu^{++W}<\mu^{++M}$,
 and $\pi(\eta)=\mu^{++M}$,
 and again $\rg(\pi)$ is bounded in $\OR^M$, so $W\in M$, contradiction. 
\end{proof}

Like in the proof of Theorem \ref{tm:rho^M_D=0_implies_Dodd-absent-solid_and_universal}, we have:
\begin{clmthree}
 There is a successful comparison $(\Uu,\Tt)$
 of $(\ph,M)$, via $(\Gamma,\Sigma)$,
 $b^\Uu$ is above $W$, $b^\Uu$ does
  not drop in model or degree,
 and $M^\Uu_\infty\ins M^\Tt_\infty$.
\end{clmthree}

\begin{clmthree}
  $M^\Uu_\infty=M^\Tt_\infty$.
\end{clmthree}
\begin{proof}
Otherwise $M^\Uu_\infty\pins M^\Tt_\infty$.
But then by the preservation of definability from $W$ to $M^\Uu_\infty$,
it follows that $W\in M^\Tt_\infty$,
and therefore, since $W$ is equivalent to a subset of $\eta$,
it follows that $W\in M$, a contradiction.
\end{proof}

\begin{clmthree}
 $b^\Tt$ does not drop in model,
 degree or Dodd-degree.
\end{clmthree}
\begin{proof}
Suppose  otherwise.
Then $b^\Tt$ drops in model, since $n=\deg^\Uu_\infty$ and $M^\Uu_\infty=M^\Tt_\infty$,
so $\deg^\Tt_\infty\geq n$.
 By \S\ref{sec:long_mice},
 the theory \[T=\Th_{\rSigma_{n+1}}^M(\eta\cup\{\pvec_n^M,p_{n+1}^M\rest i_0\}) \]
determining $W$
 is $\bfrSigma_{n+1}^{M^\Uu_\infty}$,
 and so $\rho_{n+1}^{M^\Uu_\infty}\leq\eta$
 (as $T\notin M^\Uu_\infty$).

 Suppose that $M|\eta$ is passive. Then $\eta<\lh(E^\Tt_0)$, and since $\rho_{n+1}^{M^\Uu_\infty}\leq\eta$,
 we have $C=\core_\om(\core_{\mathrm{D}}(M^\Tt_\infty))\pins M$, and $T$ is definable over $C$, so $W\in M$, as desired.
 
 Now suppose that $M|\eta$ is active.
 Then a similar calculation gives that either $C\pins M$ or $C\pins M^\Tt_1|\eta^{+M^\Tt_1}$,
 and so $C\in M$, in either case, and so $W\in M$.
\end{proof}

We now need to analyse the comparison more carefully, and in particular how the key objects are shifted by the iteration maps.
Let $q=p_{n+1}^M\rest i_0$ and $\pi(\bar{q})=q$.

\begin{clmthree}\label{clm:i^Uu(q-bar)=i^Tt(q)}
 $i^\Uu(\bar{q})= i^\Tt(q)$.
\end{clmthree}
\begin{proof}
Suppose $i^\Uu(\bar{q})<i^\Tt(q)$.
 Because $(M,q)$ is $(n+1)$-solid,
 $(M^\Tt_\infty,i^\Tt(q))$ is $(n+1)$-solid,
 so 
 then $t=\Th^{M^\Tt_\infty}_{n+1}(\{i^\Uu(\bar{q}),\pvec_n^{M^\Tt_\infty}\}\cup\sup i^\Uu``\eta)\in M^\Tt_\infty$.
 Clearly then $\crit(i^\Uu_{0\infty})<\eta$,
 so $\eta=\gamma^{+W}<\gamma^{+M}$
 and the first extender $E^\Uu_\alpha$
 used along $b^\Uu$ is long. But then the short part of $i^\Uu_{0\infty}$ is in $\es^{M^\Uu_\infty}$, and hence, working in $M^\Uu_\infty$, $t$ can be pulled back to compute $t'=\Th_{\rSigma_{n+1}}^W(\{\bar{q},\pvec_n^W\}\cup\eta)$, but $\pow(\eta)\cap M^\Uu_\infty\sub\pow(\eta)\cap W$ (in fact we have equality since $E^\Uu_\alpha$ is long),
 so $t'\in W$, a contradiction.
 
 Conversely suppose $i^\Uu(\bar{q})>i^\Tt(q)$.
 Let $\pi_{\infty}:M^\Uu_\infty\to M^{\Uu^*}_\infty$
 be the final copying map.
 Then $\pi_{\infty}\com i^\Uu=i^{\Uu^*}\com\pi$,
 and $\pi(\bar{q})=q$,
 so $i^{\Uu^*}(q)=\pi_\infty(i^\Uu(\bar{q}))>\pi_\infty(i^\Tt(q))$. Considering the choice of our enumeration $\vec{x}$ of $M$, this contradicts weak Dodd-Jensen with respect to $\vec{x}$.
\end{proof}

\begin{clmthree}\label{clm:Tt_shift_rho}
Let $\rho=\rho_{n+1}^M$. Then:
\begin{enumerate}
 \item\label{item:rho_is_lim_card} If $\rho$ is a limit cardinal of $M$
 then $\rho_{n+1}^{M^\Tt_\infty}=\sup i^\Tt``\rho$.
 \item\label{item:rho_is_successor_card} If $\rho=\delta^{+M}$ where $\delta$ is an $M$-cardinal then letting $\gamma\in b^\Tt$
 be least such that either $[0,\gamma]^\Tt=b^\Tt$
 or $\spc(E^\Tt_\theta)=i^\Tt_{0\gamma}(\rho)$
 then $\rho_{n+1}(M^\Tt_\infty)=i^\Tt_{0\gamma}(\rho)=\sup i^\Tt_{0\gamma}``\rho$.
 (But in this case, note that $\rho_{n+1}(M^\Tt_\infty)<\sup i^\Tt_{0\infty}``\rho$.)
\end{enumerate}
\end{clmthree}
\begin{proof}
 For part \ref{item:rho_is_lim_card}
 use the argument in \cite[\S2]{extmax}.
 For part \ref{item:rho_is_successor_card},
 that argument applies to $i^\Tt_{0\gamma}$,
 and then we get preservation of $\rho_{n+1}$ between $M^\Tt_{\gamma}$ and $M^\Tt_\beta$ for all $\beta\in(\gamma,\infty]^\Tt$,
 by \S\ref{sec:long_mice}.
\end{proof}

\begin{clmthree}\label{clm:compare_rhos_of_M,W}
We have:
\begin{enumerate}
 \item\label{item:rho_n+1^W_pres} $\rho_{n+1}^W=\rho_{n+1}(M^\Uu_\infty)\leq\eta$,
 \item\label{item:rho_n+1^M<rho_n+1^W_when_i_0_small} if $i_0<\lh(p_{n+1}^M)$ then $\rho_{n+1}^M<\eta$ and:
 \begin{enumerate}
 \item\label{item:rho_n+1^M<rho_n+1^W_case} if $(\rho_{n+1}^M)^{+M}\leq\eta$ then 
 $\rho_{n+1}^M<\rho_{n+1}^W$,
 \item\label{item:rho_n+1^W>=rho_n+1^M_case}  if $\rho_{n+1}^M<\eta<(\rho_{n+1}^M)^{+M}$
 (so $\eta=(\rho_{n+1}^M)^{+W}$) then $\rho_{n+1}^W\in\{\rho_{n+1}^M,\eta\}$.
 \end{enumerate}
 \item\label{item:rho_n+1^M=rho_n+1^W_when_i_0_max} if $i_0=\lh(p_{n+1}^M)$ then $\rho_{n+1}^M=\rho_{n+1}^W$.
 \end{enumerate}
\end{clmthree}
\begin{proof}
Part \ref{item:rho_n+1^W_pres}: By \S\ref{sec:long_mice} and the closeness of extenders used in $\Uu$.

Part \ref{item:rho_n+1^M<rho_n+1^W_when_i_0_small}:
Suppose $i_0<\lh(p_{n+1}^M)$,
so $\eta\in p_{n+1}^M$.
If $\eta=\rho_{n+1}^M$ then $W\in M$ by the minimality of $p_{n+1}^M$ (since $p_{n+1}^M\cut\{\eta\}<p_{n+1}^M$). Now part \ref{item:rho_n+1^M<rho_n+1^W_case} is by the minimality of $p_{n+1}^M$, and part \ref{item:rho_n+1^W>=rho_n+1^M_case} is clear.

Part \ref{item:rho_n+1^M=rho_n+1^W_when_i_0_max}:
Straightforward.
\end{proof}

\begin{clmthree}
 Suppose $i_0=\lh(p_{n+1}^M)$,
 so $\eta=\rho$ where $\rho=\rho_{n+1}^W=\rho_{n+1}^M$.
 Then $W||\rho^{+W}=M||\rho^{+M}$, so $M$ is $(n+1)$-solid and $(n+1)$-universal.
 \end{clmthree}
 \begin{proof}
We have $\rho\leq\crit(i^\Uu)$
by the definition of $\ph$ and as $\rho$ is an $M$-cardinal. If $\crit(i^\Tt)\geq\rho$
then the claim immediately follows, so suppose $\crit(i^\Tt)<\rho$.
Note that by the earlier claims,
\[ \rho_{n+1}^M=\rho_{n+1}^W=\rho_{n+1}(M^\Uu_\infty)=\rho_{n+1}(M^\Tt_\infty),\]
and therefore $\rho=\kappa^{+M}$ where $\kappa=\crit(E^\Tt_\alpha)$ where $\alpha+1=\min((0,\infty]^\Tt)$, and $E^\Tt_\alpha$ is long. But then because $b^\Tt$ is non-dropping
and $E^\Tt_\alpha$ is long,
we get $\kappa^{++M}=\kappa^{++\exit^\Tt_\alpha}$,
so $M||\kappa^{++M}=M^\Tt_\infty||\kappa^{++M^\Tt_\infty}=W||\kappa^{++M}$,
and so in fact $W||\kappa^{++W}=M||\kappa^{++M}$,
so $W||\rho^{+W}=M||\rho^{+M}$, as desired.\end{proof}

\begin{clmthree}\label{clm:M_is_simulated_solid}
 Suppose $i_0<\max(p_{n+1}^M)$,
 so $\eta\in p_{n+1}^M=\crit(\pi)$.
 Then $M$ is stretched-$(n+1)$-solid
 with $\min(p_{n+1}^M)=\eta$.
\end{clmthree}
\begin{proof}
 Suppose first that $\rho_{n+1}^M<\eta<(\rho_{n+1}^M)^{+M}$. So $\rho_{n+1}^W\in\{\eta,\rho_{n+1}^M\}$ and $\eta=(\rho_{n+1}^M)^{+W}$. Note that by Claim \ref{clm:Tt_shift_rho},
 $\rho_{n+1}^M\leq\spc(E^\Tt_\alpha)$
 where $\alpha+1=\min((0,\infty]^\Tt)$
 (otherwise $\rho_{n+1}^{M^\Tt_\infty}>\eta$).
 So $\rho_{n+1}^M=\rho_{n+1}^{M^\Tt_\infty}=\rho_{n+1}^W$. And since $b^\Tt$ does not drop,
 therefore $M||(\rho_{n+1}^M)^{+M}=M^\Tt_\infty||(\rho_{n+1}^M)^{+M}$, so $\pow(\rho)^M\sub W$,
 contradicting the fact that $\eta=(\rho_{n+1}^M)^{+W}<(\rho_{n+1}^M)^{+M}$.
 
 So $(\rho_{n+1}^M)^{+M}\leq\eta$,
 so by Claim \ref{clm:compare_rhos_of_M,W} etc,
  $\rho_{n+1}^M<\rho_{n+1}^W=\rho_{n+1}(M^\Uu_\infty)=\rho_{n+1}(M^\Tt_\infty)$,
 so  $\spc(E^\Tt_\alpha)<\rho_{n+1}^M$ where
 $\alpha+1=\min((0,\infty]^\Tt)$.
 This gives $\rho_{n+1}(M^\Tt_{\alpha+1})=\sup i^\Tt_{0,\alpha+1}``\rho_{n+1}^M\geq\sigma^{+M^\Tt_{\alpha+1}}$
 where $\sigma=i^\Tt_{0,\alpha+1}(\spc(E^\Tt_\alpha))$.
 
 But $\rho_{n+1}^W\leq\eta\leq\lh(E^\Tt_0)\leq\lh(E^\Tt_\alpha)$, and if $0<\alpha$ then $\lh(E^\Tt_0)<\lh(E^\Tt_\alpha)$. Putting things together, we get
 \begin{enumerate}[label=--]
  \item 
$\alpha=0\text{ 
 and }\rho_{n+1}^W=\eta=\lh(E^\Tt_0)\text{ and }\rho_{n+1}^M=\sigma^{+\exit^\Tt_0}=\sigma^{+M}$
 (where $\sigma=\spc(E^\Tt_0)$),
 and
 \item $\eta\leq\spc(E^\Tt_\beta)
 \text{ for all }\beta+1\in(1,\infty]^\Tt$.
 \end{enumerate}
 Thus, we now retire the variable $\alpha$, replacing it with $0$.
 
 \begin{sclmthree}\label{sclm:E^T_0_is_short}
  $E^\Tt_0$ is short.
 \end{sclmthree}
\begin{proof}
Suppose not. So $\rho_{n+1}^M=\kappa^{++M}$
where $\kappa=\crit(E^\Tt_0)$
and letting $\lambda=\lambda(E^\Tt_0)$,
we have $\gamma=\lambda^{+M}$ is an $M$-cardinal
and $\eta=\gamma^{+W}=\lambda^{++W}$.
Since $\eta\leq\spc(E^\Tt_\beta)$ for all $\beta+1\in(1,\infty]^\Tt$,
if $M^\Tt_\infty\neq M^\Tt_{1}$
then in fact $\eta<\crit(i^\Tt_{1\infty})$,
and by the rules for $\ph$,
also $\eta<\crit(i^\Uu)$. Note that $E^\Tt_0$
is long with no largest generator.

Because $\rho_{n+1}^W=\eta$,
for each $\alpha<\eta$, we have
\[t_\alpha= \Th_{\rSigma_{n+1}}^W(\alpha\cup \bar{q})\in W;\]
recall $\pi(\bar{q})=q=p_{n+1}^M\rest i_0$
and $\pi:W\to M$ is the uncollapse.
But $W|\eta=M||\eta$,
so also $t_\alpha\in M$.
But 
\[ t_\alpha=\Th_{\rSigma_{n+1}}^M(\alpha\cup q).\]
So $i^\Tt_{0\infty}(t_\alpha)$ (need not equal but) can be used to compute
\[ t'_\alpha=\Th_{\rSigma_{n+1}}^{M^\Tt_\infty}(i^\Tt(\alpha)\cup i^\Tt(q)),\]
so $t'_\alpha\in M^\Tt_\infty$.
By Claim \ref{clm:i^Uu(q-bar)=i^Tt(q)},
$i^\Tt(q)=i^\Uu(\bar{q})$,
and with $\alpha<\eta$ large enough, we have $i^\Tt(\alpha)>\eta$ (since already $i^\Tt_{0,1}(\gamma)>\eta$ where $\gamma$ is as above),
and therefore
\[ t^*_\eta=\Th_{\rSigma_{n+1}}^{M^\Uu_\infty}(\eta\cup i^\Uu(\bar{q}))\in M^\Uu_\infty,\]
but since this is coded as a subset of $\eta$, 
therefore
\[ \Th_{\rSigma_{n+1}}^{W}(\eta\cup\bar{q})\in W,\]
impossible!
\end{proof}

So $E^\Tt_0$ is short, so letting $\kappa=\crit(E^\Tt_0)$, we have $\rho_{n+1}^M=\kappa^{+M}$ and letting $\lambda=\lambda(E^\Tt_0)$, we have $\lambda=\gamma$ is an $M$-cardinal and $\eta=\lh(E^\Tt_0)=\lambda^{+W}=\lambda^{+M^\Tt_1}$.  We have:
\begin{enumerate}[label=--]
\item $\eta\leq\spc(E^\Uu_\beta)$ for all $\beta+1\in[0,\infty]^\Uu$ (by the rules of $\ph$), and
 \item 
$\eta\leq\spc(E^\Tt_\beta)$ for all $\beta+1\in(\alpha+1,\infty]^\Tt$ (as $\rho_{n+1}$ has to agree;
it might now be that $E^\Tt_\beta$ is long
with $\spc(E^\Tt_\beta)=\eta$, where $\beta+1=\min((\alpha+1,\infty]^\Tt)$).
\end{enumerate}

\begin{sclmthree}\label{sclm:sub-solid}
$M$ is ``sub-solid''
at $(q,\eta)$; that is, for each $\alpha<\eta$ we have 
 \[ \Th_{\rSigma_{n+1}^M}(\alpha\cup\{q,\pvec_n^M\})\in M.\]\end{sclmthree}
\begin{proof}
As in the proof of Subclaim \ref{sclm:E^T_0_is_short}.
\end{proof}

By the previous subclaim, 
it just remains to see that:
\begin{enumerate}[label=(\roman*)]
 \item\label{item:virtual_sol_i} $\eta=\min(p_{n+1}^M)$; i.e., recalling $\rho_{n+1}^M=\kappa^{+M}$, we have\[\Th_{\rSigma_{n+1}}^M(\kappa^{+M}\cup\{q,\eta,\pvec_n^M\})\notin M,\]
 \item\label{item:virtual_sol_ii} $X=\eta\cap\Hull_{n+1}^M(X\cup\{q,\pvec_n^M\})$ where $X=F^{M|\eta}``\kappa^{+M}$,\item\label{item:virtual_sol_iii}$C||\kappa^{++C}=M||\kappa^{++M}$
 where $C$ is the transitive collapse
 of the hull just mentioned. 
\end{enumerate}
Note that part \ref{item:virtual_sol_i} follows
easily from the conjunction of parts \ref{item:virtual_sol_ii}
and \ref{item:virtual_sol_iii},
since $\eta$ determines $F^{M|\eta}$ and
hence the set $X$, and
so by the latter two parts, 
the theory mentioned in \ref{item:virtual_sol_i}
collapses $\kappa^{++M}$.

\begin{sclmthree}\label{sclm:crit(i^U)<eta}
  $\crit(i^\Uu)<\eta$,
  so $\spc(E^\Uu_{\gamma_0})=\eta$,
  where $\gamma_0+1=\min((0,\infty]^\Uu)$.
\end{sclmthree}
\begin{proof}
Also like in the proof of Subclaim \ref{sclm:E^T_0_is_short}.
\end{proof}

By Subclaim \ref{sclm:crit(i^U)<eta},
the short part $G$ of (the extender derived from) $i^\Uu$ is indexed at $\sup i^\Uu``\eta$.

\begin{sclmthree}\label{sclm:sup_i^U``eta=sup_i^T``eta} $\sup i^\Uu``\eta=\sup i^\Tt``\eta$.
 (Note this is $i^\Tt=i^\Tt_{0\infty}$,
 not $i^\Tt_{1\infty}$.)
\end{sclmthree}
\begin{proof}
Since $(z_{n+1}^W(\bar{q}),\zeta_{n+1}^W(\bar{q}))=(\emptyset,\eta)$ (where the ``$(\bar{q})$''refers to the version of $(z,\zeta)$ relativized to the parameter $\bar{q}$),
by (the proof of) \cite[\S2]{extmax}, we have
\[ (z_{n+1}^{M^\Uu_\infty}(i^\Uu(\bar{q})),\zeta_{n+1}^{M^\Uu_\infty}(i^\Uu(\bar{q})))=(\emptyset,\sup i^\Uu``\eta).\]
And
\[ (z_{n+1}^M(q),\zeta_{n+1}^M(q))=(\emptyset,\eta), \]
as in the proof of Subclaim \ref{sclm:E^T_0_is_short} and since $W\notin M$
(and hence $\Th_{\rSigma_{n+1}}^M(\eta\cup q)\notin M$). Therefore
\[ (z_{n+1}^{M^\Tt_\infty}(i^\Tt(q)),\zeta_{n+1}^{M^\Tt_\infty}(i^\Tt(q)))=(\emptyset,\sup i^\Tt``\eta).\]
So  $\sup i^\Uu``\eta=\sup i^\Tt``\eta$, as desired. 
\end{proof}

\begin{sclmthree}\label{sclm:a_subclaim_defining_some_C} Let $F=F^{M|\eta}$ and $X=F``\kappa^{+M}$.
Let
\[ H^W=\Hull_{n+1}^W(X\cup\{\bar{q},\pvec_n^W\})\]
and
\[ H^M=\Hull_{n+1}^M(X\cup\{q,\pvec_n^M\}) \]
(note these are the uncollapsed hulls). Then:
\begin{enumerate}
\item\label{item:pointwise_images_of_X}
$i^\Uu``X=G``X=i^\Tt``X$,
\item\label{item:pointwise_images_of_hulls} $i^\Uu``H^W=i^\Tt``H^M=\Hull_{n+1}^{M^\Tt_\infty}((G``X)\cup\{i^\Tt(q),\pvec_n^{M^\Tt_\infty}\})$, and
\item\label{item:also_agmt_with_i^Tt_1,infty}
 $X=\eta\cap\Hull_{n+1}^M(X\cup\{q,\pvec_n^M\})$.
 \item\label{item:agmt_thru_kappa^++}Let $C$ be the (common) transitive collapse
 of $H^W,H^M$. Then $C||\kappa^{++C}=M||\kappa^{++M}$.\end{enumerate}
\end{sclmthree}
\begin{proof}
We have
\[ i^\Uu``H^W=\Hull_{n+1}^{M^\Uu_\infty}((G``X)\cup\{i^\Uu(\bar{q}),\pvec_n^{M^\Uu_\infty}
\})\]
and
\[ i^\Tt``H^M=\Hull_{n+1}^{M^\Tt_\infty}((i^\Tt``X)\cup\{i^\Tt(q),\pvec_n^{M^\Tt_\infty}\}) \]
and of course, $M^\Uu_\infty=M^\Tt_\infty$
and $i^\Uu(\bar{q})=i^\Tt(q)$ and $G``X=i^\Uu``X$, so  part \ref{item:pointwise_images_of_hulls} follows from part \ref{item:pointwise_images_of_X}; let us establish  the latter.

Note that $i^\Tt_{01}(\eta)=\sup i^\Tt_{01}``\eta$, and $i^\Tt_{01}(F)$ is indexed in $\es(M^\Tt_1)$ at this ordinal.
Since $E^\Tt_0=F$ and when $j$ is elementary, ``$j\rest\rg(j)=j(j)\rest\rg(j)$'', we have \[i^\Tt_{01}(F)\rest X=i^\Tt_{01}\rest X.\]
(More precisely, if $F(\alpha)=\beta$ where $\alpha<\kappa^{+M}$ then $i^\Tt_{01}(F)(i^\Tt_{01}(\alpha))=i^\Tt_{01}(\beta)$,
but $i^\Tt_{01}(\alpha)=F(\alpha)=\beta$,
so $i^\Tt_{01}(F)(\beta)=i^\Tt_{01}(\beta)$.)

So letting $F'=i^\Tt_{0,\beta_0+1}(F)=i^\Tt_{1,\beta_0+1}(i^\Tt_{01}(F))$, we have \[F'\com i^\Tt_{1,\beta_0+1}\rest X=i^\Tt_{0,\beta_0+1}\rest X.\]
In more detail, if $\beta\in X$
then since $i^\Tt_{01}(F)(\beta)=i^\Tt_{01}(\beta)$, we have \[i^\Tt_{1,\beta_0+1}(i^\Tt_{01}(F))(i^\Tt_{1,\beta_0+1}(\beta))=i^\Tt_{1,\beta_0+1}(i^\Tt_{01}(\beta))=i^\Tt_{0,\beta_0+1}(\beta).\]
But $F'$ is indexed on $\es(M^\Tt_{\beta_0+1})$ at $\eta'=i^\Tt_{1,\beta_0+1}(i^\Tt_{01}(\eta))$, and
letting $\xi=\sup i^\Tt_{1,\beta_0+1}``\eta$,
 the short part $E$ of $E^\Tt_{\beta_0}$ is indexed on $\es(M^\Tt_{\beta_0+1})$ at $\xi$.
We have $\crit(F')<\xi<\crit(F')^{+M^\Tt_{\beta_0+1}}$,
so $\lambda(F')<F'(\xi)<\lh(F')$.
Let $\bar{G}=F'(E)=F^{M^\Tt_{\beta_0+1}|F'(\xi)}$.
Note that $\bar{G}$ is a short extender
with domain $M||\eta$,
and \[i^{M||\eta}_{\bar{G}}=i_{F'}\com i_E\rest(M||\eta).\] So by the previous calculations,
\[ \bar{G}\rest X=i^\Tt_{0,\beta_0+1}\rest X.\]
Also, note that $\lh(E)=\xi=\sup i^\Tt_{1,\beta_0+1}``\eta$
and \[\lh(\bar{G})=F'(\xi)=\sup F'``\xi=\sup i^\Tt_{1,\beta_0+1}\com i^\Tt_{01}``\eta,\]
and $\cof^{M^\Tt_{\beta_0+1}}(\xi)=\cof^{M^\Tt_{\beta_0+1}}(F'(\xi))=\eta$,
since $\crit(\bar{G})=\lambda<\eta=\lambda^{+M^\Tt_{\beta_0+1}}$. So
$i^\Tt_{\beta_0+1,\infty}$ is continuous at $\lh(\bar{G})$. Therefore $\sup i^\Tt``\eta=i^\Tt_{\beta_0+1,\infty}(\lh(\bar{G}))$, so by Subclaim \ref{sclm:sup_i^U``eta=sup_i^T``eta}, $i^\Tt_{\beta_0+1,\infty}(\bar{G})=G$.
But since $\bar{G}\rest X=i^\Tt_{0,\beta_0+1}\rest X$, therefore $G\rest X=i^\Tt\rest X$, as desired.

Part \ref{item:also_agmt_with_i^Tt_1,infty}:
Suppose there is $\beta<\eta$ and a successor ordinal $\alpha<\kappa^{+M}$ such that:
\begin{enumerate}[label=--]
\item $\{\beta\}$ is $\rSigma_{n+1}^M$-definable from the parameters $F(\alpha),q,\pvec_n^M$, and
\item $\beta<\sup F``\alpha$ but $\beta\notin F``\alpha$,
\end{enumerate}
Then writing $(\alpha',\beta',F',q')=i^\Tt_{0,\beta_0+1}(\alpha,\beta,F,q)$, we have
\begin{enumerate}[label=--]
\item $\{\beta'\}$ is $\rSigma_{n+1}^{M^\Tt_{\beta_0+1}}$-definable from the parameters  $F'(\alpha'),q',\pvec_n^{M^\Tt_{\beta_0+1}}$, and
\item $\beta'<\sup F'``\alpha'$ but $\beta'\notin F'``\alpha'$.
\end{enumerate}
Since $\bar{G}=F'\com E$ and $\alpha'=E(i^\Tt_{01}(\alpha))$ and $\alpha$ is a successor ordinal, we have
\begin{enumerate}[label=--]
\item $\{\beta'\}$ is $\rSigma_{n+1}^{M^\Tt_{\beta_0+1}}$-definable from the parameters  $\bar{G}(i^\Tt_{01}(\alpha)),q',\pvec_n^{M^\Tt_{\beta_0+1}}$, and
\item $\beta'<\sup \bar{G}``i^\Tt_{01}(\alpha)$ but $\beta'\notin \bar{G}``i^\Tt_{01}(\alpha)$.
\end{enumerate}
Lifting further with $i^\Tt_{\beta_0+1,\infty}$
and writing $(\beta'',q'')=i^\Tt(\beta,q)=i^\Tt_{\beta_0+1,\infty}(\beta',q')$, therefore
\begin{enumerate}[label=--]
\item $\{\beta''\}$ is $\rSigma_{n+1}^{M^\Tt_\infty}$-definable from the parameters $G(i^\Tt_{01}(\alpha)),q'',\pvec_n^{M^\Tt_\infty}$, and
\item $\beta''<\sup G``i^\Tt_{01}(\alpha)$ but $\beta''\notin G``i^\Tt_{01}(\alpha)$.
\end{enumerate}
But $M^\Tt_\infty=M^\Uu_\infty$ and the parameters used in the latter definition are in $\rg(i^\Uu)$
(since $G\rest\eta=i^\Uu\rest\eta$), 
so $\beta''\in\rg(i^\Uu)$. Therefore, since $\beta''<\sup i^\Uu``i^\Tt_{01}(\alpha)$,
we must have $\beta''\in i^\Uu``i^\Tt_{01}(\alpha)=G``i^\Tt_{01}(\alpha)$,
a contradiction.

Part \ref{item:agmt_thru_kappa^++}:
We will consider $C$ as the transitive collapse
of $H^W$, and show that $C|\kappa^{++C}=M|\kappa^{++M}$.
Since $M^\Tt_1=\Ult_n(M,E^\Tt_0)$
and $E^\Tt_0$ is short with $\crit(E^\Tt_0)=\kappa$
and $\rho_{n+1}^M=\kappa^{+M}$,
note that $M^\Tt_1|\lambda^{++M^\Tt_1}=M^\Tt_1|\eta^{+M^\Tt_1}=\Ult_0(M|\kappa^{++M},E^\Tt_0)$,
and letting $j:M|\kappa^{++M}\to M^\Tt_1|\lambda^{++M^\Tt_1}$ be the resulting ultrapower map, then
$j=i^\Tt_{01}\rest(M|\kappa^{++M})$.
(This is because the $\bfrSigma_n^M$ functions $f:[\kappa]^{<\om}\to\kappa^{+M}$
are all bounded in $\kappa^{+M}$,
and hence $f\in M$,
and $\rho_{n+1}^M<\rho_n^M$,
since $\eta\in p_{n+1}^M\neq\emptyset$,
so $\kappa^{+M}<\rho_n^M$,
which implies that the $\bfrSigma_n^M$ functions $f:[\kappa]^{<\om}\to\kappa^{++M}$ are bounded in $\kappa^{++M}$, and hence also $f\in M$.)
But
\[ M^\Tt_1|\lambda^{++M^\Tt_1}=M^\Tt_\infty|\lambda^{++M^\Tt_\infty}=M^\Uu_\infty|\lambda^{++M^\Uu_\infty}=W|\lambda^{++W}.\]
So $W|\lambda^{++W}=\Ult_0(M|\kappa^{++M},E^\Tt_0)$ and
 $j:M|\kappa^{++M}\to W|\lambda^{++W}$
is the  associated ultrapower map,
and $X=j``\kappa^{+M}$.
We have that $C$ is the transitive collapse of $H^W$; let $\sigma:C\to H^W$ be the uncollapse map.
By parts
\ref{item:pointwise_images_of_hulls}
 and 
\ref{item:also_agmt_with_i^Tt_1,infty},
$H^W\cap\eta=H^M\cap\eta=X$,
so $C|\kappa^{+C}=M|\kappa^{+M}$ and $\sigma\rest(\kappa^{+C}+1)=j\rest(\kappa^{+M}+1)$,
and $\sigma,j$ are continuous at $\kappa^{+M}$.
It easily follows that $\sigma,j$ agree on $\pow(\kappa^{+M})\cap M\cap C$.

Let $\xi=\min(\kappa^{++C},\kappa^{++M})$. Then we claim $C||\xi=M||\xi$. For otherwise,
let $(R,s)\pins (C|\kappa^{++C},0)$ and $(S,s)\pins (M|\kappa^{++M},0)$ be such that $R||\kappa^{++R}=S||\kappa^{++S}$
and \[ \rho_{r+1}^R=\kappa^{+R}=\kappa^{+C}=\kappa^{+M}=\kappa^{+S}=\rho_{s+1}^S\]
but $R\neq S$. Let $U_R=\Ult_r(R,E^\Tt_0)$
and $U_S=\Ult_s(S,E^\Tt_0)$.
Then by the argument of \cite[***]{premouse_inheriting} and internal condensation
for $M$, we get $U_R\pins W|\lambda^{++W}$
and $U_S\pins M^\Tt_1|\lambda^{++M^\Tt_1}$
and $U_R\neq U_S$, but 
 $i^{R,r}_{E^\Tt_0},i^{S,s}_{E^\Tt_0}$
both continuous  at $\kappa^{++R}=\kappa^{++S}$.
This contradicts the fact that $W||\kappa^{++W}=M||\kappa^{++M}$.

So $C||\xi=M||\xi$. If $\kappa^{++M}=\xi<\kappa^{++C}$, then (by the foregoing discussion) $j\sub\sigma$, but because $j$ is cofinal in $\lambda^{++W}$, this is a contradiction.
So suppose $\kappa^{++C}=\xi<\kappa^{++M}$. 
Then $H^W\cap\lambda^{++W}$ is bounded in $\lambda^{++W}$ (since $\sigma\sub j$ and $j``\kappa^{++M}\sub\kappa^{++W}$).
It follows that $H^M\cap\lambda^{++M}$ is bounded in $\lambda^{++M}$ (since $i^\Uu(\lambda)=i^\Tt(\lambda)$, $i^\Tt$ is continuous at $\lambda^{++M}$ and $i^\Tt``H^M=i^\Uu``H^W$). So we can fix $\beta<\lambda^{++M}$ such that
\[ \Hull_{n+1}^M(X\cup\{q,\pvec_n^M\})\cap\lambda^{++M}\sub\beta.\]
Let us assume that $\crit(i^\Tt_{1\infty})=\lambda$, since the other case is a simplification of this one.
It follows that writing $(X',q',\lambda',\beta')=i^\Tt_{0,\beta_0+1}(X,q,\lambda,\beta)$, we have
\[ \Hull_{n+1}^{M^\Tt_{\beta_0+1}}(X'\cup\{q',\pvec_n^{M^\Tt_{\beta_0+1}}\})\cap(\lambda')^{++M^\Tt_{\beta_0+1}}\sub\beta'\]
(and note that $X'=F'``(\kappa')^{+M^\Tt_{\beta_0+1}}$
where $(F',\kappa')=i^\Tt_{0,\beta_0+1}(F,\kappa)$).
Therefore
\[ \Hull_{n+1}^{M^\Tt_{\beta_0+1}}((\bar{G}``\eta)\cup\{q',\pvec_n^{M^\Tt_{\beta_0+1}}\})\cap(\lambda')^{++M^\Tt_{\beta_0+1}}\sub\beta',\]
since $\bar{G}=F'\com E$, with notation as before.
Lifting further with $i^\Tt_{\beta_0+1,\infty}$, therefore
\begin{equation}\label{eqn:Hull_at_M_infty_bounded} \Hull_{n+1}^{M^\Tt_\infty}((G``\eta)\cup\{i^\Tt(q),\pvec_n^{M^\Tt_\infty}\})\cap i^\Tt(\lambda)^{++M^\Tt_\infty}\sub i^\Tt(\beta).\end{equation}
But the hull indicated in line (\ref{eqn:Hull_at_M_infty_bounded}) is exactly $\rg(i^\Uu)$, since $M^\Tt_\infty=M^\Uu_\infty$
and $i^\Tt(q)=i^\Uu(\bar{q})$ and $G``\eta=i^\Uu``\eta$. Also $i^\Uu(\lambda)=i^\Tt(\lambda)$, so
 \[ \rg(i^\Uu)\cap i^\Uu(\lambda)^{++M^\Uu_\infty}\sub i^\Tt(\beta),\]
 and we have $i^\Uu(\lambda)^{+M^\Uu_\infty}<i^\Tt_{0\infty}(\beta)<i^\Uu_{0\infty}(\lambda)^{++M^\Uu_\infty}$.
 This contradicts the fact that $i^\Uu$ is continuous at $\lambda^{++W}$.
\end{proof}

Let $\sigma:C\to M$ be the uncollapse
embedding, where $C$ was defined in Subclaim \ref{sclm:a_subclaim_defining_some_C}.

\begin{sclmthree}
$C$ is $(n+1)$-solid and $(n+1)$-sound,
with $\rho_{n+1}^C=\rho_{n+1}^M=\kappa^{+M}$
$\sigma(\pvec_n^C)=\pvec_n^M$ and $\sigma(p_{n+1}^C)=q=p_{n+1}^M\cut\{\eta\}$.
\end{sclmthree}
\begin{proof}
We have that $C$ is $n$-sound with $\sigma(\pvec_n^C)=\pvec_n^M$ as usual.
We also have $C=\Hull_{n+1}^C(\kappa^{+M}\cup\{\pvec_n^C,\sigma^{-1}(q)\})$ where $q$ is as in the subclaim.

So $\rho_{n+1}^C\leq\kappa^{+M}=\rho_{n+1}^M$. If $\rho_{n+1}^C<\kappa^{+M}$ then we get a contradiction because the same missing set would be $\bfrSigma_{n+1}^M$-definable. So $\rho_{n+1}^C=\kappa^{+M}$.

Likewise, $p_{n+1}^C\leq \sigma^{-1}(q)$. But if $p_{n+1}^C<\sigma^{-1}(q)$
then note that $t=\Th_{\rSigma_{n+1}}^C(\kappa^{+M}\cup\{\pvec_n^C,p_{n+1}^C\})$
is $\rSigma_{n+1}^M(\{\pvec_n^M,\sigma(p_{n+1}^C),\eta\})$,
but $(\sigma(p_{n+1}^C),\eta)<q$, so $t\in M$
by the minimality of $p_{n+1}^M$,
so $t\in C$, contradiction.
So $p_{n+1}^C=\sigma^{-1}(q)$.

It follows that $C$ is either $(n+1)$-solid
or stretched-$(n+1)$-solid
(by the global induction of our solidity proof,
and since $C$ is also $(n,\om_1,\om_1+1)^*$-iterable and $|p_{n+1}^C|<|p_{n+1}^M|$).
So we just need to show that $C$ is not stretched-$(n+1)$-solid, so suppose it is.

Let $\theta=\min(p_{n+1}^C)$.
So $H=F^{C|\theta}$
is a short $C$-total extender with $\crit(H)=\kappa$, etc, and letting
\[ D=\cHull_{n+1}^C((H``\kappa^{+M})\cup\{r,\pvec_n^C\}) \]
and $\tau:D\to C$ be the uncollapse, where
$r=p_{n+1}^C\cut\{\theta\}$,
then $D||\kappa^{++D}=C||\kappa^{++C}=M||\kappa^{++M}$, so $D\notin M$ and
\[ t=\Th_{\rSigma_{n+1}}^D(\kappa^{+M}\cup\{\tau^{-1}(r),\pvec_n^D\})\notin M.\]

We will show that $t\in M$, for a contradiction.
Note that $\sigma(H)=F^{M|\sigma(\theta)}$
is a short $M$-total extender with $\crit(\sigma(H))=\sigma(\kappa)=\lambda$.
Let $F^*=H(F)$, an extender equivalent to ``$H\com F$'', and which is indexed on $\es^M$
with $\sigma(\lambda(H))<\lh(F^*)<\sigma(\theta)=\sigma(\lh(H))$, and since $\theta=\min(p_{n+1}^C)$,
we have $\sigma(\theta)=\min(p_{n+1}^M\cut\{\eta\})$, and therefore $(p_{n+1}^M\cut\{\sigma(\theta),\eta\})\cup\{\lh(F^*)\}<p_{n+1}^M$.

It suffices to see that $t$ is $\rSigma_{n+1}^M(\{(p_{n+1}^M\cut\{\sigma(\theta),\eta\})\cup\{\lh(F^*)\},\pvec_n^M\})$, since then by the minimality of $p_{n+1}^M$, we have $t\in M$.
But given any $\rSigma_{n+1}$ formula $\varphi$
and given $\xi<\kappa^{+M}=\kappa^{+C}=\kappa^{+D}$, we have
\[ D\sats\varphi(\xi,\tau^{-1}(p_{n+1}^C\cut\{\theta\}),\pvec_n^D) \]
iff
\[ C\sats\varphi(H(\xi),p_{n+1}^C\cut\{\theta\},\pvec_n^C\}) \]
iff
\[ M\sats\varphi(\sigma(H(\xi)),p_{n+1}^M\cut\{\sigma(\theta),\eta\},\pvec_n^M),\]
but $\sigma(H(\xi))=\sigma(H)(\sigma(\xi))=\sigma(H)(F(\xi))=F^*(\xi)$, so the above statements are equivalent to
\[ M\sats\varphi(F^*(\xi),p_{n+1}^M\cut\{\sigma(\theta),\eta\},\pvec_n^M), \]
which shows that $t$ is definable in the desired manner.

\end{proof}

This completes the proof of Claim \ref{clm:M_is_simulated_solid}.
\end{proof}

This completes the proof of solidity/stretched-solidity.
\end{proof}

\subsection{Condensation}

In \cite{voellmer},
Voellmer formulates and proves condensation
for plus-one premice (which are of course
closely related to the premice considered here).
We will formulate and prove analogous results here, but will also generalize and strengthen the results somewhat. First, the ``Anomalous Case 4'' defined in \cite[\S6, p.~35]{voellmer}
was not dealt with in \cite{voellmer},
but we do handle the analogue here.
Second, the definition of \emph{premouse}
in \cite{voellmer} demands projectum free spaces, so in particular, for the embeddings $\pi:H\to M$ considered for the condensation results in \cite{voellmer} (written $\sigma:H\to M$ there), $H$ is assumed to have projectum free spaces. Of course, we do not discuss projectum free spaces here at all,
and so this does not feature in our condensation results either. Thirdly,
and probably of most interest,
$\crit(\pi)=\kappa<\kappa^{+H}=\rho_{n+1}^H=\kappa^{+M}$, \emph{as long as the short extender $E$ derived from $\pi$ is in $\es^M$}.
This is a new situation which arises naturally
for premice at the present level.

\begin{tm}\label{tm:first_cond}  Let $n<\om$ and let $M$ be an $(n,\om_1,\om_1+1)^*$-iterable  $n$-sound premouse with $\om<\rho_n^M$.
 Let $W$ be an $(n+1)$-sound premouse. Suppose that if $M$ is active short then $M,W$ are Dodd-absent-sound.
 Let $\pi:W\to M$ be an $n$-lifting $c$-preserving $\pvec_n$-preserving embedding
 such that either:
 \begin{enumerate}[label=(\alph*)]
  \item 
$\eta=\rho_{n+1}^W\leq\crit(\pi)$, or
\item\label{item:cond_moving_below_proj} $\crit(\pi)=\mu<\eta=\rho_{n+1}^W=\mu^{+W}=\mu^{+M}$ and $E\in\es^M$,
where $E$ is the short extender derived from $\pi$.
\end{enumerate}
 Then either:
 \begin{enumerate}
  \item $W\pins M$, or
  \item $M|\eta$ is active
  and $W\pins\Ult(M|\eta,F^{M|\eta})$, or
  \item\label{item:clause_3_cond}
 $W=\core_{n+1}(M)$ and $\pi$ is the core map, or
  \item\label{item:clause_4_cond}
 $M$ is stretched-$(n+1)$-solid,
  $\eta=\min(p_{n+1}^M)$, and letting $p=p_{n+1}^M\cut\{\eta\}$,
  \[W=\cHull_{n+1}^M(\eta\cup\{\pvec_n^M,p\}),\]
  or equivalently, $W=\Ult_n(\core_{n+1}(M),F^{M|\eta})$; moreover, $\pi$ is the uncollapse map.
 \end{enumerate}
\end{tm}

We will also prove the following analogue:
\begin{tm}\label{tm:second_cond}Let $M$ be a $(0^-,\om_1,\om_1+1)^*$-iterable active short  premouse.
Let $W$ be a Dodd-absent-sound active short premouse with $\kappa=\crit(F^W)=\crit(F^M)$.
Let $\pi:W\to M$ be $0$-deriving \tu{(}hence $c$-preserving\tu{)} and $\mu=\crit(\pi)$ and suppose either:
\begin{enumerate}[label=--]
 \item 
$\kappa^{+W}=\kappa^{+M}<\mu$ and $\rho^W_{\mathrm{D}}\leq\mu$, or
\item $\kappa^{+W}=\kappa^{+M}<\mu<\mu^{+W}=\mu^{+M}=\rho^W_{\mathrm{D}}$ and the short extender $E$ derived from $\pi$ is in $\es^M$, or
\item $\mu=\kappa<\kappa^{+W}=\kappa^{+M}$ and $\rho_{\mathrm{D}}^W=0$ and the short extender $E$ derived from $\pi$ is in $\es^M$.
\end{enumerate}
Let $\eta=\max(\kappa^{+M},\rho_{\mathrm{D}}^W)$. Then either:
\begin{enumerate}[label=\arabic*'.,ref=\arabic*']
 \item $W\pins M$, or
 \item $M|\eta$ is active and $W\pins\Ult(M|\eta,F^{M|\eta})$, 
 \item\label{item:clause_3'}  $\rho_{\D}^W=0$, $W=\core_{\mathrm{D}}(M)$ and $\pi$ is the Dodd-absent-core map, 
 \item\label{item:clause_4'} $\rho_{\D}^W>0$, 
 $W=\core_1(M)$ is $1$-sound and $\pi$ is the core embedding, or
 \item\label{item:clause_5'} $\rho_{\D}^W>0$, 
 $M$ is stretched-$1$-solid, $\eta=\min(p_1^M)$,
 and letting $p=p_1^M\cut\{\eta\}$,
 \[ W=\cHull_1^M(\eta\cup\{p\}),\]
 or equivalently, $W=\Ult_0(\core_1(M),F^{M|\eta})$; moreover, $\pi$ is the uncollapse map.
\end{enumerate}
\end{tm}
\begin{rem}
 Note that  in Theorem \ref{tm:second_cond}, there is no analogue of clause \ref{item:clause_4_cond} of Theorem \ref{tm:first_cond}; this is because the analogue would be a proper protomouse, but $W$ is assumed to be a premouse, so this option is ruled out.
 But maybe one could formulate a more general version in which $W$ is allowed to be a proper protomouse, and this would become a valid option.
\end{rem}

\begin{proof}[Proof of Theorems \ref{tm:first_cond} and \ref{tm:second_cond}]
Let us first make some remarks about Theorem  \ref{tm:first_cond} in case $n=0$ and $W,M$ are active short.
If $\kappa^{+W}<\rho_1^W$ then
Theorem \ref{tm:second_cond}
suffices to give the desired conclusions. (For since $W$ is Dodd-absent-sound and $1$-sound,
$\rho_1^W=\rho_{\D}^W$.
But if $\rho_1^W\leq\crit(\pi)$
then $\pi\rest\kappa^{++W}=\id$,
so $\kappa^{+W}=\kappa^{+M}$
and $\crit(F^M)=\kappa$,
so Theorem \ref{tm:second_cond} applies.
If instead $\crit(\pi)=\mu<\rho_1^W=\eta=\mu^{+W}=\mu^{+M}$,
then again $\kappa^{+M}=\kappa^{+W}\leq\mu$,
but therefore $\kappa^{+W}<\mu$,
and again Theorem \ref{tm:second_cond} applies. In either case, its conclusions suffice.) So suppose $\rho_1^W\leq\kappa^{+W}$,
so $\rho_{\D}^W=0$.
We have $\pi(\kappa)=\crit(F^W)$,
since $\pi$ is $0$-lifting. If also $\rho_1^M\leq\pi(\kappa)^{+M}$,
then we can work with $0$-maximal trees in the comparison arguments to follow (and iteration maps which do not drop in any way are $0$-embeddings, and we have weak Dodd-Jensen). Suppose instead $\rho_1^M>\pi(\kappa)^{+M}$. Then
we can find $x\in M$
and replace $M$ with $\bar{M}=\cHull_1^{M}(\pi(\kappa)^{+M}\cup\{x\})$,
in such a manner that we still get $\bar{\pi}:W\to \bar{M}$,
with $\sigma\com\bar{\pi}=\pi$
where $\sigma:\bar{M}\to M$ is the uncollapse map,
and we still have the hypotheses of the theorem with $\bar{M}$ replacing $M$.
Thus, working with $\bar{M}$ instead of $M$, we can  work safely with $0$-maximal trees in the comparison arguments. So we will assume we have made these arrangements in what follows.\footnote{Recall that if we form $0$-maximal trees on $M$ where $\pi(\kappa)^{+M}<\rho_1^M$, we might form ultrapowers ``avoiding the protomouse'' along the main branch, even at degree $0$.
We did not establish weak Dodd-Jensen in this situation.}

  We may assume $M$ is countable,
 and fix an $(n,\om_1+1)$-strategy $\Sigma$ for $M$ with the weak Dodd-Jensen property with respect to an enumeration of $M$ (see the previous paragraph).

 The proof follows a similar structure to the main phalanx comparisons used earlier,
 \ref{tm:rho^M_D=0_implies_Dodd-absent-solid_and_universal}
 and \ref{tm:solidity}.
We define a phalanx $\ph$, according to cases mostly similar
to  the earlier proofs.  As before, if $\eta$ is not a limit cardinal of $M$, then $\gamma$
denotes the largest $M$-cardinal ${<\eta}$.

\begin{enumerate}[label=Ph\arabic*.,ref=Ph\arabic*]
 \item\label{item:condensation_phalanx_eta_card} If $\eta$ is an $M$-cardinal then
 $\ph=((M,{<\eta}),W)$.
 \item\label{item:condensation_phalanx_eta_active} If $\eta=\gamma^{+W}<\gamma^{+M}$ and $M|\eta$ is active then:
\[ \ph=((M,{<\gamma}),((\Ult_n(M,F^{M|\eta}),n'),\gamma),W),\]
where:
\begin{enumerate}[label=--]
\item if $n\geq 0$ and $\Ult_n(M^{M|\eta})$
 is Dodd-absent-sound then $n'=n$, and
 \item otherwise, $n'=0^-$.
 \end{enumerate}
  \item\label{item:condensation_phalanx_eta_passive} If $\eta=\gamma^{+W}<\gamma^{+M}$ and
  $M|\eta$ is passive
 then letting $(R,r)\pins (M,0)$
 be least such that $\eta\leq\OR^R$ and $\rho_{r+1}^R=\gamma<\rho_r^R$, 
 \[ \ph=((M,{<\gamma}),((R,r),\gamma),W).\]
 \end{enumerate}

As usual, we only consider $n$-maximal $\Uu$ on $\ph$
with $E^\Uu_0\in\es_+^W$
and $\eta^{+W}<\lh(E^\Uu_0)$,
 \emph{iterability} for $\ph$ is in this sense, $R$ is at degree $r$,
 $\Ult_n(M,F^{M|\eta})$ at degree $n'$, 
 and all other models of $\ph$ at degree $n$. Note that all active short models $P$ of $\ph$ are Dodd-absent-sound, except
 possibly in the case that $M$ is active short and either:
 \begin{enumerate}[label=--]
  \item $n=0^-$ and either $P=M$ or $P=\Ult_0(M,F^{M|\eta})$, or
  \item $n=0$, $P=\Ult_0(M,F^{M|\eta})$ and $\spc(F^{M|\eta})\geq\max(\kappa^{+M},\rho_{\D}^M)$ where $\kappa=\crit(F^M)$.
 \end{enumerate}
Thus, all models of $\ph$ which are at degree $n\geq 0$ are Dodd-absent-sound. Note that if $n=0^-$ then $W,M$ are at degree $n=0^-$,
irrespective of Dodd-absent-soundness. (Likewise, if $n\geq 0$,
then $W$ is at degree $n$, even though it is $(n+1)$-sound.)

 Using  lifting methods as before
 (in particular using the methods
 of Theorem \ref{tm:rho^M_D=0_implies_Dodd-absent-solid_and_universal}
 in case $\pi\rest\eta\neq\id$)
 we have:
\begin{clmfour}
$\ph$ is $(\om_1+1)$-iterable.
In fact, there is an $(\om_1+1)$-iteration strategy $\Gamma$ for $\ph$ such that trees $\Uu$ on $\ph$ via $\Gamma$ lift to trees $\Uu^*$ on $M$ via $\Sigma$ in a manner
analogous to that in the proofs of Theorem \ref{tm:rho^M_D=0_implies_Dodd-absent-solid_and_universal}
and Theorem \ref{tm:solidity}.
\end{clmfour}

  \begin{clmfour}\label{clm:cond_pow(eta)^U_sub_M}
   We have:
   \begin{enumerate}
    \item In case \ref{item:condensation_phalanx_eta_card} of the definition of $\ph$,
    $W|\eta^{+W}=M||\eta^{+W}$,
    so $\pow(\eta)^W\sub M$.
    \item In case \ref{item:condensation_phalanx_eta_active}
   of the definition of $\ph$, we have
   $W|\eta^{+W}=W|\gamma^{++W}=\Ult_n(M,F^{M|\eta})||\eta^{+W}$,
   so
   $\pow(\gamma)^W\sub M$ and in fact $\pow(\eta)^W\sub\Ult_n(M,F^{M|\eta})$,
    \item In case \ref{item:condensation_phalanx_eta_passive}
    of the definition of $\ph$, we have
    $W|\eta^{+W}=W|\gamma^{++W}=R||\eta^{+W}$, so $\pow(\gamma)^W\sub M$ and in fact $\pow(\eta)^W\sub R$.
   \end{enumerate}
  \end{clmfour}

\begin{proof}
 In case $\gamma=\crit(\pi)<\eta=\gamma^{+M}$,
 this is by the theorem being proved, applied to segments $W'\pins W$ and $M'=\pi(W')\pins M$
 and maps $\pi':W'\to M'$ with $\pi'=\pi\rest W'$, first (i) where $\rho_\om^{W'}=\gamma$, (this gives $W|\eta=M|\eta$), and then (ii)
 where $\rho_\om^{W'}=\eta$ (this gives $W|\eta^{+W}=M||\eta^{+W}$).
 Note that the hypotheses apply for both of these cases, in case (ii) because
 the short extender $E'$ derived from $\pi'$ is just $E'=E\in\es^{M}$,
 and $\pi'(\eta)=\pi(\eta)=\pi(\gamma)^{+M}$,
 and $\pi(\gamma)<\lh(E)<\pi(\gamma)^{+M}$,
 so $E'=E\in\es^{M'}$ also. And considering the projecta involved, the only valid conclusion
 of the theorem is that $W'\pins M'$,
 so $W'\pins M$.
\end{proof}

\begin{clmfour}Let $\Uu$ be an $n$-maximal tree on $\ph$ with $\lh(E^\Uu_0)>\eta^{+W}$.
Let $\alpha+1<\lh(\Uu)$.
Then either:
\begin{enumerate}
 \item $E^\Uu_\alpha$ is close to $M^{*\Uu}_{\alpha+1}$, or
 \item $n=0$, $W\in M$, $W$ is active,
  $\mathrm{root}^\Uu(\alpha)=0$,
 $[0,\alpha]^\Uu\cap\dropset^\Uu=\emptyset$,  $E^\Uu_\alpha=F(M^\Uu_\alpha)$, $\spc(F^W)=\gamma$, $\pred^\Uu(\alpha+1)=-1$,
 $(E^\Uu_\alpha)_a\in M$ for each $a\in[\nu(E^\Uu_\alpha)]^{<\om}$, and either:
 \begin{enumerate}[label=--]
  \item $\ph$ is defined as in case \ref{item:condensation_phalanx_eta_active}, $M^{*\Uu}_{\alpha+1}=\Ult_n(M,F^{M|\eta})$, or
 \item $\ph$ is defined as in case \ref{item:condensation_phalanx_eta_passive},
  $M^{*\Uu}_{\alpha+1}=R$,
 $\rho_{r+1}^{M^\Uu_{\alpha+1}}=\gamma=\rho_{r+1}^R$ and the usual correspondence holds between $p_{r+1}^R$ and $p_{r+1}^{M^\Uu_{\alpha+1}}$; that is, 
 either 
 \begin{enumerate}[label=--]
 \item $p_{r+1}(M^\Uu_{\alpha+1})=i^{*\Uu}_{\alpha+1}(p_{r+1}^R)$
 and $M^\Uu_{\alpha+1}$ is $(r+1)$-solid, 
 or
 \item $E^\Uu_\alpha$ is long
 and $p_{r+1}(M^\Uu_{\alpha+1})=i^{*\Uu}_{\alpha+1}(p_{r+1}^R)\cup\{\xi\}$
 where $\xi$ is the index of the short part of $E^\Uu_\alpha$,
 and $M^\Uu_{\alpha+1}$
 is stretched-$(r+1)$-solid.
\end{enumerate}
\end{enumerate}
\end{enumerate}
\end{clmfour}
\begin{proof}
 \begin{casesix}
  $\ph$ is defined as in case \ref{item:condensation_phalanx_eta_card} ($\eta$ is an $M$-cardinal).
  
  The main situation
  which differs from the usual proof of closeness is in the case that $\eta=\mu^{+M}$, $\pi\rest\eta\neq\id$,
  $\mathrm{root}^\Uu(\alpha)=0$
  and $E^\Uu_\alpha$ is short with $\crit(E^\Uu_\alpha)=\mu$,
  so $\pred^\Uu(\alpha+1)=-1$
  and $M^{*\Uu}_{\alpha+1}=M$.
  As usual, $E^\Uu_\alpha$ is close to $W$.
  We then use the $\rSigma_1$-elementarity of $\pi:W\to M$,
  along with the fact that the short extender $E_\pi$ derived from $\pi$ is in $M$ (in fact in $\es^M$), to deduce that $E^\Uu_\alpha$ is close to $M$.
 \end{casesix}

  \begin{casesix}
  $\ph$ is defined as in case \ref{item:condensation_phalanx_eta_passive} ($\eta$ is a non-$M$-cardinal but $M|\eta$ is passive).
  
 We claim that for all $\alpha+1<\lh(\Uu)$, if $E^\Uu_\alpha$ is not close to $M^{*\Uu}_{\alpha+1}$
 then:
 \begin{enumerate}[label=(\roman*)]
 \item\label{item:non-closeness_basic_props} $n=0$, $W,M$ are active, $\mathrm{root}^\Uu(\alpha)=0$,
 $[0,\alpha]^\Uu\cap\dropset^{\Uu}=\emptyset$, $E^\Uu_\alpha=F(M^\Uu_\alpha)$, $\spc(E^\Uu_\alpha)=\spc(F^W)=\gamma$, $\pred^\Uu(\alpha+1)=-1$, and $M^{*\Uu}_{\alpha+1}=R$,
 \item\label{item:non-closeness_extra_props} $W\in M$,
 $(E^\Uu_\alpha)_a\in M$ for each $a\in[\nu(E^\Uu_\alpha)]^{<\om}$,
 $\rho_{r+1}^{M^\Uu_{\alpha+1}}=\gamma=\rho_{r+1}^R$, and the usual correspondence between $p_{r+1}^R$ and $p_{r+1}^{M^\Uu_{\alpha+1}}$ holds.
 \end{enumerate}
 
 Suppose first that $\alpha$ is least such that $E^\Uu_\alpha$ is not close to $M^{*\Uu}_{\alpha+1}$. Then part \ref{item:non-closeness_basic_props} for $\alpha$ follows by the usual arguments (the minimality is useful here because the usual proof of closeness is by induction).  For part \ref{item:non-closeness_extra_props}, we get $W\in M$ because $\eta=\gamma^{+W}=\dom(F^W)=\dom(E^\Uu_\alpha)<\gamma^{+M}=\dom(F^M)$,
 so $\rg(\pi)$ is bounded in $\dom(F^M)$, so $\rg(\pi)$ is bounded in $\OR^M$ (and $n=0$).
 Since $E^\Uu_\alpha$ is close to $W$ and $W\in M$, each component measure $(E^\Uu_\alpha)_a$ is in $M$. We have $\rho_{r+1}^{M^\Uu_{\alpha+1}}\leq\gamma$
 as usual. Since also $R\in M$
 and $\gamma$ is an $M$-cardinal,
 it follows that $\gamma=\rho_{r+1}(M^\Uu_{\alpha+1})$.
 From here, the usual calculations
 yield the usual correspondence
 between $p_{r+1}^R$ and $p_{r+1}(M^\Uu_{\alpha+1})$.
 
 We now proceed by induction.
 The extra wrinkle to consider is as follows: suppose
 that $\beta+1<\lh(\Uu)$  is such that $E^\Uu_\beta$ is not close to $M^{*\Uu}_{\beta+1}$,
 $\mathrm{root}^\Uu(\beta)=-1$,
 $(-1,\beta]^\Uu\cap\dropset^\Uu=\emptyset$, $E^\Uu_\beta=F(M^\Uu_\beta)$, and $\pred^\Uu(\beta+1)=-2$,
 so $M^{*\Uu}_{\beta+1}=M$.
 The problem here is that we can't just use the usual inductive argument, because letting $\alpha+1=\min(-1,\beta]^\Uu$,
 it could be that $E^\Uu_\alpha$ is not close to $R$. Note that either:
 \begin{enumerate}[label=(\alph*)]
  \item\label{item:gamma_limit_card} $\gamma$ is a limit cardinal of $M$, $F^M,F^W,E^\Uu_\alpha$ are short, $\crit(F^W)=\crit(E^\Uu_\alpha)=\gamma$ and $R$ is active with $\crit(F^R)=\crit(E^\Uu_\beta)<\gamma$, or
  \item\label{item:gamma_succ_card} $\gamma=\mu^{+M}$ where $\mu$ is an $M$-cardinal,
  $F^M,F^W,E^\Uu_\alpha$
  are long, $\spc(F^W)=\spc(E^\Uu_\alpha)=\gamma$, and $R$ is active with $\spc(F^R)=\spc(E^\Uu_\beta)\leq\mu$.
 \end{enumerate}

 Suppose \ref{item:gamma_limit_card} holds. We have $\rho_1(M^\Uu_\beta)<\gamma$, but $\rho_{r+1}^R=\gamma=\rho_{r+1}(M^\Uu_{\alpha+1})=\rho_{r+1}(M^\Uu_\beta)$, a contradiction.
 
 So \ref{item:gamma_succ_card}
 holds, and much as above,
 $F^R$ is short with $\crit(F^R)=\mu$ $\rho_1(M^\Uu_\beta)=\rho_1^R=\gamma=\mu^{+M}$, and $r=0$,
 so $\Ult_0(M^{*\Uu}_{\alpha+1},E^\Uu_\alpha)$ is formed avoiding the protomouse,
 so that indeed $\crit(F(M^\Uu_{\alpha+1}))=\mu$.
 As usual, $E^\Uu_\beta$ is close to $M^\Uu_{\alpha+1}$.
 But since $R\in M$ and $(E^\Uu_\alpha)_a\in M$ for each $a$, it follows that every $\bfrSigma_1(M^\Uu_{\alpha+1})$
 subset of $\gamma$ is in $M$,
 and therefore $E^\Uu_\beta$ is close to $M$, a contradiction.
\end{casesix}

 \begin{casesix}
  $\ph$ is defined as in case \ref{item:condensation_phalanx_eta_active} ($M|\eta$ is active).
  
  This is like the previous case,
  but simpler: as before,
  if  $\alpha$ is least such that $E^\Uu_\alpha$ is not close to $M^{*\Uu}_{\alpha+1}$,
  then we have a situation much as before; in particular,
  $M,W,\exit^\Uu_\alpha$
  are active and $\spc(F^M)=\spc(F^W)=\spc(\exit^\Uu_\alpha)=\gamma$. (Recall here
  that if $E$ is close to $\Ult_n(M,F^{M|\eta})$ then $E$ is close to $M$,
  since $F^{M|\eta}\in M$.) 
  But $M|\eta$
  is active and $\eta=\gamma^{+W}$,
  so $\crit(F)<\gamma$ where $F=F^{M|\eta}$
  so $\spc(F^{\Ult_n(M,F)})=j(\gamma)>\gamma$. So if $(-1,\beta]^\Uu\cap\dropset^\Uu=\emptyset$ and $E^\Uu_\beta=F(M^\Uu_\beta)$
  then $\spc(E^\Uu_\beta)>\gamma$, so $\pred^\Uu(\beta+1)\neq -2$.\qedhere
 \end{casesix}
\end{proof}

\begin{clmfour}
 There is a successful comparison $(\Uu,\Tt)$
 of $(\ph,M)$, via $(\Gamma,\Sigma)$,
 $b^\Uu$ is above $W$, $b^\Uu$ does
  not drop in model, degree
  or Dodd-degree,
 and $M^\Uu_\infty\ins M^\Tt_\infty$.
\end{clmfour}
\begin{proof}
 This is basically as usual;
 note, for example,
 if $n=0^-$ then $W,M$ are both 
 at degree $0^-$ in $\ph$,
 so that if $b^\Uu$ drops in degree or Dodd-degree, then in fact it drops in model. If $n=0$ then $W,M$ are at degree $0$, and if $M^\Uu_\infty=M^\Tt_\infty$ and
 $b^\Uu,b^\Tt$ do not drop in model but one of them drops in Dodd-degree, then so does the other (as $M^\Uu_\infty=M^\Tt_\infty$ is non-Dodd-absent-sound),
 and the Dodd-core embedding
 of $M^\Uu_\infty=M^\Tt_\infty$
 is a common iteration map, a contradiction.
\end{proof}

\begin{clmfour}
 If $M^\Uu_\infty\pins M^\Tt_\infty$
 or $b^\Tt$ drops  in model,
 degree  or Dodd-degree,  then
 the conclusion of condensation holds.
\end{clmfour}
\begin{proof}
Assume $M^\Uu_\infty\pins M^\Tt_\infty$.
Then
$M^\Uu_\infty$ is sound,
and so by Lemma \ref{lem:dropped_iterated_core_embedding}
and because $W$ is $(n+1)$-sound
with $\rho_{n+1}^W=\eta$,
or $W$ is Dodd-absent-sound with $\max(\kappa^{+W},\rho_{\D}^W)=\eta$,
note that $\Uu$ is trivial,
so $W\pins M^\Tt_\infty$.
But since $\rho_{n+1}^W=\eta\leq\lh(E^\Tt_0)$, it follows that either $\Tt$ is trivial and $W\pins M$, or $\Tt$ uses only one extender, whose index is $\eta$, and $W\pins M^\Tt_1$, which suffices.

If instead $M^\Uu_\infty=M^\Tt_\infty$
but $b^\Tt$ drops in model, degree or Dodd-degree,
then since $\deg^\Uu_\infty=n$,
in fact $b^\Tt$ drops in model.
By the material in \S\ref{sec:long_mice}
and since $\rho_{n+1}^{W}\leq\eta\leq\lh(E^\Tt_0)$, we easily get that $W=\core_{n+1}(M^\Tt_\infty)$
and the conclusion of condensation holds.
\end{proof}

 So we may assume that $b^\Tt$ does not drop in model, degree or Dodd-degree, and $M^\Uu_\infty=M^\Tt_\infty$.
 Therefore:
 \begin{enumerate}[label=--]
  \item If $n\geq 0$ then
$\eta=\rho_{n+1}^W=\rho_{n+1}(M^\Uu_\infty)=\rho_{n+1}(M^\Tt_\infty)$. So $\rho_{n+1}^M\leq\eta$.
\item If $n=0^-$ then $\rho_{\D}^W=\rho_{\D}(M^\Uu_\infty)=\rho_{\D}(M^\Tt_\infty)$
and $\kappa=\crit(F^W)=\crit(F(M^\Uu_\infty))=\crit(F(M^\Tt_\infty))$.
So $\rho_{\D}^M\leq\rho_{\D}^W$
(and recall that in this case we assume that $\crit(F^W)=\kappa=\crit(F^M)$).
\end{enumerate}

 \begin{clmfour}
 Suppose $n\geq 0$ and $\rho_{n+1}^M=\eta$. Then $W=\core_{n+1}(M)$
 and $\pi$ is the core embedding.
 That is, clause \ref{item:clause_3_cond}
 of the conclusion of condensation holds.
 \end{clmfour}
 \begin{proof}
 Since $\rho_{n+1}^M=\eta$,
 $\eta$ is an $M$-cardinal.
 
 \begin{sclmfour}\label{sclm:W_notin_M_and_pi_n-embedding} $W\notin M$
 and $\pi$ is an $n$-embedding.
 \end{sclmfour}
\begin{proof}
It suffices to see that $W\notin M$,
because then $\pi$ is unbounded in $\rho_n^M$,
and hence an $n$-embedding. (Here if $\crit(\pi)<\eta$,
we use the fact that $E\in M$ where $E$ is the short part of $\pi$.) So suppose $W\in M$.
Since $\rho_{n+1}(M^\Tt_\infty)=\eta=\rho_{n+1}^M$,
every $\bfrSigma_{n+1}^M$ subset of $\eta$
is $\bfrSigma_{n+1}(M^\Tt_\infty)$,
and since $M^\Tt_\infty=M^\Uu_\infty$,
therefore it is $\bfrSigma_{n+1}^W$.
But since $\rho_{n+1}^M=\eta$,
it follows that $W\notin M$.
\end{proof}
 
 Suppose $\eta\leq\crit(i^\Uu)$.
 Then $M^\Uu_\infty$ is $(n+1)$-solid. Therefore $\crit(i^\Tt)\geq\eta$,
 $M$ is $(n+1)$-solid and $i^\Uu(p_{n+1}^W)=p_{n+1}(M^\Uu_\infty)=p_{n+1}(M^\Tt_\infty)=i^\Tt(p_{n+1}^M)$.  This gives the claim in this case.
 
 Now suppose $\crit(i^\Uu)<\eta$.
 Then by the rules of $\ph$,  $\eta=\gamma^{+M}=\gamma^{+W}$ and $\crit(\pi)=\gamma$ and the short part $E$ of $\pi$ is in $\es^M$. Since $W$ is $(n+1)$-sound and  $\pi$ is an $n$-embedding, letting $z=\pi(p_{n+1}^W)$ and $\zeta=\sup\pi``\rho_{n+1}^W$ (and $\rho_{n+1}^W=\eta$), then $(z_{n+1}^M,\zeta_{n+1}^M)\geq(z,\zeta)$. But because $E\in M$ but $W\notin M$, note then that $(z_{n+1}^M,\zeta_{n+1}^M)=(z,\zeta)$
 (see \S\ref{subsec:notation}).
 Since $\rho_{n+1}^M=\eta<\zeta$,
 therefore $M$ is not $(n+1)$-solid,
 so $M$ is stretched-$(n+1)$-solid
 with $p_{n+1}^M=z\cup\{\zeta\}$.
 Therefore $W=\core_{n+1}(M)$ and $\pi$ is the core map, as desired.
\end{proof}

 \begin{clmfour}\label{clm:rho_n+1^M<eta}
Suppose $n\geq 0$ and $\rho_{n+1}^M<\eta$.
Then clause \ref{item:clause_4_cond}
 of the conclusion of condensation holds.
 \end{clmfour}
\begin{proof}
Like in the proof of (stretched-)solidity,
$M|\eta$ is active and $E^\Tt_0=F^{M|\eta}$
and $1\leq^\Tt\infty$,
and letting $\theta=\crit(E^\Tt_0)$, either $E^\Tt_0$ is short
and $\rho_{n+1}^M=\theta^{+M}$,
or $E^\Tt_0$ is long with no largest generator and $\rho_{n+1}^M=\theta^{++M}$. Moreover,
if $1<^\Tt\infty$ then $\eta\leq\spc(E^\Tt_\beta)$ where $\beta+1=\min((1,\infty]^\Tt)$.
And since $\eta$ is not an $M$-cardinal,
we have $\eta=\crit(\pi)$.

\begin{sclmfour}
 $W\notin M$ and $\pi$ is an $n$-embedding.
\end{sclmfour}
\begin{proof}
Like before, it is enough to see $W\notin M$.
Let $t=\Th_{\rSigma_{n+1}}^M(\rho_{n+1}^M\cup\{\pvec_{n+1}^M\})$.
So $t\notin M$.
But $t$ reduces to $t'=\Th_{\rSigma_{n+1}}^{M^\Tt_1}(\eta\cup\{\pvec_{n+1}^{M^\Tt_1}\})$, via $E^\Tt_0$.
So it suffices to see that $t'$ is $\bfrSigma_{n+1}^W$. But this is like before.
\end{proof}

Now arguing as before, it follows that
$z_{n+1}^M=\pi(p_{n+1}^W)$
and
$\zeta_{n+1}^M=\eta>\rho_{n+1}^M$,
so $M$ is stretched-$(n+1)$-solid
with $\eta=\min(p_{n+1}^M)$.
So the desired conclusion follows.
\end{proof}

 \begin{clmfour}
 Suppose $n=0^-$ and $\rho_{\D}^M=\eta>\kappa^{+M}$, where $\kappa=\crit(F^M)=\crit(F^W)$. Then
 $W$ is $1$-sound,
 $W=\core_1(M)$
 and $\pi$ is the core embedding.
 That is, clause \ref{item:clause_4'}
of the conclusion of condensation holds.
 \end{clmfour}
 \begin{proof}
 Recall that $\eta=\max(\kappa^{+W},\rho_{\D}^W)$.
 So since $\eta=\rho_{\D}^M>\kappa^{+M}=\kappa^{+W}$,
 in fact $\eta=\rho_{\D}^W$,
 and also 
 $\eta$ is an $M$-cardinal.
 By Lemmas \ref{lem:rho_D,rho_1_corresp} and \ref{lem:char_t^M},
 $\rho_1^W=\rho_1^M=\eta$
 and $W$ is $1$-sound.
 
 \begin{sclmfour} $W\notin M$
 and $\pi$ is a $0$-embedding.
 \end{sclmfour}
 \begin{proof}
 We get $W\notin M$ by
 an argument analogous
to that for Subclaim \ref{sclm:W_notin_M_and_pi_n-embedding}.
Also,
 Since $\kappa^{+W}=\kappa^{+M}<\eta$,
 we have $\pi(\kappa)=\kappa$,
 so $\pi$ is $0$-lifting (not just $0$-deriving), and in particular,
 $\pi(F_J^W)=F_J^M$. It follows
 that $\pi$ is a $0$-embedding.
 \end{proof}
 From here on we can argue as before to see that $W=\core_1(M)$ and $\pi$ is the core embedding.
\end{proof}

 \begin{clmfour}
Suppose $n=0^-$ and $\eta=\rho_{\D}^W>\kappa^{+M}$
but $\rho_{\D}^M<\eta$.
Then clause \ref{item:clause_5'}
 of the conclusion of condensation holds.
 \end{clmfour}
\begin{proof}
 Again we have $\rho_1^W=\eta$ and $W$ is $1$-sound. But then the proof is like that of Claim \ref{clm:rho_n+1^M<eta}.
\end{proof}

 \begin{clmfour}
Suppose $n=0^-$ and $\rho_{\D}^W=0$, so $\eta=\kappa^{+W}=\kappa^{+M}$
and
 $\rho_{\D}^M=0$.
Then clause \ref{item:clause_3'}
 of the conclusion of condensation holds.
 \end{clmfour}

 \begin{proof}
 By \S\ref{sec:long_mice},
 $W=\core_{\D}(M^\Uu_\infty)$ and $i^\Uu$ is the Dodd-absent-core embedding.
 Also by Theorem \ref{tm:rho^M_D=0_implies_Dodd-absent-solid_and_universal},
 $M$ is Dodd-absent-solid
 and Dodd-absent-universal,
 and it easily follows
 that $p_{\D}^{M^\Tt_\infty}=i^\Tt(p_{\D}^M)$.
 But then $\core_{\D}(M)=\core_{\D}(M^\Tt_\infty)=W$,
 and $\pi:W\to M$ is the Dodd-absent-core embedding.
\end{proof}

This completes all cases,
finishing the proof of condensation.
\end{proof}

\subsection{The initial segment condition}

The key result of this section is  Theorem \ref{tm:MS-ISC}, which is essentially the translation of the usual
Mitchell-Steel ISC  for short extenders to our context, for Dodd-absent-sound $1$-sound mice
(there are easy counterexamples if one drops these assumptions).
Before this, however, we prove a simple and coarse kind of ISC, also for short extenders, in Theorem \ref{tm:F^M_rest_nu_in_M_or_type_Z}.
\begin{rem}\label{rem:M_X}
 Let $M$ be an active short premouse and $X\sub M$. Let $\kappa=\crit(F^M)$ and
     \[H'_X=\{j(f)(x)\bigm|f\in M|\kappa^{+M}\text{ and }x\in X^{<\om}\}\]
    and let $H_X=(H'_X,\es^M\cap H'_X,F^M\rest H'_X)$. 
    Let $M_X$ be the transitive collapse
    of $H_X$. Let $\pi_X:M_X\to M$ be the uncollapse map. Then $\pi$ is $\Sigma_1$-elementary in the language $\{{\in},\dot{\es},\dot{F}\}$,
    but note that this language excludes the constant symbol $\dot{F}_J$ (recall that this is interpreted as the largest whole initial segment in a premouse, when it exists). So if $F_J^M\in\rg(\pi_X)$
    then $\pi_X$ is $\rSigma_1$-elementary (which includes the constant symbol $\dot{F}_J$, interpreted in $M_X$ as $\pi_X^{-1}(F_J^M)$). \end{rem}
    
    The first fact is a kind of ``coarse'' ISC:
    \begin{tm}\label{tm:F^M_rest_nu_in_M_or_type_Z} Let $M$ be an active short premouse,
    $\kappa=\crit(F^M)$
    and $\kappa<\nu<\nu(F^M)$.
  Then either:
  \begin{enumerate}[label=--]
   \item $F^M\rest\nu\in M$, or
   \item $F_J^M\neq\emptyset$
   and $\nu=\lambda(F_J^M)+1$ 
   and $F^M\rest\nu$ is type Z
   (note $F^M\rest\lambda(F^M_J)\in M$).
  \end{enumerate}
  Therefore letting $U=\Ult_0(M,F^M\rest\nu)$ and $\mu=\card^M(\nu)$,
  we have $\mu^{+U}<\mu^{+M}$.
    \end{tm}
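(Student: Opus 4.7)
The plan is to run a case analysis based on whether $F^M\rest\nu$ is \emph{whole} in the sense that $i^M_{F^M\rest\nu}(\kappa)=\nu$. The main ingredients are the Jensen ISC (built into the definition of premouse, Definition \ref{dfn:pm}) and the amenable coding of $F^M$ into $M$, which allows restrictions of $F^M$ to be computed inside $M$ once a bounded whole fragment is in hand.

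Set $\bar\lambda=i^M_{F^M\rest\nu}(\kappa)$, so $\kappa<\bar\lambda\le\nu\le\lambda(F^M)$. If $\bar\lambda=\nu$, then $F^M\rest\nu$ is whole and its trivial completion lies in $\es^M$ by the Jensen ISC, giving the first alternative of the dichotomy directly. If $\bar\lambda<\nu$, I would first verify that $F^M\rest\bar\lambda$ is itself whole: by minimality of $\bar\lambda$ as the value of $i^M_{F^M\rest\nu}(\kappa)$, any representative $i^M_{F^M\rest\nu}(f)(a)$ below $\bar\lambda$ can be arranged with $a\in[\bar\lambda]^{<\om}$, so $\bar\lambda$ is already reached using generators strictly below $\bar\lambda$. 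Hence $F^M\rest\bar\lambda\in\es^M$ (or $\bar\lambda=\lambda(F^M)$, a boundary case absorbed into the amenable coding of $F^M$). Now $F^M\rest\nu$ can be reconstructed inside $M$ from $F^M\rest\bar\lambda$ together with the finitely many extra generators of $F^M$ lying in $[\bar\lambda,\nu)$, each of which is merely an ordinal below $\nu$ and hence trivially available in $M$. The sole exceptional configuration in which this reconstruction produces a degenerate object is when $F^M$ is type B with largest whole proper fragment $F_J^M$, $\bar\lambda=\lambda(F_J^M)$, and $\nu=\lambda(F_J^M)+1$, so that the only extra ``generator'' is the whole-image $\lambda(F_J^M)$ itself; here $F^M\rest\nu$ is type Z, and one only has $F^M\rest\lambda(F_J^M)=F_J^M\in\es^M$.

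For the concluding inequality, in the first alternative $F^M\rest\nu\in M$ is an $M$-extender of length $\nu$ with $\mu\le\nu<\mu^{+M}$, so a standard internal ultrapower computation inside $M$ gives $\mu^{+U}\le\nu<\mu^{+M}$. In the type Z alternative, $U=\Ult_0(M,F^M\rest\nu)$ is canonically equivalent to $\Ult_0(M,F_J^M)$ (the phantom extra generator $\lambda(F_J^M)$ contributes nothing in the ultrapower), and $\mu^{+U}<\mu^{+M}$ follows since $F_J^M\in\es^M$ has length $\lambda(F_J^M)=\mu$. The main obstacle I anticipate is the careful case analysis to confirm that $F^M\rest\bar\lambda$ really is whole in the non-whole case, and the precise isolation of the type Z exception; beyond these two points, the reconstruction of $F^M\rest\nu$ inside $M$ and the ultrapower computation are routine.
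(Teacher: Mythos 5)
There is a genuine and fatal gap in the proposal: the inequality $\bar\lambda\le\nu$ is backwards. With $\bar\lambda=i^M_{F^M\rest\nu}(\kappa)=\lambda(F^M\rest\nu)$, one has $\nu\le\bar\lambda$ \emph{always} (the ordinals below $\nu$ sit inside the hull generated by $j``M\cup\nu$ and are fixed by the transitive collapse, so $\bar\lambda=\pi(\lambda(F^M))>\nu-1$ whenever $\nu<\lambda(F^M)$; equality $\bar\lambda=\nu$ is precisely wholeness). The extreme case makes this vivid: if $\nu=\kappa+1$, then $F^M\rest\nu$ is the derived normal measure and $\bar\lambda=i^M_{F^M\rest(\kappa+1)}(\kappa)$ is the measure-ultrapower image of $\kappa$, which hugely exceeds $\kappa+1$. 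So your ``Case $\bar\lambda<\nu$'' is vacuous, and your two cases together cover only wholeness; the case $\bar\lambda>\nu$, i.e.\ $F^M\rest\nu$ not whole, is never addressed. But that non-whole case is the entire content of the theorem (it includes, for instance, showing that the derived normal measure lies in $M$, which is precisely the property the author emphasizes is \emph{false} for the Neeman--Steel and Woodin hierarchies). Moreover, even if one reinterprets your Case~2 as ``$F^M\rest\nu$ not whole'', the claim that $F^M\rest\bar\lambda$ is then whole is false in general (take $F^M$ with generators cofinal in $\kappa^{+M}$ and $\nu=\kappa+1$: then $F^M\rest\bar\lambda$ acquires many new generators below $\bar\lambda$ and its $\lambda$ strictly exceeds $\bar\lambda$), and there is no reason for there to be only ``finitely many extra generators'' between $\bar\lambda$ and anything.

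Your argument for the final inequality is also incorrect: in the first alternative you claim ``$\mu^{+U}\le\nu$'', but again with $\nu=\kappa+1$ and $\mu=\kappa$ one has $\kappa^{+U}\gg\kappa+1$. The paper instead gets $\mu^{+U}<\mu^{+M}$ from the fact that when $F^M\rest\nu\in M$, the collapsed hull $M_\nu$ (so $(M_\nu)^{\passive}=U|\lambda^{+U}$) is an element of $M$.

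The paper's proof takes a different and, I think, essentially unavoidable route. It forms the hull $H_\nu=\Hull_1^M(\nu)$ as in Remark~\ref{rem:M_X}, so that if $F_J^M\in H_\nu$ then the uncollapse $\pi_\nu:M_\nu\to M$ is $\rSigma_1$-elementary, and since $\nu<\nu(F^M)$ the hull is proper, forcing $\rho_1^M\cup p_1^M\not\subseteq\nu$. If $\nu<\rho_1^M$, amenability gives $F^M\rest\nu\in M$ directly; otherwise $\max(p_1^M)\ge\nu$ and $1$-\emph{solidity} supplies $\Th_{\rSigma_1}^M(\max(p_1^M))\in M$, from which $F^M\rest\nu$ is recoverable. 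The type-Z exception is isolated by analyzing the other case, $F_J^M\notin H_\nu$, which forces $\card^M(\nu)=\lambda(F_J^M)$ and $\nu=\lambda(F_J^M)+1$. The key ingredients you are missing are the $1$st projectum, the $1$st standard parameter, and $1$-solidity; the Jensen ISC alone cannot reach the non-whole fragments.
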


    \begin{proof}
  Applying Remark \ref{rem:M_X} with $X=\nu$,
    we have $(M_\nu)^{\passive}=U|\lambda^{+U}$ where $\lambda=\lambda(F^M\rest\nu)$
    and $U=\Ult_0(M,F^M\rest\nu)$. Suppose for the moment that $F_J^M\in H_\nu$,
    so $\pi_\nu:M_\nu\to M$ is $\rSigma_1$-elementary.
    Since $\nu<\nu(F^M)$,
    we have $H_\nu=\rg(\pi_\nu)\neq M$.
 Therefore
 $\rho_1^M\cup p_1^M\not\sub\nu$.
    If $\nu<\rho_1^M$ then $F^M\rest\nu\in M$, so suppose $\rho_1^M\leq\nu$.
    So $p_1^M\not\sub\nu$, so letting $\gamma=\max(p_1^M)$, we have $\gamma\geq\nu$. So by $1$-solidity, $\Th_{\rSigma_1}^M(\gamma)\in M$, but then $F^M\rest\nu\in M$, as desired.
    
    Now suppose $F_J^M\notin H_\nu$.
    Then $F_J^M\neq\emptyset$.
    If $\lambda(F_J^M)<\mu=\card^M(\nu)$ then $F_J^M\in M|\mu$, so $F_J^M\in H_\nu$, a contradiction. And if $\nu\leq\lambda(F_J^M)$ then $F^M\rest\nu=F^J_M\rest\nu\in M$.
    So suppose that $\mu=\lambda(F_J^M)<\nu$. Then by induction, we have $F_J^M\in\es^M$,
    and note that $\mu$ is  a generator of $F^M$.
    We have $\lh(F_J^M)=\mu^{+\Ult(M,F_J^M)}$. Note that either $\lh(F_J^M)<\mu^{+\Ult(M,F^M\rest(\mu+1))}$,
    in which case $F_J^M\in \Ult(M,F^M\rest\nu)$, which suffices,
    or $\lh(F_J^M)=\mu^{+\Ult(M,F^M\rest(\mu+1))}$,
    in which case $\lh(F_J^M)$
    is the next generator of $F^M$.
    So if $\nu>\mu+1$ then we have $F^M_J\in H_\nu$, which suffices.
    
    So we may assume $\nu=\mu+1$,
    $F_J^M=F^M\rest\mu$ 
    and $\lh(F_J^M)=\mu^{+\Ult(M,F^M\rest(\mu+1))}$,
    which means that $F^M\rest\nu=F^M\rest(\mu+1)$ is type Z,
    as desired.
    
    Finally, if $F^M\rest\nu\in M$,
    then 
    clearly $\mu^{+U}<\mu^{+M}$,
    and in the other case, $\mu^{+U}=\lh(F^M_J)<\mu^{+M}$.
\end{proof}
We want to establish a variant of Lemma \ref{lem:existence_of_1-self-solid-parameter}:
 \begin{lem}\label{lem:existence_of_1-self-solid-Dodd-absent-solid-parameter}
 Let $M$ be a $1$-sound Dodd-absent-sound active short premouse
 with $\kappa=\crit(F^M)<\rho_1^M$.
 Let $x\in M$.
 Then there is $q\in[\OR^M]^{<\om}$
 such that $(M,\kappa^{+M},q)$ is $1$-self-solid, $x\in\Hull_1^M(\kappa^{+M}\cup\{q\})$, $p_1^M=q\cut\rho_1^M$
 and $\cHull_1^M(\kappa^{+M}\cup\{q\})$
 is Dodd-absent-solid.
\end{lem}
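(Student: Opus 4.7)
My plan is to apply Lemma \ref{lem:existence_of_1-self-solid-parameter} with $\theta=\kappa^{+M}$ but with a seed enriched by the Dodd-absent data of $M$, and then to show that Dodd-absent-solidity of the resulting hull descends automatically via the $\rSigma_1$-elementary uncollapse.

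First I would observe that the hypothesis $\kappa^{+M}<\rho_1^M$ together with Dodd-absent-soundness of $M$ yields, via Lemma \ref{lem:rho_D,rho_1_corresp}, that $\rho_1^M=\rho_{\mathrm{D}}^M>\kappa^{+M}$, with Dodd-absent-solidity witnesses $w_\alpha=F^M\rest(\alpha\cup(p_{\mathrm{D}}^M\cut(\alpha+1)))\in M$ for each $\alpha\in p_{\mathrm{D}}^M$. I would then form the enriched seed
\[x'=\langle x,p_{\mathrm{D}}^M,\rho_{\mathrm{D}}^M,\langle w_\alpha:\alpha\in p_{\mathrm{D}}^M\rangle\rangle\in M\]
and invoke Lemma \ref{lem:existence_of_1-self-solid-parameter} with $\theta=\kappa^{+M}$ and seed $x'$ to produce $q\in[\OR^M]^{<\om}$ such that $(M,\kappa^{+M},q)$ is $1$-self-solid, $p_1^M=q\cut\rho_1^M$, and $x'\in\Hull_1^M(\kappa^{+M}\cup\{q\})$. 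Set $\bar M=\cHull_1^M(\kappa^{+M}\cup\{q\})$ and let $\pi:\bar M\to M$ be the uncollapse; then $\pi$ is $\rSigma_1$-elementary in both directions, $\crit(\pi)\geq\kappa^{+M}$, and $\bar M$ is a $1$-sound active short premouse with $\rho_1^{\bar M}=\kappa^{+\bar M}=\kappa^{+M}$.

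To finish I would set $\bar p=\pi^{-1}(p_{\mathrm{D}}^M)$, $\bar\rho=\pi^{-1}(\rho_{\mathrm{D}}^M)$, and $\bar w_{\bar\alpha}=\pi^{-1}(w_{\pi(\bar\alpha)})$ for $\bar\alpha\in\bar p$; all lie in $\bar M$ by the choice of $x'$. Since the assertion ``$F^N\rest(\rho\cup r)\notin N$'' is $\rPi_1$ in a premouse $N$ with parameters $\rho,r$, bidirectional $\rSigma_1$-elementarity of $\pi$ transports it in both directions: pulling $F^M\rest(\rho_{\mathrm{D}}^M\cup p_{\mathrm{D}}^M)\notin M$ back yields $(\rho_{\mathrm{D}}^{\bar M},p_{\mathrm{D}}^{\bar M})\leq(\bar\rho,\bar p)$, while pushing $F^{\bar M}\rest(\rho_{\mathrm{D}}^{\bar M}\cup p_{\mathrm{D}}^{\bar M})\notin\bar M$ forward and invoking minimality of $(\rho_{\mathrm{D}}^M,p_{\mathrm{D}}^M)$ gives the reverse. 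This pins down $(\rho_{\mathrm{D}}^{\bar M},p_{\mathrm{D}}^{\bar M})=(\bar\rho,\bar p)$, and then the $\bar w_{\bar\alpha}$ serve as the required Dodd-absent-solidity witnesses by the same elementarity in the language $\mathscr L$.

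The hard part will be the third paragraph, which hinges on the bidirectionality of $\rSigma_1$-elementarity for the uncollapse of a $\Sigma_1$ hull applied to the $\rPi_1$ nonmembership statements defining $(\rho_{\mathrm{D}},p_{\mathrm{D}})$. Everything else amounts to Lemma \ref{lem:existence_of_1-self-solid-parameter} plus bookkeeping of parameter inclusions; in particular, no further iteration-theoretic argument should be required, which is a key advantage of reducing to the earlier lemma rather than duplicating its inductive construction.
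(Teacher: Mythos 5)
Your plan — enrich the seed with the Dodd-absent data of $M$ and then rely on bidirectional $\rSigma_1$-elementarity of the uncollapse — does not match what the paper does, and I think there is a genuine gap.

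The crux is the identification $(\rho_\D^{\bar M},p_\D^{\bar M})=(\bar\rho,\bar p)$ in your third paragraph. The collapsed hull $\bar M=\cHull_1^M(\kappa^{+M}\cup\{q\})$ has $\rho_1^{\bar M}\leq\kappa^{+\bar M}$, so by Lemma \ref{lem:rho_D,rho_1_corresp} its Dodd-absent projectum $\rho_\D^{\bar M}$ is $0$; meanwhile $\bar\rho=\pi^{-1}(\rho_\D^M)>\kappa^{+\bar M}>0$. So the identification cannot be right, and your push-forward/pull-back reasoning must break somewhere. What actually happens is that $p_\D^{\bar M}$ acquires contributions from the \emph{new} ordinals $\alpha<\rho_1^M$ that the construction of Lemma \ref{lem:existence_of_1-self-solid-parameter} adds to $q$: this is exactly the paper's assertion that $\bar\rho\cap p_\D^C=\bar\rho\cap q=\bar\rho\cap p_1^C$ (which is nonempty as soon as any points are added). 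Dodd-absent-solidity at these new points is not handed to you by pulling back the witnesses $w_\alpha$ for $p_\D^M$, since those witnesses only address the part of $p_\D^{\bar M}$ above $\bar\rho$.

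The paper's proof therefore does not merely invoke Lemma \ref{lem:existence_of_1-self-solid-parameter} as a black box: it re-runs that construction with an additional constraint, namely that each new point $\alpha$ be chosen large enough that $\{F_J^M\}$ is generated by $(q\cut(\alpha+1))\cup\alpha$. It is this constraint, together with the characterization of the Dodd parameter in Lemma \ref{lem:char_t^M}, that pins down $\bar\rho\cap p_\D^C=\bar\rho\cap p_1^C$ and lets $1$-solidity of the hull be converted into Dodd-absent-solidity below $\bar\rho$. Your proposal omits precisely this step — you cannot apply the earlier lemma untouched and then patch things up by elementarity alone. To repair the argument you would need to open up the recursion of Lemma \ref{lem:existence_of_1-self-solid-parameter} and impose this extra requirement on the choice of $\alpha$, and then argue via Lemma \ref{lem:char_t^M} as the paper does.
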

\begin{proof}
We may assume $\rho_1^M>\kappa^{+M}$.
 Follow the construction in the proof of Lemma 
 \ref{lem:existence_of_1-self-solid-parameter} with $\theta=\kappa^{+M}$,
 and at each step, and when adding a point $\alpha$ to $q$ with $\alpha<\rho_1^M$,
 choose $\alpha$ large enough that $\{F_J^M\}$ is generated by $(q\cut(\alpha+1))\cup\alpha$.
 Letting $C=\cHull_1^M(\kappa^{+M}\cup\{q\})$ and $\pi:C\to M$ the uncollapse,
 and letting $\pi(\bar{\rho})=\rho_1^M$,
 this ensures that $\bar{\rho}\cap p_{\D}^C=\bar{\rho}\cap q=\bar{\rho}\cap p_1^C$, and so by $1$-solidity,
 gives Dodd-absent-solidity below $\bar{\rho}$ (cf.~Lemma \ref{lem:char_t^M}).
\end{proof}

The \emph{Mitchell-Steel ISC}, introduced below,
is essentially the usual ISC from \cite{fsit}, combined with how type Z extenders are shown to behave in \cite{deconstructing}.
\begin{dfn}\label{dfn:MS-ISC}
 Let $M$ be an active short premouse.
 Let $\kappa=\crit(F^M)$.
 We say that $M$ satisfies the
 \emph{Mitchell-Steel ISC} (\emph{MS-ISC})
 iff for all $\nu\in[\kappa^{+M},\nu(F^M))$
 with $\nu=\nu(F^M\rest\nu)$ (that is, $\nu$ is the strict sup of generators of $F^M\rest\nu$), then $F^M\rest\nu\in M$,
 and in fact, letting
  $G$ be the trivial completion of $F^M\rest\nu$, we have $G\in M$ and
  \begin{enumerate}[label=--]
  \item if $F^M\rest\nu$ is non-type-Z
and  $M|\nu$ is passive then $G\in\es^M$,
  \item if $F^M\rest\nu$ is non-type-Z and $M|\nu$ is active then $G\in\es^{\Ult(M|\nu,F^{M|\nu})}$,
  \item if $F^M\rest\nu$ is type Z
  then letting $\nu=\xi+1$
  and $U=\Ult(M,F^M\rest\xi)$,
  there is a $U$-total type 1 short extender $H\in\es^{U}$
  such that $\crit(H)=\xi$ and $F^M\rest(\xi+1)=(H\com(F^M\rest\xi))\rest(\xi+1)$.\qedhere
 \end{enumerate}
\end{dfn}

We cannot expect the MS-ISC to hold for arbitrary short mice $M$, but we will show that if 
$M$ is also Dodd-absent-sound and $1$-sound (and sufficiently iterable), then the MS-ISC holds.

\begin{tm}\label{tm:MS-ISC}
 Let $M$ be a $(0,\om_1,\om_1+1)^*$-iterable
 Dodd-absent-sound $1$-sound
 active short premouse.
 Then $M$ satisfies the MS-ISC.
\end{tm}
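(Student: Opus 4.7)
Let $\nu\in[\kappa^{+M},\nu(F^M))$ with $\nu=\nu(F^M\rest\nu)$. By Theorem \ref{tm:F^M_rest_nu_in_M_or_type_Z} we split into the non-type-$Z$ case ($F^M\rest\nu\in M$) and the type-$Z$ case ($\nu=\xi+1$ where $\xi=\lambda(F_J^M)$). The plan is, in the non-type-$Z$ case, to construct a Dodd-absent-sound active short premouse $H$ with active extender equal to the trivial completion $G$ of $F^M\rest\nu$, together with a $0$-deriving embedding $\pi:H\to M$ with $\crit(\pi)=\nu$, and then apply the second form of the Condensation Theorem to read off which clause pins $G$ on the appropriate sequence. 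The type-$Z$ case will then reduce to the non-type-$Z$ case inside $U=\Ult(M,F_J^M)$ (first noting that $F_J^M\in\es^M$ by Jensen ISC plus $1$-soundness and iterability, so $U\in M$), applied to the short discontinuity extender $H'$ satisfying $G=H'\com F_J^M$ with $\crit(H')=\xi$.

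Concretely, I would construct $H$ as the canonical decompression with $F^H=G$, $H|\kappa^{+H}=M|\kappa^{+M}$, and $H$ containing $M|\nu$ (or $\Ult(M|\nu,F^{M|\nu})$ if $M|\nu$ is active) as a suitable initial segment; the natural factor map $\pi:H\to M$ is the identity on $\nu\cup (M|\kappa^{+M})$ and sends upper elements $i^{N,0}_G(f)(a)\mapsto i^{N,0}_{F^M}(f)(a)$ for $a\in[\nu]^{<\om}$ and the appropriate $N\ins M$. Then $\pi$ is $0$-deriving with $\crit(\pi)=\nu$, $\pi(\kappa)=\kappa$, and $H\in M$ since $G\in M$. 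The key verification is that $H$ is a bona fide Dodd-absent-sound premouse with $\rho_\D^H\leq\nu$: the Jensen ISC for $F^H$ comes from the Jensen ISC for $F^M$ since whole proper fragments of $F^H$ have their generators bounded below $\nu=\crit(\pi)$ and hence map identically to whole proper fragments of $F^M$, which lie in $\es^M\rest\nu=\es^H\rest\nu$; and Dodd-absent-solidity of $H$ below $(p_\D^H,\rho_\D^H)$ is pulled back from Dodd-absent-solidity of $M$ via Lemmas \ref{lem:char_t^M} and \ref{lem:Dodd_param_proj_pres} together with the $1$-soundness and Dodd-absent-soundness of $M$.

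With $(H,\pi)$ set up, hypothesis (a) of the second Condensation Theorem ($\kappa^{+H}=\kappa^{+M}<\nu=\crit(\pi)$ and $\rho_\D^H\leq\nu$) is met, so one of its five clauses holds. Clauses \ref{item:clause_3'}, \ref{item:clause_4'}, and \ref{item:clause_5'} are immediately ruled out because each would identify $F^H=G$ with an extender equivalent to $F^M$, contradicting $\nu<\nu(F^M)$. The remaining two clauses, $H\ins M$ and ``$M|\eta$ active and $H\pins\Ult(M|\eta,F^{M|\eta})$'' with $\eta=\max(\kappa^{+M},\rho_\D^H)$, correspond exactly to the passive and active subcases of MS-ISC once a short ordinal-height count confirms $\eta=\nu$. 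The main obstacle is the verification in the previous paragraph that $H$ is genuinely Dodd-absent-sound with $\rho_\D^H\leq\nu$: a priori some missing fragment of $F^H$ indexed by some $p<p_\D^H$ could force $\rho_\D^H>\nu$ and invalidate hypothesis (a) of condensation, but this is precluded by the controlled generation of $F^M$ from $(p_\D^M,\rho_\D^M)$ afforded by Lemma \ref{lem:char_t^M}, which, after pullback under $\pi$, forces any such missing fragment to already live in $M|\nu\subseteq H$.
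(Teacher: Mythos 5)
Your proposal takes a genuinely different route: rather than running a fresh phalanx comparison of $(\mathfrak{P},M)$ via weak Dodd--Jensen (which is what the paper does), you want to build the candidate segment $H$ directly from the $\nu$-hull and then feed it into the already-proved second form of the Condensation Theorem. This is a natural and appealing refactoring, but as written it has several genuine gaps, the most serious of which is circular in flavor.

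The central gap is the claim that $H$ is a Dodd-absent-sound premouse. You cite Lemmas \ref{lem:char_t^M} and \ref{lem:Dodd_param_proj_pres} for this, but neither does the job: Lemma \ref{lem:Dodd_param_proj_pres} is about preservation of Dodd structure when one \emph{applies} an extender $F^P$ to $M$, not when one \emph{restricts} $F^M$, and Lemma \ref{lem:char_t^M} characterizes $t^M$ but says nothing about whether the trivial completion of a restriction forms a Dodd-absent-solid structure. In fact, $H$ being Dodd-absent-sound is essentially the content of the ``segment condition'' you are trying to prove (clause 2 of Definition \ref{dfn:pm} requires segments of premice to be Dodd-absent-sound), so you cannot hand it to condensation as a hypothesis for free; some independent argument -- e.g.\ another phalanx comparison via Lemma \ref{lem:rho^M_D=0_implies_Dodd-absent-solid_and_universal}, after first establishing $\rho_{\D}^H=0$, which is itself not immediate -- would be needed, and that brings you back close to the paper's work.

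There are also smaller but real problems. The assertion $\crit(\pi)=\nu$ fails whenever $\nu\in\rg(\pi)$, which happens e.g.\ when $\nu=\kappa^{+M}$ or $\nu=\mu^{+M}$ for $\mu<\nu$: then $\nu=j(g)(\mu)$ for $g(\alpha)=\alpha^{+M}$, so $\nu\in H'_\nu$ and $\crit(\pi)>\nu$. This is exactly why the paper's Claim \ref{clm:U_M_agmt} and the three-way split in defining $\mathfrak{P}$ are necessary; your invocation of the second condensation theorem would need the analogous casing, and as things stand the hypotheses of the cases you cite do not align (for instance case 3 of that theorem requires $\crit(\pi)=\kappa$, which never occurs here, and cases 1--2 require $\crit(\pi)>\kappa^{+M}$, which is not immediate when $\nu=\kappa^{+M}$). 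Your dismissal of clauses \ref{item:clause_3'}--\ref{item:clause_5'} ``because each would identify $F^H=G$ with an extender equivalent to $F^M$'' is also not quite right -- $\core_{\D}(M)$'s active extender is the \emph{collapse} of $F^M$, not something equivalent to it, so ruling these clauses out requires comparing $\rho_{\D}^H$ with $\rho_{\D}^M$, another omission. Finally, the type-Z reduction is underdeveloped: the type-Z MS-ISC requires a normal measure on $\xi=\lambda(F_J^M)$ to lie in $\es^U$ for $U=\Ult(M,F^M\rest\xi)$, but this is an extender with critical point $\xi$, not a segment of $F^U$ with critical point $\kappa$, so it is not obtained by applying the non-type-Z case inside $U$.
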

\begin{proof}
We may assume $M$ is countable.

Let $\kappa=\crit(F^M)$ and
  suppose $\nu(F^M)>\kappa^{+M}$.
  Let $\nu\in[\kappa^{+M},\nu(F^M))$
  be such that $\nu(F^M\rest\nu)=\nu$.
  We will show by induction on $\nu$ that the MS-ISC holds at $\nu$.

We will first reduce to the case that $\rho_1^M\leq\kappa^{+M}$, where $\kappa=\crit(F^M)$.
For suppose $\kappa^{+M}<\rho_1^M$.
We have $F=F^M\rest\nu\in M$, by Dodd-absent-soundness.
Let $\gamma$ be the least generator of $F^M$ such that $\gamma\geq\nu$. 
Let $x=\{F,\nu,\gamma,F_J^M\}$.
Using Lemma \ref{lem:existence_of_1-self-solid-Dodd-absent-solid-parameter},
let $q$ be such that $(M,\kappa^{+M},q)$ is $1$-self-solid with $x\in\rg(\pi)$
and $C$ is Dodd-absent-sound,
 where $C=\cHull_1^M(\kappa^{+M}\cup\{q\})$
and $\pi:C\to M$ is the uncollapse.
So $C$ is Dodd-sound and $1$-sound
and $p^C_{\mathrm{D}}=\pi^{-1}(q)$,
and $x\in\rg(\pi)$. We have replicated the hypotheses of the theorem with $C$ replacing $M$, and $\rho_1^C\leq\kappa^{+C}$ where $\kappa=\crit(F^C)$.
And it is enough to see that $C$ satisfies the MS-ISC,
because then note that if $X\in C$ witnesses this at $\pi^{-1}(\nu)$,
then $\pi$ lifts the witness of MS-ISC back up to 
    a witness at $\nu$. (Note here that because $F^M\rest\nu\in\rg(\pi)$,
    $\pi^{-1}(\nu)$ is a strict sup of generators of $F^C$, and also because $\gamma\in\rg(\pi)$
    and $\gamma\geq\nu$,
    $\pi^{-1}(\gamma)$ is a generator of $F^C$ and $\pi^{-1}(\gamma)\geq\pi^{-1}(\nu)$.)
 
  So from now on, we may assume that $\rho_{\mathrm{D}}^M=0$.
  So we can fix a $(0,\om_1+1)$-strategy $\Sigma_M$ for $M$ with weak Dodd-Jensen.

  Let $U=\Ult_0(M,F^M\rest\nu)$
  and let $j:M\to U$ be the ultrapower map.
 Let $\mu=\card^M(\nu)$.
  Let
  \[ \pi:U\to\Ult_0(M,F^M) \]
  be the factor map. So $\pi$ is $\rSigma_1$-elementary and $\nu\leq\crit(\pi)$.
 
 \begin{clmfive}\label{clm:U_M_agmt} We have $\mu^{++U}<\OR^U$ and $\mu^{++U}<\mu^{+M}$ and either:
  \begin{enumerate}
   \item $\mu=\nu=\crit(\pi)$ and $U|\mu^{+U}=M||\mu^{+U}$, or
   \item $\mu\leq\nu\leq\crit(\pi)=\mu^{+U}$
   and $\pi(\mu^{+U})=\mu^{+M}$
   and letting $\xi=\mu^{+U}$, either:
   \begin{enumerate}
   \item $M|\xi$ is passive and $U|\mu^{++U}=M||\mu^{++U}$, or
   \item $M|\xi$ is active and $U|\mu^{++U}=P||\mu^{++U}$ where $P=\Ult(M|\xi,F^{M|\xi})$.
   \end{enumerate}
  \end{enumerate}
  \end{clmfive}
  \begin{proof}
   Because $F=F^M\rest\nu\in M$
   and by (internal) condensation. (The fact that $\mu^{++U}<\OR^U$
  is because $i^M_{F^M\rest\nu}(\kappa)\geq\mu$,
  and $M$ has many cardinals $>\kappa$. The fact that $\mu^{++`U}<\mu^{+M}$ follows from the rest.)
  \end{proof}

  If $\mu=\nu$ is an $M$-cardinal, let $\eta=\nu$,
  and otherwise let $\eta=\nu^{+U}$.
  We define a phalanx $\ph$ according to the following cases:
  \begin{enumerate}[label=Ph\arabic*.,ref=Ph\arabic*]
  \item\label{item:ISC_case_1}
  If $\eta=\mu=\nu$ is an $M$-cardinal then define
  $\ph=((M,<\eta),U)$.
  \item\label{item:ISC_case_2} If $\mu<\nu\leq\eta=\mu^{+U}$ and $M|\eta$ is active, then define
\[ \ph=((M,{<\mu}),((\Ult_0(M,F^{M|\eta}),n'),\mu),U) \]
where $n'=0$ if $\Ult_0(M,F^{M|\eta})$ is Dodd-absent-sound, and $n'=0^-$ otherwise. (Recall that $M$ is Dodd-absent-sound,
so $n'=0$ iff $\spc(F^{M|\eta})\leq\spc(F^M)=\crit(F^M)$.)
    \item\label{item:ISC_case_3} If $\mu<\nu\leq\eta=\mu^{+U}$
and $M|\eta$ is passive, define
  \[ \ph=((M,{<\mu}),((R,r),\mu),U)\]
  where $(R,r)\pins(M,0)$ is least such that $\mu^{+U}\leq\OR^R$ and $\rho_{r+1}^R=\mu$.

 \end{enumerate}

 All models of $\ph$ are at degree $0$, except that $R$ is at degree $r$, and $\Ult_0(M,F^{M|\eta})$
 at $n'$. Note that those at degree $d\geq 0$ are Dodd-absent-sound.
 Note that case \ref{item:ISC_case_1}
 holds if $\nu=\kappa^{+M}$ where $\kappa=\crit(F^M)$; that is, when $F^M\rest\nu$ is the normal measure derived from $F^M$. Generators of $F^M\rest\nu$ can only be moved (along a non-dropping branch above $U$)
 in cases \ref{item:ISC_case_2}
 and \ref{item:ISC_case_3}, and only when the first extender used along the branch is long with critical point $\mu$. We will only consider $0$-maximal trees $\Uu$ on $\ph$ with $\eta^{+U}<\lh(E^\Uu_0)$.
 
 Like in
earlier proofs, we start with some basic facts on $\ph$, its iterability, and closeness.
Like
Claim \ref{clm:pow(eta)^U_sub_M}
in the proof of Dodd-absent-solidity, we have:
  \begin{clmfive}
   We have:
   \begin{enumerate}
    \item In  case \ref{item:ISC_case_1} of the definition of $\ph$, we have
    $U|\eta^{+U}=M||\eta^{+U}$,
    so $\pow(\eta)^U\sub M$.
    \item In case \ref{item:ISC_case_2},
   $U|\eta^{+U}=U|\mu^{++U}=\Ult_0(M,F^{M|\eta})||\eta^{+U}$,
   so
   $\pow(\mu)^U\sub M$ and in fact $\pow(\eta)^U\sub\Ult_0(M,F^{M|\eta})$,
    \item In case \ref{item:ISC_case_3},
    $U|\eta^{+U}=U|\mu^{++U}=R||\eta^{+U}$, so $\pow(\mu)^U\sub M$ and in fact $\pow(\eta)^U\sub R$.
   \end{enumerate}
  \end{clmfive}

\begin{clmfive}
$\ph$ is $(\om_1+1)$-iterable.
In fact, there is an $(\om_1+1)$-iteration strategy $\Gamma$ for $\ph$ such that trees $\Uu$ on $\ph$ via $\Gamma$ lift to trees $\Uu^*$ on $M$ via $\Sigma$
in a manner similar to that in the proof of Theorem \ref{tm:rho^M_D=0_implies_Dodd-absent-solid_and_universal}.
\end{clmfive}

The proof of Claim \ref{clm:Dodd-solidity_closeness} of the proof of Theorem \ref{tm:rho^M_D=0_implies_Dodd-absent-solid_and_universal} gives:
\begin{clmfive} Let $\Uu$ be a $0$-maximal tree on $\ph$, and let $\alpha+1<\lh(\Uu)$. Then $E^\Uu_\alpha$ is close to $M^{*\Uu}_{\alpha+1}$.
  \end{clmfive}
  
  We compare $(\ph,M)$ using $(\Gamma,\Sigma)$. As usual:
  
 \begin{clmfive}\label{clm:compatibility}The comparison terminates with $b^\Uu$ above $U$, $b^\Uu,b^\Tt$  non-dropping in model, degree or Dodd-degree, and $i^\Uu\com j=i^\Tt$.\end{clmfive}

 So $\deg^\Uu_\infty=0=\deg^\Tt_\infty$, and $M^\Uu_\infty=M^\Tt_\infty$ is Dodd-absent-sound.

  \begin{clmfive} If $F^M\rest\nu$ is whole
   then $F^M\rest\nu\in\es^M$.
  \end{clmfive}
\begin{proof}Suppose $F^M\rest\nu$ is whole.
   It follows that $\nu$ is an $M$-cardinal, so $\ph=((M,{<\nu}),U)$. Since  $U|\nu=M|\nu$, we have $\lh(E^\Tt_0)>\nu$, and since $\nu$ is an $M$-cardinal, in fact $\nu\leq\lambda(E^\Tt_0)$. 
   So letting $\alpha+1=\min(b^\Tt)$,
   by Claim \ref{clm:compatibility}, $F^M\rest\nu=E^\Tt_\alpha\rest\nu$.
   If $\nu=\lambda(E^\Tt_\alpha)$ we are done, so suppose $\nu<\lambda(E^\Tt_\alpha)$. Then since $F^M\rest\nu=E^\Tt_\alpha\rest\nu$ is a whole
   segment of $E^\Tt_\alpha$, by the Jensen ISC, $F^M\rest\nu\in M^\Tt_{\alpha+1}$, and therefore, as $b^\Tt$ does not drop, $F^M\rest\nu\in M^\Tt_\infty$. But then $F^M\rest\nu\in U$, a contradiction, completing the proof in this case.
  \end{proof}

    So from now on, we may assume that \[ F^M\rest\nu\text{ is not whole}, \]
    so $j(\kappa)>\nu$
 where $\kappa=\crit(F^M)$.
    We may in fat assume that
    \[ F^M\text{ has no whole proper segment of length }\geq\nu.\]
    (Otherwise, let $F^M\rest\bar{\lambda}$ be such, and replace $M$ with the corresponding structure, noting that this is also iterable.) So if $F^M$ has a whole proper segment then it has a largest (which is $F^M_J$), and $\lambda(F^M_J)\leq\mu$ and $\lambda(F^M_J)<\nu$.

   \begin{clmfive}Either:
    \begin{enumerate}[label=(\alph*)]
     \item $F^M$ has no whole proper segment (so $F_J^M=\emptyset$), or
     \item $F^M_J=F^M\rest\bar{\lambda}$ 
     and $\lh(F^M_J)<\nu$, so $F^M_J\in\es^{U||\nu}$, or
     \item\label{item:type_Z} $\nu=\mu+1$
     where $F^M_J=F^M\rest\mu$
     and $\lh(F^M_J)=\mu^{+\Ult(M,F^M\rest\mu)}=\mu^{+U}$, and $F^M\rest\mu\notin U$.
     \end{enumerate}
     \end{clmfive}
     
 Note that in case \ref{item:type_Z} above,
 $F^M\rest\nu$ is type Z.

 \begin{proof}
   Suppose  $F^M_J\neq\emptyset$.
    If
     $\lh(F^M_J)<\nu$ then $F^M_J\in\es^U$. If instead $\nu\leq\lh(F^M_J)$
     then $\lambda(F^M_J)<\nu\leq\lh(F^M_J)$, and note that then  $\nu=\lambda(F^M_J)+1$,
     since $F^M$ has no generators in the interval $(\lambda(F^M_J),\lh(F^M_J))$. So  $F^M_J\notin U$ iff $\lambda(F^M_J)+1=\nu$ and $F^M\rest\nu$ is type Z.
 \end{proof}

 As usual, we now need to analyse the results of the comparison more carefully.
 Let $\lambda_\infty=i^\Uu(j(\kappa))=i^\Tt(\kappa)$ (and recall we are now in the case that $j(\kappa)>\nu$).
 Let $\alpha_0+1=\min((0,\infty]^\Tt)$
 and
  $\beta_0+1=\min((0,\infty]^\Uu)$.
 The first claim below follows directly from the setup of the exchange ordinals for $\ph$:
 \begin{clmfive}\label{clm:if_gens_moved}
  If $\crit(i^\Uu_{0\infty})<\nu$
  then $E^\Uu_{\beta_0}$ is long,
  and (for some $\mu'$)
  $\eta=(\mu')^{+U}$ and $\crit(E^\Uu_{\beta_0})=\mu'$.
 \end{clmfive}

 The proof of Claim \ref{clm:chain_of_exts_in_branches_DS} of the proof of Theorem \ref{tm:rho^M_D=0_implies_Dodd-absent-solid_and_universal} gives:
\begin{clmfive}\label{clm:chain_of_exts_in_branches}\
\begin{enumerate}
\item For every $\gamma+1\in b^\Uu$,
we have $\lambda(E^\Uu_\gamma)<\lambda_\infty$;
equivalently,
  $\crit(E^\Uu_\gamma)<i^\Uu_{0\delta}(j(\kappa))$ where $\delta=\pred^\Uu(\gamma+1)$.
  
 \item  For every $\gamma+1\in b^\Tt$,
 $E^\Tt_\gamma$ is short and
 $\crit(E^\Tt_\gamma)=i^\Tt_{0\delta}(\kappa)$
 where $\delta=\pred^\Tt(\gamma+1)$.
 Moreover,
  $\lambda(E^\Tt_\gamma)\leq\lambda_\infty$.
 \end{enumerate}
\end{clmfive}

\begin{clmfive}\label{clm:if_no_gens_moved_then_two_options}
 Suppose $\nu\leq\crit(i^\Uu)$.
 Then either:
 \begin{enumerate}
  \item we have:
  \begin{enumerate}
  \item $\alpha_0+1=^\Tt\infty$, so $E^\Tt_{\alpha_0}$ is the only extender used along $b^\Tt$, so $M^\Tt_{\infty}=M^\Tt_{\alpha_0+1}$,
  \item $\nu\leq\nu(E^\Tt_{\alpha_0})\leq\lambda(E^\Tt_{\alpha_0})$,
  \item $\nu<\lambda(E^\Tt_{\alpha_0})$,
   \item $E^\Tt_{\alpha_0}$ is short,
   and \item $E^\Tt_{\alpha_0}\rest\nu=F^M\rest\nu$,
  \end{enumerate}
  or
  \item there is $\alpha_1$ such that:
  \begin{enumerate}
   \item 
 $\alpha_0+1<^\Tt\alpha_1+1=^\Tt\infty$
  and $\pred^\Tt(\alpha_1+1)=\alpha_0+1$,
  so 
  $E^\Tt_{\alpha_0},E^\Tt_{\alpha_1}$ are the
  only two extenders used along $b^\Tt$,
  so $M^\Tt_\infty=M^\Tt_{\alpha_1+1}$,
  \item $\lambda(E^\Tt_{\alpha_0})=\mu<\mu+1=\nu$,
 
  \item $\crit(E^\Tt_{\alpha_1})=\mu$,
  \item $E^\Tt_{\alpha_0},E^\Tt_{\alpha_1}$ are both short, and \item $F^M\rest\nu=(E^\Tt_{\alpha_1}\com E^\Tt_{\alpha_0})\rest\nu$ is type Z.
  \end{enumerate}
 \end{enumerate}
\end{clmfive}
\begin{proof}
 Let $\alpha=\alpha_0$. We have $\mu\leq\lambda(E^\Tt_{\alpha})$
 since $\mu$ is an $M$-cardinal
 and $\mu<\lh(E^\Tt_0)$. So  $F^M\rest\mu=E^\Tt_{\alpha}\rest\mu$.
 
 Suppose $\lambda(E^\Tt_\alpha)>\mu$.
 Then clearly $\nu<\lambda(E^\Tt_\alpha)$
 and $F^M\rest\nu=E^\Tt_\alpha\rest\nu$, 
 and
$E^\Tt_\alpha$ is short
 by Claim \ref{clm:chain_of_exts_in_branches}.
 Let us verify that $E^\Tt_\alpha$ is the only extender used along $b^\Tt$.
 Otherwise, letting $E$ be the short extender derived from $i^\Tt$,
 $E^\Tt_\alpha$ is a whole proper segment of $E$.
 But (the trivial completion of) $F^M\rest\nu$ has no whole proper segment 
 of form $F^M\rest\bar{\lambda}$ with $\bar{\lambda}\geq\nu$, and since $\crit(i^\Uu)\geq\nu$,
 it follows that $E$, which is also the short extender derived from $i^\Uu\com j$, has no whole proper segment of length $\geq\nu$. Since $\nu<\lambda(E^\Tt_\alpha)$, this is a contradiction.

 So suppose $\lambda(E^\Tt_\alpha)=\mu$. 
 Since $F^M\rest\nu$ is not whole, we must then have $\mu<\nu$. And $j(\kappa)>\nu$, but $i^\Tt_{0,\alpha+1}(\kappa)=\mu$, so $M^\Tt_{\alpha+1}\neq M^\Tt_\infty$. So let $\alpha'+1=\min((\alpha+1,\infty]^\Tt)$;
 then
  $\mu=\crit(E^\Tt_{\alpha'})$.
 Clearly $(E^\Tt_{\alpha'}\com E^\Tt_\alpha)\rest\nu=F^M\rest\nu$.
Again $E^\Tt_{\alpha},E^\Tt_{\alpha'}$
are short by Claim \ref{clm:chain_of_exts_in_branches}.
Arguing much as before, these are the only two extenders used along $b^\Tt$. But $E^\Tt_{\alpha'}$
has no generators in the interval $(\mu,\mu^{+M^\Tt_{\alpha+1}})=(\mu,\eta)$, so neither does $F^M\rest\nu$, so $\nu=\mu+1$.
It also follows that $(E^\Tt_{\alpha_1}\com E^\Tt_{\alpha_0})\rest\nu$ is type Z.
\end{proof}

\begin{clmfive}\label{clm:full_analysis_ISC_nu_leq_crit_and_two_exts_in_Tt}
 Suppose $\nu\leq\crit(i^\Uu)$
 and that $b^\Tt$ uses two distinct extenders $E^\Tt_{\alpha_0},E^\Tt_{\alpha_1}$.
 Then $\alpha_0=0$ and $\alpha_1=1$,
 $E^\Tt_0\in\es^M$ and $E^\Tt_1\in\es^{M^\Tt_1}$,
 $E^\Tt_0$ is a superstrong extender in $M$
 with
 $\lambda(E^\Tt_0)=\nu(E^\Tt_0)=\mu$,
 and $E^\Tt_1$ is a normal measure in $M^\Tt_1$ with $\crit(E^\Tt_1)=\mu$.
 Moreover, $\Uu$ is trivial, so $U=\Ult(\Ult(M,E^\Tt_{0}),E^\Tt_{1})$
 and $F^M\rest\nu$ is equivalent to $E^\Tt_1\com E^\Tt_0$.
\end{clmfive}
\begin{proof}
By Claim \ref{clm:if_no_gens_moved_then_two_options},
$\mu^{+U}=\lh(E^\Tt_{\alpha_0})$,
and therefore $\alpha_0=0$.
 Let $D$ be the normal measure derived from $E^\Tt_{\alpha_1}$. 
 It suffices to see that $D\in\es_+(M^\Tt_{\alpha_1})$, since then $U=\Ult(M^\Tt_1,D)$, so $\alpha_1=1$ and $D=E^\Tt_1$. If $E^\Tt_{\alpha_1}\neq F(M^\Tt_{\alpha_1})$ or $(0,\alpha_1]^\Tt$ drops in model, then this follows by induction (on segments of $M$).
 And if $E^\Tt_{\alpha_1}=F(M^\Tt_{\alpha_1})$
 and $(0,\alpha_1]^\Tt$ is non-dropping
 then it follows by our induction on $\nu$,
 since $\mu>\kappa$ is a limit cardinal of $M$. 
\end{proof}

\begin{clmfive}\label{clm:main_claim_when_nu_leq_crit(i^U)}
 Suppose $\nu\leq\crit(i^\Uu)$
 and $b^\Tt$ uses only one extender $E^\Tt_{\alpha_0}$. Then $\Uu$ is trivial and either $[0,\alpha_0]^\Tt$ drops in model or $E^\Tt_{\alpha_0}\neq F(M^\Tt_{\alpha_0})$. Moreover:
 \begin{enumerate}
  \item\label{item:E^Tt_alpha_0_only_ext_isnt_active}
 If $E^\Tt_{\alpha_0}\neq F(M^\Tt_{\alpha_0})$ then
  $\nu(E^\Tt_{\alpha_0})=\nu$ and either
 \begin{enumerate}
  \item\label{item:alpha_0=0_etc} $\alpha_0=0$, so $F^M\rest\nu\in\es^M$, or
  \item\label{item:alpha_0=1_etc} $M|\nu$ is active, $E^\Tt_0=F^{M|\nu}$, $E^\Tt_1=F^M\rest\nu$ and $U=M^\Tt_2$.
 \end{enumerate}
 \item\label{item:active_ext} If $E^\Tt_{\alpha_0}=F(M^\Tt_{\alpha_0})$ and $(0,\alpha_0]^\Tt\cap\dropset^\Tt\neq\emptyset$ then $\nu=\mu+1$, $F^M\rest\nu$ is type Z,
 but $F^M\rest\mu$ is not whole,
 and $\alpha_0=1$,  $E^\Tt_0$ is a normal measure with $\crit(E^\Tt_0)=\mu$,
 $M^{*\Tt}_1$ is active short with $F(M^{*\Tt}_1)$ is the trivial completion of $F^M\rest\mu$, and $E^\Tt_1=F(M^\Tt_1)$ is equivalent to $E^\Tt_0\com(F^M\rest\mu)$.
\end{enumerate}
\end{clmfive}
\begin{proof}
Part \ref{item:E^Tt_alpha_0_only_ext_isnt_active}:
Suppose that $E^\Tt_{\alpha_0}\neq F(M^\Tt_{\alpha_0})$. 
 Suppose that $\nu<\nu(E^\Tt_{\alpha_0})$.
 Then by the MS-ISC for $\exit^\Tt_{\alpha_0}$
 (which holds by induction and since $\exit^\Tt_{\alpha_0}\pins M^\Tt_{\alpha_0}$), we would have $E^\Tt_{\alpha_0}\rest\nu\in\es^{\exit^\Tt_{\alpha_0}}$
 if $F^M\rest\nu$ is non-type Z, but this is impossible, as $F^M\rest\nu\notin U$. So $F^M\rest\nu$ is type Z, so $\nu=\mu+1$
 and $\mu^{+U}=\mu^{+\Ult(M,F^M\rest\mu)}$. But then also by the MS-ISC applied to $\exit^\Tt_{\alpha_0}$, we have $F^M\rest\mu\in\es^{M^\Tt_{\alpha_0}}$, contradicting the fact that $\mu^{+U}=\mu^{+\Ult(M,F^M\rest\mu)}$.
 
 So $\nu=\nu(E^\Tt_{\alpha_0})$.
 Therefore $M^\Tt_{\alpha_0+1}=U$ and $\Uu$ is trivial.
 If $\alpha_0=0$ then clause
 \ref{item:alpha_0=0_etc} holds.
 Suppose $\alpha_0> 0$.
 We have $\mu<\lh(E^\Tt_0)$
 and therefore $\nu\leq\mu^{+U}\leq\lh(E^\Tt_0)$.
 But then $\lh(E^\Tt_0)=\mu^{+U}=\nu$; for
 otherwise $\nu<\lh(E^\Tt_0)$
 and $\lh(E^\Tt_0)$ is a cardinal
 in  $M^\Tt_{\alpha_0}$,
 and $\lh(E^\Tt_0)<\lh(E^\Tt_{\alpha_0})$ and $E^\Tt_{\alpha_0}\in M^\Tt_{\alpha_0}$, 
 and $\rho_1(\exit^\Tt_{\alpha_0})\leq\nu(E^\Tt_{\alpha_0})$,
 so $\lh(E^\Tt_0)\leq\nu(E^\Tt_{\alpha_0})$,
 so $\nu<\nu(E^\Tt_{\alpha_0})$,
 contradiction.
 Similarly, 
 $\alpha_0=1$, establishing
 clause \ref{item:alpha_0=1_etc}.
 
 Part \ref{item:active_ext}:
 Suppose that $(0,\alpha_0]^\Tt\cap\dropset^\Tt\neq\emptyset$
 and $E^\Tt_{\alpha_0}=F(M^\Tt_{\alpha_0})$.
 Let $\gamma_0+1=\min((0,\alpha_0]^\Tt)$.
 We have $\mu<\lh(E^\Tt_{\gamma_0})$.
 
 We claim that $\gamma_0+1\in\dropset^\Tt$
 and $(\gamma_0+1,\alpha_0]^\Tt\cap\dropset^\Tt=\emptyset$.
 For suppose otherwise.
 Let $\gamma_1+1\leq^\Tt\alpha_0$ be least such that $(\gamma_1+1,\alpha_0]^\Tt\cap\dropset^\Tt=\emptyset$.
 Then $\lambda(E^\Tt_{\gamma_0})\leq\crit(E^\Tt_{\gamma_1})$
 and  $\lh(E^\Tt_{\gamma_0})\leq\rho_1(M^\Tt_{\alpha_0})\leq\nu$,
 since $F^M\rest\nu\notin M^\Tt_{\alpha_0}$.
 So $\lh(E^\Tt_{\gamma_0})\leq\nu$.
 Therefore $\gamma_0=0$
 and $\lambda(E^\Tt_{\gamma_0})=\mu$
 and 
 $\lh(E^\Tt_{0})=\mu^{+U}=\nu=\rho_1(M^\Tt_{\alpha_0})$
and  $\pred^\Tt(\gamma_1+1)=1$.
 So if $\nu<\crit(E^\Tt_{\gamma_1})$
 then $F^M\rest\nu=F(M^{*\Tt}_{\beta_1+1})\rest\nu$ and, since $M^{*\Tt}_{\gamma_1+1}\pins M^\Tt_1$, this leads to a contradiction
 like in the proof of part \ref{item:E^Tt_alpha_0_only_ext_isnt_active}.
 So $\crit(E^\Tt_{\gamma_1})=\mu$
 and since $\beta_1+1\in\dropset^\Tt$,
 therefore $E^\Tt_{\gamma_1}$ is long. 
 But now note that $F^M\rest\nu=F(M^\Tt_{\gamma_1+1})\rest\nu$.
 But by
 the ISC for $M^{*\Tt}_{\gamma_1+1}$,
 which holds in particular at $\mu$,
 we get that the ISC holds for $M^\Tt_{\gamma_1+1}$ at $i^{*\Tt}_{\gamma_1+1}(\mu)$, and hence at $\nu$, so $F^M\rest\nu\in\es(M^\Tt_{\gamma_1+1})$, which is impossible.

 Now $\crit(E^\Tt_{\gamma_0})<\nu$,
 because otherwise $F^M\rest\nu=F(M^{*\Tt}_{\gamma_0+1})\rest\nu$, and since $M^{*\Tt}_{\gamma_0+1}\pins M$, this leads to a contradiction like in the proof of part \ref{item:E^Tt_alpha_0_only_ext_isnt_active}. Since $\gamma_0+1\in\dropset^\Tt$,
 it follows that either:
 \begin{enumerate}[label=--]
  \item for some $\gamma$,
  we have $\eta=\mu=\nu=\gamma^{+M}=\gamma^{+U}<\gamma^{++U}<\gamma^{++M}$ and $\crit(E^\Tt_{\gamma_0})=\gamma$ and $E^\Tt_{\gamma_0}$ is long, or
  \item $\mu<\nu\leq\mu^{+U}<\mu^{+M}$ and $\crit(E^\Tt_{\gamma_0})=\mu$, or
  \item for some $\gamma$,
  we have $\gamma^{+U}=\gamma^{+M}=\mu<\nu\leq\mu^{+U}<\mu^{+M}$
  and $E^\Uu_{\gamma_0}$ is long with $\crit(E^\Uu_{\gamma_0})=\gamma$.
 \end{enumerate}
Note that it follows that $\nu<\lambda(E^\Tt_{\gamma_0})$.

 Now if $\kappa<\crit(E^\Tt_{\gamma_0})<\nu(F(M^{*\Tt}_{\gamma_0+1}))$
 then by the MS-ISC for $F(M^{*\Tt}_{\gamma_0+1})$ at $\crit(E^\Tt_{\gamma_0})$,
 we get that $M^\Tt_{\gamma_0+1}$ satisfies
 the MS-ISC at $\lambda(E^\Tt_{\gamma_0})$,
 and hence also at $\nu<\lambda(E^\Tt_{\gamma_0})$, which again leads to contradiction.
 
 Suppose instead that $\nu(F(M^{*\Tt}_{\gamma_0+1}))\leq\crit(E^\Tt_{\gamma_0})$.
 We have
 $M^{*\Tt}_{\gamma_0+1}\pins M$,
 and since $\rho_1(M^{*\Tt}_{\gamma_0+1})\leq\nu(F(M^{*\Tt}_{\gamma_0+1}))$,
 therefore $\nu(F(M^{*\Tt}_{\gamma_0+1}))=\crit(E^\Tt_{\gamma_0})=\mu<\nu$.
And $F^M\rest\nu(F(M^{*\Tt}_{\gamma_0+1}))$ is equivalent to $F(M^{*\Tt}_{\gamma_0+1})$. 
But $M^{*\Tt}_{\gamma_0+1}\npins M||\mu^{+U}$, 
since $M^{*\Tt}_{\gamma_0+1}\npins M^\Tt_\infty$. 
So $\mu^{+U}\leq\lh(F(M^{*\Tt}_{
\gamma_0+1}))$. It follows that $\nu=\mu+1$,
since otherwise the next generator of $F^M$ is $\mu^{+\Ult(M,F^M\rest\mu)}$,
and if $\mu^{+\Ult(M,F^M\rest\mu)}<\nu$ then note that $M^{*\Tt}_{\gamma_0+1}\pins U||\nu$  (it is the least segment of $M$ beyond $\mu^{+\Ult(M,F^M\rest\mu)}$ which projects to $\mu$), contradiction. Likewise, also $F^M\rest\nu$ is type Z.
Note that also $E^\Tt_{\gamma_0}\neq F(M^\Tt_{\gamma_0})$ and $E^\Tt_{\gamma_0}$ is a normal measure, $\gamma_0=0$,
$\exit^\Tt_{0}\pins M^{*\Tt}_{\gamma_0+1}=M^{*\Tt}_1$, and $E^\Tt_0\com F(M^{*\Tt}_{1})$ is equivalent to $F^M\rest\nu$, so $\Uu$ is trivial and $\Tt$
uses exactly two extenders, which are $E^\Tt_0$ and $E^\Tt_1=F(\Ult_0(M^{*\Tt}_1,E^\Tt_{0}))$.

 Finally suppose $\kappa=\crit(E^\Tt_{\gamma_0})=\gamma$.
 So
 $E^\Tt_{\gamma_0}$ is long. Since $\crit(F(M^\Tt_{\gamma_0+1}))=\kappa$,
 we have  $\rho_1(M^{*\Tt}_{\gamma_0+1})=\kappa^{+M}=\mu$  and $M^{*\Tt}_{\gamma_0+1}$ is active short (which we already knew, as $E^\Tt_{\alpha_0}=F(M^\Tt_{\alpha_0})$ is short)
 and $\deg^\Tt_{\gamma_0+1}=0^-$,
 and $M^\Tt_{\gamma_0+1}$ is formed to avoid the protomouse. So note $\mu=\kappa^{+M}\leq\nu\leq\kappa^{++U}$. We have $F^M\rest\nu=F(M^\Tt_{\gamma_0+1})\rest\nu$.
 But since $M^\Tt_{\gamma_0+1}$ was formed to avoid the protomouse $U^*$,
 $F(M^\Tt_{\gamma_0+1})=F^{U^*}\com E$
 where $E$ is the short part of $E^\Tt_{\gamma_0}$. We have $E\in\es^{M^\Tt_{\gamma_0+1}}$,
 and now $E\rest\nu=F^M\rest\nu$,
 which gives a contradiction again.
 
 This completes the proof of part \ref{item:active_ext}.
 To complete the proof of the overall claim, 
 suppose that $(0,\alpha_0]^\Tt\cap\dropset^\Tt=\emptyset$
 and $E^\Tt_{\alpha_0}=F(M^\Tt_{\alpha_0})$,
 and we will reach a contradiction.
 
\begin{sclmfive}$\alpha_0=0$.\end{sclmfive}
\begin{proof}
Suppose otherwise.
 We first show that $\nu\leq\crit(i^\Tt_{0\alpha_0})$.
 Suppose
 $\crit(i^\Tt_{0\alpha_0})<\nu$.
 We have
  $\kappa\leq\crit(i^\Tt_{0\alpha_0})$.
 If $\kappa<\crit(i^\Tt_{0\alpha_0})$
 then by induction, the MS-ISC holds for $M$
 at $\crit(i^\Tt_{0\alpha_0})$,
 and hence it holds for $M^\Tt_{\alpha_0}$ at $\lambda(E^\Tt_0)$. As before, it follows that $1\leq^\Tt\alpha_0$
 and $\lambda(E^\Tt_0)<\nu$, so $\lambda(E^\Tt_0)=\mu$. But then 
 since the MS-ISC holds for $M$
 at $\mu$, in fact it holds for $M^\Tt_{\alpha_0}$ at $i^\Tt_{0\alpha_0}(\mu)>\mu^{+U}$, which is a contradiction.
 So $\kappa=\crit(i^\Tt_{0\alpha_0})$,
 so letting $\gamma_0+1=\min((0,\alpha_0]^\Tt)$, then $\crit(E^\Tt_{\gamma_0})=\kappa$
 and $E^\Tt_{\gamma_0}$ is long
 and the ultrapower is formed to avoid the protomouse. Like before, this gives that $\gamma_0=0$ and $\lambda(E^\Tt_0)=\mu$.
But now we can argue analogously to before:
since $F^M\rest\mu\in\es^M$,
we have $E'=i^\Tt_{01}(F^M\rest\mu)\in\es^{U}$,
and although this extender has critical point $\mu$ (not $\kappa$), and hence is not a segment of $F^U$,
we anyway have $i_{E'}(F^M\rest\mu)$
is a segment of $F^U$ and by coherence,
$i_{E'}(F^M\rest\mu)\in\es^U$.
Since $U|\lh(E^\Tt_0)=M||\lh(E^\Tt_0)$,
the rest follows from the MS-ISC for this extender.

So $\nu\leq\crit(i^\Tt_{0\alpha_0})$.
So $\nu(E^\Tt_{\alpha_0})>\nu$,
so  $U\neq M^\Tt_\infty$,
so $i^\Uu_{0\infty}$ is non-identity,
with $\nu\leq\crit(i^\Uu_{0\infty})$ by hypothesis.
But now let $\theta=\min(\crit(i^\Uu),\crit(i^\Tt))$.
Then
\[U|\theta^{+U}=M^\Uu_{\infty}|\theta^{+M^\Uu_\infty}=M^\Tt_\infty|\theta^{+M^\Tt_\infty}=M^\Tt_{\alpha_0}|\theta^{+M^\Tt_{\alpha_0}}=M|\theta^{+M}.\]
But $\mu\leq\theta$, so this contradicts
the fact that $\mu^{+U}<\mu^{+M}$,
establishing the subclaim.
 \end{proof}
 
 So $M^\Uu_\infty=\Ult(M,F^M)\neq U$,
 so $\Uu$ is non-trivial. But letting $\theta=\crit(i^\Uu)\geq\nu$, we get that $U|\theta^{+U}=M|\theta^{+M}$, a contradiction like before.
\end{proof}

Claims \ref{clm:full_analysis_ISC_nu_leq_crit_and_two_exts_in_Tt} and \ref{clm:main_claim_when_nu_leq_crit(i^U)} together complete the proof of the theorem in case
$\nu\leq\crit(i^\Uu)$.
So from now on, we assume $\Uu$ is non-trivial
with $\crit(i^\Uu)<\nu$. We will derive a contradiction,
in Claims \ref{clm:if_F^M_rest_mu_in_U}--\ref{clm:if_F^M_rest_mu_not_in_U_and_is_non-whole} below.

Since $\crit(i^\Uu)<\nu$, the first extender used along $b^\Uu$ is long. Recall that
Claim \ref{clm:chain_of_exts_in_branches}
still holds in the current context,
but Claims \ref{clm:if_no_gens_moved_then_two_options}--\ref{clm:main_claim_when_nu_leq_crit(i^U)}
are not relevant.

Let $\alpha_0+1=\min(b^\Tt)$.

Recall the \emph{natural length} $\nu(F)$
of the active extender $F=F^N$ of a premouse $N$ is conventionally defined as \[\nu(F)=\max(\theta^{+N},\sup_{\gamma\in G}(\gamma+1)) \]
where $\theta=\crit(F^N)$ and $G$ is the set of generators of $F$.
The following variant of this definition is more convenient in what follows.

\begin{dfn}\label{dfn:nu-tilde}
 Let $N$ be active short and $\theta=\crit(F^N)$.
 Let $G$ be the set of generators of $F^N$.
 Then we define
 \[\widetilde{\nu}(F^N)=\sup_{\gamma\in G}(\gamma+1).\qedhere\]
\end{dfn}

The following claim describes how the MS-ISC is propagated (or not) along dropping iterates:
\begin{clmfive}\label{clm:drop_change_ISC}
 Suppose $[0,\alpha_0]^\Tt\cap\dropset^\Tt\neq\emptyset$ and $M^\Tt_{\alpha_0}$ is active short.
 Then:
 \begin{enumerate}
  \item If $\deg^\Tt_{\alpha_0}>0^-$ then $M^\Tt_{\alpha_0}$ satisfies the MS-ISC.
  \item Suppose $\deg^\Tt_{\alpha_0}=0^-$.
  Let $\beta_0+1\leq^\Tt\alpha_0$
  be least such that $(\beta_0+1,\alpha_0]^\Tt\cap\dropset^\Tt=\emptyset$
  and $\deg^\Tt_{\beta_0+1}=0^-$.
  Then:
  \begin{enumerate}
  \item The following are equivalent:
  \begin{enumerate}[label=--]
   \item 
$M^\Tt_{\alpha_0}$ satisfies the MS-ISC,
  \item $\spc(E^\Tt_\beta)<\nu(F(M^{*\Tt}_{\beta+1}))$
  for all $\beta+1\in[\beta_0+1,\alpha_0]^\Tt$,
  \item $\spc(E^\Tt_\beta)<\widetilde{\nu}(F(M^{*\Tt}_{\beta+1}))$ for all $\beta+1\in[\beta_0+1,\alpha_0]^{\Tt}$  (see Definition \ref{dfn:nu-tilde}).
  \end{enumerate}
  \item If $M^\Tt_{\alpha_0}$ satisfies the MS-ISC
  then $\nu(F(M^\Tt_{\alpha_0}))=\sup i^{*\Tt}_{\beta_0+1,\alpha_0}``\nu(F(M^{*\Tt}_{\beta_0+1}))$.
  \item\label{item:c_lose_ISC} Suppose $F(M^\Tt_{\alpha_0})$ fails the MS-ISC
  and let $F=F(M^\Tt_{\alpha_0})$ and $\xi_0+1\leq^\Tt\alpha_0+1$
  be least such that $\widetilde{\nu}=\widetilde{\nu}(\bar{F})\leq\spc(E^\Tt_{\xi_0})
  $ where $\bar{F}=F(M^{*\Tt}_{\xi_0+1})$.  Let $\theta=\crit(E^\Tt_{\xi_0})$.
  Let $\bar{N}=M^{*\Tt}_{\xi_0+1}$
  and $N=M^\Tt_{\alpha_0}$. Let
  \[ k=i^{*\Tt}_{\xi_0+1,\alpha_0}:\bar{N}\to N \]
  be the iteration map.
  Let $\nu'=\sup k``\widetilde{\nu}$.
  Then:
  \begin{enumerate}
   \item\label{item:char_nu'_in_c_lose_ISC}
$\nu'$ is the least $\nu''$
  such that $F$ fails the MS-ISC
  at $\nu''$; moreover,
  $F\rest\nu'\notin N$,

  \item\label{item:when_nu-tilde_leq_theta} If $\widetilde{\nu}\leq\theta$, so $\widetilde{\nu}=\nu'$, then letting $\nu^*=\theta$, we have:
  \begin{enumerate}
  \item $\theta=\nu^*$ is the least generator  $\theta''$ of $F$ such that $\theta''\geq\widetilde{\nu}$,
and
\item $\theta=\nu^*$ is a cardinal of $N$,
in fact inaccessible in $N$,
 \item
either:
  \begin{itemize}
  \item$\widetilde{\nu}$ is a limit of generators of $\bar{F}$ and of $F$ and is a cardinal in $\bar{N}$ and $N$, or
  \item $\widetilde{\nu}=\gamma+1$ for some $\gamma$
  and $\bar{F}$ and $F$ have no generators in the interval $(\gamma,\gamma^{+\bar{N}}]=(\gamma,\gamma^{*N}]$
  (and of course in fact $\gamma^{+\bar{N}}<\theta$).
  \end{itemize}
  \end{enumerate}
  \item\label{item:char_nu*_in_c_lose_ISC} If $\theta<
  \widetilde{\nu}\leq\theta^{+\bar{N}}=\spc(E^\Tt_{\xi_0})$, so $E^\Tt_{\xi_0}$ is long, then
  letting \[\nu^*=\sup k``\theta^{+\bar{N}},\] we have:
  \begin{enumerate}
  \item $\nu^*$ is the least  generator of $F$ which is $\geq\nu'$,
  and letting $\widehat{\theta}=k(\theta)$, we have
  \[ \widehat{\theta}<\nu'\leq\nu^*<\widehat{\theta}^{+N}, \]
  and $F^{N|\nu^*}$ is the short extender derived from $k$,
  \item either:
  \begin{itemize}
  \item $\widetilde{\nu}=\theta^{+\bar{N}}$ is a limit
  of generators of $\bar{F}$
  and $\nu'=\nu^*$  a limit of generators of $F$,
  or
  \item $\widetilde{\nu}=\gamma+1$
  and $\bar{F}$ has no generators
  $>\gamma$ (hence no generators in the interval $(\gamma,\gamma^{+\bar{N}}=\theta^{+\bar{N}}]$),
  and $F$
  has no generators in the interval $(k(\gamma),\nu^*)$,
  but \[k(\gamma)+1=\nu'<\nu^*<k(\gamma)^{+N}\] 
  \end{itemize}
  \end{enumerate}
  \item\label{item:when_F_rest_nu'_type_Z} $F\rest\nu'$ is type Z iff $\theta=\crit(\bar{F})$ (hence the ultrapower $M^\Tt_{\xi_0+1}=\Ult_0(M^{*\Tt}_{\xi_0+1},E^\Tt_{\xi_0})$ is formed to avoid the protomouse and $\crit(F)=\theta$).
  \end{enumerate}
  \end{enumerate}
 \end{enumerate}
\end{clmfive}
\begin{proof}
 This is mostly by standard calculations.
 Note that the key new case that can arise
 in part \ref{item:c_lose_ISC}
 is where $M^{*\Tt}_{\xi_0+1}$ is active type 1;
 i.e. $\widetilde{\nu}=\chi+1$ where $\chi=\crit(\bar{F})$, and also $\chi=\theta=\crit(E^\Tt_{\xi_0})$,
 and $E^\Tt_{\xi_0}$ is long with $\chi^{+M^{*\Tt}_{\xi_0+1}}=\spc(E^\Tt_{\xi_0})$.
 In this case, $M^\Tt_{\xi_0+1}$ is formed
 in the manner to avoid the protomouse.
 As stated in the claim, we get here that
 the least failure of the MS-ISC
 for $F$ is with respect to $F\rest(i^{*\Tt}_{\xi_0+1,\alpha_0}(\chi)+1)$. This is, moreover,
 a type Z proper segment of $F$, and $E=F\rest i^{*\Tt}_{\xi_0+1,\alpha_0}(\chi)\in\es^{M^\Tt_{\alpha_0}}$, and $\lh(E)$ is the least generator of $F$ strictly above $i^{*\Tt}_{\xi_0+1,\alpha_0}(\theta)$, and $E$ is also the short extender derived from $i^{*\Tt}_{\xi_0+1,\alpha_0}$.
\end{proof}

Recall that $U=\Ult_0(M,F^M\rest\nu)$
and $\mu=\card^M(\nu)$. 
We have $\beta_0+1=\min(b^\Uu)$ and $E^\Uu_{\beta_0}$ is long.
Let $\mu'=\crit(E^\Uu_{\beta_0})$,
so either $\mu'=\mu<\nu\leq\mu^{+U}<\mu^{+M}$,
or $(\mu')^{+U}=(\mu')^{+M}=\mu=\nu$. Let $E$ be the (short) extender derived from $j$. Let $E^*$ be that derived from $i^\Uu\com j=i^\Tt$.
Also let $\chi=\sup i^\Uu``((\mu')^{+U})$
and $\nu'=\sup i^{\Uu}``\nu$.
In the following claim, recall that
$\kappa\leq\mu'$.

\begin{clmfive}\label{clm:Uu_gens_movement}  We have:
\begin{enumerate}
 \item 
$\chi<i^\Uu(\mu')^{+M^\Uu_\infty}$
 and in fact, $M^\Uu_\infty|\chi$ is active
 with the short extender derived from $i^\Uu$.
 \item\label{item:char_chi_least_gen>=nu'} $\chi$ is the least generator of $E^*$ which is $\geq\nu'$,
  \item\label{item:when_E_rest_mu'_in_U} Suppose $E\rest\mu'\in U$. Then either:
  \begin{enumerate}
  \item we have:
  \begin{enumerate}[label=--]
  \item $\kappa^{+M}<\nu$,
  \item $\nu=(\mu')^{+U}$ is a limit of generators of $E$,
  \item $\nu'$ is a limit of generators
  of $E^*$,
  \item $\chi=\nu'$,
  \end{enumerate}or
  \item we have:
  \begin{enumerate}[label=--]
  \item $\kappa^{+M}<\nu$,
  \item $\nu=\gamma+1$ for some $\gamma\in[\mu',(\mu')^{+U})$ (and $\gamma$ is the largest generator of $E$),
  \item $\nu'=i^\Uu(\gamma)+1$ and  $i^\Uu(\gamma)$
  is a generator of $E^*$ and
  \item  $E$ is non-type Z and $E^*\rest(i^\Uu(\gamma)+1)$ is non-type Z, 
  \item $\chi$ is the least generator of $E^*$ which is $>i^\Uu(\gamma)$,
  \end{enumerate}
  or
  \item we have:
  \begin{enumerate}[label=--]
  \item $\kappa+1=\nu$
  and $\kappa=\mu'$ is the unique generator of $E$,
  \item $\nu'=i^\Uu(\kappa)+1$
  and $i^\Uu(\kappa)$ is a generator of $E^*$,
  \item $E$ is a normal measure
  and $E^*\rest (i^\Uu(\kappa)+1)$
  is type Z,
  \item $\chi$ is the least generator of $E^*$ which is $>i^\Uu(\kappa)$,
  \end{enumerate}
  \end{enumerate}
  and moreover, in each case,
  \begin{enumerate}[label=--] \item $\nu$ is the least $\bar{\nu}$
  such that $E\rest\bar{\nu}\notin U$, and 
  \item $\nu'$
  is the least $\bar{\nu}$ such that $E^*\rest\bar{\nu}\notin M^\Uu_\infty$,
  \end{enumerate}
  and

  \item\label{item:when_E_rest_mu_notin_U} Suppose $E\rest\mu'\notin U$. Then:
  \begin{enumerate}[label=--]
  \item $\kappa^{+M}<\nu$,
  \item $\nu=\mu'+1$ and $\nu'=i^\Uu(\mu')+1$,
  \item $i^\Uu(\mu')$ is a generator of $E^*$,
  \item $E$ is type Z and $E^*\rest(i^\Uu(\mu')+1)$ is type Z,
  \item 
   $\mu'$ is the least $\bar{\nu}$
  such that $E\rest\bar{\nu}\notin U$, and
  \item $\mu'=\sup i^\Uu``\mu'$ is also the least $\bar{\nu}$ such that $E^*\rest\bar{\nu}\notin M^\Uu_\infty$.
  \item $\chi$ is the least generator of $E^*$
  which is $>i^\Uu(\mu')$,
  \item letting $\bar{U}=\Ult_0(M,F^M\rest\mu')$
  and $k:\bar{U}\to U$ be the factor map (so $\crit(k)=\mu'$), and $F^*$ be the short extender derived from $i^\Uu\com k$, then $\lambda(F^*)>i^\Uu(\mu')$
  and $F^*\rest i^\Uu(\mu')=F^{M^\Uu_\infty|\chi}\in\es^{M^\Uu_\infty}$ but $F^*\rest (i^\Uu(\mu')+1)\notin M^\Uu_\infty$.
  \end{enumerate}
  \end{enumerate}
\end{clmfive}
\begin{proof}
 This follows by induction and from standard calculations,
 and because $E^\Uu_{\beta_0}$ is long with $\crit(E^\Uu_{\beta_0})=\mu'$.
 In part \ref{item:when_E_rest_mu_notin_U},
 to see the last clause,
 note that the normal measure $D$ derived from $k$ is not in $U$, and hence not in $M^\Uu_\infty$,
 since $\pow((\mu')^{+U})^{M^\Uu_\infty}\sub\pow((\mu')^{+U})^U$, and therefore $F^*\rest((i^\Uu(\mu')+1))\notin M^\Uu_\infty$,
 since in fact $F^*\rest\{i^\Uu(\mu')\}\notin M^\Uu_\infty$, since this is equivalent to $D$.
\end{proof}

Claims \ref{clm:if_F^M_rest_mu_in_U}--\ref{clm:if_F^M_rest_mu_not_in_U_and_is_non-whole} below each reach a contradiction, and they cover all cases, thereby completing the proof.

\begin{clmfive}\label{clm:if_F^M_rest_mu_in_U}
 Suppose $F^M\rest\mu'\in U$.
 Then:
 \begin{enumerate}
  \item\label{item:E^T_alpha_0_lambda>i(mu)}
$\lambda(E^\Tt_{\alpha_0})>i^\Uu(\mu')$, and
therefore $\lambda(E^\Tt_{\alpha_0})>i^\Uu(\mu')^{+M^\Uu_\infty}$,
\item\label{item:E^T_alpha_0_is_active_ext}
  $E^\Tt_{\alpha_0}=F(M^\Tt_{\alpha_0})$.

 \item\label{item:alpha_0_is_only_ext} $\alpha_0+1=^{\Tt}\infty$, so $E^\Tt_{\alpha_0}$ is the only extender used along $b^\Tt$.
 \item\label{item:no_drop_to_alpha_0} $[0,\alpha_0]^\Tt\cap\dropset^\Tt=\emptyset$,
 \item\label{item:0=1}  $0=1$.
 \end{enumerate}
\end{clmfive}
\begin{proof}
 Part \ref{item:E^T_alpha_0_lambda>i(mu)}: Because if $\kappa<\mu'$ then $i^\Uu(F^M\rest\mu')=E^*\rest i^\Uu(\mu')\in \es^{M^\Uu_\infty}$, and if $\kappa=\mu'$
 then $E^*\rest i^\Uu(\kappa)=F^{M^\Uu_\infty|\chi}\in\es^{M^\Uu_\infty}$
 (since if $\kappa=\mu'$
 then $\Ult_0(U,E^\Uu_{\beta_0})$
 is formed avoiding the protomouse).
 
 Part \ref{item:E^T_alpha_0_is_active_ext}:
 Since $\delta=i^\Uu(\mu')^{+M^\Uu_\infty}<\lambda(E^\Tt_{\alpha_0})$,
 note that $E^*\rest\delta=E^\Tt_{\alpha_0}\rest\delta$. Suppose $E^\Tt_{\alpha_0}\neq F(M^\Tt_{\alpha_0})$.
 Then  $E^\Tt_{\alpha_0}$ satisfies
 the MS-ISC. But by Claim \ref{clm:Uu_gens_movement}, $\chi$ is a generator of $E^*$. So by the MS-ISC,
 $E^*\rest\chi\in M^\Tt_{\alpha_0+1}$,
 so $E^*\rest\chi\in M^\Uu_\infty$,
 contradicting Claim \ref{clm:Uu_gens_movement}.
 
 Part \ref{item:alpha_0_is_only_ext}: This is like in the proof of Claim \ref{clm:if_no_gens_moved_then_two_options}.
 
 Part \ref{item:no_drop_to_alpha_0}:
 Suppose $[0,\alpha_0]^\Tt\cap\dropset^\Tt\neq\emptyset$. Apply Claim \ref{clm:drop_change_ISC}
 (let $\nu'_0$ be the ``$\nu'$'' defined in its part \ref{item:c_lose_ISC},
 and let
 $\nu^*$ be as in its part
 \ref{item:when_nu-tilde_leq_theta} or \ref{item:char_nu*_in_c_lose_ISC}, as is relevant),
 in conjunction with Claim \ref{clm:Uu_gens_movement}
 (we  defined $\nu',\chi$ there, and its part \ref{item:when_E_rest_mu'_in_U} applies).
It follows that $\nu'_0=\nu'$ and $\nu^*=\chi$, and $\nu^*$ is
defined as in \ref{item:char_nu*_in_c_lose_ISC}
of Claim \ref{clm:drop_change_ISC}.
(For by Claim \ref{clm:drop_change_ISC}
part \ref{item:char_nu'_in_c_lose_ISC},
$\nu'_0$ is the least $\nu''$
such that $E^*\rest\nu''\notin M^\Tt_\infty$, and by its parts
\ref{item:when_nu-tilde_leq_theta}
and \ref{item:char_nu*_in_c_lose_ISC},
$\nu^*$
is the least generator of $E^*$ which is $\geq\nu'_0$.
On the other hand, by Claim \ref{clm:Uu_gens_movement}
part \ref{item:when_E_rest_mu'_in_U},
$\nu'$ is the least $\bar{\nu}$
such that $E^*\rest\bar{\nu}\notin M^\Uu_\infty$
(hence $\nu'_0=\nu'$)
and by its part \ref{item:char_chi_least_gen>=nu'},
$\chi$ is the least generator of $E^*$ which is $\geq\nu'$ (hence $\chi=\nu^*$). But $M^\Uu_\infty|\chi$ is active,
so $\nu^*$ is not as in \ref{item:when_nu-tilde_leq_theta}
of Claim \ref{clm:drop_change_ISC}.)

But now using the extender $H=F^{M^\Uu_\infty|\chi}$,
we can pull back the extender derived from $i^\Uu\com j=i^\Tt$ under $H$, to see that $E=F^M\rest\nu$
has trivial completion exactly $F(M^{*\Tt}_{\xi_0+1})$.
So it follows that $\Uu$ is trivial and the last extender of $\Tt$ is just $F(M^{*\Tt}_{\xi_0+1})$,
contradicting our present case assumption,
which requires in particular that $\Uu$ is non-trivial.

 Part \ref{item:0=1}: Recall that
 at the start of the proof,
  we reduced to the case that $\rho_{\mathrm{D}}^M=0$. So $F^M\rest p_{\mathrm{D}}^M\notin M$.
 But assuming that $[0,\alpha_0]^\Tt\cap\dropset^\Tt=\emptyset$,
 then $F^M\rest p_{\mathrm{D}}^M=E^\Tt_{\alpha_0}\rest
 p'$ where $p'=i^\Tt_{0\alpha_0}(p_{\mathrm{D}}^M)$.
 (Recall that this is still true in the case that $0<^\Tt\alpha_0$ and $\crit(i^\Tt_{0\alpha_0})=\crit(F^M)$.)
 So then $F^M\rest p_{\mathrm{D}}^M$
 is derived from $i^\Uu\com j$ at $p'$.
 But $F^M\rest\nu\in M$,
 so $M_\nu\in M$ (recall $M_\nu$ was defined at the start of the proof).
 As in the proof of Dodd-absent-solidity (Theorem \ref{tm:rho^M_D=0_implies_Dodd-absent-solid_and_universal}),
 all component measures of extenders applied along $b^\Uu$ are in the model to which they apply,
 and in fact if $\alpha+1\leq^\Uu\infty$
 then the component measures of $E^\Uu_\alpha$
 are in $M^\Uu_\alpha|i^\Uu_{0\alpha}(j(\kappa))$.
  Therefore there is a  finite linear ``iteration'' $\vec{\mu}\in U|j(\kappa)$ capturing $(M^\Uu_\infty,p',\kappa^{+M})$, so letting $\sigma:\Ult_0(U,\vec{\mu})\to M^\Uu_\infty$
  be the final copy map, we have $p'\in\rg(\sigma)$ and $\sigma\com i^{U,0}_{\vec{\mu}}=i^\Uu$.
  This can then be converted into an equivalent ``iteration'' of $M_\nu$
  (here $M_\nu$ is defined as in Remark \ref{rem:M_X}). Let $M'=\Ult_0(M_\nu,\vec{\mu})$.
  Now note that $F^M\rest p_{\mathrm{D}}^M=F^{M'}\rest \sigma^{-1}(p')$,
  and since $M_\nu\in M$, therefore $F^M\rest p_{\mathrm{D}}^M\in M$, a contradiction.
\end{proof}

\begin{clmfive}\label{clm:if_F^M_rest_mu_not_in_U_and_is_whole}
 Suppose  $F^M\rest\mu'\notin U$ and $F^M\rest\mu'$ is whole (so $\mu'=\mu<\nu$). Then:
 \begin{enumerate}
  \item\label{item:exactly_two_exts_in_b^Tt} There are at least 2 extenders
  used along $b^\Tt$; let $E^\Tt_{\alpha_0},E^\Tt_{\alpha_1}$
  be the first 2,
  \item\label{item:alpha_0=0_and_E^T_0=F^M_rest_mu}$\alpha_0=0$ and $E^\Tt_{0}=F^M\rest\mu$,
  \item\label{item:E^T_alpha_1_short_and_crit=mu}
  $E^\Tt_{\alpha_1}$ is short with $\crit(E^\Tt_{\alpha_1})=\mu$,
  \item\label{item:analyse_E^T_alpha_1} letting $\beta_1+1=\min((0,\alpha_1]^\Tt)$,
  then $\beta_1+1\in\dropset^\Tt$ and $\deg^\Tt_{\beta_1+1}=0^-$, but
  $(\beta_1+1,\alpha_1]^\Tt\cap\dropset^\Tt=\emptyset$, and
   $M^{*\Tt}_{\beta_1+1}\pins M^\Tt_1$ is active short
  type 1 with $\crit(F(M^{*\Tt}_{\beta_1+1}))=\mu=\crit(E^\Tt_{\beta_1})$
  and $E^\Tt_{\beta_1}$ is long,
  so $M^\Tt_{\beta_1+1}$ is formed avoiding the protomouse,
  \item\label{item:0=1_in_proof} $0=1$.
  \end{enumerate}
\end{clmfive}

\begin{proof}
Parts 
\ref{item:exactly_two_exts_in_b^Tt},\ref{item:alpha_0=0_and_E^T_0=F^M_rest_mu}, \ref{item:E^T_alpha_1_short_and_crit=mu}:
Since $F^M\rest\mu\notin U$
but $\mu<\nu$, by induction,
$\nu=\mu+1$ and $F^M\rest\nu$ is type Z.
So $\bar{E}=F^M\rest\mu\in\es^M$,
and $\Ult_0(M,\bar{E}))||\mu^{+\Ult_0(M,\bar{E})}=U||\mu^{+U}$.
So $E^\Tt_0=\bar{E}$. Moreover,
because $\bar{E}=E^*\rest\mu$,
therefore $1<^\Tt\infty$. Since $F^M\rest\mu$ is whole (by assumption of the claim), we have $\lambda(F^M\rest\mu)=\mu$,
so $\mu=\crit(E^\Tt_{\alpha_1})$
where $\alpha_1+1=\min(1,\infty]^\Tt$
(we have $M^\Tt_1\neq M^\Tt_\infty$,
since $E^*$ is not generated by $\mu'$,
so $\alpha_1$ exists).
And $E^\Tt_{\alpha_1}$ is short
by Claim \ref{clm:chain_of_exts_in_branches}.

 Part
 \ref{item:analyse_E^T_alpha_1}: As mentioned above, $\nu(E^\Tt_{0})=\lambda(E^\Tt_0)=\mu$,
 so $M^\Tt_{1}\neq M^\Tt_\infty$,
 so we can let $\alpha_1+1=\min((1,\infty]^\Tt)$.
 Note that with $k:\bar{U}\to M^\Uu_\infty$ and $F^*$
 as in Claim \ref{clm:Uu_gens_movement} part \ref{item:when_E_rest_mu_notin_U}, we have $\bar{U}=M^\Tt_1$ and $k=i^\Tt_{1\infty}$.
 Since $F^*\rest i^\Uu(\mu)\in M^\Uu_\infty$,
 and 
 $E^\Tt_{\alpha_1}$ is short,
 we have $i^\Uu(\mu)<\lambda(E^\Tt_{\alpha_1})$.
 So $F^*\rest i^\Uu(\mu)^{+M^\Uu_\infty}=E^\Tt_{\alpha_1}\rest i^\Uu(\mu)^{+M^\Uu_\infty}$,
 and note that $F^*\rest i^\Uu(\mu)^{+M^\Uu_\infty}$
 is just the extender of $i^\Uu_{0\gamma}\com k$
 where $\gamma\in b^\Uu$ is least such that $i^\Uu_{0\gamma}(\mu)<\crit(i^\Uu_{\gamma\infty})$,
 and so $M^\Uu_\gamma=\Ult_0(U,F^*\rest i^\Uu(\mu)^{+M^\Uu_\infty})$.
 
  Now we claim $E^\Tt_{\alpha_1}=F(M^\Tt_{\alpha_1})$.
 For suppose not. Then the full MS-ISC
 holds for $E^\Tt_{\alpha_1}$.
 So since $\chi=\lh(F^*\rest i^\Uu(\mu))<i^\Uu(\mu)^{+M^\Uu_\infty}$
 and $\chi$ is a generator of $F^*$, and hence also of $E^\Tt_{\alpha_1}$,
 we have $E^\Tt_{\alpha_1}\rest\chi\in M^\Tt_\infty$.
 But  by Claim \ref{clm:Uu_gens_movement},
 $F^*\rest (i^\Uu(\mu)+1)\notin M^\Uu_\infty$, so $E^\Tt_{\alpha_1}\rest(i^\Uu(\mu)+1)\notin M^\Tt_\infty$,
 although $i^\Uu(\mu)+1<\chi$,
 contradiction.
 
 So $E^\Tt_{\alpha_1}=F(M^\Tt_{\alpha_1})$.
 Just like in the proof of Claim \ref{clm:if_F^M_rest_mu_in_U} part \ref{item:0=1},
 it follows that $[0,\alpha_1]^\Tt\cap\dropset^\Tt\neq\emptyset$.
 So by Claim \ref{clm:drop_change_ISC} part \ref{item:when_F_rest_nu'_type_Z}
 and since $F^*\rest(i^\Uu(\mu)+1)$ is type Z,
 and also since $\mu^{+U}=\mu^{+M^\Tt_1}=\lh(E^\Tt_0)$, part \ref{item:analyse_E^T_alpha_1} follows.
 (We have $M^{*\Tt}_{\beta_1+1}\pins M^\Tt_1$, as opposed to $M^{*\Tt}_{\beta_1+1}\pins M^\Tt_\alpha$ with some $\alpha>1$,
 since $F(M^{*\Tt}_{\beta_1+1})$ has to be type 1
 with $\crit(F(M^{*\Tt}_{\beta_1+1}))=\mu$,
 and hence $\rho_1(M^{*\Tt}_{\beta_1+1})=\mu^{+U}$.)
 
Part \ref{item:0=1_in_proof}: Note that the normal measure derived from $F(M^{*\Tt}_{\beta_1+1})$ is just the measure
derived from $E^\Tt_{\alpha_1}$ with seed $i^\Uu(\mu)$,
which is just the normal measure $D$ derived from $k$.
But then \[\Ult_0(M^\Tt_1,F(M^{*\Tt}_{\beta_1+1}))=\Ult_0(\bar{U},D)=U,\]
and it follows that in fact $E^\Tt_1=F(M^{*\Tt}_{\beta_1+1})$
and $M^\Tt_2=U$, so $\Uu$ is trivial, a contradiction.
\end{proof}

\begin{clmfive}\label{clm:if_F^M_rest_mu_not_in_U_and_is_non-whole}
 Suppose $F^M\rest\mu'\notin U$ and $F^M\rest\mu'$ is non-whole (so $\mu'=\mu<\nu$). Then $0=1$.
 \end{clmfive}
 \begin{proof}
 Claim \ref{clm:drop_change_ISC} part \ref{item:when_E_rest_mu_notin_U} applies again.
  And like before, no non-dropping image of $F^M$ is used along $b^\Tt$. Note that $F^M\rest\mu\ins E^\Tt_{\alpha_0}$. If $E^\Tt_{\alpha_0}=F^M\rest\mu$,
then since this is non-whole, we get $\mu<\crit(i^\Tt_{\alpha_0+1,\infty})$, but then $\mu$ is not a generator of the derived extender, a contradiction. So $\mu<\nu(E^\Tt_{\alpha_0})$.
But since $F^M\rest\mu\notin M^\Tt_\infty$,
$E^\Tt_{\alpha_0}$ fails the MS-ISC at $\mu$.
So $E^\Tt_{\alpha_0}=F(M^\Tt_{\alpha_0})$
and $[0,\alpha_0]^\Tt\cap\dropset^\Tt\neq\emptyset$,
and $\rho_1(M^\Tt_{\alpha_0})=\mu$.
Let $\beta_0+1=\min((0,\alpha_0]^\Tt)$.
Then $\pred^\Tt(\beta_0+1)=0$,
since $\mu<\lh(E^\Tt_0)$ and $\rho_1(M^{*\Tt}_{\beta_0+1})=\mu$. If $\crit(E^\Tt_{\beta_0})<\mu$
then the MS-ISC would hold for $E^\Tt_{\alpha_0}$ at $\mu$. So $\crit(E^\Tt_{\beta_0})=\mu$.
The rest of the argument is basically like in the proof of Claim \ref{clm:if_F^M_rest_mu_not_in_U_and_is_whole}. 
 \end{proof}

So under the assumption that $\crit(i^\Uu)<\nu$, we have a reached a contradiction. This completes the proof of the theorem.
\end{proof}

\bibliographystyle{plain}
\bibliography{../bibliography/bibliography}

\end{document}